\documentclass[a4paper, 11pt]{amsart}
\usepackage{mabliautoref}
\usepackage{comment}

\usepackage[all]{xy}

\usepackage{amsmath,amssymb,amscd,amsthm,amsfonts}

\usepackage{enumerate}

\usepackage{layout}
\usepackage{fullpage}

\usepackage[usenames,dvipsnames]{xcolor}

\usepackage{hyperref}

\hypersetup{
bookmarks,
bookmarksdepth=3,
bookmarksopen,
bookmarksnumbered,
pdfstartview=FitH,
colorlinks,backref,hyperindex,
linkcolor=Sepia,
anchorcolor=BurntOrange,
citecolor=MidnightBlue,
citecolor=OliveGreen,
filecolor=BlueViolet,
menucolor=Yellow,
urlcolor=OliveGreen
}

%COMANDO usato da Filippo. Se da' problemli, commentare
%\UseRawInputEncoding

%\newcommand{q}{\widetilde{q}}

\DeclareMathOperator{\Supp}{Supp}

\numberwithin{equation}{section}
\setcounter{tocdepth}{1}

\newcommand{\M}{\mathcal{M}}
\newcommand{\A}{\mathcal{A}}
\renewcommand{\O}{\mathcal{O}}

\newcommand{\g}{\overline{\gamma}}
\newcommand{\q}{\overline{q}}

\newcommand{\E}{\mathcal{E}}

\newcommand{\Q}{\mathbb{Q}}
\newcommand{\R}{\mathbb{R}}

\renewcommand{\A}{\mathcal{A}}

\newcommand{\D}{\Delta}

\renewcommand{\P}{\mathbb{P}}

\newcommand{\Z}{\mathbb{Z}}
\newcommand{\N}{\mathbb{N}}
\renewcommand{\O}{\mathcal{O}}
\newcommand{\cF}{\mathcal{F}}

\newcommand{\cL}{\mathcal{L}}

\DeclareMathOperator{\Bl}{Bl}

\DeclareMathOperator{\Spec}{Spec}

\DeclareMathOperator{\Aut}{Aut}

\DeclareMathOperator{\Sym}{Sym}

\DeclareMathOperator{\pr}{pr}
\DeclareMathOperator{\Ex}{Ex}

\DeclareMathOperator{\sing}{sing}

\DeclareMathOperator{\Exc}{Exc}

\DeclareMathOperator{\rk}{rk}

\DeclareMathOperator{\codim}{codim}

\DeclareMathOperator{\lct}{lct}
\DeclareMathOperator{\vol}{vol}
\DeclareMathOperator{\coeff}{coeff}
\DeclareMathOperator{\im}{Im}
\DeclareMathOperator{\red}{red}

\newcommand{\sHom}{\mathcal{H}om}

\newcommand{\Bir}{\mathrm{Bir}}

\theoremstyle{plain}

\theoremstyle{definition}
%\newtheorem{claim}[]{Claim}
%\newtheorem{definition}[theorem]{Definition}
%\newtheorem{assumption}[theorem]{Assumption}
%\newtheorem{conjecture}[theorem]{Conjecture}
%\newtheorem{question}[theorem]{Question}
%\newtheorem{example}[theorem]{Example}
%\newtheorem{remark}[theorem]{Remark}

% tikz
\usepackage{tikz}									
\usetikzlibrary{matrix}
\usetikzlibrary{patterns}
\usetikzlibrary{decorations.pathmorphing}
\usetikzlibrary{cd}
\usetikzlibrary{calc}
\usetikzlibrary{decorations.markings,shapes,positioning}

\usepackage{xparse}
\usepackage{minibox}
\usepackage{mathtools}

\NewDocumentCommand\UpArrow{O{2.0ex} O{black}}{%
   \mathrel{\tikz[baseline] \draw [line width=0.5pt, decoration={markings,mark=at position 1 with {\arrow[scale=2, line width=0.25pt]{to}}},  postaction={decorate}, #2] (0,0) -- ++(0,#1);}
}

\newcommand{\expl}[2]{\underset{\mathclap{\minibox[c]{$\UpArrow[10pt]$\\ \fbox{\footnotesize #2}}}}{#1}}

\newcommand{\explshift}[3]{\underset{\mathclap{\minibox[c]{$\UpArrow[10pt]$\\ \hspace{#1} \fbox{\footnotesize #3}}}}{#2}}

\newcommand{\explpar}[3]{\underset{\mathclap{\minibox[c]{$\UpArrow[10pt]$\\  \fbox{\parbox{#1}{\footnotesize #3}}}}}{#2}}

\newcommand{\explparshift}[4]{\underset{\mathclap{\minibox[c]{$\UpArrow[10pt]$\\ \hspace{#2} \fbox{\parbox{#1}{\footnotesize #4}}}}}{#3}}

\title{Effective positivity of Hodge bundles and applications}
\date{\today}

\usepackage{enumitem}
\setlist[itemize,enumerate]{leftmargin=0.8cm,itemsep=3pt,topsep=3pt}

\author{Giulio Codogni}
\address{Dipartimento di Matematica, Universit\`a degli Studi di Roma Tor Vergata, Via della Ricerca Scientifica, 00133 Roma, Italy.}
\email{codogni@mat.uniroma2.it}

\author{Zsolt Patakfalvi}
\address{
EPFL\\
SB MATHGEOM CAG  \\
MA B3 635 (B\^atiment MA) \\
Station 8 \\
CH-1015 Lausanne}
\email{zsolt.patakfalvi@epfl.ch}

\author{Luca Tasin}
\address{Dipartimento di Matematica F.\ Enriques, Universit\`a degli Studi di Milano, Via Cesare Saldini 50, 20133 Milano, Italy} 
\email{luca.tasin@unimi.it}

\subjclass{14J10, 14E30, 14D22, 32M25}
\keywords{Hodge bundles, positivity, stable varieties, moduli spaces, volume, Chow-Mumford class, foliations, automorphism groups}

\begin{document}

\begin{abstract}
We prove new boundedness results across different areas of algebraic geometry, stemming from a unifying technical starting point: bounding the integer $q > 0$ such that the $q$-th Hodge bundle becomes (semi-)positive for families of stable varieties. 

This result allows us to show that  for stable families $f: X \to T$ of maximal variation with klt general fiber and relative dimension $n$ there exist the following bounds:  
\begin{enumerate}
\setlength{\itemsep}{0pt}
\item a lower bound for the Chow-Mumford volume $\left( \lambda_{CM,f} \right)^{\dim T}$ of the form $\delta^{\dim T}$, where $\delta$ is uniform; 
\item
  a uniform lower bound on $K_{X/T}^{n+1}$, when $T$ is a curve; 
  \item an upper bound for  $|\Aut(f)|$ when $T$ is a curve, depending uniformly linearly on $K_{X/T}^{n+1}$.
\end{enumerate}
Additionally, we draw several several consequences on the subspaces of the moduli space of stable varieties parametrizing at least one klt variety, such as the positivity of Hodge bundles and a lower bound on the Chow-Mumford volume in terms of the dimension and the volume of the parametrized varieties (the volume is needed only if working on the coarse moduli space).

We also give pair versions of the above results with coefficients varying in a DCC set.
\end{abstract}

\maketitle

\section{Introduction}
\begin{center}
    
\emph{We work over an algebraically closed field of characteristic 0.}
\end{center}

\subsection{Motivation}  
In this paper, we address several boundedness questions that arise in the study of families of stable varieties and their moduli spaces. While these problems originate from different areas — birational geometry, moduli theory, Hodge theory, foliation theory — they all turn out to be governed by a common geometric theme: the positivity of Hodge bundles in families of stable varieties. Specifically, we consider the following:

\begin{enumerate}
    \item On the moduli stack $\mathcal{M}_{n,v}$  of stable varieties of dimension $n$ and volume $v$, the assignment
    \begin{equation*}
        \mathcal{M}_{n,v}(T) \ni 
    \{f: X\to T \}  \mapsto f_* \O_X(qK_{X/T})
    \end{equation*}
    defines a coherent sheaf.  When is this a  nef/ample vector bundle?
    
    Note the following:
    \begin{itemize}
        \item This coherent sheaf is usually called the $q$-th Hodge bundle, and despite of its name, it is not at all evident a-pirori even that it is a vector bundle. 
        \item The nefness and ampleness for $q$ divisible enough is known, even in the log-case \cite{Fuj,KP}. This is how the projectivity of the coarse moduli space of stable pairs has been shown, and a reason why this sort of statements are cornerstone in the theory of moduli spaces. We are asking here for a versions where $q$ is made effective, hopefully by allowing almost all positive $q$.
        \item Similar (semi-)posivitiy statements and their applications have been widespread, and thus central  in algebraic geometry. Giving a thorough overview in this direction is beyond the scope of the present article, some of them are surveyed in \cite[Section 6.3.E]{Laz2} and in \cite{Horing}, a non-exhausting list of references is \cite{EV, EV90, Fuj16, Fuj, FF14, FFS14, Fujita, Kaw81, Kaw82, Kol86a,Kol86b, Kol87, Kol90, KP, LPS20, LS24, PS, Vie82, Vie83}.
    \end{itemize} 
    \item On the same moduli space the CM (Chow-Mumford) line bundle is 
    $$
    \lambda_{CM} = (n+1)! \lim_{q \to \infty} \frac{\lambda_q}{q^{n+1}},
    $$ 
    where $\lambda_q$ is the determinant of the vector bundle from the previous point. This is known to be ample on the coarse moduli space \cite{PX}, so one can ask the lower bound for its volume on any irreducible component. Let us recall that, over the complex numbers, the Chow-Mumford line bundle is related to the existence of K\"{a}hler-Einstein metrics, and its volume is related to the Weil-Petersson metric on the moduli space defined out of K\"{a}hler-Einstein metrics (see  e.g. \cite{Weil-Petersson} and references therein) .

    \item Is there a lower bound for the volume of fibrations of fixed relative dimension with klt general fibres? (see \autoref{q:volume} for a precise formulation in the set-up of foliations). This is the relative variant of a lower bound that is well-known for varieties \cite{HMX} and which is an  essential step in showing the celebrated boundedness results for varieties of general type/stable varieties \cite{HMX-bound}.
    
    \item Given a family $f: X \to T$ of general type varieties of maximal variation, is there a linear bound in fixed dimension for the automorphism group of the fibration by the volume $\vol(K_{X/T})$ of the fibration? This is again the relative version of a  famous result for varieties \cite{HMX-Aut}.
  
\end{enumerate}
Below, we explain our positive answers to the above 4 boundedness questions. 

\subsection{Effective positivity results for Hodge bundles}

Positivity statements usually stem from stability assumptions - often the same stability assumptions used to construct moduli spaces. Let us briefly recall the stability notions used in this paper. A projective pair is stable if it has semmi-log canonical (slc for short) singularities and the log canonical divisor is ample; over the complex numbers, this notion is well-known to be equivalent to the existence of a K\"{a}hler-Einstein metric with prescribed singularities along the boundary \cite[Theorem B]{BG14}. A flat fibration $f\colon (X,\Delta)\to B$ over a reduced scheme $B$ is locally stable if the fibers of $f$ are equidimensional and demi-normal, $\Delta$ is an effective $\Q$-divisor, $\Supp \Delta$ does not contain any fiber, and singular codimension $1$ point (i.e., the node) of any fiber, $K_{X/B}+ \Delta$ is $\Q$-Cartier, and $\big(K_{X_b}, \Delta_b \big)$ is slc for every $b \in B$. It is stable if in addition $K_{X/B}+ \Delta$ is $f$-ample. This definition is as in \cite[Definition-Theorem 4.7]{Kobook}, except that we consider only $\Q$-Cartier divisors rather than $\R$-Cartier. We refer to \autoref{S:notations} for further notations.

\medskip

The following result on ampleness and bigness is an effective version of \cite[Theorem 7.1]{KP} (and \cite{Kol87, Kol90}). 
The part on nefness is a generalisation of \cite{Fuj}. Note that in general for a $\Q$-divisor $D$, one defines $\O_X(D) \cong \O_X(\lfloor D \rfloor)$.

\begin{theorem}
\label{thm:main_posivity}
Let $f\colon (X,\Delta)\to B$ be a stable family over a normal projective base and let $q>0$ be an integer. 

Then 
\begin{enumerate}
    \item {\scshape [Nefness] }
    \label{itm:main_posivity:nef}(\autoref{cor:semi_posit_lambda_e_CM})
    $f_* \O_X(q(K_{X/B}+ \Delta))$  is nef if one of the following conditions hold
    \begin{itemize}
        \item  $B$ is a curve, or
        \item $q\Delta$ is a $\Z$-divisor, or 
        \item for every normal projective curve $T$ and every morphism $g\colon T \to B$,  the base change 
        $$g^*f_* \O_X(q(K_{X/B}+ \Delta))\cong h_*\O_{X_T}(q(K_{X_T/T}+ \Delta_T))$$ 
        holds, where $h\colon (X_T,\Delta_T)\to T$ is the base change of $f$.
    \end{itemize}
    
    \item {\scshape [Bigness]} \label{itm:main_posivity:det_big} (\autoref{thm:postivity_lambda}) $\det f_* \O_X(q(K_{X/B}+ \Delta))$ and (\autoref{thm:bigness}) $f_*\O_X(2q(K_{X/B}+\D))$ are big, if all of the following conditions are satisfied:
    \begin{itemize}
        \item there is  $b \in B$ such that $(X_b,\Delta_b)$ is klt,
        \item $b \mapsto (X_b, \Delta_b)$ has maximal variation, and 
        \item $q\geq 2$, $q\Delta$  is a $\Z$-divisor and $0<\rk f_* \O_X(q(K_{X/B}+ \Delta))$.
    \end{itemize} 
    \item {\scshape [Ampleness]} \label{itm:main_posivity:det_ample} (\autoref{thm:postivity_lambda}) $\det f_* \O_X(q(K_{X/B}+ \Delta))$ is ample, if all of the following conditions are satisfied: 
    \begin{itemize}
        \item   $(X_b, \Delta_b)$ is klt for all $b \in B$, and 
        \item the isomorphism equivalence classes are finite, that is, for every closed point $b \in B$, there are at most finitely many closed points $s \in B$ such that $(X_b, \Delta_b) \cong (X_s, \Delta_s)$, and
         \item $q\geq 2$, $q\Delta$ is a $\Z$-divisor and $\rk f_* \O_X(q(K_{X/B}+ \Delta))>0$.
    \end{itemize}
\end{enumerate} 
\end{theorem}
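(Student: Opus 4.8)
The plan is to treat the three parts as a hierarchy resting on two pillars: a semipositivity theorem for the Hodge bundle of a stable family over a \emph{curve}, and the Ampleness-Lemma machinery (in the spirit of Koll\'ar and Kov\'acs--Patakfalvi) that converts \emph{variation} of the family into positivity of determinants. First I would establish, or invoke from \autoref{cor:semi_posit_lambda_e_CM}, the curve case: for a stable $h\colon (X_T,\Delta_T)\to T$ over a smooth projective curve, $h_*\O_{X_T}(q(K_{X_T/T}+\Delta_T))$ is nef (here local freeness is automatic, since over a curve torsion-free equals locally free). This is the Hodge-theoretic heart of the argument; in the slc setting it is proved by relating the pushforward to a variation of (mixed) Hodge structure on a suitable cyclic cover or resolution and invoking Fujita--Kawamata--Viehweg--Fujino type semipositivity.

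For \autoref{itm:main_posivity:nef} I would reduce an arbitrary normal projective base $B$ to the curve case. Recall that a locally free sheaf $\E$ on $B$ is nef if and only if $g^*\E$ is nef for every morphism $g\colon T\to B$ from a smooth projective curve (local freeness of $\E = f_*\O_X(q(K_{X/B}+\Delta))$ being part of the stable-family package). The crux is the base-change identity
\[
g^* f_*\O_X\big(q(K_{X/B}+\Delta)\big)\;\cong\;h_*\O_{X_T}\big(q(K_{X_T/T}+\Delta_T)\big),
\]
which lets me identify the pullback of the Hodge bundle with the Hodge bundle of the base-changed family, whence the curve case applies. Each of the three bullet hypotheses exists precisely to secure this identity: if $B$ is already a curve there is nothing to reduce; if $q\Delta$ is integral, then $q(K_{X/B}+\Delta)$ is a genuine Cartier divisor and flatness together with cohomology-and-base-change over the stable family gives the isomorphism; the third bullet simply postulates it. The failure of this identity when $q\Delta$ is not integral---because $\O_X(\lfloor q(K_{X/B}+\Delta)\rfloor)$ need not commute with restriction to fibers---is exactly the obstruction these conditions circumvent, and isolating it is the main delicacy of this part.

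For \autoref{itm:main_posivity:det_big} and \autoref{itm:main_posivity:det_ample} I would feed the nefness of \autoref{itm:main_posivity:nef} into the Ampleness Lemma, citing \autoref{thm:postivity_lambda} and \autoref{thm:bigness}. Maximal variation makes the moduli map $B\dashrightarrow \M_{n,v}$ generically finite, so the classifying map attached to the nef bundle $f_*\O_X(q(K_{X/B}+\Delta))$ (built from the evaluation/multiplication maps, using that $K_{X/B}+\Delta$ is $f$-ample) is generically finite onto its image; the Ampleness Lemma then forces $\det f_*\O_X(q(K_{X/B}+\Delta))$ to be big. The klt fiber and $q\ge 2$ guarantee the nonvanishing sections and the $f$-very-ampleness needed to run Viehweg's fiber-product amplification. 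Upgrading from bigness of the determinant to bigness of $f_*\O_X(2q(K_{X/B}+\Delta))$ itself uses the generically surjective multiplication map $\Sym^2 f_*\O_X(q(K_{X/B}+\Delta))\twoheadrightarrow f_*\O_X(2q(K_{X/B}+\Delta))$ together with the nefness already in hand. For ampleness, strengthening the hypotheses to ``all fibers klt and finitely many isomorphic fibers'' makes the moduli map \emph{finite} rather than merely generically finite, and the same Ampleness Lemma then yields ampleness of the determinant.

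I expect the principal obstacle to be twofold. On the nefness side it is establishing the curve-case semipositivity for families of \emph{singular} stable pairs, with slc fibers and non-integral boundary, while keeping exact control of the locus where base change fails; this is what forces the restrictive trichotomy in \autoref{itm:main_posivity:nef}. On the bigness/ampleness side it is making the passage from variation to positivity via the Ampleness Lemma effective, and in particular propagating bigness from $\det f_*\O_X(q(K_{X/B}+\Delta))$ to the full sheaf at level $2q$ without losing the quantitative control that the rest of the paper relies on.
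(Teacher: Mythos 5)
Your outline of Item 1 matches the paper's route: prove nefness over a smooth projective curve by Hodge-theoretic semipositivity (the paper's \autoref{thm: Fujino}, which reduces to \autoref{basic_sem} through semi-snc resolutions, cyclic covers and a bootstrap), then reduce a general normal base to curves via base change, with the three bullet hypotheses exactly securing that base change. One technical inaccuracy there: $q\Delta$ integral does \emph{not} make $q(K_{X/B}+\Delta)$ Cartier, so ``flatness plus cohomology-and-base-change'' is not available off the shelf; the paper's \autoref{prop:base_change_stable_families} must establish the base-change identity for a merely $\Q$-Cartier $\Z$-divisor by verifying an $S_3$/depth condition at non-lc centers via \cite[Thm 3.(i)]{Kol11}, and that is the real content behind the second bullet.

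The genuine gap is in Items 2 and 3. Feeding nefness into the Ampleness Lemma, as you propose, only yields bigness/ampleness of $\lambda_{\q}$ for $\q$ \emph{big and divisible enough}; as the authors note, the Ampleness Lemma ``alone can not give effective statements.'' The point of the theorem is the effective claim for the given $q\ge 2$ with $q\Delta$ integral, and the mechanism for this is absent from your proposal: the effective lower bound on the smallest Harder--Narasimhan slope $\mu_-(\E_q)$ (\autoref{thm:lowermu_basic}), proved by the Viehweg fiber-product trick plus Nadel vanishing (\autoref{p_nef_Weil}), with the log canonical threshold of the Bermann--Gibbs--Viehweg divisor controlling how large a multiple of $\lambda_{\q}$ can be subtracted while keeping the product pair klt. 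One then shows $\lambda_q-\varepsilon\lambda_{\q}$ is pseudoeffective (resp.\ nef) by testing against movable (resp.\ all) curves, and concludes from the non-effective bigness (resp.\ ampleness) of $\lambda_{\q}$ supplied by \cite{KP,PX}. Your route to bigness of $\E_{2q}$ via a generically surjective $\Sym^2\E_q\twoheadrightarrow\E_{2q}$ also does not work: that surjectivity is not guaranteed for a fixed $q\ge 2$, and nefness of $\E_q$ together with bigness of $\det\E_q$ does not imply bigness of $\Sym^2\E_q$ as a sheaf. The paper's \autoref{thm:bigness} instead extracts an $\ell$-th root of $\lambda_q$ by a cyclic cover (with $\ell$ governed by a uniform lct bound so the fiber-product pairs stay klt), writes $2q\big(K_{X^{(r)}/B}+\D^{(r)}\big)-{f^{(r)}}^*A$ as an adjoint-type divisor, and applies the weak positivity statement \autoref{prop:weakly-positive}.
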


Let us comment on the hypotheses of Item \autoref{itm:main_posivity:nef}. The curve case is a generalization of \cite[Theorems 1.10 and 1.11]{Fuj}. It is proved in  \autoref{thm: Fujino} with a delicate reduction to \cite{Fuj}. Note that \autoref{thm: Fujino} is slightly more general than what is claimed in Item \autoref{itm:main_posivity:nef} as it allows special fibers with worse than slc singularities. The assumption $q\Delta$ integral guarantees the base change property requested in the third item, see \autoref{prop:base_change_stable_families}. That base change is not always true, some counterexamples are give in \cite{Kol18}, see also our \autoref{ex:basechange}. In \cite[Theorem 1]{Kol18} it is shown that the base change holds when the generic fiber is normal and the coefficients are greater than one half, the one half case requires some care as discussed in the Warning 1.1. from loc. cit. For the non-normal case see \cite[Question 8]{Kol18}. We think it would be interesting to further investigate when the formation of Hodge bundles commutes with base change.

\autoref{ex:q0integral} shows that the assumption $q\Delta$ integral is necessary for points \autoref{itm:main_posivity:det_big} and \autoref{itm:main_posivity:det_ample} of \autoref{thm:main_posivity} to hold. The  reason is that the variation might come from the boundary, which might not contribute to the sheaf $f_* \O_X(q(K_{X/T}+ \Delta))$. In \autoref{ex:q0=1} it is shown that the assumption $q\geq 2$ is also necessary. Concerning the question whether the klt assumptions of \autoref{thm:main_posivity} are necessary, see \autoref{rem:sharpness}.

\begin{example}\label{ex:q0integral}
Set $X= E \times E$, where $E$ is an elliptic curve, $f = \pr_2 : X \to T=E$ the second projection, and let $r>1$ be an integer. Let $R \subseteq E$ be a finite subset containing $r-1$ distinct closed points, and let $\Gamma \subseteq X$ be the diagonal. Set $\Delta = \frac{1}{r} \big( \Gamma +  R \times T \big) $. Then, $f_* \O_X\big(r' (K_{X/T} + \Delta) \big) \cong \O_T $ for every $r' <r$. 
\end{example}

In fact, the positivity statements of \autoref{thm:main_posivity} for determinants can be also bootsrapped to positivity statements for the actual sheaves (earlier effective results can be found in \cite{EV, EV90} for the Gorenstein case, while non-effective statements are contained in \cite{KP}):

\begin{corollary}[= \autoref{cor:ample}]\label{corintro:ample} 
Let  $f\colon (X,\Delta)\to T$ be a  fibration where $(X,\D)$ is a projective normal log-pair with general fibre klt and $T$ is a smooth projective curve. Assume that $K_{X/B}+\D$ is $f$-ample and that $f$ has maximal variation. Let $q\ge 2$ be an integer such that $q\D$ is integral. Then
 $$
 f_*\O_X(q(K_{X/B}+\D))
 $$
 is an ample vector bundle on $T$ whenever it is non-zero.
\end{corollary}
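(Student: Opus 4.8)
The plan is to deduce the statement from \autoref{thm:main_posivity} by combining the nefness of Item~\autoref{itm:main_posivity:nef} with the bigness of the determinant of Item~\autoref{itm:main_posivity:det_big}, and then to upgrade ``nef $+$ big determinant'' to ``ample'' by excluding a flat part, where the maximal variation hypothesis does the essential work. First I would arrange the hypotheses of \autoref{thm:main_posivity}: the nefness applies to $f$ directly via \autoref{thm: Fujino}, which permits special fibres with worse than slc singularities, and for the determinant bigness of Item~\autoref{itm:main_posivity:det_big} one reduces to the genuinely stable case through a relative log canonical model / stable reduction, using that $q\D$ integral makes the formation of $f_*\O_X(q(K_{X/B}+\D))$ commute with the relevant base changes by \autoref{prop:base_change_stable_families} (and that ampleness of a vector bundle descends along a finite surjective morphism of curves, so a finite base change is harmless). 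Writing $E:=f_*\O_X(q(K_{X/B}+\D))$, this yields that $E$ is nef and that $\det E$ is big; since $T$ is a curve, a big line bundle has positive degree and is therefore ample, so $\deg E>0$.

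Next I would reduce ampleness to a slope condition. On a smooth projective curve a nef vector bundle is ample if and only if its minimal slope $\mu_{\min}$ is strictly positive, equivalently it has no nonzero quotient bundle of degree $0$; and nefness is exactly $\mu_{\min}\ge 0$. Thus $E$ nef gives $\mu_{\min}(E)\ge 0$, and the entire problem is to rule out $\mu_{\min}(E)=0$. It is crucial to note that $\deg E>0$ by itself does not suffice: the example $A\oplus\O_T$ with $A$ ample is nef with ample determinant but not ample. Hence the maximal variation hypothesis must enter at this point, and cannot be replaced by numerical data coming from determinants alone.

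The crux is therefore to exclude a degree-$0$ quotient. Suppose $\mu_{\min}(E)=0$; then the minimal graded piece of the Harder--Narasimhan filtration is a semistable, degree-$0$, hence numerically flat, quotient bundle $E\twoheadrightarrow Q$. After a further finite \'etale base change, harmless again by \autoref{prop:base_change_stable_families}, I may assume $Q$ admits a trivial quotient, i.e.\ a surjection $E\twoheadrightarrow\O_T$. I would then argue that such a flat quotient of the pushforward of the relatively ample sheaf $\O_X(q(K_{X/B}+\D))$ produces a positive-dimensional direction along which the pluricanonically embedded fibres do not move, contradicting the generic finiteness of the moduli map, which is precisely maximal variation. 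Concretely this is implemented by Viehweg's fibre-product bootstrapping: one passes to the $s$-fold fibre product $f_s\colon(X_s,\D_s)\to T$, which still has klt general fibre and maximal variation, and feeds the determinant bigness of Item~\autoref{itm:main_posivity:det_big} on these fibre products into the weak positivity of the Hodge sheaves; this converts bigness of the determinants into the strict inequality $\mu_{\min}(E)>0$, hence the ampleness of $E$.

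I expect this last step to be the main obstacle. The numerical inputs (nefness and bigness of the determinant) are individually insufficient, and the genuine content is the geometric translation of a numerically flat quotient of the Hodge bundle into a contradiction with maximal variation. This is exactly where the effective positivity machinery underlying Item~\autoref{itm:main_posivity:det_big}, namely the fibre-product construction together with weak positivity, and the base change for Hodge bundles that is available precisely because $q\D$ is integral, become indispensable.
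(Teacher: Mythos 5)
Your overall architecture coincides with the paper's: the corollary is proved there by combining the stable reduction (\autoref{prop:stable_reduction}) with the slope bound of \autoref{thm:lowermu_basic}, whose ``in particular'' clause is exactly your scheme of turning nefness plus positivity of an auxiliary determinant $\lambda_{\q}$ (supplied non-effectively by maximal variation via \autoref{thm:positivity_lambda_e_CM}) into $\mu_-(\E_q)>0$ through the Viehweg fibre-product trick. You also correctly observe that ``nef with big determinant'' is formally insufficient and that maximal variation must enter through the fibre products.

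The gap is in the one step you defer: the assertion that the fibre-product construction ``converts bigness of the determinants into $\mu_{\min}(E)>0$.'' Over a curve, weak positivity of a locally free sheaf is just nefness, so invoking weak positivity cannot by itself improve $\mu_-\ge 0$ to $\mu_->0$. What actually does the work in the paper is the nefness of the twisted pushforward $f^{(r)}_*\O_{X^{(r)}}\big(q(K_{X^{(r)}/T}+\D^{(r)})-\ell f^{(r)*}\lambda_{\q}\big)\cong \E_q^{\otimes r}\otimes\O_T(-\ell\lambda_{\q})$ for some explicit $\ell>0$, obtained from the Nadel-vanishing semipositivity engine \autoref{p_nef_Weil}. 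Applying that engine forces one to verify that the general fibre of the new pair $\big(X^{(r)},\D^{(r)}+\tfrac{\ell}{q-\ell\q}\Gamma^{\{\q\}}\big)$ is klt, where $\Gamma^{\{\q\}}$ is the effective divisor produced by the embedding $\lambda_{\q}\hookrightarrow\E_{\q}^{\otimes r}$; this is exactly the log canonical threshold of the BGV divisor of \autoref{ss:BGV divisor}, and it is the point where the hypotheses ``general fibre klt'' and ``$q\ge 2$'' are used quantitatively (they guarantee $\g_{\q}>0$ and $q-\ell\q>1$). Without identifying and controlling this singularity of $\Gamma^{\{\q\}}$, the step from $\deg\lambda_{\q}>0$ to $\mu_-(\E_q)>0$ is unjustified; and deferring it to ``the machinery underlying Item (2)'' is delicate, since the bigness statement \autoref{thm:postivity_lambda} is itself deduced from this very slope bound. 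Two smaller inaccuracies: a nef degree-zero quotient on a curve of positive genus need not become trivial after a finite \'etale base change (non-split extensions of $\O_T$ by $\O_T$ survive such pullbacks), so that reduction in your contradiction argument does not work as stated; and the heuristic ``a flat quotient gives a direction in which the fibres do not move'' is not how maximal variation enters --- it enters only through the (non-effective) positivity of $\lambda_{\q}$ for $\q$ large and divisible.
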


\begin{remark}
\label{rem:sharpness}
Remarks on the sharpness of  \autoref{corintro:ample}:
\begin{itemize} 
    \item The klt assumption for the general fibre is necessary, as shown in \cite[Example 7.5]{KP}. The same question could arise also for points \autoref{itm:main_posivity:det_big} and \autoref{itm:main_posivity:det_ample} of \autoref{thm:main_posivity}. However, in that case we do not know the answer. We do use the klt assumption multiple times during the proof, e.g. to apply \autoref{p_nef_Weil} or to ensure that the invariant $\g_q$ in \autoref{thm:lowermu_basic} is strictly positive.

     \item The same conclusion as in \autoref{corintro:ample} holds assuming $(X,\D)$ klt and $K_{X/B}+\D$ $f$-big instead of $f$-ample, since going to a relative canonical model does not change the Hodge bundles (\cite[Corollary 1.25]{Kosing}).
\end{itemize}
\end{remark}

\subsubsection{Methods of proof} 

Let us first explain the proof of nefness from \autoref{S:positivity}. The base case is when $q=1$, the boundary is integral and the total space is simple normal crossing; here nefness is a consequence of Hodge theory, and we take it for granted (the most general versions are in \cite{Fuj}). In \autoref{S:positivity}, we generalize the argument from \cite{Fuj} to bootstrap the statement for any $q$ and singular total spaces from the base case.

 Our first new result about positivity is \autoref{thm:lowermu_basic}: given a family over a curve with some technical assumptions, we give an effective lower bound one lowest slope of the Harder-Narshiman filtration of the Hodge bundles. This bound is in terms of the Bermann-Gibbs invariant discussed in \autoref{ss:BGV divisor}. The proof builds on the Viehweg trick. This result generalizes earlier works by Esnault and Viehweg \cite{EV90, EV, EV92}.  It has consequences also about the nefness threeshold of the relative log canonical bundle, as explained in \autoref{nef_threshold}.

To conclude, we combine \autoref{thm:lowermu_basic} with earlier non-effective positivity results such as the ones from \cite{Kol87, Kol90, KP, PX} to obtain our new effective bounds. These last results rely on the so called ampleness lemma, which alone can not give effective statements.

Along the way, we use some new technical base change results, and a careful analysis on how Hodge bundles are affected by the stable reduction, see \autoref{prop:base_change_stable_families} and \autoref{prop:stable_reduction}.

\subsection{Lower bound on the Chow-Mumford volume of fibrations}\label{s:intro:fib}

Combining the positivity results explained above together with the theory of slope inequalities from \cite{CTV}, see also \autoref{S:slope}, we are able to obtain the following lower bounds on the volume of the Chow-Mumford divisor $\lambda_{CM}$ of a stable family. 

\begin{theorem}[= \autoref{thm:volumelambda}]\label{thmintro:volumelambda}
Let $n$ be a positive integer and $\Lambda \subset \mathbb Q \cap [0,1]$ be a DCC set. Then there exists a constant $\delta=\delta(n,\Lambda) >0$ such that 
$$
\lambda_{CM}^{d}\geq \delta^{d}
$$
for any stable family $f\colon (X,\Delta)\to B$	satisfying the following conditions: $B$ is a normal projective variety of dimension $d\geq 1$, the coefficients of $\Delta$ are in $\Lambda$, the relative dimension of $f$ is $n$,  $f$ has maximal variation, and at least one fiber is klt.

In particular, if $d=1$, we obtain that
$$
(K_{X/B}+\D)^{n+1} \ge \delta.
$$
\end{theorem}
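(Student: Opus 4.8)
The plan is to combine the effective positivity of Hodge bundles from \autoref{thm:main_posivity} with the slope inequalities of \cite{CTV} (see \autoref{S:slope}), the engine being a uniform lower bound for the family slope over curves supplied by \autoref{thm:lowermu_basic}. First I would fix, for the given $n$ and the DCC set $\Lambda$, an integer $q_0=q_0(n,\Lambda)\ge 2$ that clears the denominators of all coefficients appearing, so that $q_0\D$ is integral and $0<\rk f_*\O_X(q_0(K_{X/B}+\D))$; the usual ACC/DCC reductions to finitely many coefficients are what make such a uniform $q_0$ available, and this is already a delicate point. With $q_0$ fixed, the class $\lambda_{q_0}:=\det f_*\O_X(q_0(K_{X/B}+\D))$ is integral and is simultaneously nef — by \autoref{itm:main_posivity:nef} of \autoref{thm:main_posivity}, since $q_0\D$ is integral — and big — by \autoref{itm:main_posivity:det_big}, since $f$ has maximal variation and a klt fibre. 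Being nef and big and integral, its top self-intersection on the $d$-fold $B$ is a positive integer, whence $\lambda_{q_0}^{d}\geq 1$.

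Second, I would establish the comparison $\lambda_{CM}\geq \delta\,\lambda_{q_0}$ of numerical classes on $B$, for a constant $\delta=\delta(n,\Lambda)>0$: this is the content of the slope inequality of \cite{CTV}, and its verification reduces to curves. Restricting $f$ to a general complete-intersection curve $C\subseteq B$ produces a stable family $f_C\colon (X_C,\D_C)\to C$ whose coefficients still lie in $\Lambda$, which still has maximal variation, and — because a general $C$ meets the open locus of klt fibres — still has a klt fibre; moreover $\lambda_{CM}$ and $\lambda_{q_0}$ are compatible with this base change, with $\deg(\lambda_{CM}|_C)=(K_{X_C/C}+\D_C)^{n+1}$. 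On $C$ the slope inequality becomes a lower bound for the family slope $(K_{X_C/C}+\D_C)^{n+1}/\deg(\lambda_{q_0}|_C)$, and \autoref{thm:lowermu_basic} controls it: it bounds the lowest Harder--Narasimhan slope $\mu_{\min}(f_{C*}\O(q_0(K_{X_C/C}+\D_C)))$ below by the Berman--Gibbs invariant $\g_{q_0}$, which is strictly positive precisely because of the klt hypothesis (cf.\ \autoref{rem:sharpness}). The essential content is that this lower bound is \emph{uniform}, i.e.\ $\g_{q_0}\geq \delta(n,\Lambda)$ independently of the fibre volume $v$; here I would invoke the effective estimate for the Berman--Gibbs invariant from \autoref{ss:BGV divisor}, together with the known lower bound on $v$ from \cite{HMX} to dispose of the regime of very small fibre volume.

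Third, with the comparison $\lambda_{CM}\geq \delta\,\lambda_{q_0}$ in hand and $\lambda_{CM}$, $\lambda_{q_0}$ nef, I would conclude by monotonicity of the volume, using $\lambda_{q_0}^{d}\geq 1$ from the first step:
\[
\lambda_{CM}^{d}=\vol(\lambda_{CM})\geq \vol(\delta\,\lambda_{q_0})=\delta^{d}\,\lambda_{q_0}^{d}\geq \delta^{d}.
\]
The case $d=1$ is exactly the curve analysis and yields $(K_{X/B}+\D)^{n+1}\geq \delta$ directly: there $\lambda_{CM}=(K_{X/B}+\D)^{n+1}$, the degree $\deg\lambda_{q_0}$ is a positive integer by bigness, and $\autoref{corintro:ample}$ already packages the requisite ampleness input on the curve.

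The main obstacle I anticipate is the uniformity of $\delta$ across the whole class of families: the slope inequality and the positivity of \autoref{thm:main_posivity} only give \emph{strict} positivity for a single fixed family, whereas the statement demands a bound depending on $n$ and $\Lambda$ alone, in particular independent of the (unbounded) fibre volume $v$. Turning ``positive'' into ``bounded below'' rests on two integrality/uniformity inputs that must be assembled carefully — the integer self-intersection $\lambda_{q_0}^{d}\geq 1$ coming from nef-and-bigness, and a $v$-independent lower bound on the Berman--Gibbs invariant $\g_{q_0}$ — and, entangled with these, the need for a uniform index $q_0$ clearing denominators across all families with coefficients in the DCC set $\Lambda$.
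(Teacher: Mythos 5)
Your scaffolding (slope inequality over curves $+$ positivity of $\lambda_q$ $+$ integrality of $\lambda_q^{d}$ $+$ reduction of the $d$-dimensional case to movable curves) matches the paper's, but there is a genuine gap in where you propose to get the uniform constant $\delta$. You want $\delta$ to come from a $v$-independent effective lower bound on the Berman--Gibbs invariant $\g_{q_0}$, citing ``the effective estimate for the Berman--Gibbs invariant from \autoref{ss:BGV divisor}''. No such estimate exists there: \autoref{gamma_positive} only gives \emph{strict positivity} of $\g$ for a single fixed klt pair (via the alpha invariant of that pair), and the paper explicitly notes the invariant has been computed only for $(\P^1,\O(1))$. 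A lower bound for $\g_{q_0}$ uniform over all $n$-dimensional klt fibres with coefficients in $\Lambda$ is not established anywhere and would be a substantial new theorem. The paper's constant comes from an entirely different source: one fixes $q$ via the effective birationality statement of \cite[Theorem 1.3]{HMX}, so that $q(K_F+\Delta_F)$ defines a birational map on fibres --- this is precisely the hypothesis of \autoref{slopeinequality}, which then yields $(K_{X/T}+\Delta)^{n+1}\geq q^{-n-1}\deg\lambda_{q}$ with the explicit constant $\delta=q^{-n-1}$. The Berman--Gibbs invariant enters only \emph{qualitatively}, through \autoref{thm:lowermu_basic} and \autoref{thm:postivity_lambda}, to guarantee $\deg\lambda_q\geq 1$; also note that \autoref{thm:lowermu_basic} bounds $\mu_-(\E_q)$ from below \emph{in terms of} $\deg\lambda_{CM}$, so it cannot by itself produce a lower bound on $\lambda_{CM}$ --- you have the logical direction of that theorem reversed.

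A second, smaller gap: for an infinite DCC set $\Lambda$ there is no single $q_0$ with $q_0\Lambda\subset\Z$, so ``clearing denominators'' is not available as stated. The paper's workaround is concrete and nontrivial: using \cite[Theorem 8.1]{Bir} it rounds the coefficients down to a finite set $I(\Lambda,\varepsilon)$ in a way that keeps $K_{X/B}+\Delta^-$ relatively big, passes to the relative canonical model, and then compares the resulting lambda classes with the original ones using nefness (\autoref{cor:semi_posit_lambda_e_CM}) and \autoref{prop:lambda_and_CM} to see that $\lambda_{CM}$ only decreases. Flagging this as ``delicate'' is not a substitute for the argument, since the inequality $\lambda_{CM,f'}^{\Delta'}\leq\lambda_{CM,f}^{\Delta}$ is exactly what lets one work with the finite set $I$ without loss.
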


A novelty in our approach is that, given a family $X \to B$, finite base changes (that would change the volume) are only used to prove positivity of the Hodge bundles on $B$, and then  we use a slope inequality from \cite{CTV} recalled in \autoref{slopeinequality} to get the bound on the volume. 

\begin{question}
Given a stable family   $f\colon (X,\Delta)\to B$ of relative dimension $n$, with $d=\dim B \ge 2$,  and coefficient of $\Delta$ in a DCC set $\Lambda$, is there a relation between $(K_{X/B} + \D)^{n+d}$ and $\lambda_{CM}^d$ only in terms of $d$, $n$ and $\Lambda$?
\end{question}

\subsection{Volume of fibrations and foliations}

For varieties of general type, a landmark result of Alexeev in dimension two and of Hacon–McKernan–Xu in arbitrary dimension establishes that their volumes satisfy the DCC, and in particular, are bounded below by a positive constant depending only on the dimension. More precisely:
	
\begin{theorem}{\cite[Theorem 1.3]{HMX}}\label{thm:HMX}
 Fix a positive integer $n$ and a DCC set $I \subset [0,1]$.  Let $\mathcal D$ be the set of log canonical pairs $(X,\D)$ such that $\dim X=n$,  $\coeff(\D) \subset I$, and $K_X+\D$ is big. Then there exist a constant $\delta=\delta(n,I)$ and a positive integer $m=m(n,I)$ such that for any $(X,\D) \in \mathcal D$
 \begin{enumerate}
     \item $\vol(X,\D) \ge \delta$ and more strongly the set $\{\vol(X,D) \ | \ (X,\D) \in \mathcal D \}$ satisfies the DCC;
     \item (effective birationality) the linear system $|m(K_X+\D)|$ defines a birational map. 
\end{enumerate}
\end{theorem}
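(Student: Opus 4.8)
The plan is to prove the two assertions together, by induction on the dimension $n$, treating effective birationality (2) and the DCC property of volumes (1) as one package that also carries the ACC for log canonical thresholds in lower dimensions as an auxiliary input. First I would reduce to the case where $K_X+\D$ is ample: running the minimal model program for a pair of log general type produces the log canonical model, on which $K_X+\D$ becomes ample, and this operation changes neither $\vol(X,\D)$ nor $\coeff(\D)$, so neither conclusion is affected. The base case $n=1$ is elementary, since on a curve $\vol(X,\D)=\deg(K_X+\D)=2g-2+\sum a_i$ with $a_i\in I$; positivity of this finite sum together with the DCC property of $I$ forces the degrees to be bounded below and to form a DCC set, and a uniform $m$ with $|m(K_X+\D)|$ birational follows from Riemann--Roch once $\deg\lfloor m(K_X+\D)\rfloor\geq 2g$.

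The heart of the induction is the creation of a controlled non-klt centre through a general point. Given a general $x\in X$ together with a lower bound $\varepsilon$ on $\vol(K_X+\D)$, one produces, by the section-lifting technique and Nadel vanishing, an effective $\Q$-divisor $D\sim_{\Q}\lambda(K_X+\D)$ with $\lambda$ bounded in terms of $n$ and $\varepsilon$ such that $(X,\D+D)$ is log canonical but not klt at $x$ with an isolated non-klt centre there. A tie-breaking argument makes the centre an irreducible subvariety $W$, and adjunction writes $(K_X+\D+D)|_W=K_W+\D_W$, where the coefficients of $\D_W$ lie in a DCC set depending only on $I$ and $n$; this control of the \emph{different} is the place where the ACC for log canonical thresholds is used. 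Applying the inductive effective birationality on $W$ and lifting sections from $W$ to $X$ by vanishing separates $x$ from a second general point, and varying the construction separates tangent directions, producing a uniform integer $m$ (a priori depending on $n$, $I$ and $\varepsilon$) with $|m(K_X+\D)|$ birational.

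It remains to remove the dependence on $\varepsilon$ and to establish the full DCC statement; this is where the two assertions feed back into one another. Granting a uniform lower bound $\delta(n,I)$ on the volume, one sets $\varepsilon=\delta(n,I)$ and obtains a genuinely uniform $m=m(n,I)$, proving (2). For (1), the lower bound itself follows from effective birationality once available, since $|m(K_X+\D)|$ birational gives $h^0(X,m(K_X+\D))\geq n+1$, while $h^0(X,m(K_X+\D))\leq m^n\vol(K_X+\D)/n!+(\text{lower order})$, forcing $\vol(X,\D)\geq\delta(n,I)>0$. The DCC refinement is then obtained by contradiction: a strictly descending sequence of volumes is bounded above, so the maps $\phi_{|m(K_X+\D)|}$ embed the log canonical models birationally into a fixed $\P^N$ with $N$ and the degrees of the images bounded, hence into a bounded family; on a bounded family the volume $(K_X+\D)^n$ is a fixed polynomial in the coefficients of $\D$, which range in the DCC set $I$, so the volumes themselves range in a DCC set, contradicting the strictly descending sequence.

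The main obstacle is disentangling this apparent circularity: effective birationality in dimension $n$ is proved only conditionally on a volume lower bound, while the lower bound is extracted from effective birationality, so the induction must be organized so that the lower-dimensional package supplies everything needed to break the loop in dimension $n$ (in particular to handle the dangerous case where volumes might accumulate at $0$). The second technical crux is the coefficient control under adjunction: one must guarantee that the restricted pair $(W,\D_W)$ again has coefficients in a DCC set to which the inductive hypothesis genuinely applies, which is exactly why the ACC for thresholds and the fine analysis of the different must be carried along the induction rather than invoked as a black box.
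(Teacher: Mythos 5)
The paper does not prove this statement: it is quoted verbatim as \cite[Theorem 1.3]{HMX} and used as a black box (e.g.\ in the proofs of \autoref{thm:volumelambda} and \autoref{cor:volume_foliations}), so there is no in-paper argument to compare yours against. Your sketch does reproduce, in outline, the actual strategy of Hacon--McKernan--Xu: the intertwined induction carrying ACC for thresholds, DCC of volumes, and effective birationality together; the creation of non-klt centres through general points by section-lifting and Nadel vanishing; tie-breaking; and adjunction with coefficient control on the different. At that level the plan is the right one.

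As a proof, however, two steps conceal the real difficulty. First, your DCC argument asserts that on a bounded family the volume $(K_X+\D)^n$ is ``a fixed polynomial in the coefficients of $\D$.'' This is false as stated: log birational boundedness only places the pairs on finitely many models $(Z,B)$ up to birational maps, and $K_X+\D$ need not be nef on (or even descend to) those models, so its volume is not computed by an intersection number there. Making this comparison work is precisely the content of the hardest part of HMX (their treatment of volumes under birational modification, the global ACC for numerically trivial pairs, and the reduction to finitely many models), and it cannot be replaced by the polynomial claim. Second, you name the circularity between the volume lower bound and effective birationality but do not break it: the actual mechanism is that if volumes in dimension $n$ accumulated at $0$, one rescales by $\lambda$ with $\lambda^n\vol$ still small to produce positive-dimensional non-klt centres $W$, and the inductive DCC for $(W,\D_W)$ then yields a contradiction --- this is an argument that must be run, not just flagged as an organizational issue. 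Two smaller points: for a \emph{fixed} multiple $m$ one cannot bound $h^0(X,m(K_X+\D))$ by $m^n\vol/n!$ plus lower order (that is only an asymptotic statement); the standard route to $\vol\ge 1/m^{n}$ is via the degree of the $n$-dimensional image in $\P^N$. And the opening reduction to the log canonical model of an lc pair of log general type is itself a nontrivial input (it needs the existence of lc flips/closures beyond BCHM), which should at least be cited rather than treated as automatic.
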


For families over curves we get the following relative version of \autoref{thm:HMX}.

\begin{corollary}\label{corinotro:relative_volume} [see \autoref{cor:relative_volume}]
Let $n$ be a positive integer and $\Lambda \subset \mathbb Q \cap [0,1]$ be a finite set. Then there exists a constant $\delta=\delta(n,\Lambda) >0$ such that 
$$
(K_{X/T}+\D)^{n+1} \geq \delta,
$$
for any fibration $f\colon (X,\Delta)\to T$ where $(X,\D)$ is a projective normal log-pair with general fibre klt and $T$ is a smooth curve such that $K_{X/T}+\D$ is $f$-ample and that $f$ has maximal variation.
\end{corollary}

 Such statement is strongly related to the theory of holomorphic foliations.  
 In recent years, there has been substantial interest in the birational aspects of foliations, especially the algebraically integrable ones, see for instance \cite{ACSS, CSMMP,CHLX}. When a foliation is defined by a regular fibration with reduced fibres, the divisor \( K_\mathcal{F} \) agrees with the relative canonical class of the fibration.
The analogue of effective birationality of \autoref{thm:HMX} fails in general for foliations, as shown in \cite[Theorem 1.3]{Lu}, and further examples can be found in \cite{Passantino}. Nevertheless, the following question remains open even in dimension 2 and has motivated several recent investigations (\cite{PS19, SS23, LT24, HJLL}):
	
\begin{question}[J.V.\ Pereira,  \cite{Cas21}, \cite{HL21}]\label{q:volume}
Given positive integers $r$ and $n$, does it exist a positive constant $\varepsilon_{r,n}$ such that the volume of every $n$-dimensional rank $r$ canonical foliated varieties $(X, \mathcal F)$ of general type is at least $\varepsilon_{r,n}$?    
\end{question}
In the recent preprint \cite{HJLL}, the authors show that the volumes of log canonical algebraically integrable foliations belongs to a discrete set depending only on its rank and the volume of its general leaves. 

Our \autoref{corinotro:relative_volume} gives a positive answer to \autoref{q:volume} for a class of foliations induced by fibrations with reduced fibres. A positive answer to the following question, would allow to generalise such result to a large class of corank 1 foliations.

\begin{question}\label{q:positivity_foliations_*}
Let $(X,\Delta,\cF)$ be an lc foliated triple satisfying property (*) - see \cite{ACSS} for the definition - with $K_{\cF}+\Delta$ big and klt general leaf. Let $f\colon X\to B$ be the fibration inducing the foliation and assume that $\dim B=1$ and the relative dimension is $n$.  %Let $q \ge 2$ be an integer such that $q\Delta$ is integral and $f_*\O_X(q(K_{\cF}+\Delta))$ is non-zero. 
Is $f_*\O_X(q(K_{\cF}+\Delta))$ ample for an effective $q \in \mathbb N$ depending only on $n$ and on the coefficients of $\D$?
\end{question}

\subsection{Automorphisms groups}
The classical Hurwitz's theorem says that the size of the automorphisms group of Riemann surface is at most 84 times its genus, i.e. a constant which depends only on the dimension times the volume. This result has been extended to higher dimension varieties in \cite[Theorem 2.8]{Ale} and \cite[Theorem 1.1]{HMX}. Using our bounds about volumes, we can further extend these results to the to the relative setting and to foliations.

Given a fibration $f\colon (X,\Delta)\to B$, we denote with $\Aut(f)$ the set of regular automorphisms $\tau$ of $(X,\D)$ such that $f \circ \tau = f$. 
\begin{corollary}[= \autoref{cor:autksb}]\label{thmintro:aut}
Let $n$ be a positive integer and $\Lambda \subset \mathbb Q \cap [0,1]$ be a DCC set. Then there exists a constant $\delta=\delta(n,\Lambda) >0$ such that 
$$
|\Aut(f)| \le \delta^d \vol(K_{X/B}+ \D)
$$
for any stable family $f\colon (X,\Delta)\to B$	satisfying the following conditions:  the coefficients of $\Delta$ are in $\Lambda$, the relative dimension of $f$ is $n$, $K_{X/T}+\Delta$ is $f$-ample  $f$ has maximal variaiton, at least one fiber is klt, and  $B$ is a smooth projective curve.

\end{corollary}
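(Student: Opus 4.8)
The plan is to reduce the bound on $|\Aut(f)|$ to the lower bound on relative volumes provided by the $d=1$ case of \autoref{thmintro:volumelambda}, following the strategy of \cite{HMX-Aut} but in the relative setting. First I would record that $G:=\Aut(f)$ is finite: restriction to the generic fibre embeds $G$ into $\Aut(X_\eta,\Delta_\eta)$, and the generic fibre is a stable pair (it is slc with $K_{X_\eta}+\Delta_\eta$ ample because $K_{X/B}+\Delta$ is $f$-ample), whose automorphism group is finite. Since every $\tau\in G$ satisfies $f\circ\tau=f$ and preserves $\Delta$, the finite group $G$ acts on $X$ over $B$ (fixing $B$ pointwise) and preserves $\Delta$. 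I then form the quotient $q\colon(X,\Delta)\to (Y,\Delta_Y):=(X,\Delta)/G$, where $\Delta_Y$ is the unique $\Q$-divisor making $q$ crepant, i.e. $K_{X/B}+\Delta=q^*(K_{Y/B}+\Delta_Y)$, together with the induced morphism $g\colon Y\to B$, so that $f=g\circ q$.

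The heart of the argument is to verify that $g\colon(Y,\Delta_Y)\to B$ is again a stable family over the curve $B$ meeting all hypotheses of \autoref{thmintro:volumelambda}. Concretely: (a) $K_{Y/B}+\Delta_Y$ is $\Q$-Cartier and $g$-ample, since its finite pullback $K_{X/B}+\Delta$ is $\Q$-Cartier and $f$-ample; (b) each fibre $(Y_b,(\Delta_Y)_b)=(X_b,\Delta_b)/G$ is slc, as slc (resp. klt) is preserved by finite quotients by a group preserving the pair, so in particular the fibre lying under the klt fibre of $f$ is klt; (c) the standard crepancy/different computation shows that if a $G$-orbit of prime divisors of $\Delta$ over a prime divisor $D$ of $\Delta_Y$ has coefficient $a$ and ramification index $m$, then $\coeff_D(\Delta_Y)=1-\tfrac{1-a}{m}$, and the set $\{\,1-\tfrac{1-a}{m}:a\in\Lambda,\ m\in\Z_{>0}\,\}\subset\Q\cap[0,1]$ is again a DCC set $\Lambda'=\Lambda'(\Lambda)$ depending only on $\Lambda$; and (d) $g$ still has maximal variation, for otherwise $(Y,\Delta_Y)$ would be birationally isotrivial, and since the $G$-covers of a fixed pair with prescribed branch divisor and inertia form a finite set, $f$ would be isotrivial as well, contradicting the maximal variation of $f$.

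Finally, since $q$ has degree $|G|$ and is crepant, the projection formula gives the volume relation $\vol(K_{X/B}+\Delta)=|G|\cdot\vol(K_{Y/B}+\Delta_Y)$. Applying the $d=1$ statement of \autoref{thmintro:volumelambda} to $g\colon(Y,\Delta_Y)\to B$ with the DCC set $\Lambda'$ yields a constant $\delta_0=\delta_0(n,\Lambda')>0$, depending only on $n$ and $\Lambda$, with $\vol(K_{Y/B}+\Delta_Y)\ge\delta_0$; combining the two gives $|G|\le\delta_0^{-1}\,\vol(K_{X/B}+\Delta)$, which is the claim with $\delta:=\delta_0^{-1}$ (recall $d=\dim B=1$, so $\delta^d=\delta$). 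I expect the main obstacle to be step (d), together with the bookkeeping in (b)–(c) ensuring the quotient of a stable family is again a stable family with boundary coefficients in a single DCC set independent of $f$; the maximal-variation part is the most delicate, since it requires controlling how variation descends along the finite quotient via the finiteness of covers with fixed branch data.
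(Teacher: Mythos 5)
Your proposal follows essentially the same route as the paper's proof of \autoref{cor:autksb}: finiteness of $\Aut(f)$ via restriction to a general fibre, quotienting by $\Aut(f)$, Riemann--Hurwitz control of the quotient coefficients inside a DCC set depending only on $\Lambda$ (this is exactly \autoref{lem:quotient} with $\cF=T_X$), the multiplicativity $(K_{X/B}+\Delta)^{n+1}=|\Aut(f)|\,(K_{Y/B}+\Gamma)^{n+1}$, and the $d=1$ case of \autoref{thm:volumelambda} applied to the quotient family. The only divergence is your step (d): the paper obtains maximal variation of the quotient family directly from \autoref{thm:positivity_lambda_e_CM} (a stable family with big relative log canonical divisor has maximal variation, and $K_{Y/B}+\Gamma$ is big since its finite pullback $K_{X/B}+\Delta$ is), which bypasses the finiteness-of-covers argument you correctly flag as the delicate point; your route can be made to work but requires the extra bookkeeping you anticipate.
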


The analogous statement for foliations is open even in dimension 2, for related results, see \cite{PS02, CF14, CM19, SS23}.

\subsection{Consequences for the moduli spaces of stable pairs}

Let $n$ be a positive integer, $I$ a finite subset of $[0,1]\cap \Q$ closed under addition\footnote{A subset $I$ of $[0,1]\cap \Q$ is closed under addition if for every finite subset $\{x_i\}$ of $I$, if $\sum x_i\leq 1$, then $\sum x_i$ is in $I$.}, and $v$ a positive rational number. 

Following \cite{Kobook}, \cite[Section 6]{KP} and references therein, we consider the moduli stack $\M=\M_{n,v,I}$ of stable pairs of dimension $n$, volume $v$ and coefficients in $I$, endowed with its reduced structure. In particular, for every reduced scheme $B$, $\M(B)$ is the collection of stable families over $B$ of dimension $n$, volume $v$, and such that the coefficients of the boundary are in $I$. As explained in loc. cit. , the stack $\M$ exists, and it is a Deligne-Mumford stack with a proper coarse moduli space $M=M_{n,v,I}$. 

The projectivity of $M$ is one of the main result of \cite{KP,Fuj,PX}. In particular, in \cite{PX} the author shows that the Chow-Mumford line bundle $\lambda_{CM}$ is ample, see \autoref{S:CM} for the definition. The top self-intersection of $\lambda_{CM}$ is called the Chow-Mumford volume of $M$; over the complex numbers, it is related to the Weil-Petersson metric on $M$ and the K\"{a}hler-Einstein metric of the pairs parametrized by $M$, see \cite{Weil-Petersson} and references therein. 

\begin{corollary}[= \autoref{cor:volume_moduli}, a uniform lower bound on Chow-Mumford volumes]
 Let $n$ be a positive integer, $\Lambda \subset \mathbb Q \cap [0,1]$ be a DCC set closed under addition. Then there exists a constant $C=C(n,I) >0$ with the following property. For every irreducible component $M$ of the coarse moduli space of $n$-dimensional stable pairs with coefficients in $\Lambda$ such that at least one parametrized pair is klt, we have the inequality
    $$
    (\lambda_{CM})^{\dim M}\geq (vC)^{-\dim M}
    $$
 where $v$ is the volume of the pairs parametrized by $M$.% and $\lambda_{CM}$ is the $\Q$ divisor from \autoref{moduli}.
\end{corollary}

Very little is known about the Picard group of $M_{n,v,I}$, except for the case of curves $n=1$. On $\overline{M}_{g}$, $\lambda_1$ is nef and big and $\lambda_{CM} - t\lambda_1$ is nef if and only if $t \le 1$. 
In addition, $\lambda_1$ and $\lambda_{CM} - \lambda_1$ are semiample. Their associated maps have been studied and several birational models of $\overline{M}_{g}$ are known by the so-called Hassett-Keel program (for a detailed account see \cite{CTV1,CTV2} and reference therein).
Recall also that on $\overline{M}_{g}$,
$$
\lambda_q= \binom{q}{2} \lambda_{CM} + \lambda_1.
$$ 

When a convenient base change result such ours \autoref{prop:base_change_stable_families} holds, it is possible to define the Hodge bundle $\rk(\E^{(q)})$, and the Hodge class $\lambda_q$ on the normalization $\nu \colon M^{\nu}\to M$, see \autoref{S:CM}. The following result was already known only for $q$ divisible enough, and in those case $\lambda_q$ is already defined on $M$; our contribution is an effective condition on $q$. It summarizes \autoref{moduli}, \autoref{cor:semi_posit_lambda_e_CM} and \autoref{thm:postivity_lambda}.

\begin{corollary}[Effective positivity of lambda classes on moduli spaces]
 Let $I\subset [0,1]$ be a finite set closed under addition, $q$ a positive integer such that $qI\subset \mathbb{Z}$, and $M$ an irreducible component of the moduli space of stable pairs such that the coefficients of the boundaries are in $I$. Then $\lambda_q$ is a well-defined nef $\Q$-divisor on $M^{\nu}$. 

 Moreover, if $q\geq 2$, $\rk(\E^{(q)})>0$,  and at least one pair parametrized by $M$ is klt, then $\lambda_q$ is big.
\end{corollary}

For some consequence on the ample cone of $M_{n,v,I}$, see \autoref{thm:ample_cone}.

\subsection*{Acknowledgments} We thank F. Ambro, J. Cao, P. Cascini, S. Filipazzi, J. Liu, C. Spicer, R. Svaldi and F. Viviani for useful conversations. We thank X. L\"u for pointing out a mistake in a proof in the first version of this manuscript. 

GC and LT are partially supported by the GNSAGA group of INdAM. GC is also partially supported by the project MatMod@TOV and the grant ``Prin Moduli spaces and birational geometry
(2022L34E7W)".  LT is  partially supported by the PRIN2020 research grant ``2020KKWT53”.

\section{Notation and preliminary results}\label{S:notations}
Recall that our base-field $k$ is algebraically closed and of characteristic $0$.
 A \emph{variety} means a reduced connected scheme of finite type over $k$. An open set is called \emph{big} if its complement has codimension at least two. A proper surjective morphism of varieties $f: X \to B$ is called a \emph{fibration} if $f_*\O_X = \O_B$ and it is called a \emph{family} if it is a fibration and flat.

 A \emph{pair} $(X,\Delta)$  consists of a demi-normal variety $X$ and an effective $\Q$-Mumford divisor $\Delta$, where a Mumford divisor is a Weil divisor with no  irreducible component of its support being  contained in the singular locus of $X$.

 When we write $f: (X, \Delta) \to B$ is a fibration, then we mean that $f : X \to B$ is a fibration and $(X, \Delta)$ is a pair.

 Following \cite[Definition 10.31]{Kobook}, the volume of a Mumford divisor $D$ is $\lim_{m\to \infty}\frac{h^0(X,\O_X(mD))}{m^{\dim X}/\dim X!}$, and for a nef divisor it is equal to $D^{\dim X}$

For a fibration where both $X$ and $B$ are $S_2$, the relative canonical sheaf is a divisorial sheaf. We can and do assume that none of the irreducible components of the support of the canonical divisor $K_X$ and the relative canonical divisor $K_{X/B}$ are contained in the singular locus of $X$. This is possible if we choose the divisor associated to a general rational section of the canonical or relative canonical sheaf.

A locally stable family over a smooth base such that the general fiber is klt has automatically klt total space (see \cite[Proposition 2.15]{Kobook} when the base is a curve, and \cite[Corollary 4.56]{Kobook} for a general base).

\subsection{Fiber product when the total space is normal}\label{S:fiber_product}

Let $f:X \to T$ be a family where $T$ is a smooth projective curve. Given an integer $r$, we denote by
$$
f^{(r)}\colon X^{(r)}\to T
$$
the $r$-th fiber self-product of $X$ over $T$, and denote by $p_i$ the projection onto the $i$-th factor. If both $X$ and $X^{(r)}$ are normal, and $D$ is a Weil divisor on $X$, then we define
\begin{equation*}
D^{(r)} = \sum_{i=1}^p p_i^* D.
\end{equation*}

\begin{lemma}{\cite[Lemma 6.3]{CP}}\label{lem:normal}
Assume that  $X$ is normal and all fibers of $f$ are reduced, then
\begin{enumerate}
    \item $X^{(r)}$ is normal for every $r\geq 1$;
    \item every base change $X_S$ is normal, where $S$ is a smooth projective curve and the morphism $S\to T$ is finite.
\end{enumerate} 

\end{lemma}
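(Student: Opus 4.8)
The plan is to prove \autoref{lem:normal} by reducing both statements to the standard fact that a scheme is normal if and only if it satisfies Serre's conditions $R_1$ (regular in codimension one) and $S_2$. First I would observe that flatness of $f\colon X\to T$ over the smooth (hence regular) one-dimensional base $T$ is preserved under base change and under forming fiber self-products, so that $f^{(r)}\colon X^{(r)}\to T$ and $f_S\colon X_S\to S$ are again flat families over smooth curves. Since flatness over a regular base together with Cohen--Macaulayness (or more weakly $S_2$-ness) of the fibers propagates to the total space, the $S_2$ condition for $X^{(r)}$ and $X_S$ will follow from the $S_2$ property of the fibers, and the fibers of these new families are products (respectively base changes) of the fibers of $f$, which are reduced by hypothesis.

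\medskip

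The key step is verifying the $R_1$ condition, and this is where the hypothesis that all fibers of $f$ are reduced is used in an essential way. For the fiber product statement, the fibers of $f^{(r)}$ over a point $t\in T$ are the $r$-fold products $X_t\times\cdots\times X_t$ of the fiber $X_t$. Since $X_t$ is reduced and we work over an algebraically closed field of characteristic zero, $X_t$ is geometrically reduced, and hence the product of copies of $X_t$ is again reduced; moreover the generic points of these product fibers lie over the generic behaviour of $X_t$, which is generically smooth (again by characteristic zero reducedness). The only codimension-one points of $X^{(r)}$ to worry about are either the generic points of the fibers (which are handled by the fact that $X$ is normal, hence $R_1$, so $X$ is smooth in codimension one, forcing the general fiber product to be smooth in codimension one) or points lying in a single special fiber; for the latter, reducedness of the fiber guarantees that $X^{(r)}$ is regular at the codimension-one points of each fiber. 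Combining $R_1$ and $S_2$ gives normality, which is part (1).

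\medskip

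For part (2), I would argue entirely analogously: the base change $X_S\to S$ along a finite morphism $S\to T$ of smooth curves is flat with reduced fibers (the fibers of $X_S$ over $s\in S$ coincide with the fibers $X_{t}$ where $t$ is the image of $s$, up to the residue field extension, which preserves reducedness in characteristic zero). Thus $X_S$ is again $S_2$, and the $R_1$ condition is checked at the generic points of fibers using normality of $X$ together with the reducedness of the fibers. I expect the main obstacle to be the careful bookkeeping at the codimension-one points sitting inside special (possibly singular) fibers: one must check that reducedness of $X_t$ really does force $R_1$ for $X^{(r)}$ and $X_S$ at those points, rather than merely at the generic fiber, and this requires analyzing the local structure of the fibers in codimension one. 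Since the statement is cited verbatim as \cite[Lemma 6.3]{CP}, the cleanest route is to defer this delicate local analysis to that reference and present the argument above as the conceptual outline, flagging that the essential input is the preservation of reducedness and the $R_1+S_2$ criterion for normality.
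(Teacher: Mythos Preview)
The paper does not supply its own proof; the lemma is quoted directly from \cite[Lemma 6.3]{CP}. Your route via Serre's criterion is the standard one and matches the approach of the cited reference.

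There is, however, a gap in your $S_2$ step. You write that $S_2$ for $X^{(r)}$ and $X_S$ follows from ``the $S_2$ property of the fibers'', but the fibers are only assumed reduced, which gives $S_1$ and $R_0$, not $S_2$. The repair is easy once noticed: for a flat morphism to a smooth curve, $S_1$ fibers already make the total space $S_2$ at points lying over closed points of the base, since the depth formula $\operatorname{depth}\O_{Y,y} = \operatorname{depth}\O_{T,t} + \operatorname{depth}\O_{Y_t,y}$ contributes an extra $1$ from the DVR $\O_{T,t}$. At points over the generic point of the base one does need the generic fiber to be $S_2$, and here it is in fact normal: the generic fiber of $X\to T$ is a localization of the normal scheme $X$, and normality survives the separable field extension $k(T)\subset k(S)$ as well as fiber products over a characteristic-zero field. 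With this correction your $R_1$ analysis goes through: codimension-one points of $X^{(r)}$ or $X_S$ are either codimension-one in the normal generic fiber, or are generic points of a reduced closed fiber, where the local ring is flat over a DVR with a field as special fiber and is therefore itself a DVR.
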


In the situation of Lemma \ref{lem:normal}, we have $K_{X^{(r)}/T}=K_{X/T}^{(r)}$, see \cite[Prop 2.26]{Pat14}.  Moreover, in this situation, if $(X,\Delta)$ is a pair,  then so is $(X^{(r)},\Delta^{(r)})$.

\begin{lemma}[K\"{u}nneth formula for divisorial sheaves]\label{lem:kunneth}
Let $f: X \to T$ be a morphism of finite type to a smooth projective curve such that the general fibers are normal and all fibers of $f$ are reduced.  Let $D$ be a $\Z$-divisor on $X$. Then for any integer $n \geq 1$ we have
$$f^{(n)}_* \O_{X^{(n)}} \left(D^{(n)} \right) \cong \big( f_* \O_X(D) \big)^{\otimes n}$$
\end{lemma}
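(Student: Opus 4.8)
The plan is to reduce to the case $n=2$ by induction and then to a sheaf identity on one of the factors. For the inductive step I would write $X^{(n)}=X^{(n-1)}\times_T X$ and treat it as a two-factor fibre product: by \autoref{lem:normal} the space $X^{(n-1)}$ is normal, and since we are in characteristic zero its fibres $(X_t)^{n-1}$ are reduced (products of reduced schemes over the perfect residue field) with normal general fibres, so $X^{(n-1)}$ satisfies exactly the hypotheses of the lemma. Thus it suffices to prove, for two families $g\colon Y\to T$ and $h\colon Z\to T$ with normal total spaces, reduced fibres, and normal general fibres, and for $\Z$-divisors $D_Y,D_Z$, the two-factor statement
$$
\left(Y\times_T Z\to T\right)_*\O_{Y\times_T Z}\!\left(p_Y^*D_Y+p_Z^*D_Z\right)\;\cong\;g_*\O_Y(D_Y)\otimes_{\O_T}h_*\O_Z(D_Z);
$$
feeding in $Y=X^{(n-1)}$, $Z=X$ and invoking the inductive hypothesis then yields the lemma. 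Note that $g_*\O_Y(D_Y)$ and $h_*\O_Z(D_Z)$ are torsion-free, hence locally free, on the smooth curve $T$.

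The key reduction is to prove this \emph{relatively} over $Y$. Writing $\pi\colon W:=Y\times_T Z\to Y$ for the first projection (a flat morphism, being the base change of the flat $h$), I would establish the isomorphism of $\O_Y$-sheaves
$$
\pi_*\O_W\!\left(p_Y^*D_Y+p_Z^*D_Z\right)\;\cong\;\O_Y(D_Y)\otimes_{\O_Y}g^*\!\left(h_*\O_Z(D_Z)\right).\qquad(\star)
$$
Granting $(\star)$, the two-factor statement follows by applying $g_*$: the pushforward to $T$ factors as $g\circ\pi$, and since $h_*\O_Z(D_Z)$ is locally free on $T$ the projection formula gives $g_*\!\left(\O_Y(D_Y)\otimes g^*(h_*\O_Z(D_Z))\right)=g_*\O_Y(D_Y)\otimes_{\O_T}h_*\O_Z(D_Z)$.

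To prove $(\star)$ I would first write down the natural $\O_Y$-linear ``multiplication'' map from the right-hand side to the left-hand side, sending a local section $a\otimes b$ to $p_Y^*a\cdot p_Z^*b$; this is well defined because $\operatorname{div}(p_Y^*a\cdot p_Z^*b)+p_Y^*D_Y+p_Z^*D_Z=p_Y^*(\operatorname{div}a+D_Y)+p_Z^*(\operatorname{div}b+D_Z)\ge 0$. Both sides of $(\star)$ are $S_2$ (equivalently reflexive) on the normal variety $Y$: the right-hand side because $g^*(h_*\O_Z(D_Z))$ is locally free and $\O_Y(D_Y)$ is divisorial, and the left-hand side because $\O_W(p_Y^*D_Y+p_Z^*D_Z)$ is reflexive on the normal $W$ (again \autoref{lem:normal}) and $\pi$ is flat with equidimensional fibres, so that the preimage of any codimension-$\ge 2$ subset of $Y$ has codimension $\ge 2$ in $W$ and the Hartogs extension property of $\O_W(\cdots)$ descends to $\pi_*\O_W(\cdots)$. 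Since a map of $S_2$-sheaves on a normal variety that is an isomorphism in codimension one is an isomorphism, it remains to check $(\star)$ at the codimension-one points $y\in Y$, where $\O_{Y,y}$ is a DVR, so $D_Y$ becomes Cartier and factors out as a line bundle; one is then reduced to identifying $\pi_*\O_W(p_Z^*D_Z)$ with $g^*(h_*\O_Z(D_Z))$ over $\operatorname{Spec}\O_{Y,y}$ by flat base change for the Cartesian square defining $W$.

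The main obstacle is precisely this last codimension-one comparison at the \emph{vertical} points of $Y$, i.e. those $y$ whose closure lies in a fibre $Y_{t_0}$. There the induced map of DVRs $\O_{T,t_0}\to\O_{Y,y}$ could a priori be ramified, in which case $p_Z^*\O_Z(D_Z)$ would differ from the divisorial sheaf $\O_W(p_Z^*D_Z)$ and the base-change identification would break. This is exactly where the reducedness of the fibres is indispensable: reducedness forces the vertical divisor to appear with multiplicity one in $g^*t_0$, so $\O_{T,t_0}\to\O_{Y,y}$ is unramified, hence (in characteristic zero) formally smooth, and pullback therefore commutes with the formation of $\O(D_Z)$. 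At the horizontal codimension-one points the comparison is an immediate flat base change along $\operatorname{Spec}\O_{Y,y}\to\operatorname{Spec}K(T)$ using normality of the generic fibre. Away from these points the argument is routine flat base change and projection formula, so the whole difficulty is concentrated in controlling the divisorial sheaf over the non-smooth but reduced fibres.
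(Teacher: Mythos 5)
Your proof is correct in substance, but it takes a genuinely different route from the paper's. The paper makes a single global reduction: because all fibres are reduced and the general fibre is normal, the relative smooth locus $U$ of $f$ is big in $X$, and $U^{(n)}\cap f^{(n),-1}V$ is big in $f^{(n),-1}V$ for every open $V\subseteq T$; since the sheaves $\O_{X^{(n)}}(D^{(n)})$ are $S_2$ on the normal spaces $X^{(n)}$ (\autoref{lem:normal}), one may replace $X$ by $U$, where $D$ becomes Cartier, and then the statement is the classical K\"{u}nneth formula for line bundles, proved by induction using the projection formula and flat base change (both valid for arbitrary finite type morphisms). You instead keep the divisorial sheaves on the full total spaces, prove the relative statement $(\star)$ over one factor, and verify it by a Hartogs/$S_2$ comparison in codimension one; the price is that you must compare $\O_W(p_Z^*D_Z)$ with $p_Z^*\O_Z(D_Z)$ under the non-smooth base changes $\Spec\O_{Y,y}\to T$, which is where your ramification-index-one/formal-smoothness analysis enters. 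Note that your codimension-one case analysis is the paper's bigness statement in disguise: the vertical codimension-one points where $\O_{T,t_0}\to\O_{Y,y}$ has ramification index one, and the horizontal ones lying on the normal generic fibre, are exactly the codimension-one points of $Y$ contained in the relative smooth locus of $g$; so both arguments rest on the same geometric input, used once and globally by the paper, pointwise in codimension one by you. The paper's route buys brevity and avoids any discussion of the behaviour of divisorial sheaves under base change; yours buys the finer relative isomorphism $(\star)$ and makes explicit where each hypothesis is used. Two small repairs are needed in your write-up: (i) ``torsion-free, hence locally free'' presumes the pushforwards are coherent, i.e.\ that $f$ is proper, which the lemma does not assume (the paper's proof stresses exactly this point); instead one should invoke the projection formula and flat base change for flat quasi-coherent sheaves. (ii) For the same reason, verifying $(\star)$ on stalks at codimension-one points yields an isomorphism over a big open set only when the sheaves are coherent; it is cleaner to verify $(\star)$ directly over the big open subset of $Y$ given by the intersection of the Cartier locus of $D_Y$ with the relative smooth locus of $g$, where $\O_Y(D_Y)$ is invertible, $p_Z$ is a smooth base change of $g$, and flat base change applies verbatim. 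Both repairs are routine and leave the architecture of your argument intact.
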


\begin{proof}
First, observe that  by  \autoref{lem:normal}, $X$ is normal and the $X^{(n)}$ are normal.  Let $V \subseteq T$ be a non-empty open set, we want to show that $f^{(n)}_* \O_{X^{(n)}} \left(D^{(n)} \right)(V) \cong \big( f_* \O_X(D) \big)^{\otimes n}(V)$, and the isomorphism is compatible with inclusions. Let $U \subseteq X$ be locus where $f$ is smooth.  By our assumptions $U^{(n)} \cap f^{(n), -1} V$ is a big open set of $f^{(n), -1} V$. Hence, the global sections of $D^{(n)}$ over $f^{(n), -1} V$ and over $U^{(n)} \cap f^{(n), -1} V$ agree. So, by replacing $X$ by $U$ we may assume that $X$ is regular and hence $D$ is Cartier (note, we did not assume that $f$ was projective). Then, we denote $\O_X(D)$ by the line bundle $\cL$ and the ``usual proof'' works. I.e. we show that for every $n>1$ we have $f_*^{(n)} \O_X(D^{(n)}) \cong \Big( f_* \O_X(D) \Big) \otimes f_*^{(n-1)} \O_X(D^{(n-1)}) $ via the following computation,
\begin{multline*}
f^{(n)}_* \cL^{(n)} \cong f^{(n-1)}_* g_* (g^* \cL^{(n-1)} \otimes \pr^* \cL) 
\expl{\cong}{projection formula}
f^{(n-1)}_* \left( \cL^{(n-1)}  \otimes g_* (  \pr^* \cL) \right)
\\ \expl{\cong}{flat base-change}
f^{(n-1)}_* \left( \cL^{(n-1)}  \otimes f^{(n-1),*} f_* \cL \right)
\expl{\cong}{projection formula}
\big( f_* \cL \big) \otimes f^{(n-1)}_* \left( \cL^{(n-1)}    \right)
\end{multline*}
where $g : X^{(n)} \to X^{(n-1)}$ is the projection on the first $n-1$ factor, and $\pr : X^{(n)} \to X$ is the projection on the last factor. The key in this argument is that both the projection formula and flat base-change works for any finite type morphism, no need to assume projectivity or properness, see \cite[Tags 01E8 and 02KH]{stackproject}. 
    
\end{proof}

%Moreover, if $\F$ is flat over $T$ and $S_2$, the same is true for $\F^{(r)}$ \cite[Prop 2.24]{Pat14}. 

\subsection{A slope inequality}\label{S:slope}
In this paper we will use multiple times a special case of \cite[Theorem F(1)]{CTV}; for the reader convenience, we recall here the statement that we will need. With respect to loc. cit. , we take $q=1$, and we assume that the map induced by $L$ when restricted to a fiber is birational, which is stronger than generically finite.
\begin{theorem}{\cite[Theorem F(1)]{CTV}}\label{slopeinequality}
Let $f\colon X\to T$ be fibration, where $X$ is a normal projective variety of dimension $n+1$, and $T$ is a smooth projective curve. Let $L$ be an $f$-ample $\Q$-Cartier $\Z$-divisor on $X$ such that $L$ and $f_*\O_X(L)$ are nef, and that $L$ restricted to a fiber of $f$ gives a birational map. Then the following inequality holds true
$$
L^{n+1}\geq \deg f_*\O_X(L) \,.
$$
\end{theorem}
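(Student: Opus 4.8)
The plan is to transport the inequality to the relative projectivization of $E := f_*\O_X(L)$ and read it off there. Since $E$ is nef of rank $r$ on the curve $T$, the tautological divisor $\xi$ on $\pi\colon P := \P(E) \to T$ (normalized so that $\pi_*\O_P(\xi) = E$) is nef. The evaluation $\pi^*E \to \O_P(\xi)$ pulls back to the relative evaluation $f^*E \to \O_X(L)$, which, because $L$ is $f$-ample and restricts to a birational map on a general fibre, defines a rational map $\phi\colon X \dashrightarrow P$ over $T$ that is birational onto its image $Y := \overline{\phi(X)}$, of dimension $n+1$. First I would resolve $\phi$: pick $\mu\colon \wt X \to X$ with $\psi := \phi\circ\mu$ a morphism, so that $\mu^*L = \psi^*\xi + F$ with $F \geq 0$ the base divisor. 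As $\mu^*L$ and $\psi^*\xi$ are nef and differ by the effective $F$, intersecting repeatedly gives $(\mu^*L)^{n+1}\geq(\psi^*\xi)^{n+1}$, and since $\psi$ is birational onto $Y$,
\[
L^{n+1} = (\mu^*L)^{n+1} \geq (\psi^*\xi)^{n+1} = \xi^{n+1}\cdot[Y].
\]

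It then remains to prove $\xi^{n+1}\cdot[Y] \geq \deg E$ (the case $\deg E = 0$ being trivial from $L$ nef). On $P$ the Grothendieck relation over the curve $T$ collapses, because $c_i(E)=0$ for $i\geq 2$, to $\xi^r = \deg E$, together with $\xi^{r-1}f = 1$ and $f^2 = 0$, where $f$ is a fibre class of $\pi$; so the target is exactly $\xi^{n+1}[Y]\geq \xi^r$. Since $T$ is a curve, the numerical class of $Y$, of codimension $c := r-(n+1)$, can be written $[Y] = a\,\xi^{c} + b\,\xi^{c-1}f$. Intersecting with $\xi^n f$ identifies $a = \xi^n f\cdot[Y] = \deg Y_t$, the degree of a general fibre $Y_t \subseteq \P^{r-1}$; for general $t$ the map $X_t \dashrightarrow \P(E_t)$ is given by the complete system $H^0(X_t,L_t)$ (cohomology and base change over a dense open of $T$), so $Y_t$ is a nondegenerate $n$-fold and hence $a = \deg Y_t \geq r-n$. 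Expanding $\xi^{n+1}[Y] = a\deg E + b$, the statement reduces to the lower bound $b \geq -(a-1)\deg E$ on the vertical coefficient.

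The main obstacle is precisely controlling $b$: the numerical data $a \geq r-n$ together with nefness of $\xi$ only give $\xi^{n+1}[Y]\geq 0$, so the sharp bound cannot come from the numerical class of $Y$ alone. Here I would invoke the Harder--Narasimhan filtration $0 = E_0 \subset \cdots \subset E_\ell = E$, whose slopes satisfy $\mu_1 > \cdots > \mu_\ell \geq 0$ by nefness of $E$. Each $E_i$ cuts out a relative linear subsystem, hence a flag of relative linear subspaces and moving divisors $M_i$ on $X$ interpolating between the base locus and $L$; running Xiao's slope method — bounding $(\mu^*L)^{n+1}$ from below by $\sum_i(\mu_i-\mu_{i+1})$ times the fibrewise self-intersections $(M_i|_{X_t})^n$, and bounding those fibre degrees via the nondegeneracy (Clifford-type) inequalities — should assemble exactly $\xi^{n+1}[Y]\geq \deg E$. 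I expect the delicate points to be (i) keeping the fibrewise systems nondegenerate after the modification $\mu$, so the lower bounds on $(M_i|_{X_t})^n$ persist, and (ii) matching the weighted slope sum against $\deg E = \sum_i \mu_i(\rk E_i - \rk E_{i-1})$ to recover the constant $1$; equality then forces $r = n+1$, i.e.\ $Y = P$.
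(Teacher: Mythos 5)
Your first half is correct and well executed: the reduction to the image $Y\subseteq \P(E)$, the telescoping inequality $L^{n+1}=(\mu^*L)^{n+1}\geq(\psi^*\xi)^{n+1}=\xi^{n+1}\cdot[Y]$ (difference $=F\cdot\sum_k(\mu^*L)^k(\psi^*\xi)^{n-k}\geq 0$), the collapse of the Grothendieck relation over a curve, and the bound $a=\deg Y_t\geq r-n$ via completeness of the fibrewise system are all fine, as is your diagnosis that nefness of $\xi$ plus the numerical class of $Y$ alone cannot give the vertical bound $b\geq-(a-1)\deg E$. (Note also that this paper contains no proof to compare against: the statement is imported verbatim from \cite[Theorem F(1)]{CTV}.)

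The gap is the last step, which is a plan rather than a proof, and the plan as written fails. The assembly you propose --- bounding $(\mu^*L)^{n+1}$ from below by $\sum_i(\mu_i-\mu_{i+1})$ times the fibrewise self-intersections $(M_i|_{X_t})^n$ --- cannot produce $\deg E$. Any Harder--Narasimhan piece $E_i$ of rank $\leq n$ defines a subsystem whose image on the fibre has dimension $<n$, so $(M_i|_{X_t})^n=0$; such a piece contributes $0$ to your sum but contributes $\mu_i\,\rk(E_i/E_{i-1})$ to $\deg E$. This already kills the argument in the simplest example satisfying all hypotheses: $X=\P_T\bigl(\O(N)\oplus\O^{\oplus 2}\bigr)$ over $T=\P^1$ with $L$ the tautological class, where $E=\O(N)\oplus\O^{\oplus 2}$, $\deg E=N=L^{3}$, the maximal destabilizing subsheaf has rank one and slope $N$, and your proposed lower bound degenerates to $0$. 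What is actually needed is to keep the mixed terms: one must prove a counting lemma of the shape $\sum_{k=0}^{n}(M_j|_{X_t})^k(M_i|_{X_t})^{n-k}\geq \rk E_i$, where $M_j$ is a system whose map is generically finite (this uses exactly your ``degree $\geq$ codimension $+1$'' bound for the image of $\phi_{E_i}$, together with the fact that a generically finite system meets every positive-dimensional fibre of $\phi_{E_i}$ positively), and then --- the genuinely delicate point --- reorganize the telescoping so that \emph{every} HN piece gets paired against such a generically finite system: for HN length $\geq 3$ even the consecutive-pair telescoping $P_{i+1}^{n+1}-P_i^{n+1}$ is insufficient (a rank-one piece of huge slope sitting above another non-generically-finite piece is again paired with a system of vanishing top self-intersection), and one is forced into mixed monomials such as $P_\ell^{n-1}P_{i'}\cdot(P_\ell-P_i)$. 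This bookkeeping is the real content of the theorem and is absent from the proposal. A structural symptom of the problem: if Xiao's method worked as you describe, it would prove $L^{n+1}\geq\deg E$ directly on $X$, making your $\P(E)$ reduction redundant; the two halves of your argument have not been reconciled, and the closing claim that equality forces $r=n+1$ is unsupported.
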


Let us take the opportunity to formulate the following natural question; a positive answer could for instance strengthen \autoref{thm:volumelambda}, obtaining, with the notations of \autoref{thm:volumelambda}, lower bounds on $(K_{X/B}+\Delta)^{n+d}$ with $d>1$.

\begin{question}[Higher dimensional slope inequality]\label{question}
In the set-up of \autoref{slopeinequality}, if we replace the curve $T$ with an higher dimensional projective manifold, does some convenient generalization of the slope inequality $L^{n+1}\geq \deg f_*\O_X(L)$ hold?
\end{question}

\section{Chow-Mumford line bundle and lambda classes}\label{S:CM}
We define some bundles and divisors on the base of any fibration; under suitable assumptions, they will satisfy convenient base change result to give objects on moduli spaces.

\begin{definition} \label{S:def}
For a fibration $f : (X, \Delta) \to B$ over a reduced base, and for any integer $q>0$ we define \begin{enumerate}
     \item  the $q$-th Hodge bundle as $f_*\O_X \Big(q(K_{X/B}+\Delta)\Big)$, we denote it by $\E^{\Delta}_{q,f}$;
     
     \item the $q$-th lambda class $\lambda_{q,f}^{\Delta}$ is the Cartier divisor $\det\left(\E^{\Delta}_{q,f}\right)$;
     
     \item when $B$ is normal, the CM line bundle $\lambda_{CM,f}^{\Delta}$ is the cycle push-forward $f_*(K_{X/B}+\Delta)^{n+1}$, where $n=\dim X- \dim B$.  It is known that $\lambda_{CM,f}^{\Delta}$ is $\Q$-Cartier, for example by \cite[Section 2.3]{PX} with further details in \cite[Sections 2.4, 3 and Appendix]{CP}.

 \end{enumerate}
 When no risk of confusion arises, we omit the dependence from the map $f$ or the boundary $\Delta$ in the notations.
\end{definition}

\begin{proposition}[Approximation of the Chow-Mumford line bundle with lambda classes]\label{prop:lambda_and_CM}
With the above notations, let $f\colon (X,\Delta)\to B$ be a stable family over a normal quasi-projective variety $B$, then there exists a positive integer multiple $\ell$ of the Cartier index of $K_{X/B}+\Delta$ such that 
$$
\lambda_{CM,f}^{\Delta}=(n+1)!\lim_{k\to \infty}(\ell k)^{-n-1}\lambda_{\ell k,f}^{\Delta}\,.
$$

%Moreover, the two definitions of the Chow-Mumford line bundle over a normal base given in \autoref{S:def} are equivalent.
 \end{proposition}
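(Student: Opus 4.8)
The plan is to compare the two divisor classes through Grothendieck--Riemann--Roch, extracting the leading coefficient in $k$ of $\lambda_{\ell k,f}^\Delta$. First I would choose $\ell$ to be a multiple of the Cartier index of $K_{X/B}+\Delta$ so that $L:=\O_X\big(\ell(K_{X/B}+\Delta)\big)$ is a genuine line bundle, which is $f$-ample because the family is stable; then for $q=\ell k$ one has $\O_X\big(q(K_{X/B}+\Delta)\big)\cong L^{\otimes k}$. Both $\lambda_{CM,f}^\Delta$ and each $\lambda_{\ell k,f}^\Delta$ are ($\Q$-)Cartier divisor classes on the normal variety $B$; since $B$ is $R_1$ its singular locus has codimension $\ge 2$, so restriction to the regular locus is injective on such classes, and I may assume $B$ smooth and quasi-projective. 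By relative Serre vanishing, $R^if_*L^{\otimes k}=0$ for $i>0$ and $k\gg 0$, hence $f_!L^{\otimes k}=[\E_{\ell k,f}^\Delta]$ in $K_0(B)$ and $\lambda_{\ell k,f}^\Delta=c_1\big(\det f_!L^{\otimes k}\big)$ for all large $k$.

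Next I would apply Grothendieck--Riemann--Roch (in Fulton's form for the projective morphism $f$, with $\operatorname{td}$ the homological Todd class), which gives
$$\tau_B\big(f_!L^{\otimes k}\big)=f_*\big(\operatorname{ch}(L^{\otimes k})\cap\operatorname{td}(X)\big)=f_*\Big(\textstyle\sum_j\frac{k^j}{j!}\,c_1(L)^j\cap\operatorname{td}(X)\Big).$$
The key observation is that the leading coefficient in $k$ is insensitive to the Todd class: the only summand contributing in $k$-degree $n+1$ to the codimension-one part on $B$ is $\frac{k^{n+1}}{(n+1)!}f_*\big(c_1(L)^{n+1}\cap[X]\big)$, paired with the top component $[X]$ of $\operatorname{td}(X)$, whereas every other contribution (lower $j$, the lower-dimensional components of $\operatorname{td}(X)$, and the rank correction coming from $\operatorname{td}(B)$, whose rank $\chi(X_b,L^{\otimes k}|_{X_b})=O(k^n)$) is polynomial of degree $\le n$ in $k$. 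Since $c_1(L)=\ell(K_{X/B}+\Delta)$, this identifies the leading coefficient with $\frac{\ell^{n+1}}{(n+1)!}f_*\big((K_{X/B}+\Delta)^{n+1}\big)=\frac{\ell^{n+1}}{(n+1)!}\lambda_{CM,f}^\Delta$.

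Combining these, the cycle class of $\lambda_{\ell k,f}^\Delta$ in $A_{\dim B-1}(B)_\Q$ equals $\frac{(\ell k)^{n+1}}{(n+1)!}\lambda_{CM,f}^\Delta$ plus a term polynomial of degree $\le n$ in $k$; on the normal variety $B$ the cycle map $\Pic(B)_\Q\hookrightarrow A_{\dim B-1}(B)_\Q$ is injective, so this is an equality of $\Q$-Cartier classes. Dividing by $(\ell k)^{n+1}$ and letting $k\to\infty$ annihilates the lower-order terms and yields $\lambda_{CM,f}^\Delta=(n+1)!\lim_{k\to\infty}(\ell k)^{-n-1}\lambda_{\ell k,f}^\Delta$. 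The main obstacle is the singular source $X$: one must invoke the version of Riemann--Roch valid for possibly singular quasi-projective schemes and argue that the precise shape of $\operatorname{td}(X)$ is irrelevant. The saving grace is exactly that only the top self-intersection $c_1(L)^{n+1}$ survives at leading order, so no control of the singular Todd class is needed. As an alternative that avoids singular Riemann--Roch altogether, the same leading coefficient can be read off from the Knudsen--Mumford expansion of $\det Rf_*L^{\otimes k}$ together with the identification of its top Deligne-pairing term with $f_*\big(c_1(L)^{n+1}\big)$.
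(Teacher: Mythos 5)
Your argument is correct, but it takes a genuinely different route from the paper's. The paper disposes of the proposition in a few lines of citations: for normal total space $X$ the statement is exactly the leading term of the Knudsen--Mumford expansion as recorded in \cite[Lemma A.2 (a)]{CP}; the normal (rather than smooth) base is handled, just as you do, by restricting to the big open regular locus; and the demi-normal case is then reduced to the normal one via the normalization-comparison result \cite[Lemma A.4]{CP}. You instead run Baum--Fulton--MacPherson Riemann--Roch directly on the (possibly demi-normal) total space, and your key observation --- that only the fundamental-class component of the homological Todd class can contribute in $k$-degree $n+1$, while the rank correction $\rk\big(\E_{\ell k,f}^{\Delta}\big)\cdot \tau_{B,\dim B-1}$ is $O(k^{n})$ --- makes any reduction to normal $X$ unnecessary. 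What the paper's route buys is brevity and slightly more structure: the Knudsen--Mumford expansion exhibits $\lambda_{\ell k,f}^{\Delta}$ as a polynomial in $k$ with honest line-bundle coefficients, whereas your identity a priori lives in $A_{\dim B-1}(B)_{\Q}$, and you must (and correctly do) invoke the injectivity of $\Pic(B)_{\Q}\to A_{\dim B-1}(B)_{\Q}$ on the normal variety $B$ to interpret it at the level of $\Q$-Cartier classes. What your route buys is uniformity and self-containedness modulo Fulton's Riemann--Roch: no separate treatment of the demi-normal total space is needed, the argument works verbatim for every multiple $\ell$ of the Cartier index (the proposition only asks for one), and the fact that all lower-order terms are polynomials in $k$ with fixed cycle-class coefficients (including $\rk\E_{\ell k,f}^{\Delta}$ being eventually a polynomial of degree $n$) gives the existence of the limit for free. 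Your closing alternative --- reading the leading coefficient off the Knudsen--Mumford/Deligne-pairing expansion of $\det Rf_*L^{\otimes k}$ --- is precisely the paper's argument, so you have in effect produced both proofs.
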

 \begin{proof}
When $X$ is normal, the result follows from the Knudsen-Mumford expansion (see e.g. \cite[Lemma A.2 (a)]{CP}). (In loc. cit. the base can be quasi-projective instead than projective; the base is also assumed to be smooth, however, as already argued in the proof \cite[Lemma A.1]{CP}, one can deduce the statement about normal base restricting to the regular locus, which is a big open subset of $B$.) We can reduce the deminormal case to the normal case using \cite[Lemma A.4]{CP} (The proof of loc. cit. works for any normal base, not only a curve.)

 \end{proof}

\subsection{Base change for stable families}

The following result in a colloquial language means that the Hodge bundle, the Chow-Mumford and the lambda classes satisfy base-change for stable families, or that their formation commutes with base-change. It is well known for the CM line bundle, and for the Hodge bundle when $q$ is divisible enough; the novelty is that we give an effective condition on $q$ when the base is normal. It also shows that the Hodge bundles are indeed vector bundles under the assumption that $q\D$ ia a $\mathbb Z$-divisor.

\begin{proposition}[Base change for stable families]\label{prop:base_change_stable_families}
Let $B$ be a normal quasi-projective variety and $f\colon (X, \Delta)\to B$ a stable family. Let $g\colon T\to B$ be a morphism from a normal quasi-projective variety. Let $h\colon (X_T,\Delta_T)\to T$ be the family obtained after base change via $g$ (see %\cite[Sec 2.4.1]{CP} 
the proof for the definition of $\Delta_T$). Then $h$ is a stable family, and if $q\Delta$ is a $\Z$-divisor, then we have
$$g^*\E_{q,f}^{\Delta} \cong \E_{q,h}^{\Delta_T} \; , \qquad g^*\lambda_{q,f}^{\Delta}=\lambda_{q,h}^{\Delta_T}\quad \textrm{and} \quad g^*\lambda_{CM,f}^{\Delta}=\lambda_{CM,h}^{\Delta_T} \; .$$

If $q$ is divisible by the Cartier index of $K_{X/B}+\Delta$, the above base change for $\E_{q,f}^{\Delta}$ and  $\lambda_{q,f}^{\Delta}$ holds also if $B$ and $T$ are just reduced rather than normal.
\end{proposition}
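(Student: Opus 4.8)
The plan is to establish the three base-change isomorphisms by reducing everything to the corresponding statements for the relative canonical and log-canonical sheaves, which are known to behave well under base change for locally stable families. First I would address the claim that $h\colon (X_T,\Delta_T)\to T$ is again a stable family: one defines $\Delta_T$ as the pullback of $\Delta$ via the induced map $X_T\to X$, using that $g$ is a morphism between normal bases and that the boundary avoids the codimension-one singular points of the fibers so that this pullback is well-defined as a $\Q$-Mumford divisor. Stability is then a fiberwise condition, and since the fibers of $h$ over a point $t\in T$ coincide (as pairs) with the fibers of $f$ over $g(t)\in B$, the slc property and the $f$-ampleness of $K_{X/B}+\Delta$ are inherited. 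The $\Q$-Cartier property of $K_{X_T/T}+\Delta_T$ follows from that of $K_{X/B}+\Delta$ by pullback.

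The main content is the isomorphism $g^*\E_{q,f}^{\Delta}\cong\E_{q,h}^{\Delta_T}$, i.e. $g^*f_*\O_X(q(K_{X/B}+\Delta))\cong h_*\O_{X_T}(q(K_{X_T/T}+\Delta_T))$. The strategy is cohomology and base change. The hypothesis $q\Delta$ integral guarantees that $q(K_{X/B}+\Delta)$ is a genuine divisorial sheaf whose formation I want to commute with restriction to fibers; the point is that for a stable (hence locally stable) family with $q\Delta$ integral, the sheaf $\O_X(q(K_{X/B}+\Delta))$ restricts on each fiber $X_b$ to $\O_{X_b}(q(K_{X_b}+\Delta_b))$ without correction terms, and higher cohomology of this restriction is controlled so that $R^1$ and the base-change comparison map behave as in the classical Grauert setup. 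Concretely I would verify that the fiber dimensions of $H^0(X_b,\O_{X_b}(q(K_{X_b}+\Delta_b)))$ are constant (or at least that the base-change map is an isomorphism) using that $qK_{X/B}+q\Delta$ commutes with base change for locally stable families — this is exactly the kind of statement underlying the divisibility-free formation of Hodge bundles, and it is where the integrality of $q\Delta$ is essential, as the failure of base change in the non-integral case is illustrated by the referenced counterexamples. Once $g^*f_*\cong h_*$ on the level of these pushforwards is in place, the equality $g^*\lambda_{q,f}^{\Delta}=\lambda_{q,h}^{\Delta_T}$ follows immediately by taking determinants, since $\det$ commutes with pullback of vector bundles.

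For the CM line bundle identity $g^*\lambda_{CM,f}^{\Delta}=\lambda_{CM,h}^{\Delta_T}$ I would invoke \autoref{prop:lambda_and_CM}: the Chow-Mumford class is the leading term of the asymptotic expansion of the $\lambda_{\ell k}$ classes, and since each $\lambda_{\ell k}$ pulls back correctly under $g$ by the part just proved (taking $q=\ell k$ divisible by the Cartier index, so $q\Delta$ is automatically integral), the limit defining $\lambda_{CM}$ pulls back as well; this is cleaner than arguing directly with the cycle-theoretic pushforward $f_*(K_{X/B}+\Delta)^{n+1}$. Finally, for the last sentence I would remove the normality assumption on $B$ and $T$ when $q$ is divisible by the Cartier index of $K_{X/B}+\Delta$: in that range $q(K_{X/B}+\Delta)$ is Cartier, so $\O_X(q(K_{X/B}+\Delta))$ is a line bundle, and the base-change isomorphism for $\E_{q,f}$ and its determinant reduces to ordinary cohomology-and-base-change for a flat family with a line bundle that is fiberwise very ample with vanishing higher cohomology — a statement that holds over any reduced base via the demi-normal reduction already used in the proof of \autoref{prop:lambda_and_CM}. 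The step I expect to be the main obstacle is the core base-change isomorphism for the Hodge bundle under only the hypothesis $q\Delta\in\Z$: one must control the behavior of the divisorial sheaf $\O_X(q(K_{X/B}+\Delta))$ on the (possibly non-normal, non-Cartier) fibers and ensure the higher-direct-image comparison maps are isomorphisms, which requires the locally stable base-change machinery rather than naive flatness arguments.
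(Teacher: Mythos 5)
Your proposal correctly identifies the overall architecture of the argument (define $\Delta_T$ by pullback over a relatively Gorenstein big open set, establish a sheaf-level base-change isomorphism for $\O_X(q(K_{X/B}+\Delta))$ together with flatness, then conclude by cohomology and base change on the fibers, and finally deduce the $\lambda_q$ and $\lambda_{CM}$ statements by taking determinants and invoking \autoref{prop:lambda_and_CM}). However, the central step is left as an assertion, and the assertion as phrased is circular: you propose to verify the base-change comparison ``using that $qK_{X/B}+q\Delta$ commutes with base change for locally stable families,'' but that is precisely the statement to be proved, and it is \emph{not} a general feature of locally stable families --- \autoref{ex:basechange} and the discussion of \cite{Kol18} in the paper show it can fail. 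The actual content of the paper's proof is the following depth argument, which is absent from your proposal: one reduces via \cite[Prop 16]{Kol18} to the case where $B$ is the spectrum of a DVR and $T$ its closed point; there the pullback of the divisorial sheaf is automatically its reflexive hull, and the desired isomorphism (resp.\ flatness) is equivalent to an $S_3$ (resp.\ $S_2$) condition for $\O_X(q(K_{X/B}+\Delta))$ along $X_T$. This condition is then verified by observing that no point of $X_T$ is a log canonical center of $(X,\Delta)$ (by inversion of adjunction applied to $(X,\Delta+X_T)$), so that Koll\'ar's local vanishing theorem \cite[Thm 3.(i)]{Kol11}, applied with $D=q(K_{X/B}+\Delta)$ $\Q$-Cartier, gives the required depth bound. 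Without this (or an equivalent substitute), the proof does not go through for the non-Cartier range of $q$, which is exactly the new case the proposition is about.

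A second, smaller gap: your appeal to a ``classical Grauert setup'' for the fiberwise cohomology needs the vanishing $H^i(X_b,\O_{X_b}(q(K_{X_b}+\Delta_b)))=0$ for $i>0$, which the paper obtains from Fujino's vanishing theorem for slc pairs \cite[Theorem 1.7]{Fuj14} by writing $q(K+\Delta)=K+\Delta+(q-1)(K+\Delta)$ with $(q-1)(K+\Delta)$ ample --- this only works for $q\geq 2$. The case $q=1$ (allowed whenever $\Delta$ is integral) has no such vanishing and is handled in the paper by a genuinely different tool, namely \cite[Theorem 8.16]{Kobook} from the Koll\'ar package for dualizing sheaves; your proposal does not account for this case.
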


\begin{proof}
Let $G$ be the natural morphism $G\colon (X_T,\Delta_T)\to (X,\Delta)$.
%Recall that, being all fibres potentially slc, $\omega_{X_T/T} \cong G^*\omega_{X/B}$ by \cite[Theorem 2.67]{Kobook}. 
%Since there is an open big subset of $X_T$ where $\omega_{X/B}$ is Cartier, and it is reflexive everywhere. 
 Let $U \subseteq X$ be an open set, such that:
 \begin{itemize}
     \item $\codim_{X_b} X_b \setminus U_b \geq 2$ for every $b \in B$, 
     \item $U$ is relatively Gorenstein over $B$, and 
     \item $\Supp \Delta|_U$ is contained in the regular locus of $U$.
 \end{itemize}
 Such open set $U$ exists by the stable assumption (see \cite[Definition 3.35]{Kobook} for the first two conditions, while the third follows from the fact that $\D$ is a Mumford divisor, see \cite[Paragraph 4.2.4]{Kobook}). By the relative Gorenstein and the regularity assumptions we have the following equality of $\Q$-divisors
 \begin{equation}
 \label{eq:bc_on_U}
     G_U^* (K_{U/B} + \Delta|_U) = K_{U_T/T} + \Delta_{U_T},
 \end{equation}
 where $G_U : U_T \to U$ is the restriction of $G$.
 Then, using that $U$ is big in all fibers, we obtain
\begin{equation}\label{eq:bc_can}
G^*(K_{X/B}+\Delta)=K_{X_T/T}+\Delta_T \,,
\end{equation}
where $\Delta_T$ is the unique extension of $\Delta_{U_T}$ (via the closure).
Then, the base change $h\colon (X_T,\Delta_T)\to T$ is stable because of Equation \autoref{eq:bc_can} and \cite[Definition-Theorem 4.7.1]{Kobook}.

We want to show that
%this is worked out on \cite[(4.4), page 2]{Kol11}.
\begin{equation}
\label{eq:log_canonical_pullback}
    G^* \O_X\Big(q(K_{X/B} + \Delta)\Big) \cong \O\Big(q(K_{X_T} + \Delta_T)\Big).
\end{equation}
and that the above sheaves are flat over $B$ and $T$, respectively. If $q(K_{X/T}+\Delta)$ is Cartier this standard. Otherwise we have to assume that $B$ and $T$ are normal make the following (lengthy) argument (we speculate that there exists an argument that does not use the normality assumption).

In what follows we are going to use that $X$ is demi-normal, for instance to have that all divisorial sheaves are reflexive. To see that $X$ is demi-normal note first that it is $S_2$ since $B$ and all fibres are $S_2$ (cf. \cite[Paragraph 10.10]{Kobook}) %(cf. \cite[Proposition 6.3.1]{EGAIV}). 
Consider then a singular an irreducible component $D$ of the singular locus of $X$ which has codimension 1. Since $B$ is smooth in codimension one, there is a dense open subset $U$ of $D$ such that $f(U)$ is in the smooth locus of $B$. Since the fibers of the map $f\colon U \to f(U)$ are nodal in codimension one, we conclude that the generic point of $D$ is a node. 

 Let $\iota : U \hookrightarrow X$ and $\iota_T: U_T \to X_T$ be the natural embeddings. Then for every integer $s$ we have
\begin{equation*}
\O_{X}\Big(s(K_{X/B}+ \Delta)\Big)
\expl{\cong}{the left side is a divisorial sheaf and hence reflexive} 
\iota_* \O_{U}\Big(s(K_{U/B}+ \Delta|_U)\Big) 
\end{equation*}
and
\begin{equation*}
\O_{X_T}\Big(s(K_{X_T/T}+ \Delta_T)\Big)
\expl{\cong}{the left side is a divisorial sheaf and hence reflexive} 
\iota_{T,*} \O_{U_T}\Big(s(K_{U_T/T}+ \Delta|_{U_T})\Big)
\expl{\cong}{by \autoref{eq:bc_on_U}} 
\iota_{T,*} G_U^* \O_{U}\Big(s(K_{U/B}+ \Delta|_U)\Big) 
\end{equation*}
So, the question is when do we have $G^* \iota_* \cL \cong \iota_{T,*} G_U^* \cL$ for an adequate line bundle $\cL$ on $U$ (we would apply this to $\cL=\O_{U}\Big(s(K_{U/B}+ \Delta|_U)\Big)$). According to \cite[Prop 16]{Kol18}, if we combine this isomorphism with the flatness of the corresponding sheaves, for the two conditio ns together to hold (for every $T$ as above) we may assume that $B$ is the spectrum of a DVR and $T$ the closed point of $B$. But, then $\O_{X_T}\Big(s(K_{X_T/B}+ \Delta_T)\Big)$ is the reflexive hull of $G_U^* \O_{U}\Big(s(K_{U/B}+ \Delta|_U)\Big)$. Hence, $G^* \O_{X}\Big(s(K_{X/B}+ \Delta)\Big)|_T$ is isomorphic to it if and only if $\O_{X}\Big(s(K_{X/B}+ \Delta)\Big)$ satisfies the depth condition defining $S_3$ for every point of $T$ (here we used that depth decreases exactly by $1$ when restricting over a Cartier divisor). This condition we call being $S_3$ along $T$. Additionally, for flatness one has to show an even milder condition:  all sections of $\O_X\Big(s (K_{X/B}+ \Delta)\Big)_T$ are supported at generic points \cite[Lem 2.13]{BHPS13}, for which it is enough to show that $\O_X\Big(s (K_{X/B} + \Delta)\Big)$ is $S_2$ along $T$. 

However, the points of $X_T$ are not log canonical centers of $(X, \Delta)$, as even $(X , \Delta + X_T)$ is slc by inversion of adjunction. Hence, it is enough to show that the $S_3$ condition holds at all non-log canonical centers of $(X, \Delta)$ for the sheaf $\O_X\Big(s (K_{X/B} + \Delta)\Big)$. This is not true definitely without requiring some further conditions on $s$ or on $\Delta$. However, for $q=s$
one can show the above depth condition at non log canonical centers by applying \cite[Thm 3.(i)]{Kol11} directly with $\Delta'=0$ and $D=q(K_{X/B} + \Delta)$, as $D=q(K_{X/B}+ \Delta)$ is $\Q$-Cartier. This conclude the proof of Equation \autoref{eq:log_canonical_pullback}.

Equation \autoref{eq:log_canonical_pullback} together with the flatness of the participating sheaves yields a natural morphism $\rho\colon g^*\E_{q,f}^{\Delta}\to \E_{q,h}^{\Delta_T}$. 
When $q=1$, $\rho$ is an isomorphism because of \cite[Theorems 8.16]{Kobook} (applied with $L$ trivial and $B = \Delta$). 
To show that it is an isomorphism for $q\geq 2$, it is enough to see that 
\begin{equation}
\label{eq:vanishing}
 H^i\bigg(X_b,\O_{X_b}\Big(q(K_{X_b} + \Delta_b)\Big)\bigg)=0 \textrm{,  for } i>0 \textrm{  and for every closed point } b \in B. 
\end{equation}
Indeed, as we already know that $\O_X\Big(q(K_{X/B}+ \Delta)\Big)$ is flat and that $\O_X\Big(q(K_{X/B}+ \Delta)\Big)|_{X_b} \cong \O_{X_b}\Big(q(K_{X_b} + \Delta_b)\Big) $,  the standard cohomology and base change, e.g. \cite[Corollary 2 page 50]{MumAbel} gives that $\rho$ is an isomorphism in this case. However,  \autoref{eq:vanishing} follows by applying \cite[Theorem 1.7]{Fuj14} with $D=q(K_{X/B}+\Delta)$.

Taking the determinant of $\rho$, we obtain the isomorphism $g^*\lambda_{q,f}=\lambda_{q,h}$. The base change for $\lambda_{CM}$ now follows from \autoref{prop:lambda_and_CM} if the base is normal. %otherwise, it follows from the base change for $\lambda_{q,f}^{\Delta}$ for $q$ divisible enough, see \cite[Lemma 3.5]{CP}.

\end{proof}
\begin{corollary}\label{moduli}
%Let $I$ be a subset of $[0,1]\cap \Q$ closed under addition. 
Let $\M$ be an irreducible component of a moduli stack parametrizing stable pairs. We endow $\M$ with its reduced structure.

Let $A$ be the greatest common divisor of the cardinality of $\Aut(F,\Gamma)$, where $(F,\Gamma)$ varies in $\M(\Spec(k))$. Then:

\begin{enumerate}
    \item 
the assignment of the vector bundle $\E_{q,f}^{\Delta}$, and the line bundle $\lambda_{q,f}^{\Delta}$ from \autoref{S:def}, yields for all positive integer $q$ divisible enough respectively a vector bundle $\E_q$, and a line bundle $\lambda_q$ on $\M$.

\item  The assignment of the vector bundle $\E_{q,f}^{\Delta}$, the line bundle $\lambda_{q,f}^{\Delta}$ and the $\Q$-divisor $\lambda_{CM,f}^{\Delta}$  from \autoref{S:def} for families over normal basis yelds, for all positive integers $q$ such that $qI\subset \Z$, a vector bundle $\E_q$, a line bundle $\lambda_q$, and a $\Q$-divisor $\lambda_{CM}$ on the normalization $\M^{\nu}$ of $\M$.  

Moreover, $\lambda_q^{\otimes A}$ descends to a Cartier divisor on the coarse moduli space $M^{\nu}$ of $\M^{\nu}$.

\item There exists a  $\Q$-divisor on $\M$, which by abuse of notations we still denote by $\lambda_{CM}$, which pull-back to our $\lambda_{CM}$ via $\nu$.
\end{enumerate} 
\end{corollary}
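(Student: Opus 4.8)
The plan is to read every assertion as an instance of descent along the moduli stack, with the base-change isomorphisms of \autoref{prop:base_change_stable_families} furnishing the descent data. Recall that to give a quasi-coherent sheaf (resp.\ a line bundle, resp.\ a $\Q$-Cartier divisor class) on a stack is the same as giving, for each object $\{f\colon (X,\Delta)\to T\}$, such an object on $T$, together with isomorphisms $g^\ast(\text{object for }f)\cong(\text{object for the pullback }h)$ for every morphism $g\colon T'\to T$ of test schemes, subject to the cocycle condition over triple pullbacks. For $\E_{q}$, $\lambda_{q}$ and $\lambda_{CM}$ these isomorphisms are exactly the content of \autoref{prop:base_change_stable_families}; since in each case the isomorphism is the canonical base-change morphism $\rho$ constructed there, functoriality of $\rho$ in $g$ gives the cocycle condition automatically. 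Thus the only thing to monitor is the range of bases over which \autoref{prop:base_change_stable_families} applies, which is dictated by the divisibility of $q$.

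First I would treat the regime where $q$ is divisible enough, meaning divisible by the (uniform, since $\M$ is of finite type) Cartier index $N$ of the relative log canonical divisor of the universal family over $\M$. There the last sentence of \autoref{prop:base_change_stable_families} gives base change over arbitrary reduced bases, which is precisely the class of test objects of $\M$ taken with its reduced structure; hence the descent datum is defined on all of $\M$ and produces the vector bundle $\E_{q}$ and its determinant $\lambda_{q}=\det\E_{q}$ on $\M$. For the second regime, where one only assumes $qI\subset\Z$, i.e.\ $q\Delta\in\Z$, \autoref{prop:base_change_stable_families} guarantees base change only over \emph{normal} bases. I would therefore work directly with a smooth atlas $U\to\M^{\nu}$ by a normal scheme $U$, carrying the universal family pulled back along $U\to\M^{\nu}\to\M$: since $U$ is normal the base-change isomorphisms apply, and the two projections $U\times_{\M^{\nu}}U\rightrightarrows U$ have normal source (being smooth over the normal scheme $U$), so the cocycle condition can again be verified there. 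This descent over the normal atlas produces $\E_{q}$, $\lambda_{q}$ and the $\Q$-divisor $\lambda_{CM}$ on $\M^{\nu}$.

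Next I would pass from $\M^{\nu}$ to its coarse space $M^{\nu}$ using the standard descent criterion for line bundles on a Deligne--Mumford stack (in the form of Koll\'ar's descent lemma): a line bundle descends to the coarse space precisely when, at every point, the stabilizer acts trivially on the fiber. At a point corresponding to $(F,\Gamma)$ the stabilizer $\Aut(F,\Gamma)$ acts on the one-dimensional fiber of $\lambda_{q}$, namely $\det H^{0}\!\big(F,\O_F(q(K_F+\Gamma))\big)$, through a character $\chi$; as every $g\in\Aut(F,\Gamma)$ has order dividing $|\Aut(F,\Gamma)|$, its eigenvalues on $H^{0}$ are $|\Aut(F,\Gamma)|$-th roots of unity, so $\chi$ has order dividing $|\Aut(F,\Gamma)|$. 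The delicate point, and the one I expect to be the \emph{main obstacle}, is to verify that raising to the power $A$ kills all of these characters simultaneously, so that $\lambda_{q}^{\otimes A}$ meets the trivial-stabilizer condition and descends to a Cartier divisor on $M^{\nu}$; this is where the arithmetic of $A$ as prescribed in the statement must be matched against the orders of the characters $\chi$ across the component.

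Finally, for the CM line bundle on $\M$ itself I would invoke the classical functorial construction of $\lambda_{CM}$ as an honest line bundle on the base of any family (via the Knudsen--Mumford expansion, equivalently the determinant combination of the $\lambda_{q}$ with $N\mid q$ from the first regime, as in \autoref{prop:lambda_and_CM}), which is base-change compatible over arbitrary reduced bases. Descent then produces a line bundle on $\M$; comparing it with the $\Q$-divisor $\lambda_{CM}$ built on $\M^{\nu}$ over a common normal atlas, and using \autoref{prop:lambda_and_CM} to identify the two, shows that the former pulls back to the latter under $\nu$, which is the assertion of the last paragraph.
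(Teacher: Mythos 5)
Your proposal follows essentially the same route as the paper's proof: present $\M$ (resp.\ $\M^{\nu}$) as a groupoid $[R/U]$ with $R,U$ reduced (resp.\ normal), define the bundles on $U$ via \autoref{S:def}, obtain the descent isomorphisms from the base-change statement \autoref{prop:base_change_stable_families} in exactly the two regimes you distinguish, descend $\lambda_q^{\otimes A}$ to the coarse space by the triviality of the stabilizer action on one-dimensional fibers, and construct $\lambda_{CM}$ on $\M$ itself via the Knudsen--Mumford expansion, identifying it with the $\Q$-divisor on $\M^{\nu}$ by \autoref{prop:lambda_and_CM}.

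The one step you leave open --- whether raising to the power $A$ kills the character $\chi$ of $G=\Aut(F,\Gamma)$ on the fiber of $\lambda_q$ --- is where your caution is in fact warranted. The paper disposes of it in one line by asserting that ``the cardinality of $G$ divides $A$,'' so that $\chi^{A}=(\chi^{|G|})^{A/|G|}=1$. That argument is valid only if $A$ is a \emph{common multiple} of the orders $|\Aut(F,\Gamma)|$; with $A$ literally the greatest common divisor, as the statement says, one only gets $A\mid |G|$ and the conclusion fails (a character of order $|G|>A$ survives). The statement's ``greatest common divisor'' is evidently a slip for ``least common multiple'' (note this is consistent with how $A$ is used later, e.g.\ in \autoref{cor:volume_moduli}, where $A$ is bounded above via the automorphism bound of \cite{HMX-Aut}, which only makes sense for the lcm). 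With that reading your obstacle disappears: every eigenvalue of $g\in G$ on $H^0(F,\O_F(q(K_F+\Gamma)))$ is a root of unity of order dividing $|G|$, hence $\chi^{|G|}=1$ and a fortiori $\chi^{A}=1$ for any common multiple $A$ of the $|G|$'s. Everything else in your write-up matches the paper's argument.
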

\begin{proof}
If we present a DM stack as a groupoid quotient $[R/U]$, to define a bundle on the stack is equivalnet to give a bundle $L$ on $U$ and an isomorphism on $R$ between $s^*L$ and $t^*L$, where $s$ and $t$ are the source and target map for the relations.

We can find a presentation of $\M$
 with $R$ and $U$ reduced, and a presentation of $\M^{\nu}$ wit $R$ and $U$ normal (see e.g. \cite[0GMH]{stackproject}). We can thus define the convenient bundles on $U$ and use the base change from \autoref{prop:base_change_stable_families} to obtain the requested isomorphisms.

A line bundle $L$ on $\M^{\nu}$ descends to a Cartier divisor on $M^{\nu}$ if for every pair $(F,\Gamma)$ in $\M$ the action of $G=\Aut(F,\Gamma)$ on the fiber $L_{(F,\Gamma)}$ is trivial. Since $L_{(F,\Gamma)}$ is one dimensional and the cardinality of $G$ divides $A$, the action of $G$ on $L_{(F,\Gamma)}^{\otimes A}$ is trivial.

The construction of $\lambda_{CM}$ on $\M$, which in turn is the construction of $\lambda_{CM}$ on the base of every stable family, is carried out using the Knudsen-Mumford expansion, see \cite[Section 2.3]{PX} and \cite[Section 3]{CP}. As explained in loc. cit., over normal basis, and so over $\M^{\nu}$, it coincied with our definition.

\end{proof}

As shown by the following example, there is no hope to have a base-change as in \autoref{prop:base_change_stable_families} without assuming something on $q$ and $\Delta$ (as here we assumed that $q \Delta$ is a $\Z$-divisor). However, one might wonder if it works for all integers $q$ in the case of diminished standard coefficients, or even in the case of coefficients greater than $1/2$. This is in fact almost shown in \cite{Kol18}, except that it is assumed that general fibers are normal. The case of  non-normal  general fibers is still open. 

\begin{example}\label{ex:basechange}
     
 Let us consider a conic $C \subseteq \P^1 \times \P^1$ such that $C \cap \pr_2(0)$ is a double point and another general such conic $C'$, where $\pr_2 : \P^1 \times \P^1 \to \P^1$ is the projection on the second factor. Let $D$ be some general high enough degree curve on $\P^1 \times \P^1$. Set $X= \P^1 \times \P^1 \times \P^1$ and $\Delta= \frac{1}{2}( \P^1 \times C + \P^1 \times C'+\P^1 \times D$). Let $f : X \to B= \P^1 \times \P^1$ be the projection on the first and third factor and let $g : T = \P^1 \to \P^1 \times \P^1$ be the map $x \mapsto (x, 0)$. Then, $\E_{1,f}^{\Delta}=0$ but $\E_{1,h}^{\Delta_T} \neq 0$. 
  \end{example}

\subsection{Flat base change}

\begin{proposition}[Flat base change]\label{prop:flat_base_change}
Let $f:(X,\D) \to B$ be a fibration, where $(X,\D)$ is a log pair and $B$ is a normal quasi-projective variety.
Let $g\colon B'\to B$ be a flat morphism, and let $\Sigma\subset B$ the branched divisor. Assume that there exists a neighbourhood $U$ of $\Sigma$ such that $f$ restricted to $f^{-1}U$ is locally stable. Let $h\colon (X_{B'},\Delta_{B'})\to {B'}$ be the family obtained after base change. Then
\begin{enumerate}
    \item  if $(X,\Delta)$ is slc/lc/klt, the same is true for $(X_{B'},\Delta_{B'})$;
    \item if the generic fiber of $f$ is slc/lc/klt, the same is true for the generic fiber of $h$;
    \item $g^*\E_{q,f}^{\Delta}=\E_{q,h}^{\Delta_{B'}} \, , \, g^*\lambda_{q,f}=\lambda_{q,h}\, \textrm{and } g^*\lambda_{CM,f}=\lambda_{CM,g}$ for any integer $q \ge 1$.
\end{enumerate}
\end{proposition}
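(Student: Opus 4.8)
The plan is to reduce every assertion to two complementary regions of the base: the open set $V := B\setminus\Sigma$, over which $g$ is étale, and the open neighbourhood $U$ of $\Sigma$ given by hypothesis, over which $f$ is locally stable. Since $\{V,U\}$ covers $B$, the preimages $\{g^{-1}V,g^{-1}U\}$ cover $B'$, and because all three assertions are local on the base — the singularity conditions of the total space are local on $X$ and hence on $B$, while the claimed isomorphisms of $\E_{q}$, $\lambda_{q}$ and $\lambda_{CM}$ are local on $B'$ — it suffices to treat each item over $g^{-1}V$ and over $g^{-1}U$ separately. Throughout, write $g_X\colon X_{B'}\to X$ for the projection, which is flat as the base change of the flat morphism $g$, and set $\Delta_{B'} := g_X^{*}\Delta$; the two descriptions of this pullback, as the étale pullback over $V$ and as the locally-stable base-change boundary over $U$, agree on the overlap.

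For item (1), over $g^{-1}V$ the morphism $g_X$ is étale (being the base change of the étale morphism $g|_{g^{-1}V}$), and étale pullbacks preserve discrepancies and carry $\Delta$ to $\Delta_{B'}$, so $(X_{B'},\Delta_{B'})$ is slc/lc/klt there whenever $(X,\Delta)$ is. Over $g^{-1}U$ the local stability of $f$ is the crucial input: by the theory of locally stable morphisms \cite{Kobook}, the base change $h$ is again locally stable, $K_{X/B}+\Delta$ is $\Q$-Cartier with $g_X^{*}(K_{X/B}+\Delta)=K_{X_{B'}/B'}+\Delta_{B'}$, and the total-space singularities are preserved under the base change. Combining the two regions yields item (1).

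For item (3) I would argue uniformly. As $g$ is flat and $f$ is proper, the flat base-change theorem \cite[Tag 02KH]{stackproject} gives a canonical isomorphism $g^{*}f_{*}\cF \cong f'_{*}g_X^{*}\cF$ for $\cF=\O_X\big(q(K_{X/B}+\Delta)\big)$. It then remains to identify $g_X^{*}\cF$ with $\O_{X_{B'}}\big(q(K_{X_{B'}/B'}+\Delta_{B'})\big)$: this holds over $g^{-1}V$ because $g_X$ is étale, and over $g^{-1}U$ because there is a big open subset of $X$, relatively Gorenstein over $B$ with $\Delta$ in its smooth locus, on which $\cF$ is a line bundle whose flat pullback is computed fibrewise — exactly as in the proof of \autoref{prop:base_change_stable_families}, but now \emph{without} the depth obstruction, since $g_X$ is flat. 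Taking determinants gives $g^{*}\lambda_{q,f}=\lambda_{q,h}$, and for the Chow–Mumford class one uses that flat pullback commutes with proper pushforward and with intersection products, together with $g_X^{*}(K_{X/B}+\Delta)=K_{X_{B'}/B'}+\Delta_{B'}$, to obtain $g^{*}\lambda_{CM,f}=\lambda_{CM,h}$; alternatively one passes to the limit in the Knudsen–Mumford expansion of \autoref{prop:lambda_and_CM}. Item (2) is then immediate: since $B'$ is irreducible and $g$ is flat, hence dominant, we have $g(\eta_{B'})=\eta_{B}\in V$, so the generic fibre of $h$ is the generic fibre of $f$ base-changed along the field extension $k(B)\subseteq k(B')$, which in characteristic $0$ is separable and preserves the slc/lc/klt property.

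The main obstacle is the behaviour over the ramification locus, i.e.\ over $g^{-1}U$: away from $\Sigma$ the morphism $g_X$ is étale and every statement is formal, so all of the content lies in showing that a flat, possibly ramified, base change neither worsens the singularities of the total space nor disturbs the formation of the log-canonical sheaves. This is precisely what local stability along $\Sigma$ is designed to guarantee, and the care required is in quoting the correct base-change statements for locally stable families from \cite{Kobook} and in verifying the fibrewise bigness of the Gorenstein locus underlying the sheaf identification in item (3).
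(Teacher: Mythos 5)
Your proposal is correct and follows essentially the same route as the paper: both arguments split the base into the étale locus away from $\Sigma$ and the locally stable locus $U$ to establish $G^{*}(K_{X/B}+\Delta)=K_{X_{B'}/B'}+\Delta_{B'}$ (citing \cite[Theorem 2.67]{Kobook} over $U$), deduce items (1) and (2) from this, identify $G^{*}\O_X(q(K_{X/B}+\Delta))$ with $\O_{X_{B'}}(q(K_{X_{B'}/B'}+\Delta_{B'}))$ via flatness and reflexivity, and conclude item (3) by flat base change in cohomology, determinants, and flat pullback of cycles for $\lambda_{CM}$. The only cosmetic difference is that the paper invokes "flat pullback of a reflexive sheaf is reflexive" directly where you argue via the relatively Gorenstein big open set, which amounts to the same thing.
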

\begin{proof}
Let $G\colon (X_{B'},\Delta_{B'})\to (X,\Delta)$ be the induced flat morphism. First observe that $G^*(K_{X/B}+\D)=K_{X/{B'}}+\D_{B'}$ because: on the complement of $f^{-1}U$, $G$ is \'etale, so $G^*(\omega_X)=\omega_{X_{B'}}$, which implies $G^*(K_{X/B}+\D)=K_{X/{B'}}+\D_{B'}$ by definition of $\D_{B'}$; on $f^{-1}U$, it is true because the family is locally stable, see \cite[Theorem 2.67]{Kobook}. This already gives Items (1) and (2).

Since flat pull-back of a reflexive sheaf is a reflexive sheaf, we
also have $G^*\O_X(q(K_{X/B}+\Delta))=\O_{X_{B'}}(q(K_{X_{B'}/{B'}}+\Delta_{B'}))$ for any $q \in \mathbb N$. 
This gives a map 
$$
\rho \colon g^*f_*\O_X(q(K_{X/B}+\Delta)\to h_*\O_{X_{B'}}(q(K_{X_{B'}/{B'}}+\Delta_{B'}))
$$
for any $q \in \mathbb N$, which is an isomorphism by flat base change in cohomolgy. Taking the determinant of $\rho$ we obtain the statement for $\lambda_q$. 

The base change for $\lambda_{CM}$ follows from flat base change for Chow groups.
%Since $G$ is \'{e}tale, $G^*\left((K_{X/B}+\D)^n\right)=(K_{X/T}+\D_T)^n$. Since $g$ is flat, we obtain the base change for the CM line bundle using flat base change for Chow groups. \Giulio{when $X$ is normal we can also use \autoref{prop:lambda_and_CM}}
\end{proof}

\subsection{Stable reduction and Hodge bundles}\label{S:base_change_non_reduced_fibers}

Stable reduction is used to transform a family over a curve where the generic member is stable into a stable family. It is useful to get rid of non-reduced fibers. In this section we investigate the interplay between stable reduction and positivity of the Hodge bundle.

The following is a combination of \cite[Proposition 2.1 and Lemma 6.2]{CP}.

\begin{lemma} 
\label{lem:fibre_ridotte}
Let $(X,\Delta)$ be a normal pair, $f: X \to T$ be a surjective morphism, where $T$ is a smooth projective curve.  Let $\tau : S \to T$ be a finite surjection of smooth projective curves, $\pi: Y \to X_S:= X \times_T S$ the normalisation and $g : Y \to S, h: X_S \to S, \rho : X_S \to X$ the induced morphisms. The notations are summarized by the following diagram
\begin{equation}
\label{eq:base_change}
\xymatrix@C=70pt{
Y \ar[r]^{\pi}  \ar[dr]_g  &  
X_S \ar[r]^{\rho} \ar[d]^h & X \ar[d]^f \\
& S \ar[r]_{\tau} & T
}
\end{equation}
Then
\begin{enumerate}
\item\label{existence_cover} there exists a choice of $\tau$ such that all fibers of $g$ are reduced (any $\tau$ whose degree is a common multiple of the multiplicity of every fiber of $f$ work).
\item \label{item_boundary} for any choice of $\tau$ additionally there exists a boundary $\Gamma$ on $Y$ such that
\begin{enumerate}
\item\label{item_crepant}  $(\rho \circ \pi)^*(K_{X/T}+\Delta)=K_{Y/S}+\Gamma$;
\item\label{item_sing} if the generic fiber of $f$ is klt (resp. lc), then the generic fiber of $g$ is klt (resp. lc);

\item\label{item_eff} there exists an effective vertical Weil divisor $E$ on $Y$ such that $(\rho\circ \pi)^*\Delta=\Gamma-E$ and  $(\rho\circ \pi)^*K_{X/T}=K_{Y/S}+E$;
\item \label{item_positivity} if $K_{X/T}+\Delta$ is $f$-ample (resp. f-big, nef), then $K_{Y/S}+\Gamma$ is $g$-ample (resp. g-big, nef).
\item\label{item_CM} We have the base change $\tau^*\lambda_{CM}^{f}=\lambda_{CM}^g$.
\end{enumerate}
\end{enumerate}
\end{lemma}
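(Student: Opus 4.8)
The plan is to build every item on a single crepant pullback identity and then read off the rest. Throughout I work with the finite map $\varphi:=\rho\circ\pi\colon Y\to X$, which has degree $\deg\tau$ and sits over $\tau$, in the sense that $\tau\circ g=f\circ\varphi$ (see \autoref{eq:base_change}). The heart of the matter is to compare relative canonical divisors under the normalized base change.

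First I would treat the two maps separately. Since relative dualizing sheaves commute with the flat base change $\rho\colon X_S\to X$, one has $K_{X_S/S}=\rho^*K_{X/T}$, so all the correction comes from the normalization $\pi$, which contributes a term supported on the non-normal locus of $X_S$; this locus is vertical over $S$. I would record the outcome as
$$\varphi^*K_{X/T}=K_{Y/S}+E$$
with $E$ an \emph{effective} vertical Weil divisor, and then \emph{define} $\Gamma:=\varphi^*\Delta+E$. By construction $\Gamma$ is effective, $\varphi^*\Delta=\Gamma-E$, and summing the two identities gives $\varphi^*(K_{X/T}+\Delta)=K_{Y/S}+\Gamma$; this delivers the crepant formula, the decomposition item, and the effectivity of $\Gamma$ simultaneously. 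The reducedness statement is handled in parallel: after choosing $\tau$ ramified to order a common multiple of the multiplicities of the fibres of $f$, the normalized base change acquires reduced fibres. Both the effectivity/verticality of $E$ and this reducedness assertion are the genuine technical input, and for them I would invoke \cite[Proposition 2.1 and Lemma 6.2]{CP}, which carry out exactly this local analysis of the non-normal base change and its normalization at multiple fibres.

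Granting the crepant identity, the remaining items are formal. For the singularities item, over the generic point of $T$ the map $\tau$ is étale, hence $\pi$ is an isomorphism there, and $E$, being vertical, does not meet the generic fibre of $g$; thus $\Gamma$ restricts to $\Delta$ on the generic fibre and klt (resp. lc) is preserved. For the positivity item, $\varphi$ is finite, so pulling back $K_{X/T}+\Delta$ preserves nefness and bigness (the relative volume is multiplied by $\deg\varphi$) as well as relative ampleness; since $f\circ\varphi=\tau\circ g$ with $\tau$ finite, ampleness relative to $T$ transfers to ampleness relative to $S$. For the Chow--Mumford item, on the curve $S$ one computes $(K_{Y/S}+\Gamma)^{n+1}=\deg\varphi\cdot(K_{X/T}+\Delta)^{n+1}=\deg\tau\cdot(K_{X/T}+\Delta)^{n+1}$, which equals $\deg\big(\tau^*\lambda_{CM}^f\big)$; since a $\Q$-Cartier class on a smooth curve is determined by its degree, this yields $\tau^*\lambda_{CM}^f=\lambda_{CM}^g$, the finer cycle-level identity again being \cite[Lemma 6.2]{CP}.

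The main obstacle is precisely the local computation at the non-reduced fibres: understanding the singularities of $X_S=X\times_T S$ along the multiple fibres, and tracking how the relative dualizing sheaf changes under the normalization $\pi$, so as to produce $E$ as an effective vertical divisor and to guarantee that the fibres of $g$ become reduced after the base change. This is exactly the content I would import from \cite[Proposition 2.1 and Lemma 6.2]{CP}; everything else in the statement is bookkeeping built on top of the crepant pullback formula.
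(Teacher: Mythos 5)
Your proposal follows essentially the same route as the paper: both reduce everything to the crepant identity $(\rho\circ\pi)^*(K_{X/T}+\Delta)=K_{Y/S}+\Gamma$ with $E$ the effective vertical discrepancy of the normalization, import the genuine local analysis at the multiple fibres from \cite[Proposition 2.1]{CP} (the paper uses \cite[Lemma 2.53]{Kobook} for reducedness and \cite[Propositions 3.7, 3.8, Lemma A.4]{CP} for the CM base change), and deduce the remaining items formally from the finiteness of $\rho\circ\pi$ and the fact that the generic fibres agree. The only slip is the aside that a $\Q$-Cartier class on a smooth curve is determined by its degree — this is false in general, and only gives numerical equivalence — but since you defer the actual cycle-level identity $\tau^*\lambda_{CM}^{f}=\lambda_{CM}^{g}$ to the reference anyway, exactly as the paper does, this does not affect the argument.
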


\begin{proof}
Item \autoref{existence_cover} is \cite[Lemma 2.53]{Kobook}.

To prove Item \autoref{item_boundary} we follow \cite[Subsection 2.4.2]{CP}, where \cite[Proposition 2.1]{CP} is proven. The existence of an effective boundary $\Gamma$ such that $(\rho \circ \pi)^*(K_{X/T}+\Delta)=K_{Y/S}+\Gamma$ is explained in loc. cit.. This gives Item \autoref{item_crepant}.

Item \autoref{item_sing} is true because the generic fibers are isomorphic. Item \autoref{item_positivity} holds true because $\rho \circ \pi$ is finite.

We now  prove Item \autoref{item_eff}. Set $E=(\rho\circ \pi)^*K_{X/T}-K_{Y/S}$. Since $\rho\circ \pi$ is \'{e}tale over an open subset of $T$, $E$ is vertical. We have to show that $E$ is effective. Let $U \subset X$ be the open subset of relatively Gorenstein points over $T$. In \cite[Subsection 2.4.2]{CP} it is proven that $\rho^{-1}U$ is big in $X_S$, and  that $\omega_{W/S}$ injects into $(\rho\circ \pi)^*\omega_{U/T}$, where $W= (\rho\circ \pi)^{-1}(U)$.  Since $W$ is big in $Y$, this implies that $E=(\rho\circ \pi)^*K_{X/T}-K_{Y/S}$ is an effective Weil divisor.

Item \autoref{item_CM} is as in  \cite[Proposition 3.7 and 3.8, see also Lemma A.4]{CP}.
\end{proof}

Let us stress that the base change from \autoref{item_CM} of \autoref{lem:fibre_ridotte} does not work if we replace the Chow-Mumford line bundle with $\lambda_q$ or the Hodge bundle. The defect can be measured by sheaves such as $(\O_Z/\O_{X_S})\otimes\omega_{(X/S)}^{[q]}$ from the proof of \cite[Lemma A.4]{CP}.

\begin{lemma}\label{lem:reduction_bundle}
In the situation of Lemma \ref{lem:fibre_ridotte},  let $L$ be an integral Weil  divisor on $X$. Then  $L_Y:=(\rho \circ \pi)^*( L- K_{X/T}) + K_{Y/S}$ is an integral Weil  divisor such that there exists a generically surjective homomorphism
$$
g_*\O_Y(L_Y)  \to   \tau^*f_*\O_X(L)
$$
In particular, if $g_*\O_Y(L_Y)$ is nef/ample, the same is true for $f_*\O_X(L)$.
\end{lemma}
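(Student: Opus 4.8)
The plan is to rewrite $L_Y$ using \autoref{item_eff} of \autoref{lem:fibre_ridotte} and then produce the map from the Grothendieck trace of the finite normalisation $\pi$. Write $\phi=\rho\circ\pi\colon Y\to X$, which is finite, being the composite of the finite base change $\rho$ and the finite normalisation $\pi$. By \autoref{item_eff} we have $\phi^*K_{X/T}=K_{Y/S}+E$ with $E\geq 0$ effective and vertical, so
\[
L_Y=\phi^*(L-K_{X/T})+K_{Y/S}=\phi^*L-E .
\]
Since $L$ is integral and $\phi$ is a finite morphism of normal varieties, $\phi^*L$ is an integral Weil divisor, hence so is $L_Y$; this settles the first assertion. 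Throughout I would fix a big open subset $U\subseteq X$ on which $L$ is Cartier and $f$ is Gorenstein and whose complement is big in every fibre (it exists because $L$ is integral Weil on the normal $X$ and by the relative Gorenstein hypotheses), together with its preimages $X_{S,U}\subseteq X_S$ and $W=\phi^{-1}(U)\subseteq Y$, so that all divisorial sheaves restrict to genuine line bundles there.

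I would then record two base-change facts. As $\tau$ is a finite, hence flat, morphism of smooth curves and the square in \autoref{eq:base_change} is Cartesian, flat base change in cohomology gives $\tau^*f_*\O_X(L)\cong h_*\rho^*\O_X(L)$. Moreover $f$ is flat (a surjection from a normal variety onto a smooth curve), so the relative dualising sheaf commutes with the flat base change $\tau$, i.e.\ $\omega_{X_S/S}\cong\rho^*\omega_{X/T}$. Restricting to $W$, where everything is a line bundle, the divisor computation $K_{Y/X_S}=K_{Y/S}-\pi^*K_{X_S/S}=K_{Y/S}-\phi^*K_{X/T}=-E$ is valid, so the relative dualising sheaf $\omega_{Y/X_S}=\pi^!\O_{X_S}$ of the finite morphism $\pi$ agrees with $\O_Y(-E)$ on the big open $W$; as both are reflexive on the normal $Y$, we get $\omega_{Y/X_S}\cong\O_Y(-E)$ on all of $Y$. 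Consequently $\pi^*\rho^*\O_X(L)\otimes\omega_{Y/X_S}\cong\O_Y(\phi^*L-E)=\O_Y(L_Y)$.

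Now I would build the map from the Grothendieck trace $\Tr_\pi\colon\pi_*\omega_{Y/X_S}\to\O_{X_S}$ of the finite morphism $\pi$. Twisting by $\rho^*\O_X(L)$ and applying the projection formula (licit over $X_{S,U}$, where $\rho^*\O_X(L)$ is locally free) yields a morphism $\pi_*\O_Y(L_Y)\to\rho^*\O_X(L)$ over the big open; since both sheaves push forward to vector bundles on the smooth curve $S$, this extends uniquely across $S$. Applying $h_*$ and the flat base-change isomorphism produces the desired
\[
g_*\O_Y(L_Y)=h_*\pi_*\O_Y(L_Y)\longrightarrow h_*\rho^*\O_X(L)\cong\tau^*f_*\O_X(L).
\]
For generic surjectivity, let $S^\circ\subseteq S$ be the dense open locus lying over the étale locus of $\tau$, inside the image of $U$, and away from the finitely many points below $\Supp E$; over $S^\circ$ the scheme $X_S$ is normal, so $\pi$ is an isomorphism and $E$ vanishes, whence the trace is the identity and the map restricts to an isomorphism. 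In particular it is generically surjective.

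Finally, for the positivity conclusion: $g_*\O_Y(L_Y)$ and $\tau^*f_*\O_X(L)$ are locally free on the smooth curve $S$, and a generically surjective map from a nef (resp.\ ample) bundle forces the target to be nef (resp.\ ample). Indeed, comparing minimal Harder--Narasimhan slopes, the image is a nef (resp.\ ample) quotient of the source sitting with torsion cokernel inside the target, so every torsion-free quotient of the target has a full-rank nef (resp.\ ample) subsheaf and hence non-negative (resp.\ positive) degree. Thus $\tau^*f_*\O_X(L)$ is nef (resp.\ ample), and since $\tau$ is finite surjective this descends to $f_*\O_X(L)$. The step I expect to be most delicate is the construction of the twisted trace map: the projection formula is immediate only for the locally free $\O_X(L)$, whereas here $\O_X(L)$ is merely reflexive, and $\omega_{Y/X_S}$ must be treated via $\pi^!\O_{X_S}$ rather than as an honest divisor because $X_S$ is non-normal. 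Confining both the dualising-sheaf identity and the projection formula to the big open $U$ and extending the resulting morphism of vector bundles across $S$ by reflexivity is what makes these points rigorous, and generic surjectivity is unaffected because $S^\circ$ can be chosen inside the image of $U$.
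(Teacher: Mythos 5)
Your proof is correct and follows essentially the same route as the paper: both arguments reduce to a big open set where $L$ is Cartier and $f$ is relatively Gorenstein, produce the map from the Grothendieck trace of the finite normalisation $\pi\colon Y\to X_S$ twisted by $\rho^*\O_X(L)$, extend across $S$ by reflexivity/local freeness, and conclude via the generically-surjective-map lemma (\autoref{lem:fuj2.2}). The only cosmetic difference is that you use the trace $\pi_*\omega_{Y/X_S}\to\O_{X_S}$ together with the identification $\omega_{Y/X_S}\cong\O_Y(-E)$, whereas the paper twists by $\O_X(L-K_{X/T})$ and uses the trace $\pi_*\omega_{Y/S}\to\omega_{X_S/S}$; these are equivalent.
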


\begin{proof}
Let $U \subseteq X$ be the intersection of the relative Gorenstein locus and of the locus where $L$ is Cartier. This  is a  big open set of $X$, as it contains the  regular locus of $X$. Set $U_S:= \rho^{-1} U$, and $U_Y= \pi^{-1} \rho^{-1} U$. Note that $X_S$ is $G_1$ and $S_2$, and hence the equivalence between reflexive and $S_2$ sheaves holds on $X_S$ as well \cite[Proposition 1.9]{HarGenDiv}, and also the usual extension property of $S_2$ sheaves hold \cite[Proposition 1.11]{HarGenDiv}. Then we have
\begin{multline*}
g_*\O_Y(L_Y) 
\expl{=}{The definition of $L_Y$} 
g_* \O_{Y} ((\rho \circ \pi)^*( L- K_{X/T}) + K_{Y/S}) 
\explpar{150pt}{=}{flat pullback of reflexive is reflexive \cite[Proposition 1.8]{Har}, and hence $\rho^* \O_X(L -K_{X/T})$ is reflexive }
h_* \pi_*   (\pi ^{[*]} \rho^* \O_X(L - K_{X/T}) [\otimes]   \omega_{Y/S}) 
\explpar{150pt}{=}{$\pi_*  \O_{Y} (\pi ^{[*]} \rho^* \O_X(L - K_{X/T}) [\otimes]   \omega_{Y/S}) $ is $S_2$, because it is a pushforward of an $S_2$ sheaf \cite[Prop 5.4]{KM}, so the next isomorphism can be checked on $U_S$}
\\
= h_* (\rho^* \O_X(L - K_{X/T}) [\otimes] \pi_*  \omega_{Y/S}) 
\explparshift{160pt}{-160pt}{\to}{This is $h_*(\_)$ applied to the reflexive twist of the trace. In particular, this map is isomorphism over the non-empty open set of $S$ over which $Y \to X_S$ is an isomorphism.} 
h_* (\rho^* \O_X(L - K_{X/T}) [\otimes]  \omega_{X_S/S}) 
\explparshift{190pt}{-85pt}{=}{the isomorphism $\rho^* \O_X(L - K_{X/T}) [\otimes]  \omega_{X_S/S} \cong \rho^* \O_{X} ( L)$ holds on $U_S$,  by e.g. \cite[Theorem 3.6.1 page 164]{ConradBook}; we then extend it over all $X$ because both sides are reflexive; then we apply $h_*(\_)$}  
h_* \rho^* \O_{X} ( L)
\explshift{60pt}{=}{flat base-change} \tau^* f_* \O_X (L),
\end{multline*}
The last assertion follows from \autoref{lem:fuj2.2}.
\end{proof}

With the following proposition we recall the stable reduction, and we discuss how the Hodge bundles, the lambda classes and the Chow-Mumford line bundle are affected by this operation. Let us stress that the stable reduction does not preserve the relative log canonical bundle. We follow \cite[Theorem 2.51]{Kobook}. 

\begin{proposition}\label{prop:stable_reduction}
Let $f:(X,\D) \to T$ be a family where $(X,\D)$ is a projective normal pair with $K_X +\Delta$ $\Q$-Cartier and with $T$ a smooth curve. Assume that over an open dense subset $T^0 \subset T$, $f^0: (X^0,\D^0) \to T^0$ is stable, in particular all fibres are slc, $K_{X^0/T^0}+\Delta^0$ is $f^0$-ample and the support of $\Delta^0$ does not contain any vertical component. 

Then there exists a finite surjection $\tau: S \to T$ from a smooth projective curve $S$ such that the base change
$$
f^0_S: (X^0\times_T S, \D^0 \times_T S) \to \tau^{-1}(T^0)
$$ 
extends to a stable family $g: (Y,\Theta) \to S$, and for any $q \in \Z_{>0}$ such that $q \Delta$ is a $\Z$-divisor we have:
\begin{enumerate}
\item  $(Y,\Theta)$ is lc and $q \Theta$ is a $\Z$-divisor;
\item\label{prop:stable_reduction_itemHodge}   if $\mathcal E^{\Theta}_{g,q}$ is nef/ample, then the same is true for $\mathcal E^{\D}_{f,q}$;
\item\label{prop:stable_reduction_itemLambda} 
if $ \lambda^{\Theta}_{g,q}$ is nef/ample, then the same is true for $\lambda^{\D}_{f,q}$, and
\item\label{prop:stable_reduction_itemCM} assuming that $K_{X/T}+\Delta$ is $f$-ample, if $\lambda^{\Theta}_{CM,g}$ is nef/ample, then the same is true for $\lambda^{\D}_{CM,f}$. 

\end{enumerate}
\end{proposition}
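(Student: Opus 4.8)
The plan is to realize the stable reduction explicitly as a base change followed by a relative canonical model, and then to transport positivity \emph{backwards} along this construction using the two comparison lemmas \autoref{lem:fibre_ridotte} and \autoref{lem:reduction_bundle}, together with the fact that passing to a relative canonical model leaves the Hodge bundles unchanged (\cite[Corollary 1.25]{Kosing}). First I would produce $\tau\colon S\to T$ following \cite[Theorem 2.51]{Kobook}: take $\deg\tau$ divisible by the multiplicities of the fibres of $f$, so that after base change and normalization the pair of \autoref{lem:fibre_ridotte} (whose total space I rename $Y_0$, reserving $Y$ for the stable model) has reduced fibres and satisfies $(\rho\circ\pi)^*(K_{X/T}+\D)=K_{Y_0/S}+\Gamma$; then let $g\colon(Y,\Theta)\to S$ be the relative log canonical model of $(Y_0,\Gamma)$ over $S$, and write $g_0\colon Y_0\to S$ for the structure map. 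Item~\eqref{prop:stable_reduction} (the first one) is then bookkeeping: $(Y,\Theta)$ is lc because $(Y_0,\Gamma)$ is crepant to the slc generic fibre and a relative canonical model of an lc pair over a curve is again lc; and $q\Theta$ is a $\Z$-divisor because the coefficients of $\Theta$ are those of the horizontal divisor $(\rho\circ\pi)^*\D$, hence lie in $\tfrac{1}{q}\Z$ since $q\D$ is integral, while the vertical correction $E$ of \autoref{lem:fibre_ridotte} is reduced (its integer coefficients are bounded by $1$ by lc-ness).

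For the statements on $\mathcal E^{\Theta}_{g,q}$ and $\lambda^{\Theta}_{g,q}$ I would apply \autoref{lem:reduction_bundle} with $L=q(K_{X/T}+\D)$, which is an integral Weil divisor as $q\D$ is. Unwinding $L_Y$ with the identities $(\rho\circ\pi)^*K_{X/T}=K_{Y_0/S}+E$ and $(\rho\circ\pi)^*\D=\Gamma-E$ yields $L_Y=q(K_{Y_0/S}+\Gamma)-E$, together with a generically surjective map $g_{0,*}\O_{Y_0}(L_Y)\to\tau^*\mathcal E^{\D}_{f,q}$. Independently, by \cite[Corollary 1.25]{Kosing} the Hodge bundle $\mathcal E^{\Theta}_{g,q}$ of the stable model is the $g_0$-pushforward of $q$ times the pullback of $K_{Y/S}+\Theta$ to $Y_0$; the decisive point is that this pullback is $\le L_Y$, giving an inclusion $\mathcal E^{\Theta}_{g,q}\hookrightarrow g_{0,*}\O_{Y_0}(L_Y)$ which is an isomorphism over $\tau^{-1}(T^0)$. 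Granting this, \autoref{lem:fuj2.2} applied to this generically surjective inclusion shows that $g_{0,*}\O_{Y_0}(L_Y)$ inherits nefness/ampleness from $\mathcal E^{\Theta}_{g,q}$, and the last assertion of \autoref{lem:reduction_bundle} transports it to $\mathcal E^{\D}_{f,q}$. For $\lambda^{\Theta}_{g,q}$ it is cleaner to argue with degrees on the curve: the same inclusion and generically surjective map give $\deg\lambda^{\Theta}_{g,q}\le\deg\det g_{0,*}\O_{Y_0}(L_Y)\le\deg\tau^*\lambda^{\D}_{f,q}=(\deg\tau)\deg\lambda^{\D}_{f,q}$, so nefness ($\deg\ge0$) or ampleness ($\deg>0$) of $\lambda^{\Theta}_{g,q}$ descends to $\lambda^{\D}_{f,q}$.

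For the statement on $\lambda^{\Theta}_{CM,g}$ the extra hypothesis that $K_{X/T}+\D$ is $f$-ample is what makes the Chow--Mumford class survive the canonical model step. Indeed, by \autoref{lem:fibre_ridotte} the class $K_{Y_0/S}+\Gamma=(\rho\circ\pi)^*(K_{X/T}+\D)$ is then $g_0$-ample, hence nef, so its top self-intersection equals its volume, i.e. equals $(K_{Y/S}+\Theta)^{n+1}$; therefore $\lambda^{\Theta}_{CM,g}$ agrees with the analogous class for $g_0$, which by \autoref{lem:fibre_ridotte} equals $\tau^*\lambda^{\D}_{CM,f}$. Nefness/ampleness of $\lambda^{\Theta}_{CM,g}$ thus descends to $\lambda^{\D}_{CM,f}$ along the finite surjection $\tau$. (This is precisely the place where $f$-ampleness is indispensable: without it the intersection number $(K_{Y_0/S}+\Gamma)^{n+1}$ need not compute a volume, and the CM classes of $f$ and of its stable model differ.)

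The hard part, and the only place sensitive to the degenerate fibres, is the inclusion $\mathcal E^{\Theta}_{g,q}\hookrightarrow g_{0,*}\O_{Y_0}(L_Y)$ used in the second paragraph, i.e. the inequality $q\,\mu^*(K_{Y/S}+\Theta)\le L_Y=q(K_{Y_0/S}+\Gamma)-E$ on $Y_0$, where $\mu\colon Y_0\dashrightarrow Y$ is the canonical model map. Writing $F:=K_{Y_0/S}+\Gamma-\mu^*(K_{Y/S}+\Theta)\ge0$, this is exactly $qF\ge E$, so I would need to control the effective vertical divisor $E$ of \autoref{lem:fibre_ridotte} by the discrepancy $F$ of the canonical model. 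This holds because $E$ occurs in $\Gamma$ with coefficient $1$ while $\Theta$ is horizontal, so $F$ dominates $E$ along each component of $E$; reducedness of the fibres and the lc bound on the coefficients of $E$ are what force the inequality. This step is where the whole transfer of positivity is genuinely at stake, and I would develop it first.
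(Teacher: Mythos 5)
Your overall architecture (base change and normalization via \autoref{lem:fibre_ridotte}, transfer via \autoref{lem:reduction_bundle}, relative canonical model plus \cite[Corollary 1.25]{Kosing} at the end) matches the paper's, but there are two genuine gaps. First, you skip the preliminary log resolution. The paper first replaces $(X,\Delta)$ by a log resolution $(X',\Delta')$ where $\Delta'$ is the \emph{horizontal part} of the strict transform plus the exceptional divisor; only then is $(X',\Delta'+(X'_t)_{\red})$ lc for every $t$, which is what makes $(Y',\Theta'+Y'_s)$ lc after base change (via \cite[Lemma 2.52]{Kobook}) and guarantees that the relative canonical model exists. The hypotheses only give stability over $T^0$: the fibres over $T\setminus T^0$ can be arbitrarily bad, so your pair $(Y_0,\Gamma)$ need not be lc, the relative log canonical model of $(Y_0,\Gamma)$ need not exist, and your coefficient bounds ``by lc-ness'' (e.g.\ that $E$ is reduced) are unjustified.

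Second, and more seriously, you take the canonical model of $(Y_0,\Gamma)$ with $\Gamma$ containing the vertical divisor $E$. The resulting boundary $\mu_*\Gamma$ then contains vertical components, so the model you produce is not the stable family extending $f^0_S$ (and your later assertion that $\Theta$ is horizontal is inconsistent with your own construction). This mis-step forces you into the comparison $qF\ge E$ between the canonical-model discrepancy $F$ and $E$, which is false in general: if $(Y_0,\Gamma)$ happens to be its own relative canonical model then $F=0$ while $E$ can be nonzero. The paper avoids this entirely by taking the canonical model of the pair with the \emph{horizontal} boundary $\Theta'$, so that $\mathcal{E}^{\Theta'}_{g',q}\cong\mathcal{E}^{\Theta}_{g,q}$ by \cite[Corollary 1.25]{Kosing}, and the decisive inclusion is the trivial one
$\mathcal{E}^{\Theta'}_{g',q}\hookrightarrow\mathcal{E}^{\Theta'+\frac{q-1}{q}E}_{g',q}\cong g'_*\O_{Y'}(L_{Y'})$,
obtained by adding the effective vertical integral divisor $(q-1)E$ to $q$ times the boundary — no control of $E$ by discrepancies is needed. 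A smaller issue: for item (4) you claim the CM classes of $g$ and $g_0$ agree because $(K_{Y_0/S}+\Gamma)^{n+1}$ computes a volume, but $K_{Y_0/S}+\Gamma$ is only $g_0$-ample, not nef on $Y_0$; the paper instead deduces item (4) from the degree inequality $\deg\lambda^{\Theta}_{g,q}\le\deg(\tau)\,\deg\lambda^{\Delta}_{f,q}$ by letting $q\to\infty$ via \autoref{prop:lambda_and_CM}.
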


\begin{proof}

By \cite[Corollary 10.46]{Kosing} we can take a log resolution $\phi: X' \to X$ such that ${\phi_{*}^{-1}}\D + \Exc(\phi) + (X'_t)_{\red}$ is SNC for every $t \in T$, where $\Exc(\phi)$  is the  exceptional locus of $\phi$ as a reduced divisor. We set $f' = f \circ \phi$. We define a boundary divisor $\D'$ on $X'$ taking the horizontal part (over $T$) of ${\phi_{*}^{-1}}\D + \Exc(\phi)$.
This way $q\D'$ is a $\Z$-divisor and $(X',\D' + (X'_t)_{\red})$ is lc for any $t \in T$. We also have 
\begin{multline}
\label{eq:stab_red_step_1}
K_{X'}+{\phi_{*}^{-1}}\D + \Exc(\phi) \sim_\Q \phi^*(K_X+\D) +\sum_{\parbox{33pt}{\tiny $E_i$  is excecptional}} a(E_i,X,\D)E_i +\Exc(\phi) 
\\ \geq \phi^*(K_X+\D) + \underbrace{\sum_{\parbox{75pt}{\tiny $E_i$  is excecptional and $a(E_i, X, \Delta) <-1$}} \Big(a(E_i,X,\D)+1\Big)E_i}_{=:-G},
\end{multline}
where $G \geq 0$ and  $X^0 \cap \Supp G = \emptyset$, as $(X^0, \Delta^0)$ is lc. Hence, we have the following inclusions and isomorphisms, where each inclusion is isomorphism over a dense open set of $T$:
\begin{equation}
\label{eq:stab_red_incl_1}
\mathcal E^{\D'}_{f', q} 
\explparshift{200pt}{-140pt}{\hookrightarrow}{$\Delta' \leq {\phi_{*}^{-1}}\D + \Exc(\phi)+G$ and the two sides of this inequality are equal over a non-empty open set of $T$}
\mathcal E^{{\phi_{*}^{-1}}\D + \Exc(\phi)+G}_{f', q} 
\explparshift{150pt}{130pt}{\cong}{the difference betweent he two sides of inequality \autoref{eq:stab_red_step_1} are all $f$-exceptional}
\mathcal E^\Delta_{f, q}
\end{equation}
By \autoref{lem:fibre_ridotte} we can take a finite map $\tau \colon S \to T$ such that the normalisation $Y'  \to X_S'$ has reduced fibers over $S$. Set $g': Y' \to S$ and $\xi: Y' \to X$ be the induced morphisms.

Since $(X', \D' + (X_t')_{\red})$ is lc for every $t \in T$, we can apply \cite[Lemma 2.52]{Kobook} to show that $(Y', \Theta' + Y_s')$ is lc for any $s \in S$,  where $\Theta'=\xi^*\D'$. 
The fibers $Y_s'$ being Cartier, we get that  $(Y',\Theta')$ is lc. Also, as $q \Delta'$ is a $\Z$-divisor, so is $q \Theta'$. Furthermore, observe that the generic fiber of $(Y',\Theta')$ is isomorphic to the generic fiber of $(X',\Delta')$.

Let $E$ be the vertical effective $\Z$-divisor on $Y'$ defined in Lemma \ref{lem:fibre_ridotte}, for which $\xi^* K_{X'/T} = K_{Y'/S} + E$. 
We apply \autoref{lem:reduction_bundle} with $L=q(K_{X'/T}+\Delta')$ (here we use that $q\Delta'$ is a $\Z$-divisor to apply \autoref{lem:reduction_bundle}):
\begin{equation*}
    L_{Y'}=\xi^* (L-K_{X'/T}) + K_{Y'/T} 
= \xi^* \Big( (q-1) K_{X'/T} + q \Delta'\Big) + K_{Y'/T}
= q\left(K_{Y'/S}+\Theta' + \frac{q-1}{q}E\right)
\end{equation*}
Set $\D_{Y'}=\Theta + \frac{q-1}{q}E$, so that $L_{Y'}=q(K_{Y'/S}+\Delta_{Y'})$. Then, we have the following generically isomorphic inclusions of vector bundles on $S$:
\begin{equation}
\label{eq:stab_red_incl_2}
\mathcal E^{\Theta'}_{g', q} 
\explshift{-80pt}{\hookrightarrow}{$q\D_{Y'}- q\Theta'$ is a vertical effective integral divisor}
\mathcal E^{\Delta_{Y'}}_{g', q} 
\cong
g'_*\O_{Y'}(L_{Y'})
\explshift{30pt}{\hookrightarrow}{\autoref{lem:reduction_bundle}}
f'_* \O_{X'}(L) \cong \mathcal E^{\Delta'}_{f', q}
\end{equation}
As discussed before, $(Y', \Theta' + Y_s')$ is lc for any $s \in S$, so $g'$ has no vertical lc center and we can apply \cite[Theorem 11.28 (2)]{Kobook} to obtain the relative canonical model $g\colon(Y,\Theta)\to S$  of $g'\colon\left(Y', \Theta' \right) \to S$. Such model is stable by \cite[Proposition 2.47]{Kobook}. The generic fiber of the $g$ is birational to the generic fibers of $f$, the log canonical bundle is ample on both generic fiber, hence they are isomorphic. Because of the way $\Theta$ is constructed, the isomorphism preserve the boundary. We conclude that $(Y,\Theta)$ is isomorphic to $(X_S,\Delta_S)$ over $\tau^{-1}(T^0)$ by \cite[Proposition 2.50]{Kobook}. Additionally, by \cite[Corollary 1.25]{Kosing} we obtain that $\mathcal E^{\Theta'}_{g', q} \cong \mathcal E^{\Theta}_{g, q}$. Putting this together with the generically isomorphic inclusions of \autoref{eq:stab_red_incl_2} and \autoref{eq:stab_red_incl_1}, we obtain a generically isomoprhic inclusion:
\begin{equation*}
%\label{eq:stab_red_incl_final}
    \mathcal E^{\Theta}_{g, q} \hookrightarrow  \mathcal E^{\Delta}_{f, q} 
\end{equation*}
By \autoref{lem:fuj2.2} this implies Item \ref{prop:stable_reduction_itemHodge}, and the inequaility $\deg(\lambda_{g,q}^{\Theta})\leq \deg(\tau)\deg(\lambda_{f,q}^{\Delta})$, which in turn implies Item \ref{prop:stable_reduction_itemLambda}. When for both families $f$ and $g$ the relative canonical bundle is relatively ample, we can apply \autoref{prop:lambda_and_CM} to deduce Item \ref{prop:stable_reduction_itemCM} from the inequaility $\deg(\lambda_{g,q}^{\Theta})\leq \deg(\tau)\deg(\lambda_{f,q}^{\Delta})$.

\end{proof}
The following lemma is well-known and it has been used multiple times.
\begin{lemma}\label{lem:fuj2.2}
Let $T$ be a smooth projective curves, $W$ and $V$ two locally free sheaves on $T$, $\iota \colon W \rightarrow V$ a morphism which is surjective over an open dense subset of $T$. Then if $W$ is nef (resp. ample), then $V$ is nef (resp. ample). Moreover, $\deg\det(W)\leq \deg \det(V)$.
\end{lemma}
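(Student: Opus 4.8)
The plan is to factor $\iota$ through its image and thereby reduce the statement to a single non-formal point, namely the behaviour of positivity under enlarging a subbundle by a torsion cokernel. Since $T$ is a smooth projective curve, every torsion-free coherent sheaf on $T$ is locally free. I would set $\mathcal I:=\operatorname{im}(\iota)\subseteq V$: because $\iota$ is surjective over a dense open set, $\mathcal I$ is a full-rank subsheaf of $V$, it is torsion-free and hence locally free, and $\iota$ factors as a surjection $W\twoheadrightarrow\mathcal I$ followed by an inclusion $\mathcal I\hookrightarrow V$ with torsion cokernel $Q:=V/\mathcal I$ of finite length. In the generically isomorphic case, which is how the lemma is applied (with $\rk W=\rk V$, the maps of interest being isomorphisms over a dense open set), the surjection $W\twoheadrightarrow\mathcal I$ has torsion and hence trivial kernel, so one is simply left with a short exact sequence $0\to W\xrightarrow{\iota}V\to Q\to0$ with $Q$ torsion.

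For nefness and ampleness I would invoke the standard characterisation of positivity of vector bundles on a smooth projective curve in terms of the Harder--Narasimhan filtration: $E$ is nef if and only if $\mu_{\min}(E)\ge0$, and ample if and only if $\mu_{\min}(E)>0$, where $\mu_{\min}(E)$ is the least slope occurring among the locally free quotients of $E$ (equivalently, the slope of the terminal graded piece of the filtration). Granting this, it suffices to prove $\mu_{\min}(V)\ge\mu_{\min}(W)$. First, every quotient bundle of $\mathcal I$ is a quotient bundle of $W$ through $W\twoheadrightarrow\mathcal I$, so $\mu_{\min}(\mathcal I)\ge\mu_{\min}(W)$. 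Second, for any quotient bundle $V\twoheadrightarrow G$, the composite $\mathcal I\hookrightarrow V\twoheadrightarrow G$ is surjective over a dense open set, so its image $G'\subseteq G$ is a full-rank subsheaf with $\deg G'\le\deg G$; as $G'$ has the same rank as $G$ this gives $\mu(G')\le\mu(G)$, while $G'$ is a locally free quotient of $\mathcal I$, so $\mu(G')\ge\mu_{\min}(\mathcal I)$. Taking the infimum over all such $G$ yields $\mu_{\min}(V)\ge\mu_{\min}(\mathcal I)$, and combining gives $\mu_{\min}(V)\ge\mu_{\min}(\mathcal I)\ge\mu_{\min}(W)$. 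Hence $W$ nef (resp.\ ample) forces $V$ nef (resp.\ ample).

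Finally, the degree inequality is formal in the generically isomorphic setting: from $0\to W\to V\to Q\to0$ with $Q$ a torsion sheaf, taking determinants gives $\det V\cong\det W\otimes\det Q$, and $\det Q$ is a line bundle of degree $\operatorname{length}(Q)\ge0$, whence $\deg\det V=\deg\det W+\operatorname{length}(Q)\ge\deg\det W$. The step I expect to be the only real content is the second slope inequality, i.e.\ that positivity propagates from the subbundle $\mathcal I$ up to the overbundle $V$ across the torsion quotient $Q$: this runs in the opposite direction to the elementary fact that quotients of nef/ample bundles are nef/ample, and it is precisely here that the curve hypothesis and the minimal-slope criterion are indispensable. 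Everything else --- local freeness of the image and the determinant computation --- is automatic on a smooth curve.
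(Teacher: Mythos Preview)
Your proof is correct and takes essentially the same approach as the paper: the paper cites the Barton--Kleiman criterion for the positivity claim (which on a curve unwinds to exactly your $\mu_{\min}$ argument via quotient bundles) and, for the degree claim, observes that $\det(\iota)\colon\det W\to\det V$ is a nonzero map of line bundles, which is your short exact sequence computation in disguise. You are more explicit than the paper in restricting the degree inequality to the equal-rank (generically isomorphic) case; this is in fact necessary---e.g.\ $W=\O(1)^{\oplus 2}\twoheadrightarrow V=\O(1)$ violates the inequality---and is the only case the paper ever applies.
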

\begin{proof}
The first claim follows from the Barton-Kleiman Criterion \cite[Proposition 6.1.16]{Laz2}. For the second observe that $\det(\iota)\colon \det(W) \to \det(V)$ is a morphism of line bundle which is an isomorphism over an open dense subset of $T$.
\end{proof}

\subsection{Positivity}\label{S:positivity}

%The following theorem is a strengthening of \cite[Theorems 1.10 and 1.11]{Fuj}.

In the next proof we will use many times semi-snc resolutions. Following \cite[Definition 1.7, page 9]{Kosing} we say that a pair $(X,\Delta)$ is snc if $X$ is smooth and the support of $\Delta$ is a snc divisor. Following  \cite[Definition 1.10 page 11]{Kosing}, we say that a pair $(X,\Delta)$ is semi-snc if for every closed point $x$ of $X$, there exist a snc pair $(W,\sum E_i+\sum F_i)$ such that a Zariski neighbourhood of $x$ in $(X,\Delta)$ is isomorphic to $(Y,\sum a_i F_i|_Y)$, where $Y$ is the union of the divisors $E_i$'s. Note that in \cite{Fuj} our semi-snc pairs are called just snc pairs. A semi-snc pair is called \emph{double semi-snc}, if at most two irreducible components of X meet in any point, i.e. all points of $X$ have multiplicity at most two.  According to \cite[Cor 10.57]{Kosing}:

\begin{theorem}[Existence of double semi-snc resolutions]
\cite[Cor 10.57]{Kosing}
\label{thm:res_sing}
    Every pair $(X,D)$ of a reduced scheme with a reduced Weil-divisor on it admits a double semi-snc log-resolution $\rho : Y  \to X$. That is, $\Big(Y, \rho^{-1}_* D + \Ex(\rho)\Big)$ is double semi-snc.  Morever we may choose $\rho$ such that:
    \begin{enumerate}
        \item  $\rho$ is an isomorphism over any open set $U \subseteq X$ such that $U$ has only regular and double snc points, $D|_U$ is smooth and $D|_U$ does not intersect the singular locus of $U$. 
        \item $\rho$ maps $Y_{\sing}$ birationally onto the closure of $U_{\sing}$.
    \end{enumerate}
\end{theorem}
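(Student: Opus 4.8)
The plan is to lift the problem to the normalization, apply a functorial embedded resolution there, descend back along the gluing datum, and finally cut down the crossing multiplicity to two. After replacing $X$ by its seminormalization (a finite universal homeomorphism, which is harmless for the purpose of producing a proper birational $\rho$) I may assume $X$ is seminormal. Let $\nu\colon \bar X\to X$ be the normalization; it is finite and birational, and $\bar X$ is the disjoint union of the normalizations $\bar X_i$ of the irreducible components of $X$. Let $C\subset\bar X$ be the reduced conductor, i.e. the reduced preimage of the non-normal locus, so that $X$ is reconstructed from the pair $(\bar X, C)$ together with the seminormal gluing datum on $C$. Near the generic points of the non-normal locus this datum is simply an involution exchanging the two local branches, which is why those points become the nodes of the final semi-snc $Y$.

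First I would apply functorial strong embedded resolution of singularities to the pair $(\bar X,\, \nu^{-1}_* D + C)$, obtaining $\mu\colon\bar Y\to\bar X$ with every component of $\bar Y$ smooth and with $\mu^{-1}_*(D+C)+\Ex(\mu)$ an snc divisor. Because strong resolution is functorial for smooth morphisms and compatible with the finite gluing identifications, the gluing datum lifts to the strict transform $C_{\bar Y}$ of the conductor; descending $\bar Y$ along this lifted datum yields a reduced scheme $Y$ with a proper birational $\rho\colon Y\to X$ whose components are the (smooth) strict transforms of the components of $X$ and which cross along the image of $C_{\bar Y}$. Tracking the divisors shows that $(Y,\rho^{-1}_* D+\Ex(\rho))$ is semi-snc, since \'etale-locally it is the quotient of the snc pair on $\bar Y$ by the gluing. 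Functoriality also yields condition (1): over an open set $U$ consisting of regular and double snc points, with $D|_U$ smooth and disjoint from $U_{\sing}$, the pair $(X,D)$ is already double semi-snc, so the resolution is an isomorphism there and $\rho$ is an isomorphism over $U$.

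The main obstacle is enforcing the \emph{double} condition, since after the previous step three or more components of $Y$ may still pass through a common point. I would remove these by induction on the maximal crossing multiplicity $m\geq 3$. Let $Z$ be the closure of the locus where exactly $m$ components meet; as a closed stratum of a semi-snc scheme it is smooth, and I would blow up an ambient smooth model along $Z$ and pass to the strict transform of $Y$. On the standard local model $\{x_1\cdots x_m=0\}$ with $Z=\{x_1=\cdots=x_m=0\}$ one checks that the strict transforms of the $m$ sheets meet the exceptional divisor in the coordinate hyperplanes of a $\P^{m-1}$, which have no common point; hence no point of the blow-up lies on all $m$ sheets, the maximal multiplicity drops, and the exceptional divisor enters the boundary as a new snc component rather than as a component of $Y$. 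Iterating until $m=2$ produces a double semi-snc pair. Since every such center $Z$ lies over the non-normal locus and is preserved by the gluing, these blow-ups descend and preserve the conditions already established; in particular $Y_{\sing}$ is exactly the double locus, which maps birationally — an isomorphism over the generic point of each conductor component — onto the closure of $U_{\sing}$, giving condition (2).

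I expect the delicate point to be organizing the multiplicity-reducing blow-ups so that they are simultaneously compatible with the gluing and strictly decrease $m$ without creating new points of multiplicity $\geq 3$; verifying this termination while staying inside the semi-snc category is precisely the technical heart of the construction, carried out in \cite[\S 10.5]{Kosing}, from which the statement is quoted.
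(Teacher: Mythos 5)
The paper does not actually prove this statement: it is quoted verbatim from \cite[Cor 10.57]{Kosing}, so there is no in-paper argument to compare yours against. Your outline --- normalize, apply functorial embedded resolution to the normalization together with the conductor, descend along the gluing datum, then kill crossings of multiplicity $\geq 3$ by blowing up the deepest strata --- is a faithful sketch of the strategy of the cited reference, and your local computation for the multiplicity-reducing blow-up is correct (note also that when $m$ is maximal the $m$-fold locus is automatically closed and its distinct components are disjoint, so the center is smooth and the induction is less delicate than you fear).

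One point in your write-up is genuinely wrong as stated and matters for a general reduced $X$: it is not true that ``near the generic points of the non-normal locus'' the gluing datum is an involution exchanging two branches. Even after seminormalization, $X$ may have codimension-one singular points where three or more branches meet, or where the gluing is a self-gluing; there the conductor carries a finite equivalence relation of degree $>2$. If you reglue $\bar Y$ along the full lifted datum you reproduce these non-nodal codimension-one singularities and $Y$ fails to be semi-snc. The correct recipe is to reglue only over the closure of the generically nodal locus, i.e.\ over $\overline{U_{\sing}}$, and to leave the branches separated (normalized) over the remaining codimension-one points of $X_{\sing}$. This is exactly why conclusion (2) asserts that $Y_{\sing}$ maps birationally onto $\overline{U_{\sing}}$ rather than onto all of $X_{\sing}$: the resolution is forced to modify $X$ in codimension one wherever $X$ is worse than nodal there. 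With that correction, and with the caveat you yourself note that the compatibility of the multiplicity-reducing centers with the descent is the technical content of \cite[Section 10.4--10.5]{Kosing}, your outline is an accurate account of the proof being cited.
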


Note the following lemmas too:

\begin{lemma}
In \autoref{thm:res_sing}, if $X$ is $S_2$ and the open set $U$  is a big open set (in each irreducible component of $X$), then the natural homomorphism $\O_X \to \rho_* \O_Y$ is an isomoprhism.
\end{lemma}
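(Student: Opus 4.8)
The plan is to realize both $\O_X$ and $\rho_* \O_Y$ inside the $S_2$-extension $j_* \O_U$, where $j \colon U \hookrightarrow X$ is the inclusion of the big open set, and to show all three agree. Since $X$ is $S_2$ and $U$ is big, the extension property of $S_2$ sheaves (e.g. \cite[Proposition 1.11]{HarGenDiv}) gives $\O_X \cong j_* \O_U$: every section of $\O_X$ over an open $V$ is determined by, and extends uniquely from, its restriction to $V \cap U$.

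Concretely, I would write down the two natural morphisms
$$\O_X \xrightarrow{\ \alpha\ } \rho_* \O_Y \xrightarrow{\ \gamma\ } j_* \big( (\rho_* \O_Y)|_U \big) \cong j_* \O_U \cong \O_X,$$
where $\alpha$ is the unit of adjunction (pullback of functions), $\gamma$ is the adjunction unit for $j$ (restriction to $U$), and the last two isomorphisms use that $\rho$ is an isomorphism over $U$ by the first property of \autoref{thm:res_sing}, together with the $S_2$-ness of $\O_X$.

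The heart of the argument is two verifications. First, the composite $\gamma \circ \alpha$ is the identity: it is visibly the identity over $U$, and since $\O_X \cong j_* \O_U$ an endomorphism of $\O_X$ is determined by its restriction to $U$. This already exhibits $\alpha$ as a split injection. It then remains to prove that $\gamma$ is injective, which amounts to showing that $\rho_* \O_Y$ has no nonzero local section supported on $X \setminus U$: such a section is a regular function on some $\rho^{-1}(V)$ vanishing on $\rho^{-1}(V \cap U)$, hence vanishing on a dense open subset of $\rho^{-1}(V)$, hence zero because $Y$ is reduced. Injectivity of $\gamma$ together with $\gamma \circ \alpha = \mathrm{id}$ forces $\gamma$, and therefore $\alpha$, to be an isomorphism.

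The step I expect to require the most care is the density claim underlying the injectivity of $\gamma$, namely that $\rho^{-1}(U)$ is dense in $Y$, equivalently that no irreducible component of $Y$ is mapped by $\rho$ into the codimension $\geq 2$ locus $X \setminus U$. This follows because the double semi-snc resolution is pure of dimension $\dim X$ and restricts to an isomorphism $\rho^{-1}(U) \cong U$ over the big open set $U$, so the closure of $\rho^{-1}(U)$ already accounts for every component of $Y$; a hypothetical further component would be contracted into $X \setminus U$, contradicting purity of dimension. Once this density is secured, the reducedness intrinsic to semi-snc pairs closes the argument, and notably no Cohen--Macaulay or normality assumption on $Y$ is needed.
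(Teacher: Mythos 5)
Your proof is correct in substance, and it is essentially the paper's own argument in a different packaging: the paper notes that $\O_X \to \rho_*\O_Y$ is injective and an isomorphism over $U$, and then concludes from the $S_2$ property of $\O_X$ together with the fact that $\rho_*\O_Y$ is $S_1$ (no sections supported on nowhere-dense closed subsets). Your two ingredients are exactly these: the identification $\O_X \cong j_*\O_U$ is the $S_2$ extension property, and the injectivity of $\gamma$ is precisely the relevant part of the $S_1$ statement for $\rho_*\O_Y$; the split-injection formalism $\gamma\circ\alpha=\mathrm{id}$ is just a clean way of running the comparison.

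There is, however, one justification that does not hold up as written: the density of $\rho^{-1}(U)$ in $Y$ does not follow from purity of dimension. A proper morphism can perfectly well contract an entire $n$-dimensional irreducible component to a point, so equidimensionality of $Y$ is compatible with a component of $Y$ lying over $X\setminus U$; there is no contradiction to be had there. The correct (and immediate) reason is that $\rho$, being a log resolution as in \autoref{thm:res_sing}, is \emph{birational}: there are dense open subsets $W\subseteq Y$ and $V\subseteq X$ with $\rho|_W\colon W \xrightarrow{\sim} V$. Density of $W$ means it meets every irreducible component $Y_0$ of $Y$, so $\rho(W\cap Y_0)$ is a nonempty open subset of $X$; if $Y_0$ mapped into the closed nowhere-dense set $X\setminus U$, this open set would be contained in $X\setminus U$, which is absurd. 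Hence $\rho^{-1}(U)\cap Y_0$ is a nonempty open subset of the irreducible $Y_0$, thus dense in it, and your injectivity argument for $\gamma$ (reducedness of $Y$ plus density) goes through. With this repair your proof is complete; note that the paper's own proof leaves the corresponding point — why $\rho_*\O_Y$ is $S_1$, which needs exactly this non-contraction of components — entirely implicit, so your write-up, once corrected, is actually the more careful of the two.
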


\begin{proof}
The natural homomorpism $\O_X \to \rho_* \O_Y$ is injective and isomorphism over $U$. Then the $S_2$ assumption together with the fact that $\rho_* \O_Y$ is $S_1$ (i.e. it has no embedded points) gives the global isomorphism. 
\end{proof}

\begin{lemma}
\label{lem:snc_Cartier}
Let $(X, D= \sum D_i)$ be a semi-snc pair  with reduced boundary, where the $D_i$ are irreducible. Let $\Delta= \sum_i a_i D_i$ for some $a_i \in \Q$. Then, the following are equivalent:
\begin{enumerate}
\item \label{itm:snc_Cartier:semi_snc} $(X, \Delta)$ is semi-snc, 
    \item \label{itm:snc_Cartier:Delta} $\Delta$ is $\Q$-Cartier,
    \item \label{itm:snc_Cartier:K_X_plus_Delta} $K_X + \Delta$ is $\Q$-Cartier, and
    \item \label{itm:snc_Cartier:coeffs} whenever $\Supp D_i \cap \Supp D_j$ contains a singular codimension $2$ point of $X$, we have $a_i = a_j$.
\end{enumerate}
\end{lemma}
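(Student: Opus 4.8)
My plan is to prove the four conditions equivalent via the cycle $(1)\Rightarrow(2)\Rightarrow(3)\Rightarrow(4)\Rightarrow(1)$, working locally analytically (or étale-locally) at a point $x$ of $X$, since all four conditions are local and divisorial. The key structural observation to set up first is that, because $(X, D)$ is semi-snc, at any point $x$ the variety $X$ looks étale-locally like a union of coordinate hyperplanes $Y = \bigcup E_i$ inside a smooth ambient space $W$, and the components $D_i$ restrict to coordinate divisors. The only interesting points are the singular codimension-$2$ points, where exactly two branches, say $\Supp D_i$ and $\Supp D_j$, of $X$ meet transversally (this is where double semi-snc is convenient, though the statement does not require it); at regular and smooth points of $D$ everything is automatically Cartier and there is nothing to check.

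\textbf{The main content is the local analysis at a node.} The plan is to model a singular codimension-$2$ point étale-locally as the node $X = \{uv = 0\} \subseteq \mathbb{A}^{n+1}$ with coordinates $u, v, t_1, \dots, t_{n-1}$, where the two branches are $\{u = 0\}$ and $\{v = 0\}$, and where $\Supp D_i$ corresponds to one branch and $\Supp D_j$ to the other. The crucial computation is to determine when the Weil divisor $\Delta = a_i D_i + a_j D_j + (\text{divisors not through } x)$ is $\Q$-Cartier at $x$. A Weil divisor supported on the two branches of a node is $\Q$-Cartier precisely when it pulls back to a Cartier divisor on each branch in a compatible way across the singular locus $\{u = v = 0\}$; concretely, the defining equation must agree (up to unit) on the two branches along their intersection. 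For the divisor $a_i D_i + a_j D_j$, this compatibility forces $a_i = a_j$. This gives both $(2)\Rightarrow(4)$ and, reading the computation backwards, $(4)\Rightarrow(2)$ and $(4)\Rightarrow(1)$.

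For the remaining implications I would argue as follows. The implication $(1)\Rightarrow(2)$ is essentially the definition: if $(X,\Delta)$ is semi-snc, then locally $(X, \Delta) \cong (Y, \sum a_i F_i|_Y)$ with $(W, \sum E_i + \sum F_i)$ snc, so $\Delta$ is the restriction of a Cartier (hence $\Q$-Cartier) divisor on the smooth $W$ and is therefore $\Q$-Cartier on $Y = X$. For $(2)\Leftrightarrow(3)$ I would use that $X$ is semi-snc, hence in particular Gorenstein (a reduced snc-type scheme has invertible dualizing sheaf), so $K_X$ is Cartier; therefore $\Delta$ is $\Q$-Cartier if and only if $K_X + \Delta$ is. This handles $(2)\Rightarrow(3)$ and $(3)\Rightarrow(2)$ simultaneously, reducing everything to the comparison of $(1), (2), (4)$, which is settled by the node computation above.

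\textbf{The main obstacle} I anticipate is making the node computation rigorous and coordinate-free enough to identify the $\Q$-Cartier condition with the numerical equality $a_i = a_j$. The subtlety is that $\Q$-Cartier-ness is a statement about some positive multiple $m\Delta$ being Cartier, so one must check that clearing denominators does not accidentally satisfy the compatibility for unequal $a_i, a_j$; the point is that the local class group of the node $\{uv=0\}$ along its singular locus is torsion-free in the relevant direction (the difference $D_i - D_j$ generates a $\mathbb{Z}$, not a finite group, in the local divisor class group modulo Cartier), so no multiple of $a_i D_i + a_j D_j$ becomes Cartier unless $a_i = a_j$ already. I would phrase this cleanly by restricting to the two branches and comparing the induced Cartier divisors along the codimension-$2$ stratum.
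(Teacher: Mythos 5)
Your proposal is correct and follows essentially the same route as the paper: both arguments reduce to the observation that $K_X$ is Cartier on a semi-snc scheme (identifying \autoref{itm:snc_Cartier:Delta} and \autoref{itm:snc_Cartier:K_X_plus_Delta}), combined with the local computation at the model $X=\{uv=0\}$ showing that $a_iD_i+a_jD_j$ is $\Q$-Cartier along the double locus if and only if $a_i=a_j$, with the torsion-freeness point you raise being exactly why no multiple helps. One caution on wording: the two branches of $X$ at the node are irreducible components of $X$ itself, not $\Supp D_i$ and $\Supp D_j$ --- rather, $D_i$ and $D_j$ are divisors each lying on one branch and meeting along the codimension-$2$ singular stratum, which is the picture your subsequent compatibility argument correctly uses.
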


\begin{proof}
As $X$ is semi-snc, $K_X$ is Cartier. Hence point \autoref{itm:snc_Cartier:Delta} and \autoref{itm:snc_Cartier:K_X_plus_Delta} are euqivalent. Points \autoref{itm:snc_Cartier:semi_snc} and \autoref{itm:snc_Cartier:K_X_plus_Delta} are equivalent by definition. So, it is enough to show that \autoref{itm:snc_Cartier:Delta} and \autoref{itm:snc_Cartier:coeffs} are equivalent.

First, note the following fact (from \cite[Item (2) after Defintion 1.10 page 11]{Kosing}): if locally $X=\{xy=0\} \subseteq Z$ and $E= \{z=0\} \subseteq X$,  where $x, y, z$ are local parameters, then $(X,E)$ is semi-snc, and $E= E_1+ E_2$ has two components. In this case $\alpha_1 E_1 + \alpha_2 E_2$ is $\Q$-Cartier if and only if $\alpha_1 = \alpha_2$, which is shown in the rest of the proof.

Additionally by putting in further components of $X$ or for $E$, does not change the condition of $\alpha_1 E_1 + \alpha_2 E_2$ being $\Q$-Cartier. Indeed, if $\alpha_1 = \alpha_2$, then $z$ gives a global defining equation, independently of the presence of other components. On the other hand if $\alpha_1 \neq \alpha_2$, then at the generic point of $\{x=y=0\}$ there are no other components intersecting. Hence at these generic points, $\alpha_1 E_1 + \alpha_2 E_2$ is not $\Q$-Cartier.  
\end{proof}

\begin{lemma}
\label{lem:pullback_Q_divisor}
Let $\rho: Y \to X $ be a proper birational morphism between demi-normal varieties such that $\rho$ is isomorphism in codimension $1$ of $X$ and that no codimension 1 component of $Y_{\sing}$ is contracted by $\rho$. If $D$ is a $\Q$-Cartier divisor on $X$, then $\O_X(D) \cong \rho_* \O_Y(\rho^*D)$.
\end{lemma}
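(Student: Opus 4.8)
The plan is to realize both $\O_X(D)$ and $\rho_*\O_Y(\rho^*D)$ as subsheaves of the constant sheaf $\underline{K(X)}$ of rational functions, using the birational identification $K(Y)=K(X)$, and then to prove that they coincide as such. First I would record that the hypotheses make $\rho^*D$ a genuine $\Q$-Cartier Mumford divisor on the demi-normal variety $Y$, so that $\O_Y(\rho^*D)=\O_Y(\lfloor\rho^*D\rfloor)$ is an honest divisorial, hence $S_2$, sheaf by \cite[Proposition 1.9]{HarGenDiv}: the support of $\rho^*D$ beyond the strict transform of $D$ lies on the $\rho$-exceptional divisors, and the assumption that no codimension $1$ component of $Y_{\sing}$ is contracted guarantees that these exceptional divisors sit in the Gorenstein locus of $Y$. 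Combined with the assumption that $\rho$ is an isomorphism in codimension $1$ of $X$, this produces a big open $j\colon U\hookrightarrow X$ over which $\rho$ is an isomorphism and which meets every prime divisor of $X$, with $\rho^{-1}(U)$ big in $Y$; over $U$ both sheaves visibly coincide with $\O_U(D|_U)$.

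For the inclusion $\O_X(D)\subseteq\rho_*\O_Y(\rho^*D)$, the key input is that the pullback of an effective $\Q$-Cartier divisor is effective. Concretely, for a section $s\in\O_X(D)(W)$ the $\Q$-divisor
$$B:=\operatorname{div}_X(s)+D$$
is $\Q$-Cartier (since $D$ is) and satisfies $B|_W\geq 0$; pulling back gives $\rho^*B=\operatorname{div}_Y(s)+\rho^*D$, using $\operatorname{div}_Y(s)=\rho^*\operatorname{div}_X(s)$ for the birational morphism $\rho$, and $\rho^*B|_{\rho^{-1}W}\geq 0$ because the pullback of an effective $\Q$-Cartier divisor is effective. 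As $s$ has integral order along every prime divisor, the floor in $\O_Y(\lfloor\rho^*D\rfloor)$ is harmless, and I conclude $s\in\O_Y(\rho^*D)(\rho^{-1}W)=\rho_*\O_Y(\rho^*D)(W)$. This single step disposes of both the exceptional and the non-exceptional prime divisors of $Y$ at once, and is the clean heart of the argument.

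For the reverse inclusion I would restrict a section $s\in\rho_*\O_Y(\rho^*D)(W)$ to the non-exceptional prime divisors of $Y$: by the codimension $1$ bijection these carry the same valuations as the prime divisors of $X$, so reading off $\operatorname{div}_Y(s)+\rho^*D\geq 0$ along them shows $\operatorname{div}_X(s)+D\geq 0$ in codimension $1$ over $W$, i.e.\ $s$ restricts to a section of $\O_X(D)$ over $U\cap W$. Since $\O_X(D)$ is $S_2$ on the $S_2$ scheme $X$, the extension property $\O_X(D)\cong j_*(\O_X(D)|_U)$ from \cite[Proposition 1.11]{HarGenDiv} promotes this to a section over $W$, giving $\rho_*\O_Y(\rho^*D)\subseteq\O_X(D)$ and hence the asserted isomorphism. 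The main obstacle is not the effectivity step but the demi-normal bookkeeping in codimension one: one must verify that the two demi-normal structures are genuinely identified along their nodal loci and that $\rho^*D$ avoids $Y_{\sing}$, which is precisely what the hypothesis on non-contracted codimension $1$ singular components secures; once that is in place the divisorial-sheaf formalism applies verbatim.
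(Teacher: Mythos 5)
Your proof is correct and takes essentially the same route as the paper's: both arguments realize $\O_X(D)$ and $\rho_*\O_Y(\rho^*D)$ inside the constant sheaf of rational functions, obtain the inclusion $\O_X(D)\subseteq \rho_*\O_Y(\rho^*D)$ by pulling back sections and using that the pullback of an effective $\Q$-Cartier divisor is effective, note that the two sheaves agree over the big open set where $\rho$ is an isomorphism, and conclude by the $S_2$ property of divisorial sheaves. One harmless inaccuracy, which you never actually use: $\rho^{-1}(U)$ need \emph{not} be big in $Y$, since $\rho$ can have exceptional divisors lying over the codimension-two locus $X\setminus U$ (this is precisely the situation in the paper's application, where $\rho$ is a log resolution), so you should delete that clause; the rest of your argument is unaffected.
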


\begin{proof}
We may assume that $X$ is affine, and then it is enough to show that every element of $H^0(X,D)$ extends to an element of $H^0(Y, \rho^*D)$. Indeed, if this is the case, then we have an injection $\O_X(D) \hookrightarrow \rho_* \O_Y(\rho^*D)$ which is isomorphism in codimension $1$, because $\rho$ itself is an isomorphism in codimension $1$. Hence it is an isomorphism globally by the $S_2$ property.

To show the above extension property, just note that 
\begin{equation*}
H^0(X,D)=\{ s \in K(X)|(s)_X + D \geq 0 \} \subseteq \{ t\in K(Y)| (t)_Y + \rho^*D \geq 0 \} = H^0(Y, \rho^*D)
\end{equation*}
    by assigning $t=\rho^*s$.
\end{proof}

\begin{proposition}[Existence of double semi-snc principalizations]
\label{prop:principalization}
 Let $(X, \Delta)$ be a connected demi-normal pair which has normal crossing singularities in codimension $1$ and let $\mathcal{I}$ be an ideal sheaf on $X$ such that $\mathcal{I} = \O_X$ in a neighborhood of every singular codimension $1$ point. Then the double snc principalization of $\mathcal{I}$ exists.  That is, there exists a proper birational morphism $\rho: Y \to X$ such that
\begin{itemize}
    \item 
    $\rho$   does not contract any irreducible component of $Y$ or of $Y_{\sing}$,
    \item the sheaf $\rho^{-1} \mathcal{I}$ is locally principal, and defines a Cartier divisor $F$ on $Y$,
    \item the pair $(Y,F +  \Delta_Y)$ is semi-snc, where $\Delta_Y$ is the crepant boundary associated to $\rho$ and $\Delta$, and
    \item $\rho$ is an isomorphism over a big open set $U$ of $X$.
\end{itemize}

\end{proposition}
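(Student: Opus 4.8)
The plan is to descend a resolution from the normalization through the demi-normal gluing datum. Let $\pi\colon \ov{X}\to X$ be the normalization, let $\ov{D}\subseteq \ov{X}$ be the conductor divisor, and let $\tau$ be the involution on (the normalization of) $\ov{D}$ that recovers $X$ from the gluing triple $(\ov{X},\ov{D},\tau)$. Since $X$ has normal crossing singularities in codimension $1$, near the generic points of the double locus two smooth branches meet transversally, $\pi$ is \'etale there, and $\ov{D}\to D$ is the normalization of the double locus. I would then pull back the data, writing $\ov{\Delta}$ for the birational transform of $\Delta$ and $\ov{\mathcal{I}}=\mathcal{I}\cdot\O_{\ov{X}}$; the hypothesis that $\mathcal{I}=\O_X$ near every singular codimension $1$ point says exactly that $\ov{\mathcal{I}}=\O_{\ov{X}}$ in a neighborhood of $\ov{D}$.

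First I would apply functorial log principalization on the \emph{normal} variety $\ov{X}$. By Hironaka's theorem in its strong functorial form there is a projective birational $\ov{\rho}\colon \ov{Y}\to \ov{X}$ with $\ov{Y}$ smooth such that $\ov{\rho}^{-1}\ov{\mathcal{I}}\cdot\O_{\ov{Y}}$ is the ideal of a Cartier divisor $\ov{F}$, and such that $\ov{F}+\ov{\rho}^{-1}_*\ov{\Delta}+\Exc(\ov{\rho})+\ov{\rho}^{-1}_*\ov{D}$ is snc; here I include $\ov{D}$ in the boundary so that it stays snc, and I record the crepant transform $\ov{\Delta}_{\ov{Y}}$. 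Because $\ov{\mathcal{I}}$ is trivial near $\ov{D}$, I may moreover arrange that $\ov{\rho}$ is an isomorphism over a big open set of $\ov{X}$ containing a neighborhood of $\ov{D}$, so that $\ov{\rho}^{-1}_*\ov{D}\cong \ov{D}$ and $\ov{F}$ is disjoint from it.

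The heart of the argument is to make this modification descend to $X$, and I expect this to be the main obstacle. The key point is that the canonical resolution is functorial for smooth (in particular \'etale) morphisms, hence compatible with automorphisms: the involution $\tau$ identifies \'etale-local neighborhoods of the two branches of $X$ along the double locus, and the canonical construction commutes with this identification. Combined with the triviality of $\ov{\mathcal{I}}$ near $\ov{D}$ — which guarantees that $\ov{\rho}$ is an isomorphism over a neighborhood of $\ov{D}$, so that the conductor and the involution $\tau$ survive the modification unchanged — this produces a projective birational $\rho\colon Y\to X$, where $Y$ is the demi-normal variety obtained by gluing $\ov{Y}$ along $\ov{\rho}^{-1}_*\ov{D}$ via $\tau$, and $\ov{Y}$ is its normalization.

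Finally I would read off the four required properties. As $\ov{\rho}$ contracts no component of $\ov{Y}$ and is an isomorphism near $\ov{D}$, the morphism $\rho$ contracts neither an irreducible component of $Y$ nor a codimension one component of $Y_{\sing}$, the latter being the image of $\ov{\rho}^{-1}_*\ov{D}$, where exactly two smooth branches of $\ov{Y}$ meet, giving double points. The divisor $F$ descended from $\ov{F}$ makes $\rho^{-1}\mathcal{I}$ locally principal and is disjoint from $Y_{\sing}$, since $\mathcal{I}$ is trivial there. The snc property of $\ov{F}+\ov{\Delta}_{\ov{Y}}+\Exc(\ov{\rho})+\ov{\rho}^{-1}_*\ov{D}$ upstairs descends, after gluing along $\tau$, to show that $(Y,F+\Delta_Y)$ is semi-snc and double, where $\Delta_Y$ is the crepant boundary; the coefficient-matching condition of \autoref{lem:snc_Cartier} along the glued double locus holds automatically because $\ov{\Delta}_{\ov{Y}}$ and $F$ avoid it. Isomorphism over a big open set $U$ of $X$ then follows from the corresponding statement for $\ov{\rho}$ over $\ov{X}$.
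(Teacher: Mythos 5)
There is a genuine gap, and it sits exactly where you predicted: in descending the resolution through the gluing datum. Two of the claims you rely on are not available. First, the hypothesis only says that $\mathcal{I}=\O_X$ near every \emph{codimension $1$} singular point, i.e.\ near the generic points of the conductor $\ov{D}$ --- not on a neighborhood of all of $\ov{D}$. So you cannot arrange $\ov{\rho}$ to be an isomorphism over a neighborhood of $\ov{D}$: the ideal $\ov{\mathcal{I}}$ (and for that matter the singularities of $\ov{X}$ and of $\ov{\Delta}+\ov{D}$) can be nontrivial at special points of $\ov{D}$, forcing the principalization to blow up centers meeting $\ov{D}$. Second, and more fundamentally, the involution $\tau$ is defined only on the normalization of the conductor; it does \emph{not} identify \'etale-local neighborhoods of the two branches inside $\ov{X}$ (the two branches can have entirely different ambient geometry along $\ov{D}$). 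Hence functoriality of canonical resolution under smooth morphisms gives no $\tau$-equivariance of the centers. Once $\ov{\rho}$ modifies $\ov{Y}$ over points of $\ov{D}$ in a non-$\tau$-symmetric way, the exceptional divisors and the transform of $\ov{F}+\ov{\Delta}_{\ov{Y}}$ restrict to the two sides of the double locus with no matching, so by \autoref{lem:snc_Cartier} the glued boundary is not even $\Q$-Cartier along the double locus and $(Y,F+\Delta_Y)$ fails to be semi-snc; the very existence of the glued scheme $Y$ (quotient by the relation generated by $\tau$ on the strict transform of $\ov{D}$) also needs justification at this point. In effect you are trying to reprove the existence of semi-snc resolutions of non-normal pairs by normalizing and regluing, which is precisely the difficulty that result is designed to absorb.

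The paper avoids normalization altogether: it first blows up $\mathcal{I}$ on the demi-normal $X$ itself (and then the exceptional locus) to make $\rho^{-1}\mathcal{I}$ locally principal with divisorial cosupport, and then applies the cited double semi-snc log-resolution theorem (\autoref{thm:res_sing}, i.e.\ \cite[Cor 10.57]{Kosing}) to the resulting reduced scheme with the reduced boundary $\big(\alpha^{-1}_*\Delta+F_{Z,\red}+\Exc(\alpha)\big)_{\red}$, finally checking the $\Q$-Cartier/coefficient conditions via \autoref{lem:snc_Cartier}. If you want to salvage your route, you would need to quote a semi-snc resolution statement for the non-normal total space anyway, at which point the detour through the normalization buys nothing.
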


\begin{proof}
Before starting the proof note that  the connectedness assumption together with the condition that $\mathcal{I}= \O_X$ at the generic points of $X_{\sing}$ implies that $\mathcal{I}=\O_X$ generically in all irreducible components. 

 We construct $Y$ in multiple steps. First, we fix the big open set $U$ such that $\mathcal{I}|_U=\O_U(-F_U)$ for some Cartier Mumford divisor $F_U$ on $U$, and  such that $(U, \Delta|_U + F_U)$ is double semi-snc. To obtains such a  $U$ we start with the double semi-snc locus $V$ of $(X, \Delta)$; this open set is big because $X$ is a demi-normal pair which has normal crossing singularities in codimension $1$. Then we remove the intersection every non divisorial component of $\O_X/\mathcal{I}$ from $V$. Finally we remove the intersection of the divisorial components of $\O_X/\mathcal{I}$ and of $V_{\sing}$.
    
 Next, we define $\alpha : Z \to X$ to be the blow-up $\Bl_X \mathcal{I}$ of $\mathcal{I}$. Note that as $X$ is reduced, so is $Z$, and by the definition of $U$, $\alpha$ is an isomorphism over $U$. Furthermore, if $V \subseteq Z$ is an irreducible component, then $\alpha^{-1} Z\cong \Bl_Z \big( \mathcal{I}|_Z \big)$, and hence $\alpha^{-1} Z$ is integral.  Additionally $\alpha^{-1} \mathcal{I}$ is locally free. Here there is a little catch: $Z$ is only reduced and not neccessarily $S_2$ and $G_1$. So, we do not have the equivalence between Mumford divisorial  and almost Cartier divisorial sheaves.   Still, $\Supp \O_X/\alpha^{-1} \mathcal{I}$ is purely of codimension $1$ by Hauptidealsatz, and so we can think about its support as a reduced Weil divisor (i.e., linear combination of codimension 1 subvarieties of $Z$). Let us call $F_{Z, \red}$ this divisor. Note that, despite the failure of the bijection between almost Cartier divisorial sheaves and Weil divisors, by hauptidealsatz we have the property that $\Supp \big( \O_X/ \alpha^{-1} \mathcal{I} \big) = \Supp F_{Z, \red}$.   Also,  $F_{Z, \red}|_U=F_{U, \red}$, where , as $\alpha$ is an isomorphism, we think about $U$ being both an open set of $Z$ and of $X$. This is the only properties of $F$ that matter for the continuation of the proof.
    
    Replace then $Z$ and $\alpha$,
    with the birational model obtained by  precomposing $\alpha$ with the blow-up of $\Exc(\alpha)$. Note that this precomposition is identity over $U$, and hence $\alpha$ is still an identity over $U$. This way we obtain that both $\O_Z/\alpha^{-1}\mathcal{I}$ and $\Exc(\alpha)$ have pure codimension $1$, with reduced Weil divisors $F_{Z, \red}$ and $\Exc(\alpha)$ associated to them. 
    
    Let $\Gamma = \big(\alpha^{-1}_* \Delta+ F_{Z, \red} + \Exc(\alpha) \big)_{\red}$.  Let us apply then double semi-snc resolution to $(Z, \Gamma )$. 
 This yields a proper, birational morphism $\beta: Y \to Z$ such that $\big(Y, \Gamma':=\beta^{-1}_* \Gamma + \Exc(\beta) \big)$ is double semi-snc and such that $\beta$ is an isomorphism over $U$. We define $\rho= \alpha \circ \beta$. Note that as the support of $\Exc(\alpha)$ is contained in $\Gamma$, it follows that $\Exc(\rho)$ is contained in $\Gamma'$. However, as the irreducible components of the boundary of a double semi-snc pair are not contained in the singular locus, it follows that $\rho$ is an isomorphism at the generic point of every irreducible component $G$ of $Y_{\sing}$ (all such $G$ is of codimension $1$ by the semi-snc condition). In particular, all such $G$ intersects $\rho^{-1} U$ non-trivially, and hence $\rho^{-1} \mathcal{I}= \O_Y$ at the generic point of all such $G$.
 %\item Then, we may apply \cite[Lem 2.11]{Fuj18} to obtain $Y$, which satisfies that $\rho^{-1} \mathcal{I}$ is Cartier. 
 Hence, there is a Mumford divisor $F$ on $Y$ corresponding to the locally free sheaf $\rho^{-1} \mathcal{I}$ (it is locally free, because $\alpha^{-1} \mathcal{I}$ is already locally free).  That is, we have $\rho^{-1}\mathcal{I}=\O_Y(-F)$. As $\mathcal{I}$ is locally free,  $F$ is Cartier. Also, as $\beta^{-1}_* F_Z + \Exc(\beta)$ contains $F$, we see that $F_{\red} \leq \Gamma'$.

We now show that $(Y,F+\Delta_Y)$ is double-semi-snc. As $\Delta_Y$ is the crepant boundary to $\Delta$, we automatically have that $K_Y + \Delta_Y$ is $\Q$-Cartier. Also, as $\Supp \Delta_Y \subseteq \rho^{-1} \Delta + \Exc(\rho)$ we have that $\Supp \Delta_Y \subseteq \Supp \Gamma'$. Using now the above shown properties of $F$, we see that $K_Y + \Delta_Y + F$ is also $\Q$-Cartier and that $( \Delta_Y + F)_{\red} \subseteq \Gamma'$. Hence by \autoref{lem:snc_Cartier}, Item (3), we see that $(Y, \Delta_Y + F)$ is double semi-snc.

\end{proof}

The following result generalizes the construction of a Galois cover - see e.g. \cite[Section 2.44 page 67]{Kosing} - to deminormal pairs; we will use it only in the case of double snc pairs.

\begin{lemma}[Construction of Galois covers of deminormal pair]
\label{lem:cyclic_snc}
 Let $(X,G)$ be a deminormal pair such that $G$ is effective and Cartier,  and such that $G \in |\mathcal{L}^t|$ for a line bundle $\mathcal{L}$  and an integer $t>0$, and the support of $G$ does not contain any codimension one component of the singular locus. Let $\Gamma$ be a Cartier divisor on $X$. Then there exists a finite $\Z/t\Z$-Galois cover $p : Z \to X$ by a demi-normal scheme such that
 \begin{itemize}
     \item the codimension $1$ irreducible components of $Z_{\sing}$ are exactly the irreducible components of $p^{-1} X_{\sing}$,
     \item $p$ is étale over the codimension $1$ points that are not contained in $G$,
 \end{itemize} 
 and additionally
\begin{equation}
\label{eq:cyclic_snc:goal}
p_* \O_Z( K_Z + p^* \Gamma) \cong \bigoplus_{i=0}^t \O_X(K_X + \Gamma) \otimes \mathcal{L}^{(i)}
\end{equation}
where
\begin{equation*}
\mathcal{L}^{(i)}:=\mathcal{L}^{\otimes i}\otimes \O_X\left(-\left\lfloor\frac{iG}{t} \right\rfloor\right)
\end{equation*}
for every integer $i \geq 0$.
\end{lemma}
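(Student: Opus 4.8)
The plan is to adapt the classical normalized cyclic cover construction (as in \cite[Section 2.44]{Kosing}) to the demi-normal setting, building $Z$ directly as the relative spectrum of an explicit sheaf of $\O_X$-algebras whose graded pieces are the divisorial sheaves appearing in \autoref{eq:cyclic_snc:goal}. Concretely, using the isomorphism $\cL^{t}\cong \O_X(G)$ induced by $s$, I would set
\begin{equation*}
\mathcal{A}=\bigoplus_{i=0}^{t-1}\cL^{-i}\otimes \O_X\left(\left\lfloor \tfrac{iG}{t}\right\rfloor\right),
\end{equation*}
and define the multiplication by the natural maps on the graded pieces, using $s$ to identify $\cL^{-t}\otimes\O_X(G)\cong \O_X$ whenever two indices add up to at least $t$; the identity $\lfloor (i+j)G/t\rfloor=\lfloor (i+j-t)G/t\rfloor+G$ (valid because $G$ is an integral divisor) guarantees that this product is well defined. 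Since $G$ is Cartier and $\Supp G$ meets neither the generic points of $X$ nor any codimension one point of $X_{\sing}$, each summand is a divisorial (hence $S_2$) sheaf, so $\mathcal{A}$ is $S_2$ as an $\O_X$-module, and I set $Z=\Spec_X\mathcal{A}$ with $p\colon Z\to X$ the projection. Letting the generator of $\Z/t\Z$ act by multiplication by $\zeta^{i}$ on the $i$-th summand, with $\zeta$ a primitive $t$-th root of unity, yields the Galois action, and $p_*\O_Z=\mathcal{A}$ by construction.

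Next I would verify that $Z$ is demi-normal with the asserted singular locus, which is where the hypothesis on $\Supp G$ is essential. As $\mathcal{A}$ is $S_2$, so is $Z$; it remains to analyse codimension one points. Over a codimension one point $\eta\notin \Supp G$ the section $s$ is a unit, so locally $\mathcal{A}\cong \O_{X}[y]/(y^t-u)$ with $u$ a unit, which is étale over $\O_{X,\eta}$ because we are in characteristic zero; this proves that $p$ is étale over every codimension one point not contained in $G$. In particular, since $\Supp G$ avoids the codimension one part of $X_{\sing}$, the cover is étale over the nodes of $X$, so the irreducible components of $p^{-1}X_{\sing}$ are nodes of $Z$; conversely, over the regular locus of $X$ the algebra $\mathcal{A}$ is the usual normalized cyclic cover algebra, whence $Z$ is normal there and contributes no codimension one singularity. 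Hence the codimension one irreducible components of $Z_{\sing}$ are exactly the components of $p^{-1}X_{\sing}$, and being $S_2$ and nodal in codimension one, $Z$ is demi-normal.

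Finally I would establish \autoref{eq:cyclic_snc:goal}. Both sides are $S_2$ (the left side because it is the pushforward under the finite morphism $p$ of the divisorial sheaf $\O_Z(K_Z+p^*\Gamma)$, the right side because it is a direct sum of divisorial sheaves), so by the usual extension property it suffices to produce the isomorphism over the big open set $U\subseteq X$ consisting of the regular and the nodal points. There it follows from Grothendieck duality for the finite morphism $p$, giving $p_*\omega_Z\cong \sHom_{\O_X}(p_*\O_Z,\omega_X)$: over the regular locus this is the classical ramification computation for normalized cyclic covers, while over the nodes $p$ is étale, so $\omega_Z\cong p^*\omega_X$ and all floor terms vanish. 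Computing the $\sHom$ of $\mathcal{A}$ termwise then yields
\begin{equation*}
p_*\omega_Z\cong \bigoplus_{i=0}^{t-1}\omega_X\otimes\cL^{i}\otimes\O_X\left(-\left\lfloor\tfrac{iG}{t}\right\rfloor\right)=\bigoplus_{i=0}^{t-1}\O_X(K_X)\otimes\cL^{(i)},
\end{equation*}
and twisting by the line bundle $\O_X(\Gamma)$ — here the hypothesis that $\Gamma$ is Cartier enters, through the projection formula and the identification $p^*\O_X(\Gamma)\cong\O_Z(p^*\Gamma)$ — gives the claimed decomposition of $p_*\O_Z(K_Z+p^*\Gamma)$.

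The only real difficulty is geometric rather than algebraic: one must guarantee that the relative spectrum of $\mathcal{A}$ is demi-normal with \emph{exactly} the prescribed codimension one singular locus. This is precisely controlled by the hypothesis that $\Supp G$ contains no codimension one component of $X_{\sing}$, which forces $p$ to be étale over the nodes, thereby preserving them while creating no new codimension one singularities. The duality computation and the bookkeeping with the floor divisors are then routine, and reducing the verification of \autoref{eq:cyclic_snc:goal} to the big open set $U$ conveniently avoids having to discuss dualizing sheaves on the possibly non Cohen--Macaulay scheme $Z$.
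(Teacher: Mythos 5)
Your proposal is correct and follows essentially the same route as the paper's proof: the same normalized cyclic-cover algebra $\mathcal{A}=\bigoplus_{i=0}^{t-1}\mathcal{L}^{-i}\otimes\O_X(\lfloor iG/t\rfloor)$ (the paper cites \cite[Claims 3.8 and 3.12]{EV92} for the ring structure and the behaviour over regular codimension-one points, which you instead spell out), the same use of the hypothesis on $\Supp G$ to get \'etaleness over the nodes and hence demi-normality, and the same duality argument $p_*\omega_Z\cong\sHom_X(p_*\O_Z,\omega_X)$ combined with the projection formula to reduce to $\Gamma=0$. The only cosmetic difference is that you correctly index the direct sum up to $t-1$ (the paper's upper limit $t$ is a typo) and you add the harmless reduction of the duality computation to the big Gorenstein open set.
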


\begin{proof}

Consider the sheaf of $\O_X$-algebras
\begin{equation*}
\mathcal{A}=\bigoplus_{i=0}^{t-1} \mathcal{L}^{(-i)} %\subseteq \iota_* \O_{V^0}
\end{equation*}
where
\begin{equation*}
\mathcal{L}^{(-i)}:=\mathcal{L}^{\otimes -i}\otimes \O_X\left(\left\lfloor\frac{iG}{t} \right\rfloor\right)
\end{equation*}
for every integer $i \geq 0$, and the ring structure is explained in \cite[Claim 3.8]{EV92}. Additionally,  $\mathcal{A}$ is regular over all non-singular codimension $1$ points of $X$ \cite[Claim 3.12 on page 24]{EV92}. Note that $\mathcal{A}$ in this citation agrees with the above defined $\mathcal{A}$ \cite[pages 22--23]{EV92}. $\mathcal{A}$ is étale over the singular codimension one points because the support of $G$ does not contain any singular codimension one point. These, together with the $S_2$ property of $\mathcal{A}$, gives that $Z:= \Spec \mathcal{A} $ is demi-normal. Set $p :Z \to X $ be the structure morphism. 

By the projection formula, it is enough to show \autoref{eq:cyclic_snc:goal} for $\Gamma=0$. And in that case, it follows from the usual duality argument \cite[Propositions 5.67 (1) and 5.68]{KM}:
\begin{equation*}
p_* \omega_Z \cong p_* \sHom_Z(\O_Z, \omega_Z) \cong \sHom_X(p_* \O_Z , \omega_X) \cong \bigoplus_{i=0}^t \omega_X \otimes \mathcal{L}^{(i)} 
\end{equation*}

\end{proof}

We now recall a key semipositivity result from Hodge theory. These sort of statements have been known for a long time, see e.g. \cite[Corollary 3.7]{Kol86a} or \cite[Section 6.3.E]{Laz2} and references therein; for the generality we need here we refer to \cite[Theorem 1.9, first part of Theorem 1.10, and references therein]{Fuj}.

\begin{theorem}[Basic semipositivity]\label{basic_sem}
Let $f: (Z,D) \to T$ be a family where $(Z,D)$ is a projective double semi-snc pair, and $D$ is a reduced integral divisor whose support does not contain any irreducible component of the fibers of $f$. Then 
$
f_*\O_Z(K_{Z/T}+D)
$
is  nef.
\end{theorem}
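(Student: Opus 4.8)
The plan is to treat the statement as what it is, an intrinsically Hodge-theoretic semipositivity result, and to organize a reduction to the classical semipositivity of the deepest piece of a Hodge filtration, importing that input from \cite{Fuj}. \emph{First I would reduce to the case where $T$ is a smooth projective curve.} Nefness of the coherent sheaf $f_*\O_Z(K_{Z/T}+D)$ is detected on curves: it is equivalent to the nefness of $\phi^*f_*\O_Z(K_{Z/T}+D)$ for every morphism $\phi\colon C\to T$ from a smooth projective curve $C$. The formation of this bottom Hodge bundle commutes with base change for families of semi-snc pairs (this is the $q=1$ instance of the base-change package, cf. \cite[Theorem 8.16]{Kobook}), so $\phi^*f_*\O_Z(K_{Z/T}+D)$ agrees, at least over a dense open subset of $C$, with the analogous pushforward $(f_C)_*\O_{Z_C}(K_{Z_C/C}+D_C)$ for the family obtained by base change. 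By \autoref{lem:fuj2.2} it is then enough to prove nefness over curves, so from now on I assume $T$ is a smooth projective curve.

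\emph{Next I would isolate the Hodge-theoretic content over the good locus.} Let $T^0\subseteq T$ be the dense open subset over which $f$ is log smooth, i.e. the total space, the fibers and $D$ are all relatively semi-snc compatibly with $f$. Writing $n$ for the relative dimension, over $T^0$ one has $\O_Z(K_{Z/T}+D)=\omega_{Z/T}(\log D)=\Omega^n_{Z/T}(\log D)$, and relative log de Rham theory identifies $f_*\O_Z(K_{Z/T}+D)|_{T^0}$ with $F^n$, the deepest piece of the Hodge filtration on $R^nf_*\mathbb{C}\otimes\O_{T^0}$ with its Gauss--Manin connection. Because the fibers are reducible (semi-snc) and the pair is logarithmic, this is a polarizable variation of \emph{mixed} Hodge structure rather than a pure one, but the deepest piece still carries the Hodge metric, whose curvature is semipositive in the sense of Griffiths (Fujita's theorem and its extensions). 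Consequently the Deligne canonical extension of $F^n$ across $T\setminus T^0$ is a nef vector bundle, by the Fujita--Kawamata--Koll\'ar--Schmid circle of results.

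\emph{The final and hardest step is to match the canonical extension with the actual sheaf across the bad fibers, in the non-normal setting.} One must show that $f_*\O_Z(K_{Z/T}+D)$ coincides with, or is a generically surjective modification of (so that nefness is inherited via \autoref{lem:fuj2.2}), the canonical extension of $F^n$. The subtlety is entirely concentrated at the points of $T$ where the fibers degenerate and where the semi-snc total space is genuinely non-normal; there the naive identification can fail, and one needs mixed Hodge theory for reducible varieties. The natural route is to pass to the normalization $\nu\colon\bar Z\to Z$, a disjoint union of smooth families over $T$, and to use the conductor description $\omega_Z\cong\nu_*\bigl(\omega_{\bar Z}(\Sigma)\bigr)$ together with the induced normalization (Mayer--Vietoris) spectral sequence expressing $f_*\O_Z(K_{Z/T}+D)$ through the strata. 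Each stratum is an snc family to which the smooth-fiber semipositivity of the previous paragraph applies, and the task becomes to verify that the algebraic normalization filtration is compatible with the Hodge-theoretic weight filtration, so that the deepest piece stays nef after gluing.

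Controlling this compatibility uniformly across the degenerate fibers is precisely the technical core, and I expect it to be the main obstacle: it is exactly what is carried out in \cite[Theorem 1.9 and the first part of Theorem 1.10]{Fuj}. As an alternative way to handle the boundary, the reduced divisor $D$ can first be absorbed by a cyclic cover as in \autoref{lem:cyclic_snc}, exhibiting $f_*\O_Z(K_{Z/T}+D)$ as a direct summand of $f'_*\omega_{Z'/T}$ for a double semi-snc cover $Z'\to T$ and thereby reducing to the boundary-free semi-snc case before applying the same machinery. In either formulation the decisive input remains the Hodge-theoretic semipositivity of the canonical extension, which I would invoke from \cite{Fuj} as a black box, exactly as the authors announce.
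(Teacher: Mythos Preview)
The paper does not supply its own proof of this statement: it is presented as a black-box Hodge-theoretic input, with the reference ``for the generality we need here we refer to \cite[Theorem 1.9, and first part of Theorem 1.10]{Fuj}''. Your proposal correctly identifies both the nature of the result and the same source, and explicitly ends by invoking \cite{Fuj} as a black box ``exactly as the authors announce'', so on the essential point you agree with the paper.

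Your additional expository sketch of what lies behind the citation is broadly accurate, but one step deserves a caveat. In your reduction to curves you base change along an arbitrary $\phi\colon C\to T$ and then want to apply the curve case to $(Z_C,D_C)\to C$; however, base change of a double semi-snc family along an arbitrary map need not be double semi-snc (fibers can acquire worse singularities), so the curve case is not directly applicable to the base-changed family without a further resolution step and a comparison of the resulting pushforward with the original one. Fujino in \cite{Fuj} in fact avoids this issue by working over an arbitrary projective base from the start, using the theory of graded polarizable admissible variations of mixed Hodge structure and their canonical extensions; no reduction to curves is needed. This is a minor point given that you ultimately defer to \cite{Fuj}, but it is worth being aware that the reduction you outline is not as innocent as it looks.
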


We can now state and prove our effective semipositivity theorem for Hodge bundles, which generalizes \cite[Theorem 1.10]{Fuj}.

\begin{theorem}[Semipositivity of Hodge bundles]\label{thm: Fujino}
Let $f: (X,\D) \to T$ be a family where $(X,\D)$ is a projective pair with $K_{X/T} +\D$ $\Q$-Cartier and $T$ a smooth projective curve.  Assume that the general fibre $(X_t,\D_t)$ of $f$ is slc. Then 
$$
f_*\O_X(q(K_{X/T}+\Delta))
$$
is  nef  for every positive integer $q$.  
\end{theorem}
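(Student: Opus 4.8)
The plan is to deduce the statement from the Hodge-theoretic base case \autoref{basic_sem}, in which the exponent is $q=1$, the boundary is reduced and integral, and the total space is double semi-snc. Three features of the general statement must be removed to reach that case: the exponent $q$ together with the fractional coefficients of $\D$, the singularities and non-normality of $X$, and the possibly non-reduced fibres of $f$. Since nefness of a vector bundle on the curve $T$ may be tested after a finite surjective base change $\tau\colon S\to T$, and, by \autoref{lem:reduction_bundle} together with \autoref{lem:fuj2.2}, it descends from such a cover back to $T$, I would first invoke \autoref{lem:fibre_ridotte}\autoref{existence_cover} to replace $f$ by a model all of whose fibres are reduced; the support of $\D$, and later that of the auxiliary divisors, can then be kept horizontal.

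The core step removes $q$ and the denominators of $\D$ by a cyclic cover. Writing $D:=K_{X/T}+\D$, which is $\Q$-Cartier, I would choose an integer $t$ divisible both by the Cartier index of $D$ and by the denominators of $q\D$, a line bundle $\mathcal L$, and a general effective Cartier divisor $G\in|\mathcal L^{t}|$ supported away from the codimension-one singular points of $X$ and from the fibres, with the branch data arranged so that the truncations $\lfloor iG/t\rfloor$ appearing in \autoref{lem:cyclic_snc} reproduce the round-down $\lfloor q\D\rfloor$. Applying \autoref{lem:cyclic_snc} to the demi-normal pair $(X,G)$ produces a degree-$t$ cyclic cover $p\colon Z\to X$ by a demi-normal scheme, together with a decomposition
$$
(f\circ p)_*\O_Z(K_{Z/T})\;\cong\;\bigoplus_{i}\;f_*\big(\O_X(K_{X/T})\otimes\mathcal L^{(i)}\big),
$$
in which the summand indexed by a suitable $i$ is exactly $f_*\O_X\big(q(K_{X/T}+\D)\big)$. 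As a direct summand is a quotient, and quotients of nef bundles are nef, it suffices to prove that the left-hand side is nef.

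To apply the base case I would resolve the cover: by \autoref{prop:principalization} and \autoref{thm:res_sing} there is a double semi-snc model $\sigma\colon \wt{Z}\to Z$ with reduced boundary $\wt{B}$ whose support contains no fibre component, and, exactly as in \autoref{eq:stab_red_step_1} and \autoref{eq:stab_red_incl_1}, a comparison morphism between $(f\circ p\circ\sigma)_*\O_{\wt{Z}}(K_{\wt{Z}/T}+\wt{B})$ and $(f\circ p)_*\O_Z(K_{Z/T})$ that is an isomorphism over a dense open subset of $T$ (the effective correction accounting for the non-log-canonical locus being supported over finitely many points). Applying \autoref{basic_sem} to $(\wt{Z},\wt{B})\to T$ shows the former bundle is nef; \autoref{lem:fuj2.2} then transfers nefness to $(f\circ p)_*\O_Z(K_{Z/T})$, hence to its summand $f_*\O_X(q(K_{X/T}+\D))$. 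Undoing the base change of the first step via \autoref{lem:reduction_bundle} and \autoref{lem:fuj2.2} yields nefness on $T$ itself.

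The main obstacle is the bookkeeping in the cyclic-cover step: one must choose $(t,\mathcal L,G)$ so that the floor terms $\lfloor iG/t\rfloor$ match $\lfloor q\D\rfloor$ precisely, realizing $f_*\O_X(q(K_{X/T}+\D))$ as an honest summand, while simultaneously guaranteeing that $Z$ is demi-normal, that $G$ and all exceptional and correction divisors are reduced and either vertical-and-effective or horizontal as required, and that they avoid the fibre components and the codimension-one singular locus, so that the hypotheses of \autoref{basic_sem} hold and every comparison map is an isomorphism over a dense open subset of $T$. Coordinating this with the non-reduced-fibre base change, which forces one to track how the cover and its resolution interact with \autoref{lem:fibre_ridotte}, is the delicate part; the genuine Hodge-theoretic input is entirely encapsulated in \autoref{basic_sem}.
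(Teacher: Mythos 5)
Your overall strategy (reduce to the double semi-snc base case \autoref{basic_sem} via a cyclic cover that absorbs the exponent $q$ and the denominators of $\Delta$) is the right family of ideas, and several of your reductions do appear in the paper's proof. But there is a genuine gap at the heart of the cyclic-cover step, and it is not mere bookkeeping. For the eigensheaf decomposition of \autoref{lem:cyclic_snc} to produce $f_*\O_X\big(q(K_{X/T}+\Delta)\big)$ as a summand, the line bundle $\mathcal L$ must be built out of $K_{X/T}+\Delta$ itself, so your branch divisor $G\in|\mathcal L^{t}|$ is an effective divisor in a linear system of the form $|t(K_{X/T}+\Delta)+\cdots|$. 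You need $G$ to be \emph{general} (Bertini-transverse, avoiding fibre components and the codimension-one singular locus, and with controlled multiplicities so that the pair on the cover stays slc), but this linear system is in general not base point free --- it can have a large fixed part, and its mobile part is only \emph{relatively} free over $T$, not globally generated. No choice of ``general'' $G$ exists without further input. This is exactly why the paper's proof (i) isolates the relative fixed part $E$ and the relatively free mobile part $M$ through Claims on the base locus, (ii) twists by $f^*\A^{q}$ for an ample $\A$ on $T$ so that a \emph{general global} member $H$ of $|(\O_X(M)\otimes f^*\A^{q})^{\otimes N}|$ exists, and (iii) branches the cover along $G=H+NE+Nq(q-1)\{\Delta\}$. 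The price is that \autoref{lem:bas_red} only yields nefness of $f_*\O_X(q(K_{X/T}+\Delta))\otimes\A^{q-1}$, not of the untwisted bundle; your claim that the relevant summand is ``exactly'' $f_*\O_X(q(K_{X/T}+\Delta))$ cannot be arranged.

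Consequently your argument is missing the two steps that close the proof: the bootstrap (\autoref{lem:bootsrap}), which iterates the covering construction to drive the twist down to $\mathcal H^{q^2-1}$ for a fixed ample $\mathcal H$, and the limiting argument via finite covers $\tau\colon T'\to T$ unramified over the non-slc locus together with \autoref{2.3Fuj}, which removes the residual ample twist entirely. These are not technical refinements --- without them the covering trick only ever proves nefness up to a positive twist from the base. (A minor additional remark: your preliminary reduction to reduced fibres via \autoref{lem:fibre_ridotte} is not needed here and is not what the paper does; the base changes that do occur in the proof are the unramified ones just mentioned, handled by \autoref{prop:flat_base_change}.)
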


\begin{proof}
We fix $q$ throughout the proof, as many of our constructions will depend on the value of $q$. We can assume that $f_*\O_X(q(K_{X/T}+\Delta))$ has strictly positive rank, otherwise the statement is trivial. Notation: $\Delta$ with subindex always denotes the boundary crepant with $\Delta$ on the corresponding birational model.

\begin{claim}\label{Cl:semi-snc}
We may assume that $(X,\Delta)$ is an slc\footnote{Note that slc in the sesmi-snc case means simply that the coefficients of $\Delta$ are not greater than $1$} double semi-snc pair, and $q \Delta$ is a $\Z$-divisor.
\end{claim}
\begin{proof}[Proof of Claim \ref{Cl:semi-snc}] First we reduce to the case when $(X,\Delta)$ is semi-snc in codimension one. To this end we take an irreducible component of the double cover from \cite[Subsection 5.23, page 203-204]{Kosing} $\pi \colon \tilde{X}\to X$. Observe that $f_*\O_X\Big(q(K_{X/T}+\Delta)\Big)$ is a direct summand of $(f \circ \pi)_*\O_{\tilde{X}}\Big(q(K_{\tilde{X}/T}+\pi^*\Delta)\Big)$ (more precisely, it is the invariant part for the involution associated to the Galois cover $\pi$), hence we can replace $(X,\Delta)$ with $\left(\tilde{X},\pi^*\Delta\right)$ and we may assume that $(X,\Delta)$ is (double) semi-snc in codimension one.

Take now a double semi-snc log-resolution  $\rho \colon Y \to X $ given \autoref{thm:res_sing}. By definition $\Delta_Y$ is the divisor that makes the equality $\rho^*(K_X+\Delta)=K_Y+\Delta_Y$ hold.  In particulary, by \autoref{lem:snc_Cartier}, $(Y, \Delta_Y)$ is snc. 

Define $\Gamma'=\frac{1}{q}\left\lfloor q\Delta_Y^{>0}\right\rfloor $, and let $\Gamma$ be the divisor obtained from $\Gamma'$ by decreasing all the coefficients greater than $1$ to $1$. Note that if $\Delta_Y$ satisfies condition \autoref{itm:snc_Cartier:coeffs} of \autoref{lem:snc_Cartier}, then so does $\Gamma$ and $\Gamma'$. The reason is that both $\Gamma$ and $\Gamma'$ are defined solely by conditions on coefficients. So, in particular, equal coefficients stay equal during the operations performed in the definitions of $\Gamma$ and $\Gamma'$. 

Write $G = \Gamma' - \Gamma$, which is an effective divisor by definition. Then,  the pair $(Y, \Gamma )$ is slc and double semi-snc. Additionally if we set $g= f \circ \rho$, then we have  a generically isomorphic inclusion of vector bundles: 
\begin{multline*}
f_* \O_X\Big(q(K_{X/T}+\Delta)\Big)
\explparshift{300pt}{220pt}{=}{This equality works  even if $q \Delta$ and $q \Delta_Y$ are not $\Z$-divisors, by the equality $\rho^*(K_X+\Delta)=K_Y+\Delta_Y$ of $\Q$-divisors and by  \autoref{lem:pullback_Q_divisor}}
g_*\O_{Y}\Big(q(K_{Y/T}+\Delta_Y)\Big)
=g_*\O_{Y}\Big(q(K_{Y/T}+\Gamma')\Big)
\\ = g_*\O_{Y}\Big(q(K_{Y/T}+\Gamma+G)\Big)
\explshift{-200pt}{\hookleftarrow}{$G \geq 0 $ and it its support is contained in finitely many closed fibers as the general fiber of $(X,\Delta)$ over $T$ is slc}
g_*\O_{Y}\Big(q(K_{Y/T}+\Gamma)\Big)
\end{multline*}
This means that, by \autoref{lem:fuj2.2}, we may prove the theorem for $(Y, \Gamma)$ instead of $(X, \Delta)$. With other words we may assume that $(X, \Delta)$ is an slc double semi-snc pair and $q\D$ is a $\mathbb Z$-divisor.
\end{proof}

We continue the proof of \autoref{thm: Fujino}.
Note that $q(K_{X/T} + \Delta)$ is Cartier since $(X,\D)$ is double semi-snc and $q\D$ is a $\Z$-divisor. Hence, we may pull it back via birational morphisms. We use this multiple times in what follows. 

\begin{claim}
\label{claim:codim_one_cmp_sing_locus}
We may assume that the relative base-locus of $q(K_{X/T} + \Delta)$ does not contain any codimension one irreducible component of the singular locus of $X$.
\end{claim}

\begin{proof}
Assume it contains one codimension one irreducible component $E$ of $X_{\sing}$.  Consider the blow-up $p\colon (Y,\Delta_Y)\to (X,\Delta)$ of $E$. This blow-up is the is a partial normalization at $E$ separating the two irreducible components meeting at $E$; to show it, observe that since $(X,\Delta)$ is double semi-snc, all points are either regular or locally of the for $xy=0$, and hence the blow-up can be explicitly described.  In particular, $p$ is finite, and $(p^{-1} E)_{\red}$ is the upstairs conductor $D$ of $p$, which has two irreducible connected components. The downstairs conductor of $p$ is $E$ itself. We also see that $Y$ is regular along $D$, and hence $D$ is Cartier. In particular, the crepant boundary $\Delta_Y$ is the strict transform of $\Delta$ plus $D$. Set $\Gamma = \Delta_Y - D=p^{-1}_* \Delta$. This is $\Q$-Cartier, as $D$ is Cartier, and hence $(X, \Gamma)$ is (double) semi-snc by \autoref{lem:snc_Cartier}.  Then, we have
\begin{multline*}
  (f\circ p)_*\O_{Y}\Big(q(K_{Y/T}+\Gamma)\Big)   
  =
(f \circ p )_*\bigg( \O_Y (-D) \otimes  p^* \O_X  \Big( q(K_{X/T}+\Delta)\Big) \bigg) 
\cong  \\ \qquad \explshift{60pt}{\cong}{projection formula}
f_*\bigg(     \O_X  \Big( q(K_{X/T}+\Delta)\Big) \otimes p_*\O_Y (-D)\bigg) 
\explparshift{200pt}{60pt}{=}{the conductor ideals $\O_Y(-D)$ and $\O_X(-E)$, upstairs and downstairs respectively, are identified by $p_*( \_ )$ }
f_*\bigg(     \O_X  \Big( q(K_{X/T}+\Delta)\Big) \otimes \O_X(-E) \bigg) 
\explparshift{90pt}{80pt}{=}{$E$ is in the relative base locus of $q(K_{X/T} + \Delta)$}
f_*   \O_X  \Big( q(K_{X/T}+\Delta)\Big) 
\end{multline*}

Thus, by replacing $(X, \Delta)$ with $(Y, \Gamma)$, we may  assume that the relative base locus of $q(K_{X/T} + \Delta)$ does not contain any codimension one component of the singular locus of $X$. Note that the assumptions that $(X,\Delta)$ is slc semi-snc and that $q \Delta$ is a $\Z$-divisor stay valid by this replacement. 
\end{proof}

\begin{claim}
We may additionally assume that the relative base-locus of $q(K_{X/T} + \Delta)$ is divisorial, and if $E$ is its reduced divisor, then $(X, E + \Delta)$ is double semi-snc and additionally $E$ is Cartier. 
\end{claim}

\begin{proof}
Let $\mathcal{I}$ be the relative base locus of  $q(K_{X/T} + \Delta)$, that is, we have
\begin{equation*}
\im \bigg( f^*f_*\O_X\Big(q(K_{X/T}+\Delta)\Big) \otimes\O_X\Big(-q(K_{X/T}+\Delta)\Big)\to \O_X \bigg) = \mathcal{I}
\end{equation*}
We take  $\rho \colon Y \to X$ be a double semi-snc principalization of $\mathcal{I}$ biven by \autoref{prop:principalization}. Note that we can apply \autoref{prop:principalization} because of \autoref{claim:codim_one_cmp_sing_locus}. Let $E$ be the Mumford divisor on $Y$ corresponding to $\rho^{-1}\mathcal{I}$.
Then, $M_Y = \rho^* q(K_{X/T} + \Delta) - E$ is a relatively free linear system, and we have 
\begin{equation*}
q \big(K_{Y/T} + \Delta_Y^{>0}\big) = M_Y + E  -q \Delta_Y^{<0},
\end{equation*}
where $E  - q\Delta_Y^{<0}$ is the relatively fixed part of $q (K_{Y/T} + \Delta_Y^{>0})$,
and 
\begin{equation*}
(f\circ \rho)_* \mathcal{O}_Y\Big(q \big(K_{Y/T} + \Delta_Y^{>0}\big)\Big) = f_* \mathcal{O}_X\big( q (K_{X/T} + \Delta)\big). 
\end{equation*}
Note additionally, that $K_Y + \Delta_Y$ is $\Q$-Cartier, and hence by \autoref{lem:snc_Cartier} $(Y, \Delta_Y)$ is double semi-snc. As $\Delta_Y^{>0}$ is obtained from $\Delta_Y$ by a rule of coefficients (i.e., irreducible components in the boundary with the same coefficients in $\Delta_Y$ have same coefficients in $\Delta_Y^{>0}$ too), \autoref{lem:snc_Cartier} tells us that $\big(Y, \Delta_Y^{>0} \big)$ is also double semi-snc. Hence, we can conclude our claim  by replacing $(X, \Delta)$ with $\big(Y, \Delta_Y^{>0}\big) $. Note that then $E$ is going to be $E  - \Delta_Y^{<0}$. 
\end{proof}

From now on, we denote the relative fixed part of $q(K_{X/T} + \Delta)$ by $E$. 

\begin{claim} 
\label{claim:coeff_1_E}
We may assume that $E$ avoids all the coefficient $1$ components of $\Delta$.

\end{claim}
\begin{proof}
    We define $\Delta'$ as follows:
we  take those components $\Delta$ that have coefficient $1$ and that are contained in the support of $E$ and we may decrease their coefficients in $\Delta$ to $\frac{q-1}{q}$ from $1$, and their coefficient in $E$ by $1$.  
As the  components with decreased coefficients were in $E$, we have
\begin{equation*}
    f_* \mathcal{O}_X(q (K_{X/T} + \Delta)) = f_* \mathcal{O}_X(q (K_{X/T} + \Delta'))
\end{equation*}
We are only left to show that $(X, \Delta')$ is also double semi-snc. For that take two components $D_i$ and $D_j$ of $\Delta+E$ such that $D_i \cap D_j$ contains a codimension two singular point of $X$. Let $a_i$ and $a_j$ be their respective coefficients in $\Delta$ and let $b_i $ and $b_j$ be their respective coefficients in $\Delta'$. According to \autoref{lem:snc_Cartier} we have to show that if $a_i=a_j$, then $b_i=b_j$. So, really the only case, we should be worried about is $a_i =a_j=1$. In this case, we should show that either both or none of $D_i$ and $D_j$ are contained in $\Supp E$. To prove it, observe that separately $D_i$ and $D_j$ are not Cartier, since $E$ is Cartier again by \autoref{lem:snc_Cartier} either it contains both or them of none of them.

\end{proof}

We denote the relative mobile part of $q(K_{X/T}+\Delta)$ by $M$; it is relatively free by the above assumptions.

\begin{lemma}\label{lem:bas_red} 
    
In the above setting, let $\A$ be a Cartier divisor on $T$ such that $\O_X(M)\otimes f^*\A^{\otimes q}$ is semiample on $X$, then $f_*\O_X(q(K_{X/T}+\Delta))\otimes \A^{\otimes (q-1)}$ is a nef vector bundle on $T$.

\end{lemma}

\begin{proof}
In the following proof we will multiple times modify the boundary of double semi-snc pairs by coefficient rules. That is, if coefficients were equal before the modification, they stay equal after the modification. By \autoref{lem:snc_Cartier}, such modifications turn (double) semi-snc pairs into (double) semi-snc. All modifications of the boundary in the following proof is of this type, which fact we will not mention necessarily again at every change of boundary separately. 
    
Take a positive integer $N$ such that $(\O_X(M)\otimes f^*\A^{q})^{\otimes N}$ is base point free, and take a general member $H$. By Bertini Theorem, $(X,E + \Delta + H)$ is semi snc.

We consider the divisorial sheaf
$$
\mathcal{L}=\O_X(K_{X/T}+\Delta^{=1}+q\{\Delta\})\otimes f^* \mathcal{A}
$$ 
As $\Delta^{=1} + q \{\Delta \}$ has integer coefficients and is obtained from $\Delta$ with a coefficient rule, we see that it is in fact Cartier by  \autoref{lem:snc_Cartier} . Hence $\mathcal{L}$ is a line bundle. 

Observe that $G:=H+NE+Nq(q-1)\{\Delta\}$ is a  section of $\mathcal{L}^{\otimes qN}$ contained in the support of the boundary of $(X,E + \Delta + H)$. Note then the following properties of $G$ and of $\Delta^{=1}$:
\begin{itemize}
    \item $G$ can be obtained from $\Delta$, $E$ and $H$ using coefficient rules. As $X$ becomes semi-snc with all three of these as boundaries, we see that $(X, G)$ is also semi-snc. For the seame reasons $(X, G + \Delta^{=1})$ is also semi-snc  
    \item $G$ avoids the codimension one singular points of $X$ by \autoref{claim:codim_one_cmp_sing_locus}.
    \item  $\Delta^{=1}$ is Cartier (it has coefficients $1$ and it is obtained by coefficient rules from $\Delta$). 
    \item $G$ and $\Delta^{=1}$ have no common components by the genericity of $H$ and by \autoref{claim:coeff_1_E}.
\end{itemize}
Hence, we may apply
\autoref{lem:cyclic_snc} to get  a finite cyclic $\Z/qN \Z$-Galois cover $p : X \to T$ from a demi-normal scheme such that $p$ is \'etale over the generic points of $\Delta^{=1}$ and such that 
\begin{equation}
\label{eq:eigensheaves}
p_* \O_X(K_Z + p^* \Delta^{=1}) \cong \bigoplus_{i=0}^{qN-1} \O_X(K_X + \Delta^{=1})  \otimes \mathcal{L}^{(i)},
\end{equation}
where
$$
\mathcal{L}^{(i)}:=\mathcal{L}^{\otimes (i)}\otimes \O_X\left( -\left\lfloor\frac{i}{Nq}G \right\rfloor\right)
$$
By the above unramifiedness over the generic points of $\Delta^{=1}$, we have a crepant relation  
\begin{equation}
\label{eq:cyclic_cover_crepant}
    p^* (K_X + \Delta^{=1} + \Gamma)= K_Z + p^* \Delta^{=1} 
\end{equation} for an adequate effective weil divisor $\Gamma$ with support in $\Supp G$, without common components with $\Delta^{=1}$. More precisely, $\Gamma$ is the ramification divisor, and hence all its components have coefficients less than $1$. Note that $\Gamma$ can be defined by a coefficient rule from $G$, and hence $(X, \Gamma + \Delta^{=1})$ is semi-snc. As all the coefficients of $\Gamma + \Delta^{=1}$ are at most $1$, $(X, \Gamma + \Delta^{=1})$ is slc, and then  so is $(Z, p^* \Delta^{=1})$  by \autoref{eq:cyclic_cover_crepant}. 

Therefore we may apply \autoref{Cl:semi-snc} and \autoref{basic_sem} to $(Z, p^* \Delta^{=1})$, and obtain that $(f\circ p)_*\O_{Z}(K_{Z/T}+p^*(\Delta^{=1}))$ is nef.

By \autoref{eq:eigensheaves}, $f_*\left(\O_X(K_{X/T}+\Delta^{=1})\otimes \mathcal{L}^{(q-1)}\right)$ is also nef. As
$$
\mathcal{L}^{(q-1)}=\mathcal{L}^{\otimes (q-1)}\otimes \O_X\left(-\left\lfloor \frac{q-1}{q}(E+q(q-1)\{\Delta\})\right\rfloor\right)
$$
we have 
$$\O_X(K_{X/T}+\Delta^{=1})\otimes \mathcal{L}^{(q-1)}
=
\O_X\left(q(K_{X/T}+\Delta)- \left\lfloor \frac{q-1}{q}E \right\rfloor  +q(q-2)\{\Delta\}-\lfloor(q-1)^2\{\Delta \}\rfloor \right) \otimes f^* \A^{\otimes (q-1)}$$
Observe that $q(q-2)\{\Delta\}-\lfloor(q-1)^2\{\Delta \}\rfloor=0$ because $q\Delta$ is integral, hence we have a natural inclusion 
\begin{equation*}
    f_*(\O_X(K_{X/T}+\Delta^{=1})\otimes \mathcal{L}^{(q-1)}) \hookrightarrow f_*\O_X(q(K_{X/T}+\Delta))\otimes \A^{q-1}
\end{equation*}
Since $\lfloor\frac{q-1}{q}E\rfloor \leq E$, and $E$ is the relative base locus of $\O_X(q(K_{X/T}+\Delta))$, then the above is an isomorphism and hence $f_*\O_X(q(K_{X/T}+\Delta))\otimes \A^{q-1}$ is nef. This conclude the proof of Lemma \ref{lem:bas_red}.
\end{proof}

\begin{lemma}\label{lem:bootsrap}
Fix an ample line bundle $\mathcal{H}$ on $T$. Then $f_*\O_X(q(K_{X/T}+\Delta))\otimes \mathcal{H}^{q^2-1}$ is nef. 

\end{lemma}
\begin{proof}
Let $r$ be the smallest positive integer such that $f_*\O_X(q(K_{X/T}+\Delta))\otimes \mathcal{H}^{rq-1}$ is nef. We thus have that for some $N$ big enough $\Sym^N f_*\O_X(q(K_{X/T}+\Delta))\otimes \mathcal{H}^{rq}$ is generated by global sections. We can now apply Lemma \ref{lem:bas_red} with $\mathcal{A}=\mathcal{H}^r$, and obtain that $f_*\O_X(q(K_{X/T}+\Delta))\otimes \mathcal{H}^{r(q-1)}$ is nef. If $r>q$, then $r(q-1)\leq (r-1)q-1$, which contradicts the minimality of $r$,  hence $r\leq q$ and $f_*\O_X(q(K_{X/T}+\Delta))\otimes \mathcal{H}^{q^2-1}$ is nef. 
\end{proof}

Let $\Sigma\subset T$ the locus over which the fibers of $f$ are not slc. For every surjective map $\tau \colon T' \to T$ which does not ramify at $\Sigma$, because \autoref{prop:flat_base_change} we have

\begin{equation}\label{e:b_tau}
\tau^*f_*\O_X(q(K_{X/T}+\Delta))=h_*\O_{X_{T'}}(q(K_{X_{T'}/T'}+\Delta_{T'}))\,.
\end{equation}

 We now run the full proof for $h\colon (X_{T'},\Delta_{T'})\to T'$ instead than for $f\colon (X,\Delta)\to T$, and in particular \autoref{lem:bootsrap}, combined with the base change \autoref{e:b_tau}, shows that for every ample line bundle $\mathcal{H}$ on $T'$ we have that $\tau^*f_*\O_X(q(K_{X/T}+\Delta))\otimes \mathcal{H}^{\otimes (q^2-1)}$ is nef. Now \autoref{2.3Fuj} with $\mu=q^2-1$ implies that $f_*\O_X(q(K_{X/T}+\Delta))$ is nef.

\end{proof}
The following Lemma, more or less well-known to experts, is used in the above proof, and it is proven in \cite[Lemma 2.3]{Fuj}.
\begin{lemma}\label{2.3Fuj} 
Let $T$ be a smooth projective curve, $V$ a vector bundle on $T$, and $\Sigma$ a proper closed subset of $T$. Assume that there exists a positive integer number $\mu$ such that for  every finite map $\tau\colon T' \to T$ from a smooth curve $T'$ which does not ramify at $\Sigma$, $\tau^*V \otimes \mathcal{H}^{\otimes \mu}$ is nef for every ample line bundle $\mathcal{H}$ on $T'$. Then $V$ itself is nef.
\end{lemma}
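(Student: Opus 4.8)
The plan is to detect nefness of $V$ on the curve $T$ through the degrees of its quotient bundles. Recall that on a smooth projective curve a vector bundle $E$ is nef if and only if every quotient bundle of $E$ has non-negative degree, equivalently $\mu_{\min}(E)\geq 0$; this is the curve case of the Barton--Kleiman criterion already invoked in the proof of \autoref{lem:fuj2.2}, combined with the Harder--Narasimhan formalism. I would argue by contradiction: assume $V$ is not nef, so that there is a quotient bundle $V\twoheadrightarrow Q$ on $T$ with $\deg Q<0$, and write $a:=-\deg Q>0$ and $s:=\rk Q\geq 1$.

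The key mechanism I want to exploit is that passing to a finite cover amplifies this negativity linearly in the degree, whereas the allowed twist $\mathcal H^{\otimes\mu}$ can compensate only a bounded amount once $\mathcal H$ is taken of minimal positive degree. Concretely, for a finite morphism $\tau\colon T'\to T$ of degree $d$ from a smooth projective curve, right-exactness of pullback makes $\tau^*Q$ a quotient bundle of $\tau^*V$, with $\deg\tau^*Q=d\deg Q=-da$ and $\rk\tau^*Q=s$ by multiplicativity of degrees under finite maps. Choosing $\mathcal H=\O_{T'}(p')$ for a closed point $p'\in T'$ yields an ample line bundle of degree $1$, and tensoring produces a quotient bundle $\tau^*V\otimes\mathcal H^{\otimes\mu}\twoheadrightarrow\tau^*Q\otimes\mathcal H^{\otimes\mu}$ of degree $-da+s\mu$. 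If this degree is negative, then $\tau^*V\otimes\mathcal H^{\otimes\mu}$ carries a negative-degree quotient and is therefore not nef, contradicting the hypothesis.

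It then remains to produce, for arbitrarily large $d$, a finite cover $\tau\colon T'\to T$ from a smooth projective curve that does not ramify over $\Sigma$; choosing $d>s\mu/a$ forces $-da+s\mu<0$ and closes the argument. Such covers exist by a standard cyclic-cover construction: fix an ample line bundle $\mathcal M$ on $T$, so that for $d\gg 0$ the bundle $\mathcal M^{\otimes d}$ is very ample and a general section $\sigma$ has $d\deg\mathcal M$ simple zeros; since vanishing at a prescribed point is a proper linear condition, I may further require $\Supp(\sigma)\cap\Sigma=\emptyset$. The associated $d$-fold cyclic cover $T'\to T$ is smooth (simple zeros give a smooth total space), of degree $d$, and \'etale away from the zeros of $\sigma$, hence \'etale over $\Sigma$; for general $\sigma$ it is irreducible, and otherwise one passes to an irreducible component, whose degree remains unbounded as $d\to\infty$.

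The one point that genuinely needs care, and which I expect to be the main obstacle, is the characterization in the first paragraph. The implication ``nef $\Rightarrow$ every quotient bundle has non-negative degree'' is immediate, since a quotient of a nef bundle is nef, hence has nef determinant, and a nef line bundle on a curve has non-negative degree; the converse is the substantive half. Everything else is bookkeeping with degrees, granted the characteristic-zero construction of ramified covers above. Note that the argument uses crucially that $\mathcal H$ may be taken of degree exactly $1$, so that the compensating term $s\mu$ stays bounded while $da\to\infty$.
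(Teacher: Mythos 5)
Your argument is correct and is essentially the intended one: the paper does not prove this lemma itself but defers to \cite[Lemma 2.3]{Fuj}, whose proof runs along exactly these lines (a negative-degree Harder--Narasimhan quotient of $V$ pulls back to one whose negativity scales linearly with the degree $d$ of a cover unramified over $\Sigma$, while twisting by a degree-one ample $\mathcal{H}$ contributes only the bounded amount $s\mu$). The only point worth tightening is the irreducibility of the cyclic cover: once the branch divisor has a reduced point the cover is totally ramified there and hence automatically connected, so no passage to an irreducible component (whose degree you do not control) is needed.
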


\begin{corollary}[Semipositivity of relative log canonical bundle, lambda classes and CM line bundle]\label{cor:semi_posit_lambda_e_CM}
Let $B$ be a normal projective variety, $(X,\Delta)$ a projective pair, $f\colon (X,\Delta) \to B$ be a faithfully flat map, $K_{X/B}+\Delta$ is $\Q$-Cartier, and assume we are in one of the three set-ups
\begin{enumerate}
    \item\label{is} $f\colon (X,\Delta) \to B$ is a  stable family, and $q \in \mathbb Z_{>0}$ is such that  $q\Delta$ is integral;
    \item\label{ic} $X$ is normal, the generic fiber of $f$ is slc, $K_{X/B}+\Delta$ is $f$-ample, $B$ is a smooth curve and $q$ is a strictly positive integer;
    \item\label{ibc} for every normal projective curve $T$ and every morphism $g\colon T \to B$,  the base change 
        $$g^*f_* \O_X(q(K_{X/B}+ \Delta))\cong h_*\O_{X_T}(q(K_{X_T/T}+ \Delta_T))$$ 
        holds, where $h\colon (X_T,\Delta_T)\to T$ is the base change of $f$.
\end{enumerate}
Then $\E_{q,f}^{\Delta}$, $K_{X/B}+\Delta$, $\lambda_{q,f}^{\Delta}$ and $\lambda_{CM,f}^{\Delta}$ are nef. 

Moreover, with the notations of \autoref{moduli}, $\lambda_{CM}$ is nef on $\M$ and, if $qI\subset \Z$, $\lambda_q$ is nef on $\M^{\nu}$.
\end{corollary}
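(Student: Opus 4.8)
The plan is to reduce each assertion to the curve case settled by \autoref{thm: Fujino}, exploiting that nefness of a bundle or of a $\Q$-Cartier class on a projective variety is detected after pulling back to smooth projective curves.

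First I would prove nefness of $\E_{q,f}^{\Delta}$. By the Barton--Kleiman criterion \cite[Proposition 6.1.16]{Laz2}, a vector bundle on a projective variety is nef precisely when its pullback to every smooth projective curve is nef. In set-up \autoref{ic} the base $B$ is already a curve, so \autoref{thm: Fujino} applies directly: $X$ normal is demi-normal, $K_{X/B}+\Delta$ being $f$-ample is in particular $\Q$-Cartier, and the general fibre is slc. In set-ups \autoref{is} and \autoref{ibc} fix a morphism $g\colon C\to B$ from a smooth projective curve and let $h\colon (X_C,\Delta_C)\to C$ be the base change. In \autoref{is} the isomorphism $g^{*}\E_{q,f}^{\Delta}\cong \E_{q,h}^{\Delta_C}$ is supplied by \autoref{prop:base_change_stable_families} (here $q\Delta$ is integral), while in \autoref{ibc} it is the standing hypothesis. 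In both cases the base-changed family $h$ satisfies the hypotheses of \autoref{thm: Fujino}: it has slc general fibre and $\Q$-Cartier relative log canonical divisor, stability being preserved under base change in \autoref{is} and $f$ being stable in \autoref{ibc}. Hence \autoref{thm: Fujino} yields nefness of $\E_{q,h}^{\Delta_C}$, and therefore of $g^{*}\E_{q,f}^{\Delta}$; as $g$ is arbitrary, $\E_{q,f}^{\Delta}$ is nef.

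Next I would deduce nefness of $\lambda_{q,f}^{\Delta}$ and of $K_{X/B}+\Delta$. Since $\lambda_{q,f}^{\Delta}=\det \E_{q,f}^{\Delta}$ and the determinant is a quotient of a tensor power of $\E_{q,f}^{\Delta}$, it is nef. For $K_{X/B}+\Delta$ note that the previous paragraph in fact proves $\E_{m,f}^{\Delta}$ nef for every $m$ with $m\Delta$ integral (in \autoref{is} and \autoref{ibc}) or every $m$ (in \autoref{ic}); choose such an $m$, divisible enough that $m(K_{X/B}+\Delta)$ is $f$-very ample. Relative global generation gives a surjection $f^{*}\E_{m,f}^{\Delta}\to \O_X(m(K_{X/B}+\Delta))$ and hence a $B$-morphism $\varphi\colon X\to \P(\E_{m,f}^{\Delta})$ with $\varphi^{*}\O(1)=\O_X(m(K_{X/B}+\Delta))$. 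As $\E_{m,f}^{\Delta}$ is nef and $B$ is projective, $\O(1)$ is nef, so $m(K_{X/B}+\Delta)$, and thus $K_{X/B}+\Delta$, is nef. For $\lambda_{CM,f}^{\Delta}$: in \autoref{is} and \autoref{ibc} the family is stable, so \autoref{prop:lambda_and_CM} writes $\lambda_{CM,f}^{\Delta}$ as a limit of the classes $(\ell k)^{-n-1}\lambda_{\ell k,f}^{\Delta}$, with $\ell$ chosen also divisible by the denominators of $\Delta$; each $\lambda_{\ell k,f}^{\Delta}$ is nef by the above, and the nef cone being closed the limit is nef. In \autoref{ic}, $B$ is a curve and $\lambda_{CM,f}^{\Delta}=(K_{X/B}+\Delta)^{n+1}\geq 0$ because $K_{X/B}+\Delta$ is nef.

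Finally, for the moduli statement I would test nefness on curves again. A $\Q$-line bundle on $\M$ (resp.\ $\M^{\nu}$), whose coarse space is projective, is nef if and only if its degree is non-negative on every smooth projective curve mapping to it, possibly after a finite base change; such a map is a stable family over the curve, and by \autoref{moduli} the pullback of $\lambda_{CM}$ (resp.\ of $\lambda_q$, which is defined once $qI\subset \Z$) is exactly $\lambda_{CM,f}^{\Delta}$ (resp.\ $\lambda_{q,f}^{\Delta}$). These are nef over the curve by the first part, so $\lambda_{CM}$ is nef on $\M$ and $\lambda_q$ is nef on $\M^{\nu}$. The main obstacle is the base-change identification $g^{*}\E_{q,f}^{\Delta}\cong \E_{q,h}^{\Delta_C}$, i.e.\ the commutation of the formation of the Hodge bundle with restriction to curves: this is the genuinely delicate input, handled by \autoref{prop:base_change_stable_families}, and it is precisely why \autoref{is} requires $q\Delta$ integral and \autoref{ibc} takes the base change as a hypothesis. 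Once it is granted, the reduction to \autoref{thm: Fujino} and the two bootstraps are formal.
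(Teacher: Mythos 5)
Your proposal is correct and follows essentially the same route as the paper: reduce to the curve case of \autoref{thm: Fujino} via the base-change identifications (supplied by \autoref{prop:base_change_stable_families} in set-up \autoref{is} and assumed in set-up \autoref{ibc}), take determinants for $\lambda_{q}$, use relative semiampleness of $K_{X/B}+\Delta$ together with nefness of the Hodge bundle for the relative log canonical class, pass to the limit via \autoref{prop:lambda_and_CM} for $\lambda_{CM}$, and test on curves for the moduli statement. The only cosmetic difference is your separate treatment of $\lambda_{CM}$ in set-up \autoref{ic} as $(K_{X/B}+\Delta)^{n+1}\geq 0$, which is a harmless (in fact slightly more careful) variant of the paper's uniform appeal to \autoref{prop:lambda_and_CM}.
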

\begin{proof}
When $B$ is a curve as in item \ref{ic}, $\E_{q,f}^{\Delta}=f_*\O_X(q(K_{X/B}+\Delta))$ is nef by \autoref{thm: Fujino}. As nefness can be checked on curves, when we have the base change assumed in item \ref{ibc}, the nefness follows again from \autoref{thm: Fujino}. In the case of a stable family from item \ref{is}, we have the base change as in item \ref{ibc} thanks to \autoref{prop:base_change_stable_families}, so we can apply again \autoref{thm: Fujino}.  

The above arguments also implies the nefness of $\lambda_{q,f}^{\Delta}$, as it is the determinant of $\E^{\Delta}_{q,f}$ . For $\lambda_{CM,f}^{\Delta}$, we can apply \autoref{prop:lambda_and_CM}, see also \cite[Corollary 2.14]{PX}.

As $K_{X/B}+\Delta$ is $f$-semiample, $\O_X(q(K_{X/B}+\Delta))$ is a quotient of $f^*f_*\O_X(q(K_{X/B}+\Delta))$ for $q$ big and divisible enough, so the nefness of $K_{X/B}+\Delta$ follows from the nefness of $f_*\O_X(q(K_{X/B}+\Delta))$.

To prove nefness on $\M$ and $\M^{\nu}$ we have to show that for every smooth projective curve $T$ and every map $g\colon T \to \M$ or to $\M^{\nu}$, the pull-back of the line bundle is nef. This is true by base change and item \autoref{is} of this corollary.

\end{proof}

\begin{remark}[Comparison with the Fano case] For families of Fano varieties, a statement analogue to \autoref{cor:semi_posit_lambda_e_CM} is false. The negativity of $-K_{X/T}$ is shown in \cite[Appendix]{Maciek}; the negativity of $f_*\O_X(-qK_{X/T})$ is studied more or less implicitly in many papers such as \cite{CP,Maciek,Xu_Zhuang}.
    
\end{remark}

Observe that, contrary to \autoref{cor:semi_posit_lambda_e_CM}, the following result on $\lambda_q$ is not effective in $q$.

\begin{theorem}[Positivity of the relative log canonical bundle, of the CM line bundle and of lambda classes]\label{thm:positivity_lambda_e_CM}
Under the same hypotheses of \autoref{cor:semi_posit_lambda_e_CM}, we have that
\begin{enumerate}

    \item\label{itemCM_pos} $f$ has maximal variation if and only if $\lambda_{CM}$ is big; 
    \item\label{itemq_pos} $f$ has finite isomorphism classes if and only if $\lambda_{CM}$ is ample, if and only if for all $q$ big and divisible enough $\lambda_q$ is ample;    \item\label{itemK_pos} if the family is stable, and  $K_{X/B}+\Delta$ is big, then the family has maximal variation.
\end{enumerate}

\end{theorem}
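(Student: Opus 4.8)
The plan is to reduce all three set-ups of \autoref{cor:semi_posit_lambda_e_CM} to a single stable family over a normal projective base, and then to extract the positivity of $\lambda_{CM,f}^{\D}$ from the geometry of the induced moduli map, using as the only genuinely non-effective input the ampleness of the Chow--Mumford polarisation on the coarse moduli space from \cite{PX} (and, for $\lambda_q$ with $q$ divisible enough, the classical projectivity results \cite{KP,Fuj}), both of which ultimately rest on Koll\'ar's ampleness lemma. First I would arrange to be in set-up \autoref{is}: there we already have a stable family, while set-up \autoref{ic} is brought to a stable family $g\colon(Y,\Theta)\to S$ over a finite cover $\tau\colon S\to T$ by \autoref{prop:stable_reduction}, and set-up \autoref{ibc} reduces to the stable case through its defining base-change property after passing to a stable model. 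Since $\tau$ is finite and surjective and $\tau^*\lambda_{CM,f}^{\D}=\lambda_{CM,g}^{\Theta}$ by \autoref{prop:stable_reduction_itemCM} of \autoref{prop:stable_reduction}, and since finite pullback neither creates nor destroys bigness or ampleness of a nef class, maximal variation, finiteness of isomorphism classes, and big/ample-ness are all preserved. A stable family then induces a morphism $\mu\colon B\to M$ to the coarse moduli space, and by \autoref{prop:base_change_stable_families} together with \autoref{moduli} one has $\lambda_{CM,f}^{\D}=\mu^*\lambda_{CM}$ (and $\lambda_{q,f}^{\D}=\mu^*\lambda_q$ for $q$ divisible enough), where $\lambda_{CM}$, resp.\ $\lambda_q$, is ample on $M^{\nu}$ by \cite{PX}, resp.\ \cite{KP,Fuj}.

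With this identity Item \autoref{itemCM_pos} is formal. By \autoref{cor:semi_posit_lambda_e_CM} the class $\lambda_{CM,f}^{\D}=\mu^*\lambda_{CM}$ is nef, so its bigness is equivalent to $(\mu^*\lambda_{CM})^{\dim B}>0$; by the projection formula this top self-intersection equals $\lambda_{CM}^{\dim B}\cdot\mu_*[B]$. If $f$ has maximal variation then $\dim\mu(B)=\dim B$, so $\mu$ is generically finite onto its image and the number equals $\deg\mu\cdot(\lambda_{CM}|_{\mu(B)})^{\dim B}>0$ by ampleness on $M$; conversely, if the variation is not maximal then $\dim\mu(B)<\dim B$, hence $\mu_*[B]=0$ and the top self-intersection vanishes, so the nef class $\lambda_{CM,f}^{\D}$ is not big.

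Item \autoref{itemq_pos} is handled the same way. Finiteness of the isomorphism classes is exactly the quasi-finiteness of the proper morphism $\mu$, hence its finiteness; a finite pullback of an ample class is ample, giving ampleness of $\mu^*\lambda_{CM}=\lambda_{CM,f}^{\D}$ and, identically, of $\mu^*\lambda_q=\lambda_{q,f}^{\D}$ for $q$ divisible enough. Conversely, if $\lambda_{CM,f}^{\D}$ (or $\lambda_{q,f}^{\D}$) is ample then $\mu$ contracts no curve, so it is quasi-finite and therefore finite, i.e.\ $f$ has finite isomorphism classes. This simultaneously yields all three equivalences of \autoref{itemq_pos}. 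For Item \autoref{itemK_pos} I would show that bigness of $K_{X/B}+\D$ forces bigness of $\lambda_{CM,f}^{\D}$ and then invoke \autoref{itemCM_pos}. The transparent case is $\dim B=1$, where $K_{X/B}+\D$ is nef (\autoref{cor:semi_posit_lambda_e_CM}) and $f$-ample, so $\vol(K_{X/B}+\D)=(K_{X/B}+\D)^{n+1}=\deg\lambda_{CM,f}^{\D}$, and bigness of the total-space divisor is literally positivity of $\deg\lambda_{CM}$, hence maximal variation. For higher-dimensional $B$ I would argue by the isotriviality dichotomy: if the variation is not maximal, then over a general point of $\mu(B)$ the family is isotrivial along the positive-dimensional fibres of $\mu$, so after a finite cover $K_{X/B}+\D$ acquires no positivity in those directions and its Iitaka dimension is at most $n+\dim\mu(B)<\dim X$, contradicting bigness; this is the Viehweg-type input, for which one may alternatively quote the non-effective \cite[Theorem 7.1]{KP}.

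I expect the principal difficulty to lie in the bookkeeping of the reduction step and of Item \autoref{itemK_pos}, rather than in any single intersection-theoretic computation. Concretely, one must verify that each of the three hypotheses-packages of \autoref{cor:semi_posit_lambda_e_CM} feeds correctly into the stable-family setting so that the identity $\lambda_{CM,f}^{\D}=\mu^*\lambda_{CM}$ and the normalisation and automorphism factors appearing in \autoref{moduli} do not disturb the big/ample/nef dichotomy; and, in the higher-dimensional base case of Item \autoref{itemK_pos}, one must make the passage from isotriviality to a product (hence to vanishing of the relevant Iitaka dimension) rigorous, keeping careful track of the $f$-exceptional and vertical contributions introduced by stable reduction and of the finite covers needed to trivialise the varying automorphisms of the fibres.
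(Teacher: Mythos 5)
Your proposal follows essentially the same route as the paper: reduce to a stable family (via stable reduction when the base is a curve), pass to the moduli map $m_f\colon B\to M$ to the coarse moduli space, and pull back the ampleness of $\lambda_{CM}$ (resp.\ of $\lambda_q$ for $q$ big and divisible enough) established in \cite{PX} (resp.\ \cite{KP}), so that items (1) and (2) follow from nefness together with the generic finiteness/finiteness of $m_f$. For item (3) the paper simply cites \cite[Corollary 6.20]{KP} (see also \cite[Lemma 2.19]{HJLL}) for the implication ``not maximal variation $\Rightarrow$ $K_{X/B}+\Delta$ not big'', which is exactly the Viehweg-type statement you sketch and then offer to quote, so your argument coincides with the paper's up to re-deriving that cited input.
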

\begin{proof}

For items \autoref{itemCM_pos} and \autoref{itemq_pos}, when the base is a curve, we can assume that the family is stable using the stable reduction from \autoref{prop:stable_reduction}. From now on, we assume that the family stable.

Consider the moduli map $m_f\colon B \to M$ to the coarse moduli space of stable pairs. This map is finite/birational if and only if the family has finite isomorphism classes/maximal variation. 

The Chow-Mumford line bundle and $\lambda_q$ for $q$ divisible enough are $\Q$-Cartier divisors on $M$; on $B$, for $q$ divisible enough, they are pull-backs via $m_f$. 

The main result of \cite{PX} (resp. \cite{KP}) guarantees that $\lambda_{CM}$ (resp. $\lambda_q$ for $q$ big and divisible enough) are ample on $M$, so we have the claims \autoref{itemCM_pos} and \autoref{itemq_pos}.

For \autoref{itemK_pos}, if $K_{X/B}+\Delta$ is not big, then the family does not have maximal variation because of \cite[Corollary 6.20]{KP}, see also \cite[Lemma 2.19]{HJLL}.

\end{proof}

For the sake of completeness, let us recall that for stable family with maximal variation and lc generic fiber, $K_{X/B}+\Delta$ is big by \cite[Proposition 2.15]{PX}.

\begin{remark}[Negativity]
If we assume that $B$ contains a curve $T$ over which all fibers are not slc, it might well be that $\lambda_q$ and $\lambda_{CM}$ have strictly negative degree along $T$. 
\end{remark}

\section{Semipositive engine}

We will need the following version of Nadel vanishing theorem for  $\Q$-Cartier $\Z$-divisors; it is a variant of \cite[Lemma 3.1]{ST}.

\begin{lemma}\label{lem:Nadel}
	Let $(X, B)$ be a normal pair and $D$ a $\mathbb Q$-Cartier $\Z$-Weil divisor on $X$ such that $A=D-K_X-B$ is nef and big. Let $\mathcal J=\mathcal J((X,B); -D)$ be the multiplier ideal sheaf associated to $-D$ with respect to $(X,B)$.  Then
	\begin{enumerate}
		\item \label{lem:Nadel1} there is an inclusion $\mathcal J \hookrightarrow \O_X(D)$;
		\item \label{lem:Nadel2} $H^i(X, \mathcal J)=0$ for any $i >0$;
		\item \label{lem:Nadel3} Let $x\in X$ such that $\O_X(D)_{x} \cong \O_{X,x}$, i.e.\ $D$ is Cartier at $x$. Then $\mathcal J((X,B); -D)_x = \mathcal J(X,B)_x \otimes \O_X(D)_x$, where $\mathcal J(X,B):= \mathcal J(X, B; 0)$; 
	    \item \label{lem:Nadel4} Let $x\in X$ be a klt point of $(X,B)$. Then $\mathcal J((X,B); -D)_x = \O_X(D)_x$.
\end{enumerate}	 	
\end{lemma}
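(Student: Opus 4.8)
The plan is to fix a canonical divisor $K_X$ and a common log resolution $\mu\colon Y\to X$ of $(X,B)$ and of $D$, i.e.\ with $Y$ smooth and $\Exc(\mu)\cup\Supp(\mu_*^{-1}B)\cup\Supp(\mu^*D)$ simple normal crossing; the pullback $\mu^*A$ makes sense because $A=D-K_X-B$ is $\Q$-Cartier. The computational heart is the identity of $\Z$-divisors
\[
\lceil K_Y-\mu^*(K_X+B)+\mu^*D\rceil \;=\; K_Y+\lceil \mu^*A\rceil ,
\]
(both sides equal $K_Y+\lceil\mu^*(D-K_X-B)\rceil$), which lets me rewrite $\J=\mu_*\O_Y\big(K_Y+\lceil\mu^*A\rceil\big)$. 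I stress that this last description needs only $A$ to be $\Q$-Cartier, whereas the left-hand description requires $K_X+B$ to be $\Q$-Cartier; the latter holds on the big open locus $U\subseteq X$ where $D$ is Cartier (big since $X$ is normal, so $D$ is Cartier in codimension $1$), because there $K_X+B=D-A$ is $\Q$-Cartier.

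For (3) I work locally at a point where $D$ is Cartier: there $\mu^*D$ is an integral Cartier pullback, so it factors out of the round-up and the projection formula gives
\[
\J\cong \mu_*\O_Y\!\big(\lceil K_Y-\mu^*(K_X+B)\rceil\big)\otimes\O_X(D)=\J(X,B)\otimes\O_X(D).
\]
Assertion (1) follows from this: on the big open set $U$ we get $\J|_U=\J(X,B)|_U\otimes\O_X(D)|_U\subseteq\O_X(D)|_U$ because $\J(X,B)\subseteq\O_X$; as $\J$ is torsion-free and $\O_X(D)$ is reflexive (hence $S_2$), this inclusion extends across the codimension $\ge 2$ set $X\setminus U$ to a global $\J\hookrightarrow j_*\big(\O_X(D)|_U\big)=\O_X(D)$, where $j\colon U\hookrightarrow X$.

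For (4), at a klt point the discrepancies in $K_Y-\mu^*(K_X+B)=\sum_i a_iE_i$ satisfy $a_i>-1$; writing $d_E=\coeff_E\mu^*D$, an elementary coefficient-wise check gives $\lceil a_E+d_E\rceil\ge\lfloor d_E\rfloor$ for every prime divisor $E$ of $Y$. Combined with the standard equality $\O_X(D)=\mu_*\O_Y(\lfloor\mu^*D\rfloor)$ this yields the inclusion $\O_X(D)_x\subseteq\J_x$, which together with (1) forces $\J_x=\O_X(D)_x$.

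The main point is (2), which I deduce from $\J=\mu_*\O_Y(K_Y+\lceil\mu^*A\rceil)$ by the Nadel argument. Write $\lceil\mu^*A\rceil=\mu^*A+G$ with $G:=\lceil\mu^*A\rceil-\mu^*A\ge 0$ an snc $\Q$-divisor satisfying $\lfloor G\rfloor=0$, so $(Y,G)$ is klt; since $\mu^*A$ is $\mu$-nef and $\mu$-big (being $\mu$-trivial with $\mu$ birational), relative Kawamata–Viehweg vanishing yields the local vanishing $R^j\mu_*\O_Y(K_Y+\lceil\mu^*A\rceil)=0$ for $j>0$. Hence Leray degenerates and $H^i(X,\J)\cong H^i\big(Y,\O_Y(K_Y+G+\mu^*A)\big)$; as $\mu$ is birational, $\mu^*A$ is nef and big on $Y$, so global Kawamata–Viehweg vanishing on the smooth $Y$ gives $H^i(Y,K_Y+G+\mu^*A)=0$ for all $i>0$, proving (2). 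The step I would be most careful about is precisely that $K_X+B$ need not be globally $\Q$-Cartier, so one must consistently use the $K_Y+\lceil\mu^*A\rceil$ description for the vanishing and the pair description only where it is legitimate (on $U$ and near klt points).
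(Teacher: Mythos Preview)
Your proof is correct and follows essentially the same approach as the paper: fix a log resolution, identify $\mathcal J$ with $\mu_*\O_Y(K_Y+\lceil\mu^*A\rceil)$, and apply relative and global Kawamata--Viehweg vanishing for (2), projection formula for (3), and a round-up/round-down estimate for (4). One unnecessary complication: you worry that $K_X+B$ might fail to be $\Q$-Cartier off the locus where $D$ is Cartier, but in fact $K_X+B=D-A$ is automatically $\Q$-Cartier everywhere since both $D$ and $A$ are $\Q$-Cartier by hypothesis, so the pair description of $\mathcal J$ is valid globally and your detour through the big open set $U$ for (1) can be simplified (the paper argues (1) directly by computing outside the exceptional locus and taking the reflexive hull).
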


\begin{proof}
	Let $\mu \colon W \rightarrow X$ be a log resolution of $(X,B + D)$  and define a $\Q$-divisor $B_W$ by 
	$$
	K_W + B_W = \mu^*(K_{X} + B). 
	$$
	We can write $\mu^*D = \tilde{D}+\sum_{k=1}^m b_k E_k$, where $\tilde{D} \subset W$ is the strict transform of $D$ and 
	$E_1, \ldots, E_m$ are exceptional divisors of $\mu$. Since $D$ is a $\Z$-divisor, $\tilde{D}$ is a Cartier divisor on $W$.
	Set 
	$$
	\mathcal L:= \O_W\left(\tilde{D} + \left\lceil \sum b_k E_k - B_W \right\rceil\right)
    =
    \O_W\left(\tilde{D} - \left\lfloor -\sum b_k E_k + B_W \right\rfloor\right). 
	$$
	Then by definition of multiplier ideal sheaf (see \cite[Definition 9.3.56]{Laz2})
	$$
	\mathcal J=\mathcal J((X,B);-D)= \mu_* \mathcal L.
	$$
	If $E$ is the exceptional locus of $\mu$ and $Z:=\mu(E)$, we have  
	$$
	\mathcal L_{|(W\setminus E)} \cong \O_{X \setminus Z}(D + \lceil -B \rceil) \hookrightarrow \O_{X \setminus Z}(D).
	$$
	Hence we have $\mu_*\mathcal L \hookrightarrow (\mu_* \mathcal L)^{\vee \vee} \simeq \O_{X}(D + \lceil -B \rceil) \hookrightarrow \O_X(D)$ and obtain an injection $\mathcal J \hookrightarrow \O_X(D)$ as a composition. This shows \autoref{lem:Nadel1}.
	
	We now prove point \autoref{lem:Nadel2}. Since 
	$$
	\tilde{D} + \sum b_k E_k - B_W  \equiv \mu^*(D) -B_W \equiv \mu^*(K_{X} + B + A) - B_W = K_W+ \mu^*(A), 
	$$ 
	the relative Kawamata-Viehweg vanishing \cite[Theorem 1-2-3]{KMM87} implies
	$$
	R^i\mu_*(\mathcal L)=0
	$$
	for $i>0$ and so by Leray spectral sequence we get that $H^i(X, \mathcal J)= H^i(W, \mathcal L)$. By Kawamata-Viehweg vanishing we also get 
	$H^i(W, \mathcal L)=0$ for $i >0$ and so \autoref{lem:Nadel2} is proven. 
	
	Assume now that $D$ is Cartier at $x \in X$. Then 
	$$
	\mathcal J_x=\mu_*\mathcal O_W(\tilde{D} + \lceil \sum b_k E_k - B_W \rceil)_x=\mu_*\mathcal O_W(\mu^*D + \lceil - B_W \rceil)_x= \mathcal J(X,B)_x \otimes \O_X(D)_x
	$$
	by the projection formula and we get \autoref{lem:Nadel3}.
	
	To prove \autoref{lem:Nadel4}, consider a nbd $U$ of $x \in X$ such that $(U, B_U)$ is klt and $\mu: W_U \to U$ the induced resolution. Write 
	$$
	K_{W_U} + B_{W_U} = \mu^*(K_{U} + B_U). 
	$$
    Since $(U, B_U)$ is klt $\lceil - B_{W_U} \rceil$ is an effective exceptional integral divisor  and so 
    $$
    \mathcal J_x=\mu_*\mathcal O_{W_U}(\tilde{D} + \lceil \sum b_k E_k - B_U \rceil)_x=\mu_*\mathcal O_{W_U}(\lceil \mu^*D - B_U \rceil )_x=  \O_U(D)_x.
    $$
   The next to last equality is because $\tilde{D}$ is integral. 

The last inequality follows from Lemma \ref{lem:debarre} noting that
$$
\lceil \mu^*D \rceil \le \lceil \mu^*D -B_U \rceil \le \lceil \mu^*D \rceil + \lceil -B_U \rceil.
$$
\end{proof}

The following standard lemma is used in the above proof.

\begin{lemma}\label{lem:debarre}
Let $\pi\colon V\to W$ be a birational morphism of demi-normal projective varieties, let $E\subset V$ be an effective $\Q$-divisor such that $\pi(\Supp(E))$ has codimension two in $W$. Let $D$ be an integral $\Q$-Cartier divisor on $W$, then the pull-back $\pi^*$ gives isomorphisms
$$H^0(W,D) \cong H^0(V,\lfloor \pi^*D \rfloor+E) \cong H^0(V,\lceil \pi^*D \rceil+E)  $$
\end{lemma}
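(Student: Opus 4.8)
The plan is to realize all three spaces as subspaces of a common field of rational functions via the birational identification induced by $\pi$, and then to compare them through the valuative description of sections of divisorial sheaves. Since $\pi$ is birational it induces an isomorphism $K(W)\cong K(V)$, under which $H^0(W,D)=\{s\in K(W):\operatorname{div}_W(s)+D\geq 0\}$ and likewise on $V$; moreover the pullback of the principal divisor $\operatorname{div}_W(s)$ is $\operatorname{div}_V(s)$, so by linearity of $\pi^*$ on $\Q$-Cartier divisors one has $\pi^*(\operatorname{div}_W(s)+D)=\operatorname{div}_V(s)+\pi^*D$. Once this is set up, the whole argument reduces to comparing coefficients of $\Q$-divisors prime-by-prime.

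First I would prove the chain of inclusions $H^0(W,D)\subseteq H^0(V,\lfloor \pi^*D\rfloor+E)\subseteq H^0(V,\lceil \pi^*D\rceil+E)$. For the first inclusion, if $\operatorname{div}_W(s)+D\geq 0$ then its pullback $\operatorname{div}_V(s)+\pi^*D$ is an effective $\Q$-Cartier divisor, since the pullback of an effective $\Q$-Cartier divisor is effective. Writing $\pi^*D=\lfloor \pi^*D\rfloor+\{\pi^*D\}$ with $0\le\{\pi^*D\}<1$ coefficientwise, the integral divisor $\operatorname{div}_V(s)+\lfloor \pi^*D\rfloor$ has every coefficient $>-1$ and hence is $\geq 0$; adding the effective $E$ keeps it effective, so $s\in H^0(V,\lfloor \pi^*D\rfloor+E)$. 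The second inclusion is immediate from $\lfloor \pi^*D\rfloor\leq \lceil \pi^*D\rceil$.

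Then I would close the loop by showing $H^0(V,\lceil \pi^*D\rceil+E)\subseteq H^0(W,D)$, which is the heart of the matter. Given $s$ with $\operatorname{div}_V(s)+\lceil \pi^*D\rceil+E\geq 0$, I would check $\operatorname{div}_W(s)+D\geq 0$ coefficientwise at each prime divisor $P$ of $W$ by testing the inequality on $V$ along the strict transform $\tilde P$. There $\operatorname{ord}_{\tilde P}=\operatorname{ord}_P$ as valuations on $K(W)\cong K(V)$ (the local ring $\O_{V,\tilde P}$ dominates the DVR $\O_{W,P}$ inside the common fraction field, hence equals it), so $\operatorname{ord}_{\tilde P}(s)=\operatorname{ord}_P(s)$ and $\operatorname{coeff}_{\tilde P}(\pi^*D)=\operatorname{coeff}_P(D)$; as $D$ is \emph{integral} this coefficient is an integer, so $\operatorname{coeff}_{\tilde P}(\lceil \pi^*D\rceil)=\operatorname{coeff}_P(D)$. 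The key geometric input is that $\operatorname{coeff}_{\tilde P}(E)=0$: since $\tilde P$ dominates the codimension-one subvariety $P$ whereas every component of $\Supp E$ maps into $\pi(\Supp E)$, which has codimension two, $\tilde P$ cannot be a component of $E$. The inequality at $\tilde P$ thus reads $\operatorname{ord}_P(s)+\operatorname{coeff}_P(D)\geq 0$, and letting $P$ vary gives $\operatorname{div}_W(s)+D\geq 0$, as wanted.

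The step I expect to require the most care is the bookkeeping in the demi-normal setting, rather than any deep difficulty. The valuative formula for $H^0$, the equality of orders along strict transforms, and the identification $K(W)\cong K(V)$ (to be read as total rings of fractions when $W$ is reducible) are cleanest for normal varieties; in general one should run the argument over the big open locus where $W$ is regular or nodal and $D$ is Mumford, where the divisorial sheaves are reflexive and $S_2$, and then extend sections by the $S_2$ property exactly as in the proof of \autoref{lem:pullback_Q_divisor}. Once this is in place, the only genuine ingredient is the codimension-two hypothesis on $\pi(\Supp E)$, guaranteeing that $E$ contributes nothing along the strict transforms of codimension-one points of $W$.
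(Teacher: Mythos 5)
Your proof is correct and follows essentially the same sandwich argument as the paper: the easy inclusions $H^0(W,D)\subseteq H^0(V,\lfloor \pi^*D\rfloor+E)\subseteq H^0(V,\lceil \pi^*D\rceil+E)$, then closing the loop using that $D$ is integral (so the floor/ceiling discrepancy is invisible along strict transforms of divisors of $W$) and that $\pi(\Supp E)$ has codimension two, finishing with the $S_2$ property of $\O_W(D)$. The only cosmetic difference is that you phrase the closing containment valuatively, prime divisor by prime divisor, whereas the paper restricts to $W\setminus \pi(\Supp E)$ and extends by $S_2$ --- the same mechanism in sheaf-theoretic language.
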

\begin{proof}		
Since $D$ is integral, $F=\lceil \mu^*D \rceil - \lfloor \mu^*D \rfloor$ is an effective exceptional divisor.
	
We have inclusions
\begin{align*}
	H^0(W,D) = H^0(V,\lfloor \pi^*D \rfloor) \subset H^0(V,\lfloor \pi^*D \rfloor +E) \subset H^0(V,\lceil \pi^*D \rceil +E) \\ \subset H^0(V\setminus \Supp(E), \lceil \pi^*D \rceil) \cong H^0(W\setminus \pi(\Supp(E)),\lceil D \rceil) = H^0(W\setminus \pi(\Supp(E)), D)  
	\end{align*}
	
 Since $W$ is demi-normal, the sheaf $\O_W(D)$ is S2, and we have
$$
H^0(W\setminus \pi(\Supp(E)),D)= H^0(W,D)  
$$
so the claim.
\end{proof}

The following semipositivity is a variant of \cite[Proposition 6.4]{CP}: we generalize loc. cit. to $\Q$-Cartier $\Z$-divisors instead of just Cartier divisors, but we need to assume that all fibers are reduced.

\begin{proposition}\label{p_nef_Weil}
Let $f \colon (Z, \Delta) \to T$  be a fibration from a normal projective pair to a smooth curve with $K_Z + \Delta$ $\Q$-Cartier. Let $L$ be a $\mathbb{Q}$-Cartier $\Z$-divisor on $X$ such that $L- K_{Z/T} - \Delta$ is $f$-big and nef.  Assume that the general fibre of $f$ is klt and all fibres are reduced.
  
Then, $f_* \mathcal{O}_Z(L)$ is a nef vector bundle. 
\end{proposition}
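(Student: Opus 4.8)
The plan is to run a fibre-product argument in the style of Viehweg, fed by the Nadel-type vanishing of \autoref{lem:Nadel}; the reduced-fibre hypothesis is exactly what keeps the fibre powers normal and makes the K\"unneth formula available. \emph{Step 1 (fibre powers).} Since all fibres of $f$ are reduced, \autoref{lem:normal} shows that the $r$-fold fibre product $Z^{(r)}\to T$ is normal and that every finite base change $Z\times_T T'$ stays normal. On $Z^{(r)}$ one has $K_{Z^{(r)}/T}=\sum_i p_i^*K_{Z/T}$, and with $\D^{(r)}=\sum_i p_i^*\D$, $L^{(r)}=\sum_i p_i^*L$ the class $L^{(r)}-K_{Z^{(r)}/T}-\D^{(r)}=\sum_i p_i^*(L-K_{Z/T}-\D)$ is again nef, while its restriction to a general fibre $(Z_t)^r$ is nef and big (the external sum of nef and big classes on the factors), hence $f^{(r)}$-big. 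The general fibre $\big((Z_t)^r,\D_t^{(r)}\big)$ is klt as a product of klt pairs, and all fibres of $f^{(r)}$ are reduced, so the full hypothesis set is inherited by $\big(Z^{(r)},\D^{(r)},L^{(r)}\big)$. Finally \autoref{lem:kunneth} gives $f^{(r)}_*\O_{Z^{(r)}}\big(L^{(r)}\big)\cong\big(f_*\O_Z(L)\big)^{\otimes r}$. Write $V=f_*\O_Z(L)$, which is locally free since $T$ is a smooth curve.

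\emph{Step 2 (generic global generation after a fixed ample twist).} Fix an ample $\A$ on $T$ of degree large enough that $\A-K_T-p$ is ample for every $p\in T$, and set $D=L+f^*\A$. Apply \autoref{lem:Nadel} with $B=\D$: as $D-K_Z-\D=(L-K_{Z/T}-\D)+f^*(\A-K_T)$ is nef and big (a fibre-intersection computation using $f$-bigness of $L-K_{Z/T}-\D$), the multiplier ideal $\J=\J((Z,\D);-D)$ satisfies $H^i(Z,\J)=0$ for $i>0$ and injects into $\O_Z(D)$. Since the general fibre is klt and the Cartier fibre $Z_t$ is reduced, adjunction places $(Z,\D)$ in its klt locus over a dense open $T^0\subseteq T$, so by \autoref{lem:Nadel4} we get $\J=\O_Z(D)$ over $Z_{T^0}$. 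Running the same vanishing with $D-Z_p$ in place of $D$ (legitimate, as $\A-K_T-p$ is still ample), and noting that subtracting the Cartier fibre $Z_p=f^*p$ twists the multiplier ideal by $\O_Z(-Z_p)$, yields $H^i\big(Z,\J\otimes\O_Z(-Z_p)\big)=0$ for all $p$ and $i>0$; by the Leray sequence and the projection formula $H^1\big(T,(f_*\J)\otimes\O_T(-p)\big)=0$ for every $p$, so $f_*\J$ is $0$-regular and hence globally generated. Pushing $\J\hookrightarrow\O_Z(D)$ forward, $f_*\J\hookrightarrow V\otimes\A$ is an isomorphism over $T^0$, so $V\otimes\A$ is generically globally generated. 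The crucial point is that the \emph{same} $\A$ works verbatim on each $Z^{(r)}\to T$, so $V^{\otimes r}\otimes\A$ is generically globally generated for all $r$ with $\A$ independent of $r$.

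\emph{Step 3 (conclusion).} By the Barton--Kleiman criterion (as used in \autoref{lem:fuj2.2}) it suffices to show that for every finite $\tau\colon T'\to T$ from a smooth projective curve and every quotient line bundle $\tau^*V\twoheadrightarrow N$ one has $\deg N\ge 0$. By Step 1 the hypotheses survive on $T'$, and since $\tau$ is flat, $\tau^*V\cong (f_{T'})_*\O_{Z_{T'}}(L_{T'})$; so we may run Steps 1--2 over $T'$ and assume $T'=T$. Tensoring $V^{\otimes r}\twoheadrightarrow N^{\otimes r}$ with $\A$ and using that $V^{\otimes r}\otimes\A$ is generically globally generated, the line bundle $N^{\otimes r}\otimes\A$ acquires a nonzero section, so $r\deg N+\deg\A\ge 0$; letting $r\to\infty$ gives $\deg N\ge 0$. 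Hence $V$ is nef.

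\emph{Main obstacle.} The delicate step is Step 2: one must identify $\J$ with $\O_Z(D)$ over the generic fibre (which is precisely where the klt general fibre enters, via \autoref{lem:Nadel4}) while simultaneously arranging the Nadel vanishing with an ample twist $\A$ that is uniform both in the point $p$ (to upgrade the vanishing to $0$-regularity and hence global generation of $f_*\J$) and in the fibre-power index $r$ (so that positivity survives the limit in Step 3). Throughout, the reduced-fibre hypothesis is indispensable, both for the normality of the fibre powers and base changes through \autoref{lem:normal} and for the adjunction argument that puts $(Z,\D)$ in its klt locus over $T^0$.
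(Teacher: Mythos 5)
Your proposal is correct and follows essentially the same route as the paper's proof: the Viehweg fibre-product trick with \autoref{lem:normal} and the K\"unneth formula of \autoref{lem:kunneth}, feeding the Nadel-type vanishing of \autoref{lem:Nadel} into the multiplier ideal of the twisted divisor on $Z^{(r)}$, with the klt general fibre entering exactly as in the paper through inversion of adjunction and \autoref{lem:Nadel}\autoref{lem:Nadel4}. The only differences are organizational: the paper twists by $K_T+2t$, extracts generic global generation from surjectivity of restriction onto a single klt fibre, and concludes by citing \cite[Lemma 3.4]{Pat14}, whereas you obtain it via $0$-regularity of $f_*\J$ and then inline the Barton--Kleiman quotient-degree limit (with a finite base change whose crepancy is guaranteed by the reduced-fibre hypothesis, cf.\ \autoref{lem:fibre_ridotte}); both versions are sound.
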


\begin{proof}
The fact that all fibres are reduced guarantees that $Z^{(m)}$ is normal for every $m$, see \autoref{lem:normal}, and that we can apply K\"{u}nneth formula from \autoref{lem:kunneth}. According to \cite[Lemma 3.4]{Pat14}, it is enough to prove that for all integers $m>0$, the following vector bundle is generated at a general $t \in T$ by global sections: 
$$
V_m:=\omega_T (2t) \otimes \bigotimes_{i=1}^m f_* \O_Z(L) \,.
$$
We fix a $t$ such that the fiber $Z_t$ is not contained in the support of $\Delta$, and the pair $(Z_t,\Delta_t)$ is klt. Let 
$$N:=L^{(m)} + \left( f^{(m)} \right)^* \left(K_T + 2 t\right)\,.$$
The divisor $N$ is not Cartier, but $K_T+2t$ is Cartier and thus we can apply projection formula for $f^{(m)}$ together with \autoref{lem:kunneth} to get an isomorphism $V_m\cong f^{(m)}_*\O_{Z^{(m)}}(N)$. To prove that $V_m$ is generically globally generated we need  to prove that the restriction map
$$
H^0\left( Z^{(m)}, N \right) \to H^0\left( Z_t^{(m)}, N_t \right)
$$
is surjective. 

Let $\mathcal J= \mathcal J\Big(\big(Z^{(m)},\Delta^{(m)}\big);-N \Big) $ be the multiplier ideal sheaf associated to $-N$ with respect to $\big(Z^{(m)},\Delta^{(m)}\big)$. 

Let us show that the following sequence is exact.
\begin{equation}\label{eq:short_exact_CM}
0 \to \mathcal J \otimes \O_{Z^{(m)}}\left(-Z^{(m)}_t\right) \to \mathcal{J} \to \mathcal{J}|_{Z^{(m)}_t}= \O_{Z^{(m)}_t}(N_t) \to 0.
\end{equation}
\

The fibre $Z^{(m)}_t$ is klt because it is the product of klt varieties, and so $Z^{(m)}$ is klt around it by inversion of adjunction. By Lemma \ref{lem:Nadel}\autoref{lem:Nadel4} this implies that 
$$
\mathcal J\Big(\big(Z^{(m)},\Delta^{(m)}\big);-N \Big)= \O_{Z^{(m)}}(N)
$$
in a neighbourhood of $Z^{(m)}_t$ and so in such nbd the sequence reduces to 

$$
0 \to \O_{Z^{(m)}}\left(N-Z^{(m)}_t\right) \to \O_{Z^{(m)}}(N) \to \O_{Z^{(m)}_t}(N_t) \to 0.
$$
The latter is exact, because on a klt variety all divisorial sheaves are CM (\cite[Corollary 5.25]{KM}) and hence the sequence makes sense (see for instance \cite[Proposition 5.26]{KM}).  
Away from $Z^{(m)}_t$ the sequence \autoref{eq:short_exact_CM} is trivially true, so it is exact on all $Z^{(m)}$.

Since $H^0\left( Z^{(m)}, J \right) \hookrightarrow H^0\left( Z^{(m)}, N \right)$ by \autoref{lem:Nadel}\autoref{lem:Nadel1}, it is enough to prove the vanishing of 
\begin{equation*}
    H^1\left(Z^{(m)},\mathcal J \otimes \O_{Z^{(m)}}\left(-Z^{(m)}_t\right)\right)
    =
    H^1\left(Z^{(m)},\mathcal J\left(Z^{(m)},\Delta^{(m)}\right);-N+Z^{(m)}_t \right)
\end{equation*} 
(the equality follows by \autoref{lem:Nadel}\autoref{lem:Nadel3}, as $Z^{(m)}_t$ is Cartier). Write 
 $$N-Z_t^{(m)} = K_{Z^{(m)}} +  \Delta^{(m)} +  \left(L^{(m)} - K_{Z^{(m)}/T} - \Delta^{(m)} +  Z_t^{(m)}\right)\,,$$
and observe that the divisor $L^{(m)} - K_{Z^{(m)}/T} - \Delta^{(m)} +  Z_t^{(m)}$ is the sum of a nef and $f^{(m)}$-big divisor with the the pull-back of an ample divisor from $T$, hence it is nef and big. We can now apply Nadel vanishing as phrased in \autoref{lem:Nadel}\autoref{lem:Nadel2}. 
\end{proof}

The following is a higher dimensional version of \autoref{p_nef_Weil} and it will be used in the proof of \autoref{thm:bigness}. When $L$ is Cartier, it follows from \cite[Proposition 8.14]{KP}.

\begin{proposition}\label{prop:weakly-positive}
Let $f \colon (Z, \Delta) \to B$  be a locally stable family,  where $(Z,\D)$ is a  klt pair and $B$ is a smooth variety. Let $L$ be a $\mathbb{Q}$-Cartier $\Z$-divisor on $Z$ such that $L- K_{Z/T} - \Delta$ is $f$-big and nef.  
  Then, $f_* \mathcal{O}_Z(L)$ is a weakly-positive sheaf (in the sense of \cite[Definition 4.7]{KP}). 
\end{proposition}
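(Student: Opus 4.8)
The plan is to follow the proof of \autoref{p_nef_Weil} as closely as possible, but to replace the restriction to a single fibre $Z_t$ — which is no longer a Cartier divisor once $\dim B>1$ — by a Castelnuovo–Mumford regularity argument, and to exploit the fact that here the whole pair $(Z,\Delta)$, and not merely the general fibre, is klt. First I would reduce the statement to a single global generation property. Recall (Viehweg's criterion, essentially the content of \cite[Definition 4.7]{KP}) that a torsion-free sheaf $\mathcal F$ on the smooth variety $B$ is weakly positive over its locally free locus $U$ as soon as there exists one ample line bundle $A$ such that $\widehat{S}^{m}\mathcal F\otimes A$ is globally generated over $U$ for every $m>0$: given an ample $H$ and an integer $\alpha$ one chooses $\beta\gg 0$ with $H^{\beta}\otimes A^{-1}$ globally generated and tensors. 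Since in characteristic zero $\widehat{S}^{m}\mathcal F$ is a quotient of $\bigotimes^{m}\mathcal F$ over $U$, it suffices to globally generate $\big(\bigotimes^{m}\mathcal F\big)\otimes A$ over $U$, where $\mathcal F:=f_*\mathcal O_Z(L)$.

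Next I would pass to the $m$-fold fibre power $f^{(m)}\colon (Z^{(m)},\Delta^{(m)})\to B$. As $f$ is locally stable with klt general fibre over the smooth base $B$, its fibres are slc and hence reduced, and by Koll\'ar's theory the fibre power is again locally stable with klt general fibre (the product of klt varieties being klt); by the klt total space statement recalled in \autoref{S:notations}, $(Z^{(m)},\Delta^{(m)})$ is klt. Moreover $K_{Z^{(m)}/B}+\Delta^{(m)}=\sum_i p_i^*(K_{Z/B}+\Delta)$ and $L^{(m)}=\sum_i p_i^*L$, so $L^{(m)}-K_{Z^{(m)}/B}-\Delta^{(m)}=\sum_i p_i^*(L-K_{Z/B}-\Delta)$ is $f^{(m)}$-nef and $f^{(m)}$-big (nefness is preserved under pullback and sum, and the external sum of $f$-big divisors is big on the product fibre). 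Relative Kawamata–Viehweg vanishing then gives $R^{>0}f^{(m)}_*\mathcal O_{Z^{(m)}}(L^{(m)})=0$, so the K\"unneth computation of \autoref{lem:kunneth} — whose proof uses only the projection formula and flat base change, valid over any base once the higher direct images vanish — yields $f^{(m)}_*\mathcal O_{Z^{(m)}}(L^{(m)})\cong\bigotimes^{m}\mathcal F$.

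Finally I would establish global generation by regularity. Fix a very ample $H$ on $B$ with $H-K_B$ ample, set $A=H^{\otimes(\dim B+1)}$, and consider $\mathcal G:=\big(\bigotimes^{m}\mathcal F\big)\otimes A$. For $N_i:=L^{(m)}+(f^{(m)})^*\big((\dim B+1-i)H\big)$, the projection formula together with $R^{>0}f^{(m)}_*=0$ identifies $H^i\big(B,\mathcal G\otimes H^{-i}\big)$ with $H^i\big(Z^{(m)},\mathcal O_{Z^{(m)}}(N_i)\big)$. Now $N_i-K_{Z^{(m)}}-\Delta^{(m)}=\big(L^{(m)}-K_{Z^{(m)}/B}-\Delta^{(m)}\big)+(f^{(m)})^*\big((\dim B+1-i)H-K_B\big)$ is nef and big for $1\le i\le \dim B$, being the sum of an $f^{(m)}$-nef-and-big divisor and the pullback of an ample divisor. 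Since $(Z^{(m)},\Delta^{(m)})$ is klt, \autoref{lem:Nadel} applies with trivial multiplier ideal (its \autoref{lem:Nadel4} gives $\mathcal J((Z^{(m)},\Delta^{(m)});-N_i)=\mathcal O(N_i)$ at every point, and \autoref{lem:Nadel2} gives the vanishing), so $H^i(Z^{(m)},\mathcal O(N_i))=0$ for $i>0$. Hence $\mathcal G$ is $0$-regular with respect to $H$, therefore globally generated, which is the required statement for $A$ and $m$.

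The main obstacle is exactly the failure of the fibre-restriction step of \autoref{p_nef_Weil} over a higher-dimensional base: the short exact sequence of a fibre must be traded for the regularity–Nadel vanishing package, and this in turn forces one to verify that the fibre powers $(Z^{(m)},\Delta^{(m)})$ are \emph{globally} klt (not merely klt near the general fibre), which is where the hypothesis that $(Z,\Delta)$ itself is klt and locally stable is decisive. A secondary point requiring care is checking that both the K\"unneth isomorphism and the vanishing $R^{>0}f^{(m)}_*\mathcal O(L^{(m)})=0$ persist over a base of arbitrary dimension, and that the reduction to a projective $B$ (needed for the cohomological arguments) is harmless.
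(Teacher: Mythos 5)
Your proposal is correct, and its skeleton --- Viehweg's reduction to generic global generation of $\bigotimes^m f_*\O_Z(L)\otimes A$ for one fixed ample $A$, passage to the fibre power $(Z^{(m)},\Delta^{(m)})\to B$, which is again locally stable with globally klt total space, and a vanishing theorem on $Z^{(m)}$ --- is the same as the paper's. Where you genuinely diverge is the mechanism extracting global generation from vanishing. The paper picks $d=\dim B$ general members $A_1,\dots,A_d\in|A|$, uses that divisorial sheaves on the klt variety $Z^{(m)}$ are Cohen--Macaulay to write the restriction exact sequence over each $Z^{(m)}_{A_i}$, and applies Kawamata--Viehweg vanishing at each step to lift sections from the fibre over a general point $b\in\bigcap_i A_i$; this yields generic global generation of $\O_B(K_B+A+\sum_i A_i)\otimes f^{(m)}_*\O_{Z^{(m)}}(L^{(m)})$ directly and never needs to control higher direct images. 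You instead prove Castelnuovo--Mumford $0$-regularity of $f^{(m)}_*\O_{Z^{(m)}}(L^{(m)})\otimes H^{\otimes(\dim B+1)}$ via Nadel vanishing upstairs (items (2) and (4) of \autoref{lem:Nadel} applied to the globally klt pair $(Z^{(m)},\Delta^{(m)})$), which buys honest rather than merely generic global generation, at the price of having to establish $R^{>0}f^{(m)}_*\O_{Z^{(m)}}(L^{(m)})=0$ --- a relative Nadel/Kawamata--Viehweg vanishing for the $\Q$-Cartier $\Z$-divisor $L^{(m)}$ that is implicit in the proof of \autoref{lem:Nadel} but should be stated explicitly, since the Leray degeneration is what lets you compute $H^i(B,\mathcal G\otimes H^{-i})$ on the total space. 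Both routes share the same two delicate points, which you correctly flag and which the paper also treats lightly: the K\"unneth identification $f^{(m)}_*\O_{Z^{(m)}}(L^{(m)})\cong\bigotimes^m f_*\O_Z(L)$ over a higher-dimensional base (only its generic validity is actually needed for weak positivity), and the implicit projectivity of $B$.
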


\begin{proof}
Let $A$ be a very ample divisor on $B$ and $m$ a positive integer. 
Consider the $\Q$-Cartier $\Z$-divisor 
$$N:=L^{(m)} + \left( f^{(m)} \right)^* (K_B + A)\,.$$
on the fibre product $f^{(m)}: Z^{(m)} \to B$. Note that $(Z^{(m)}, \Delta^{(m)}) \to B$ is a locally stable family (see the proof of \cite[Proposition 2.12]{BHPS13} and since $B$ is smooth, we can apply \cite[Corollary 4.56]{Kobook} to show that $(Z^{(m)}, \Delta^{(m)})$ is klt.  Let $A_1 \in |A|$ be general. By \cite[Corollary 5.25]{KM}, we have an exact sequence

\begin{equation}\label{eq:short_exact_klt}
0 \to \O_{Z^{(m)}}\left(N + {f^{(m)}}^*A_1-Z^{(m)}_{A_1}\right) \to \O_{Z^{(m)}}(N+{f^{(m)}}^*A_1) \to \O_{{Z^{(m)}_{A_1}}}(N+{f^{(m)}}^*A_1) \to 0.
\end{equation}

Since $N+{f^{(m)}}^*A_1-Z_{A_1}^{(m)}-K_{Z^{(m)}} - \Delta^{(m)}$ is $f^{(m)}$-big and nef, by Kodaira vanishing \cite[Theorem 2.70]{KM} we get a surjection
$$
H^0(Z^{(m)}, \O_{Z^{(m)}}(N+{f^{(m)}}^*A_1)) \to H^0(Z^{(m)}_{A_1}, \O_{{Z^{(m)}_{A_1}}}(N+{f^{(m)}}^*A_1)). 
$$

Let $A_i \in |A|$ be general for $i=1,\ldots,d=\dim B$. Applying the argument above inductively we end up with a surjection

$$
H^0(Z^{(m)},  \O_{Z^{(m)}}(N+\sum_{i=1}^d{f^{(m)}}^*A_i)) \to  H^0(Z^{(m)}_b,  \O_{Z^{(m)}}(L^{(m)})|_{b}),
$$
where $b \in \cap_{i=1}^d A_i$.
The left hand side is isomorphic to
$$
H^0\left(B, \O_B(K_B + A + \sum_{i=1}^d A_i) \otimes f_*^{(m)} \O_{Z^{(m)}}(L^{(m)})\right)
$$
while the right hand side can be identified with $f_*^{(m)} \O_{Z^{(m)}}(L^{(m)}) \otimes k(b)$. We conclude that 
$$
\O_B(K_B+ A + \sum_{i=1}^d A_i) \otimes f_*^{(m)} \O_{Z^{m}}(L^{(m)}) \cong \O_B(K_B+ A + \sum_{i=1}^d A_i) \otimes \bigotimes_{j=1}^m f_*\O_Z(L)
$$
is generically globally generated for all $m >0$. 
Hence $\O_B(K_B+ A + \sum_{i=1}^d A_i) \otimes \Sym^{m}f_*\O_Z(L)$ is also generically globally generated, which implies that $f_*\O_Z(L)$ is weakly positive by definition.

\end{proof}

\section{The Bermann-Gibbs-Viehweg (BGV) divisor}\label{ss:BGV divisor}

Let $(F,\Delta)$ be a normal  projective pair, and $L$ be a $\Q$-Cartier divisor on $F$. For every $q$ such that $qL$ is Cartier, let $r(q)=h^0(F,qL)$; we always assume $r(q)\geq 1$ to avoid trivial cases. The $q$-th Bermann-Gibbs-Viehweg (BGV) divisor $\mathcal{D}^{\{q\}}$ is a Cartier divisor on the product $F^{\times r(q)}$ in the linear system $|qL^{\boxtimes r(q)}|$ defined in the following way. There is a canonical isomorphism between $H^0(F^{\times r(q)},qL^{\boxtimes r(q)})$ and $H^0(F,qL)^{\otimes r}$, and a canonical embedding of the one dimensional vector space $\det\left(H^0(F,qL)\right)$ into $H^0(F,qL)^{\otimes r(q)}$. The divisor $\mathcal{D}^{\{q\}}$ is then defined by the image of a non-zero element of $\det\left(H^0(F,qL)\right)$.  We set
$$
\g_q(L; F,\Delta)=  \lct\left(\mathcal{D}^{\{q\}} ; F^{\times r(q)}, \Delta^{(r(q))}\right)  \,,
$$
and then
$$
\g(L; F,\Delta):= \liminf_{q \to \infty} q\g_q(L; F,\Delta).
$$
Here, the notation for log canonical threshold (lct) is the following (the same as in \cite[Section 8]{KP}): given a klt pair $(X,D)$  and $\Gamma$ a $\Q$-Cartier divisor on $X$, then 
$$
\lct(\Gamma; X,D):=\sup \{t \in \mathbb R \ | \ (X,D+t\Gamma) \text{ is log canonical}   \}.
$$

Observe that, if $(F,\Delta)$ is klt, then $\g_q>0$ for every $q$.

Using \cite[Def. 8.9 and Cor. 8.12]{KP}, we have
$$
\g_q(L; F,\Delta) \ge \lct(\O_F(qL); F,\Delta).
$$
Recall that the alpha invariant $\alpha(L; F,\Delta)$ is defined as the infimum of the log canonical threesholds of all effective divisors $\Q$-linearly equivalent to $L$. 

\begin{lemma}\label{gamma_positive}
If $(F,\Delta)$ is klt and $L$ is ample, then $\g(L; F,\Delta)$ is strictly greater than zero.
\end{lemma}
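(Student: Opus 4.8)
The plan is to bound $\g(L;F,\Delta)$ below by the alpha invariant $\alpha(L;F,\Delta)$, and then to invoke the positivity of the latter for an ample class on a klt pair.

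For the first step, recall $\g(L;F,\Delta)=\liminf_{q\to\infty}q\,\g_q(L;F,\Delta)$, so by the inequality $\g_q(L;F,\Delta)\ge\lct(\O_F(qL);F,\Delta)$ recalled above it suffices to show that $q\,\lct(\O_F(qL);F,\Delta)\ge\alpha(L;F,\Delta)$ for every $q$ with $qL$ Cartier. This is the standard comparison between the log canonical threshold of the complete linear system and the alpha invariant: every effective member $D\in|qL|$ gives an effective $\Q$-divisor $\tfrac1qD\sim_\Q L$, for which $\lct\!\big(\tfrac1qD;F,\Delta\big)=q\,\lct(D;F,\Delta)\ge\alpha(L;F,\Delta)$ by the very definition of $\alpha$ as the infimum of log canonical thresholds of effective divisors $\Q$-linearly equivalent to $L$; taking the appropriate infimum over $|qL|$ yields the claim. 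Thus $\g(L;F,\Delta)\ge\alpha(L;F,\Delta)$, and the lemma reduces to $\alpha(L;F,\Delta)>0$.

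The positivity $\alpha(L;F,\Delta)>0$ is the real content and the main obstacle. It is the classical positivity of the global log canonical threshold of an ample class on a klt pair, which I would either cite or establish through a uniform bound on the singularities of members of $|qL|$. Fixing $m$ with $mL$ very ample and cutting with a complete intersection curve $\Gamma$ of $n-1$ general members of $|mL|$ through a point $x$ (where $n=\dim F$), any effective $D\sim qL$ not containing $\Gamma$ satisfies $\mult_x D\le D\cdot\Gamma=q\,m^{n-1}L^{n}$, so multiplicities of members of $|qL|$ grow at most linearly in $q$ with a constant $C$ depending only on $(F,L)$. Passing to a fixed log resolution $\mu\colon W\to F$ of $(F,\Delta)$, whose discrepancy boundary has coefficients bounded away from $1$ by the klt hypothesis, the multiplicities of $\mu^*D$ remain $O(q)$, and the standard bound $\lct\ge 1/\mult$ on the smooth $W$, combined with these fixed discrepancies, gives $\lct(F,\Delta;D)\ge 1/(C'q)$ uniformly in $q$ and $D$; equivalently $\alpha(L;F,\Delta)\ge 1/C'>0$. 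Combining with the first step gives $\g(L;F,\Delta)\ge\alpha(L;F,\Delta)>0$. The delicate point in this second step is precisely the uniformity over all $q$ and the simultaneous control of $\Delta$ and of the singularities of $F$ on a single resolution; this is exactly what the cited classical positivity statement packages, and invoking it bypasses the estimate altogether.
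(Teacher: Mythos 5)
Your proof is correct and follows essentially the same route as the paper: both reduce the statement to the positivity of the alpha invariant via the inequality $\g_q(L;F,\Delta)\ge\lct(\O_F(qL);F,\Delta)$ recalled just before the lemma, and the paper then simply cites \cite[Theorem 9.14]{Bou} for $\alpha(L;F,\Delta)>0$. Your additional multiplicity-bound sketch is just the standard proof of that cited fact and is not needed once the citation is invoked.
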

\begin{proof}
 By \cite[Theorem 9.14]{Bou}, $\alpha(L; F,\Delta)$ is strictly greater than zero and so we have $0<\alpha(L;F,\Delta)=\liminf_{q}q\lct(\O_F(qL); F,0)\leq \g(L; F,\Delta)$.
\end{proof}

For the sake of completeness, let us briefly review some of the literature where these invaraints have been already discussed. The BGV divisor plays a key role in the definition of Bermann-Gibbs stability \cite{B,F,FO}. In the canonically polarized case, it can be used to construct the K\"{a}hler-Einstein metric. In the Fano case, it is used to define Bermann-Gibbs stability, which is conjecturally equivalent to K-stability. 
In \cite{F,FO}, the definition of the gamma invariant is slightly different, and because of this we have labelled our invariant $\g$ rather than $\gamma$. The difference is that in loc. cit., the log canonical threshold is computed only along the small diagonal rather than on all $F^{\times r(q)}$. We do not know if the two different definitions give the same invariant. Also in loc.cit. the boundary is not considered.

To best of our knowledge, the gamma invariant has been computed only for $(\P^1,\O(1))$; in \cite{F}, it is shown that $\alpha=1/2$ and $\gamma=1$.

\section{Lower bound on the smallest slope of the Hodge bundles}\label{S:mumeno}

The following bound holds under the assumption that all the fibers of the family are reduced; it is used in the Viehweg product trick to guarantee that the total space of the fiber product is normal.

\begin{theorem}[Lower bound on the smallest slope of the Hodge bundle]\label{thm:lowermu_basic}
Let $f: (X,\D) \to T$ be a family where $T$ is a smooth projective curve, $(X,\D)$ is a projective pair, all fibers of $f$ are reduced, the general fibre is klt and $K_{X/T}+\D$ is $\Q$-Cartier and $f$-ample. For any integer $q>0$, let  $\mu_-(\E_q)$ the smallest slope of the Harder-Narasimanhan filtration of the Hodge bundle $\E_q=\E_{q,f}^{\D}$.

Fix $t \in T$ such that $(X_t,\Delta_t)$ is klt. With the notations of   \autoref{ss:BGV divisor}, consider the BGV-invariant
$$
\g_q:=\g_q(K_{X_t}+\Delta_t; X_t,\Delta_t)\,,\textrm{and  } \g:=\g(K_{X_t}+\Delta_t; X_t,\Delta_t)
$$
Let $q, \q \ge 2$ be positive integers such that  $q(K_{X/T}+\Delta)$  is an  Weil $\Z$-divisor and $\q(K_{X/T}+\Delta)$ is Cartier. Assume that both $\rk(\E_{q})$ and $\rk(\E_{\q})$ are strictly positive.

Then 
\begin{equation}\label{e_boundmumeno}
\mu_-\left(\E_{q}\right)\geq \min \left\{ \frac{q-1}{\q}, \frac{q\g_{\q}}{1 +  \g_{\q}\q }\right \}\frac{\deg{\lambda_{\q}}}{\rk \E_{\q}}.
\end{equation}

Moreover,
\begin{equation}\label{e:asym_bound}    
\mu_-\left(\E_{q}\right)\geq \min \left\{ q-1, \frac{q\g}{1 + \g} \right\}\frac{\deg{\lambda_{CM}}}{(n+1)(K_{X_t}+\Delta_t)^n}.
\end{equation}

In particular, if $f$ has maximal variation, then $\mu_-(\E_{q}) >0$, $\E_{q}$ and its determinant $\lambda_{q}$ are ample. 

\end{theorem}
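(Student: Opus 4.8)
The plan is to deduce the final assertion directly from the asymptotic inequality \autoref{e:asym_bound}, taking the two displayed bounds \autoref{e_boundmumeno} and \autoref{e:asym_bound} as already established earlier in the proof. Everything then reduces to checking that the right-hand side of \autoref{e:asym_bound} is strictly positive, and afterwards invoking the standard characterisation of ampleness for vector bundles on a curve. First I would verify the positivity of each factor on the right-hand side of \autoref{e:asym_bound}. The denominator $(n+1)(K_{X_t}+\Delta_t)^n$ is strictly positive, since $K_{X_t}+\Delta_t$ is ample on the $n$-dimensional klt fibre $(X_t,\Delta_t)$ (it is the restriction of the $f$-ample $\Q$-divisor $K_{X/T}+\D$). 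For the factor $\min\{q-1,\frac{q\g}{1+\g}\}$, note that $q-1\geq 1$ because $q\geq 2$, while $\g=\g(K_{X_t}+\Delta_t;X_t,\Delta_t)>0$ by \autoref{gamma_positive} (the fibre is klt and the polarisation is ample), so $\frac{q\g}{1+\g}>0$ and the minimum is positive. Finally, for the numerator $\deg\lambda_{CM}$, the maximal variation hypothesis gives $\deg\lambda_{CM}>0$: over the curve $T$ the class $\lambda_{CM}$ is big precisely when $f$ has maximal variation by \autoref{thm:positivity_lambda_e_CM}\autoref{itemCM_pos}, and bigness of a divisor on a curve means positive degree. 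Combining these, \autoref{e:asym_bound} yields $\mu_-(\E_{q})>0$.

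It then remains to upgrade positivity of the minimal slope to ampleness. On a smooth projective curve a vector bundle $V$ is ample if and only if $\mu_-(V)>0$: the graded pieces of the Harder--Narasimhan filtration are semistable of positive slope, hence ample by Hartshorne's theorem on ample vector bundles over curves, and an iterated extension of ample bundles is again ample. Applying this with $V=\E_{q}$ shows that $\E_{q}$ is ample. For the determinant, $\lambda_{q}=\det\E_{q}$ is a quotient of the ample bundle $\bigwedge^{\rk\E_{q}}\E_{q}$, hence ample; equivalently $\deg\lambda_{q}=\deg\E_{q}\geq \rk(\E_{q})\,\mu_-(\E_{q})>0$, and a line bundle of positive degree on a curve is ample. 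This gives the last assertion.

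The only genuinely external input for this final step is the equivalence between maximal variation and $\deg\lambda_{CM}>0$, which I expect to be the main obstacle, since the family here is not assumed stable and hence the moduli-theoretic positivity of the Chow--Mumford class cannot be quoted directly. This equivalence is supplied by \autoref{thm:positivity_lambda_e_CM}, whose proof passes through a stable reduction over the curve (under which the Chow--Mumford degree is merely multiplied by the positive degree of the cover, see \autoref{lem:fibre_ridotte}\autoref{item_CM}, while maximal variation is preserved) and then uses ampleness of $\lambda_{CM}$ on the coarse moduli space. The remaining inputs---positivity of $\g$ and of the fibre volume---are local to the klt fibre and present no difficulty.
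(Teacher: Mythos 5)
There is a fundamental gap: your proposal only establishes the final ``in particular'' clause and explicitly takes the two displayed inequalities \autoref{e_boundmumeno} and \autoref{e:asym_bound} as ``already established earlier in the proof,'' but those inequalities \emph{are} the theorem — there is no earlier part to appeal to. The entire substance of the result is the derivation of \autoref{e_boundmumeno}, which in the paper goes through the Viehweg product trick: one embeds $\lambda_{\q}\hookrightarrow \E_{\q}^{\otimes r}$ with $r=\rk\E_{\q}$, identifies $\E_{\q}^{\otimes r}$ with $f^{(r)}_*\O_{X^{(r)}}(\q(K_{X^{(r)}/T}+\D^{(r)}))$ via the K\"unneth formula (\autoref{lem:kunneth}, which is where the hypothesis that all fibres are reduced enters, through \autoref{lem:normal}), and obtains an effective divisor $\Gamma^{\{\q\}}$ with $f^{(r)*}\lambda_{\q}+\Gamma^{\{\q\}}\sim \q(K_{X^{(r)}/T}+\D^{(r)})$. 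One then sets $L_{\q}=q(K_{X^{(r)}/T}+\D^{(r)})-\ell f^{(r)*}\lambda_{\q}$ for $\ell<\min\{\frac{q-1}{\q},\frac{q\g_{\q}}{1+\g_{\q}\q}\}$, checks that the restriction of $\Gamma^{\{\q\}}$ to the fibre over $t$ is exactly the BGV divisor so that the threshold $\g_{\q}$ guarantees $\big(X_t^{\times r},\D_t^{(r)}+\frac{\ell}{q-\ell\q}\Gamma^{\{\q\}}_t\big)$ is klt, and applies the Nadel-vanishing-based semipositivity \autoref{p_nef_Weil} to conclude that $f^{(r)}_*\O(L_{\q})\cong \E_q^{\otimes r}\otimes\O_T(-\ell\lambda_{\q})$ is nef, whence $r\,\mu_-(\E_q)\geq \ell\deg\lambda_{\q}$. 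None of this machinery, nor the passage to a base change making $\ell\lambda_{\q}$ Cartier, nor the limiting argument producing \autoref{e:asym_bound} from \autoref{e_boundmumeno} via \autoref{prop:lambda_and_CM} and asymptotic Riemann--Roch, appears in your proposal. Without it the two inequalities are simply unproved.

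The part you do write — deducing $\mu_-(\E_q)>0$ and ampleness of $\E_q$ and $\lambda_q$ from maximal variation — is correct and coincides with the paper's own (second) route: positivity of $\g$ from \autoref{gamma_positive}, positivity of $\deg\lambda_{CM}$ from \autoref{thm:positivity_lambda_e_CM}, and the Hartshorne characterisation of ample bundles on a curve via $\mu_->0$. Your remark that the equivalence ``maximal variation $\Leftrightarrow$ $\deg\lambda_{CM}>0$'' requires a stable reduction because the family here need not be stable is a fair observation, and indeed \autoref{thm:positivity_lambda_e_CM} handles this via \autoref{prop:stable_reduction}. But this closing step is a few lines of the argument; the theorem itself remains unproved in your proposal.
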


\begin{proof}

Let $r$ be the rank of $\E_{\q}$. When $r>0$, we have a natural embedding
$\lambda_{\q} \hookrightarrow \E_{\q}^{\otimes r}$.
As $X$ is normal and all fibers are reduced, by \autoref{lem:normal} also the fiber product $X^{(r)}$ is normal, and we can apply K\"{u}nneth formula from \autoref{lem:kunneth}. Viehweg product trick is the observation that the above embedding together with the isomorphism 
$$
\E_{\q}^{\otimes r} \cong f_*^{(r)}\O_{X^{(r)}}\Big(\q\big(K_{X^{(r))}/T}+\D^{(r)}\big)\Big)
$$
 permits to write 
$$f^{(r)*}\lambda_{\q}+\Gamma^{\{\q\}}\sim \q\big(K_{X^{(r)}/T}+\D^{(r)}\big),$$
for some effective divisor $\Gamma^{\{\q\}}$ on $X^{(r)}$.  

Let $\ell$ be a positive rational number strictly smaller than $\min \{ \frac{q-1}{\q}, \frac{q\g_{\q}}{1 + \g_{\q}\q}\}$. We can take $\ell>0$ bcause, since the generic fiber is klt, $\g_q>0$. We will show that \begin{equation}\label{eq:bound_with_ell}
\mu_-\left(\E_{q}\right)\geq \ell \frac{\deg{\lambda_{\q}}}{\rk \E_{\q}}.
\end{equation}
Let $\tau \colon T' \to T$ be a degree $d$ surjective morphism from a smooth curve $T'$ whose branch divisor is supported in the locus where the fibers of $f$ are slc. Because of \autoref{prop:flat_base_change}, if we prove Equation \autoref{eq:bound_with_ell} after the base change via $\tau$, we obtain the result for the original family. After base change, the degree of $\lambda_{\q}$ gets multiplied by $d$, so if we choose a $d$ divisible enough, the degree of $\ell \tau^* \lambda_{\q}$ is an integer. We can thus assume that $\ell \lambda_{\q}$ is Cartier, and not only $\Q$-Cartier.

Consider the integral Weil  divisor
$$
L_{\q}=q\left(K_{X^{(r)}/T}+\Delta^{(r)}\right)- \ell f^{(r)*}\lambda_{\q}.
$$
First, observe that $L_{\q}$ is $\mathbb{Q}$-Cartier and $f^{(r)}$-ample. We can write
$$
L_{\q} \sim_\Q (q-\ell \q)\left( K_{X^{(r)}/T}+\Delta^{(r)}+\frac{\ell}{q-\ell \q}\Gamma^{(\q)}\right).
$$
The assumption 
$$
\ell \le\frac{q\g_{\q}}{1 + \g_{\q}\q}
$$
is equivalent to
$$
\frac{\ell}{q - \ell \q} \le \g_{\q}.
$$
We also have $0< q -\ell \q$, so 
\begin{equation*}
0 < \frac{\ell}{q - \ell \q}  \le \g_{\q}.
\end{equation*}
The divisor $\Gamma^{\{\q\}}_t$ on $X_t^{\times r(\q)}$ is the $\q$-th BGV divisor of $(X_t,\Delta_t, K_{X_t} + \Delta_t)$,  hence the pair 
\begin{equation*}
    \left(X_t^{\times r},\D_t^{(r)}+ \frac{\ell}{q-\ell \q}\Gamma^{\{\q\}}_t\right)
\end{equation*}
is klt.
We would like to apply \autoref{p_nef_Weil} to $L_{\q}$, so we consider the divisor
$$
L_{\q}-K_{X^{(r)}/T}-\D^{(r)}-\frac{\ell}{q-\ell \q}\Gamma^{\{\q\}}
=(q-\ell \q-1)\left( K_{X^{(r)}/T}+\Delta^{(r)}+\frac{\ell}{q-\ell \q}\Gamma^{\{\q\}}\right)
$$
The assumption $\ell \le (q-1)/\q$ gives $q-\ell \q >1$, in particular the above divisor is a positive multiple of $M:=\left( K_{X^{(r)}/T}+\Delta^{(r)}+\frac{\ell}{q-\ell \q}\Gamma^{\{\q\}}\right)$. Since $L_{\q}$ is $f^{(r)}$-ample, the same is true for $M$. By  \autoref{cor:semi_posit_lambda_e_CM}, $M$ is nef. We can thus apply Proposition \ref{p_nef_Weil} to show that $f_*^{(r)}\O_X(L_{\q})$ is nef.

By the projection formula, we have 
$$
f_*^{(r)}\O_{X^{(r)}}(L_{\q})=f_*^{(r)}\O_{X^{(r)}}\Big(q\big(K_{X^{(r)}/T}+\Delta^{(r)}\big)\Big)\otimes \O_T( -\ell \lambda_{\q})\,,
$$
and by  \autoref{lem:kunneth}, we have
$$
f_*^{(r)}\O_{X^{(r)}}\Big(q\big(K_{X^{(r)}/T}+\Delta^{(r)}\big)\Big)=\left(\E_{q}\right)^{\otimes r}\,.
$$
Since $\rk(\E^{(q)})>0$, we have $\rk\left(f_*^{(r)}\O_{X^{(r)}}(L_{\q}) \right)>0$;  because of Hartshorne characterization of nef vector bundles, we have $\mu_-(f_*^{(r)}\O_X(L_{\q}))\geq 0$, and
$$
0\leq \mu_-\left(f_*^{(r)}\O_{X^{r}}(L_{\q})\right)
=
\rk\left(\E_{\q}\right)\mu_-\left(\E_{q}\right)-\ell \deg(\lambda_{\q}).
$$
We obtain Equation \autoref{e_boundmumeno}  letting $\ell$ tend to $\min \{ \frac{q-1}{\q}, \frac{q\g_{\q}}{1 + \g_{\q}\q}\}$.

Equation \autoref{e:asym_bound} can be obtained from Equation \eqref{e_boundmumeno} taking the limit on any subsequence of the $\q$'s satisfying the hypotheses of the Theorem and of \autoref{prop:lambda_and_CM}. To compute the limit, recall \autoref{prop:lambda_and_CM}, the definition of $\g$ as liminf of $\g_{\q}\q$, and the classical asymptotic Riemann-Roch for the the rank of $\E_{\q}$.

Assume now that $f$ has maximal variation. Then by  \autoref{thm:positivity_lambda_e_CM} we can take an integer $\q$ big and divisible enough so that $\lambda_{\q}$ is ample. In particular $\deg(\lambda_{\q})\geq 1$ since $\lambda_{\q}$ is Cartier, and by Equation \autoref{e_boundmumeno} we get $\mu_-(\E_{q})>0$.  Equivalently, we can combine Equation \eqref{e:asym_bound} with \autoref{gamma_positive} and \autoref{thm:positivity_lambda_e_CM} to obtain $\mu_-(\E_{q})>0$.

\end{proof}

\begin{remark}[Case $q=1$]\label{ex:q0=1}
When $q=1$, a statement like Theorem \ref{thm:lowermu_basic} can not hold. Let us give an example. Take $S$ a smooth projective surface with strictly positive irregularity $h^1(S,\O_S)$. Given a fibration $f\colon S \to T$, where $T$ is a smooth curve, the bundle $\E^{(1)}$ has a summand $\O_T^{\oplus h^1(S,\O_S)}$, hence $\mu_-(\E^{(1)})=0$. On the other hand, if the fibers of $f$ are of general type, still $\deg(\lambda_{\q})>0$ for $\q$ big enough.

\end{remark}

The following is an immediate application of Theorem \ref{thm:lowermu_basic} using the stable reduction \autoref{prop:stable_reduction}.

\begin{corollary}[Effective positivity of Hodge bundles over a curve]\label{cor:ample}
 Let  $f\colon (X,\Delta)\to T$ be a  fibration where $(X,\D)$ is a projective normal log-pair with general fibre klt and $T$ is a smooth curve. Assume that $K_{X/T}+\D$ is $f$-ample and that $f$ has maximal variation. Let $q\ge 2$ be an integer such that $q\D$ is integral. Then
 $$
 f_*\O_X(q(K_{X/T}+\D))
 $$
 is an ample vector bundle on $T$ whenever it is non-zero. In particular, under the same assumptions, $\lambda_{q}$ is ample.
\end{corollary}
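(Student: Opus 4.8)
The plan is to reduce everything, via the stable reduction of \autoref{prop:stable_reduction}, to the situation of \autoref{thm:lowermu_basic}, whose final ``In particular'' clause already records the ampleness we want. No new positivity input is needed: the effective bound on the smallest slope is entirely packaged into \autoref{thm:lowermu_basic}, and the role of the present corollary is merely to transport its conclusion across the stable reduction.

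First I would check that $f$ falls under the hypotheses of \autoref{prop:stable_reduction}. Since the general fibre is klt, hence slc, and $K_{X/T}+\D$ is $f$-ample, there is a dense open $T^0\subseteq T$ over which $f^0\colon (X^0,\D^0)\to T^0$ is stable with horizontal boundary; moreover $K_X+\D$ is $\Q$-Cartier, because $K_{X/T}+\D$ is and $K_T$ is Cartier on the smooth curve $T$. As $q\ge 2$ and $q\D$ is a $\Z$-divisor, \autoref{prop:stable_reduction} then produces a finite surjection $\tau\colon S\to T$ from a smooth projective curve together with a stable family $g\colon (Y,\Theta)\to S$ such that $q\Theta$ is a $\Z$-divisor, and, by Items \autoref{prop:stable_reduction_itemHodge} and \autoref{prop:stable_reduction_itemLambda}, nefness/ampleness of $\mathcal E^{\Theta}_{g,q}$ (resp. of $\lambda^{\Theta}_{g,q}$) propagates to $\mathcal E^{\D}_{f,q}$ (resp. to $\lambda^{\D}_{f,q}$).

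Next I would verify that $g$ satisfies the hypotheses of \autoref{thm:lowermu_basic}. Being stable, $g$ has $K_{Y/S}+\Theta$ $\Q$-Cartier and $g$-ample, and all of its fibres are slc, hence demi-normal, hence reduced; this is exactly the reducedness that is needed to run the Viehweg product trick (so that $Y^{(r)}$ stays normal and \autoref{lem:kunneth} applies). Its general fibre is isomorphic, as a pair, to the general fibre of $f$, since both are the relative log canonical model of the same generic fibre; in particular the general fibre of $g$ is klt and $\rk \mathcal E^{\Theta}_{g,q}=\rk \mathcal E^{\D}_{f,q}$, which is positive under the standing assumption $f_*\O_X(q(K_{X/T}+\D))\ne 0$. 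Finally $g$ inherits maximal variation from $f$, since passing to a finite cover and to the fibrewise canonical models keeps the moduli map generically finite onto its image. The ``In particular'' clause of \autoref{thm:lowermu_basic} (which internally chooses an auxiliary $\q$ big and divisible enough) then gives that $\mathcal E^{\Theta}_{g,q}$ and $\lambda^{\Theta}_{g,q}$ are ample.

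Combining the two steps, Items \autoref{prop:stable_reduction_itemHodge} and \autoref{prop:stable_reduction_itemLambda} of \autoref{prop:stable_reduction} upgrade this to ampleness of $f_*\O_X(q(K_{X/T}+\D))=\mathcal E^{\D}_{f,q}$ and of $\lambda_q=\lambda^{\D}_{f,q}$, which is the assertion. I expect the only genuinely delicate points to be bookkeeping ones: confirming that the stable reduction is arranged with $q\Theta$ integral (guaranteed by Item (1) of \autoref{prop:stable_reduction}, so that $q(K_{Y/S}+\Theta)$ is a Weil $\Z$-divisor) and that the rank of the Hodge bundle is preserved, so the positive-rank hypothesis of \autoref{thm:lowermu_basic} is not lost. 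Everything substantive has already been done in \autoref{thm:lowermu_basic} and \autoref{prop:stable_reduction}.
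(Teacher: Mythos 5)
Your proposal is correct and follows exactly the route the paper takes: the paper's entire proof is the remark that the corollary is ``an immediate application of \autoref{thm:lowermu_basic} using the stable reduction \autoref{prop:stable_reduction}'', and your write-up simply makes explicit the hypothesis-checking (reducedness of the fibres of the stable model, preservation of the klt general fibre, of the rank of the Hodge bundle, of maximal variation, and integrality of $q\Theta$) that the authors leave implicit.
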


\section{Positivity of lambda classes and Hodge bundles}

\begin{theorem}[Effective positivity of lambda classes]\label{thm:postivity_lambda}
Let $f\colon (X,\Delta)\to B$ be a stable family, where $B$ is a normal projective variety of dimension $d\geq 1$. Assume further that at least one fibre of $f$ is klt, and $f$ has maximal variation.  Let $q\geq 2$ be an integer such that $q\Delta$ is integral and $\rk(\E^{(q)})>0$. Then $\lambda_{q}$ is big. 

If furthermore all fibers of $f$ are klt and the family has finite isomorphism classes, $\lambda_{q}$ is ample. 

Moreover, with the notations of \autoref{moduli}, if $qI\subset \Z$, $q\geq 2$, $\rk(\E^{(q)})>0$, and  $\M$ parametrizes at least one klt pair, then $\lambda_q$ is big on $\M^{\nu}$ 
\end{theorem}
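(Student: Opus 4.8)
The plan is to reduce everything to the curve case of \autoref{thm:lowermu_basic} by comparing $\lambda_q$ with the Chow--Mumford class $\lambda_{CM}$. By \autoref{cor:semi_posit_lambda_e_CM} both $\lambda_q$ and $\lambda_{CM}$ are nef, and since $f$ has maximal variation, $\lambda_{CM}$ is big by item \autoref{itemCM_pos} of \autoref{thm:positivity_lambda_e_CM}. Because $\lambda_q$ is nef, to show it is big it is enough to produce a constant $c>0$ for which $\lambda_q-c\,\lambda_{CM}$ is pseudo-effective: then $\lambda_q=(\lambda_q-c\lambda_{CM})+c\lambda_{CM}$ is a sum of a pseudo-effective and a big class, hence big.

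To manufacture this comparison I would restrict to general complete-intersection curves. Fix an ample $A$ and let $C=A_1\cap\dots\cap A_{d-1}$ be a general member with $A_i\in|mA|$, $m\gg0$; such $C$ is smooth, lies in the regular locus of $B$, and its generic point is a generic point of $B$. By \autoref{prop:base_change_stable_families} (using $q\Delta$ integral) the restricted family $h\colon(X_C,\Delta_C)\to C$ is stable, with $\lambda_q|_C$ and $\lambda_{CM}|_C$ the corresponding classes of $h$; its general fibre is klt (klt is open and one fibre of $f$ is klt), all fibres are reduced as they are slc, and the base is smooth, so $X_C$ is klt, hence normal, by the fact recalled in \autoref{S:notations}. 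Thus \autoref{thm:lowermu_basic} applies to $h$. The fibre invariants $v=(K_{X_t}+\Delta_t)^n$ and $\g=\g(K_{X_t}+\Delta_t;X_t,\Delta_t)$ depend only on the fibre, with $v$ constant in the family and $\g>0$ by \autoref{gamma_positive}; moreover $\g\ge\g_0>0$ on a dense open subset of $B$ by lower semicontinuity of the log canonical threshold together with $\g\ge\alpha$. Writing $\E_{q,h}$ for the Hodge bundle of $h$, the asymptotic bound \eqref{e:asym_bound} and $\deg(\lambda_q|_C)=\deg\E_{q,h}\ge\rk(\E_q)\,\mu_-(\E_{q,h})$ give
\[
\lambda_q\cdot C=\deg(\lambda_q|_C)\ \ge\ \rk(\E_q)\,\mu_-(\E_{q,h})\ \ge\ c\,\deg(\lambda_{CM}|_C)=c\,(\lambda_{CM}\cdot C),
\]
with $c:=\rk(\E_q)\,\min\{q-1,\ \tfrac{q\g_0}{1+\g_0}\}/\big((n+1)v\big)>0$, for every such $C$. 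Since these curves move in families covering $B$, their classes span the movable cone (on a resolution of $B$), so the duality between pseudo-effective divisors and movable curves yields that $\lambda_q-c\,\lambda_{CM}$ is pseudo-effective, finishing the bigness statement.

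For ampleness, assume all fibres are klt and $f$ has finite isomorphism classes, so the moduli map $B\to M$ is finite. Then every subvariety $V\subseteq B$ maps finitely to $M$, and the normalized restricted family over $\widetilde V$ is again stable, of maximal variation, with $q\Delta$ integral, a klt fibre, and $\rk(\E_q)>0$ (inherited, as its fibres are fibres of $f$). The bigness case just proved gives that $\lambda_q|_{\widetilde V}$ is big, i.e. $\lambda_q^{\dim V}\cdot V=(\lambda_q|_{\widetilde V})^{\dim V}>0$; as this holds for every $V$ including $V=B$, the Nakai--Moishezon criterion shows $\lambda_q$ is ample.

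Finally, for the statement on $\M^\nu$, I would choose a presentation as in \autoref{moduli}, giving a stable family $f\colon(X,\Delta)\to B$ with $B\to\M^\nu$ generically finite and dominant (hence $f$ has maximal variation); here $qI\subset\Z$ forces $q\Delta$ integral, $\rk(\E_q)>0$ since $\lambda_q$ is non-trivial, and a klt fibre exists because $\M$ parametrizes one. The bigness case gives $\lambda_q$ big on $B$, and as $\lambda_q$ on $\M^\nu$ pulls back to it, bigness descends along the generically finite surjection to $\M^\nu$. The hard parts will be the two positivity inputs feeding the comparison: securing the uniform lower bound $\g_0>0$ for the fibre invariant over a dense open of the base, and upgrading the curve-by-curve inequality to pseudo-effectivity of $\lambda_q-c\,\lambda_{CM}$ through the movable-cone duality.
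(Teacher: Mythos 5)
Your overall architecture is the same as the paper's: restrict to curves meeting the klt locus, feed the base-changed stable family into \autoref{thm:lowermu_basic} to bound $\mu_-(\E_{q})$ from below by a fixed positive multiple of the degree of an auxiliary big nef class, and then use the BDPP duality between pseudo-effective divisors and movable curves to globalize; the Nakai--Moishezon route to ampleness and the reduction of the moduli statement to a finite cover of $M^\nu$ inducing a family are also essentially what the paper does (the paper instead shows $\lambda_q-\varepsilon\lambda_{\q}$ is nef using arbitrary effective curves, and invokes ampleness of $\lambda_{\q}$ from \cite{KP}). The one substantive divergence is your choice of comparison class: you compare $\lambda_q$ with $\lambda_{CM}$ via the asymptotic bound \autoref{e:asym_bound}, whereas the paper fixes an auxiliary integer $\q$ with $\lambda_{\q}$ big and uses the non-asymptotic bound \autoref{e_boundmumeno}. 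This matters: your constant $c$ requires a uniform positive lower bound $\g_0$ for the \emph{asymptotic} invariant $\g=\liminf_{\q}\q\,\g_{\q}$ over a dense open of $B$, and your proposed justification (``$\g\ge\alpha$ plus lower semicontinuity of the lct'') does not obviously deliver it, since $\alpha$ is itself a liminf of constructible functions and its uniform positivity in families is a nontrivial statement. The paper sidesteps this entirely: for a single fixed $\q$, the function $t\mapsto\g_{\q}(K_{X_t}+\Delta_t;X_t,\Delta_t)$ is constructible on $B_{\mathrm{klt}}$, hence takes finitely many (positive) values, and a uniform $s>0$ exists. You should adopt that device; as written, this step is a genuine gap.

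A second, smaller problem is your assertion that general complete-intersection curves ``span the movable cone (on a resolution of $B$)''. They do not: by \cite{BDPP} the movable cone is generated by pushforwards $\mu_*(\tilde A_1\cdots\tilde A_{d-1})$ over \emph{all} birational modifications $\mu\colon B'\to B$, not by complete intersections on $B$ or on a single resolution. The fix is the one the paper uses: run the curve inequality for an arbitrary movable class $[C]$ (or for each generator of the movable cone), noting only that such a class admits a representative, or a positive multiple $g(T)$ with $T$ a smooth curve, meeting $B_{\mathrm{klt}}$ — this is all your degree computation actually needs. With these two repairs your argument closes, and the Nakai--Moishezon treatment of ampleness is a perfectly serviceable alternative to the paper's nef-plus-ample decomposition.
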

\begin{proof}
Pick $\q$ big and divisible enough such that $\rk(\E^{(\q)})\neq 0$, $\q\Delta$ is integral, and $\lambda_{\q}$ is big (this is possible because $f$ has maximal variation and we can apply \autoref{thm:positivity_lambda_e_CM}). Observe that both $\lambda_{q}$ and $\lambda_{\q}$ are Cartier by construction, and  nef by \autoref{cor:semi_posit_lambda_e_CM}

Let $B_{klt}$ be the open dense subset of $B$ over which the fibers are klt. The invariant $\g_{\q}$ is constructible on $B_{klt}$, so we can fix an $s$ such that
$$
0 < s \le \min \left\{ \frac{q-1}{\q}, \frac{q\g_{\q}}{1 + \g_{\q}\q} \right\}.
$$
for every $t$ in $B_{klt}$. Let $\varepsilon=s/\rk(\E^{(\q)})$. 

Fix the class of $[C]$ of a movable curve on $B$ (in particular, a representative of $[C]$ intersect $B_{klt}$). We can find a morphism $g\colon T \to B$, such that $T$ is a smooth projective curve, the class of $g(T)$ is a positive multiple of $[C]$. By \autoref{prop:base_change_stable_families} the base-changed family $h\colon (X_T,\Delta_T)\to T$ is stable and $g^*\E^{(q)}_f=\E^{(q)}_h$. Since the general fibre of $h\colon (X_T,\Delta_T)\to T$ is klt, we have that  $(X_T,\Delta_T)$ is klt by \cite[Proposition 2.15]{Kobook}.

We can then apply \autoref{thm:lowermu_basic} to $h\colon (X_T,\Delta_T)\to T$ and obtain 
\begin{equation} \label{e:bound_sulle_curve}
\mu_-(g^*\E^{(q)}_f)\geq \varepsilon \deg(g^*\lambda_{\q,f}).
\end{equation}
In particular,
$$
\left(\lambda_{q}-\varepsilon \lambda_{\q}\right)\cdot [C]\geq 0.
$$
Recall that on a normal variety the cone of movable curves is dual to the cone of pseff divisors (see \cite[Theorem 2.2]{BDPP} and the paragraphs afterward for the normal case). Since we can choose as $[C]$ the class of any movable curve, and $B$ is normal, we have that $\lambda_{q}-\varepsilon \lambda_{\q}$ is pseff. Since we have choose $\q$ such that $\lambda_{\q}$ is big,  $\lambda_{q}$ is big as well.

If all fibers of $f$ are klt, we can make the same argument with any effective curve $[C]$, not just movable (the only place where we use that $[C]$ is movable is to guarantee that a representative intersect $B_{klt}$, but now we are assuming $B=B_{klt})$. This shows that $\lambda_{q}-\varepsilon \lambda_{\q}$ is nef. As the family has finite isomorphism classes, we can assume that $\lambda_{\q}$ is ample by Theorem \ref{thm:positivity_lambda_e_CM}, hence $\lambda_{q}$ is ample too.

Let us now prove the claim on $\M^{\nu}$.  We can prove it on the coarse space $M^{\nu}$. By \cite[Corollary 6.19]{KP}, we can take a finite normal cover $\pi\colon S\to M^{\nu}$ which is induced by a stable family over $S$. This family has maximal variation because $\pi$ is finite. Because of base change \autoref{prop:base_change_stable_families}, it is enough to show that $\lambda_{q}$ is big on $S$, but this is the previouse claim with $S=B$
\end{proof}

The following is an effective version of \cite[Theorem 8.1]{KP}.

\begin{theorem}[Bigness of Hodge bundles]\label{thm:bigness}
Let $f\colon (X,\Delta)\to B$ be a stable family over a normal base where $B$ is a normal projective variety of dimension $d\geq 1$. Assume further that at least one fibre of $f$ is klt, and $f$ has maximal variation.  Let $q\geq 2$ be an integer such that $q\Delta$ is integral and $\rk(\E^{(q)})>0$. Then $\E_{2q,f}^\D$ is big. 
\end{theorem}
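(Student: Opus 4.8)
The plan is to follow the strategy of \cite[Theorem 8.1]{KP}, feeding in the effective positivity already established, after first reducing to a smooth base. Choose a resolution $\nu\colon B'\to B$; it is birational, hence generically finite and surjective, so bigness of a coherent sheaf descends along $\nu$ (pull back the tautological sheaf on the projectivization and use that $\nu_*\O_{B'}=\O_B$, so $h^0$ is preserved). By \autoref{prop:base_change_stable_families}, applicable since $B$ and $B'$ are normal and $2q\D$ is integral (as $q\D$ is), the base change $h\colon (X_{B'},\Delta_{B'})\to B'$ is again stable and $\nu^*\E_{2q,f}^{\D}\cong \E_{2q,h}^{\Delta_{B'}}$; its general fibre is the general, hence klt, fibre of $f$. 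Thus it suffices to prove that $\E_{2q,h}^{\Delta_{B'}}$ is big on $B'$. Since $B'$ is smooth and the general fibre is klt, the total space $(X_{B'},\Delta_{B'})$ is klt by the fact recalled in \autoref{S:notations}.

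Next I would collect the positivity inputs on $B'$. As $h$ is stable, $K_{X_{B'}/B'}+\Delta_{B'}$ is nef by \autoref{cor:semi_posit_lambda_e_CM} and $h$-ample by definition of stability, so $(2q-1)(K_{X_{B'}/B'}+\Delta_{B'})$ is nef and $h$-big. Applying \autoref{prop:weakly-positive} with $L=2q(K_{X_{B'}/B'}+\Delta_{B'})$, and likewise with $L=q(K_{X_{B'}/B'}+\Delta_{B'})$, shows that $\E_{2q,h}$ and $\E_{q,h}$ are weakly positive. Moreover $\det\E_{q,h}=\nu^*\lambda_{q}$ and $\det\E_{2q,h}=\nu^*\lambda_{2q}$ are big: indeed $\lambda_q$ and $\lambda_{2q}$ are big on $B$ by \autoref{thm:postivity_lambda} (applied both with $q$ and with $2q$; note $\rk\E_{2q}>0$ follows from $\rk\E_q>0$ by squaring a nonzero fibrewise section), and the pullback of a big line bundle under the birational $\nu$ is big.

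The remaining, and main, step is to upgrade this to bigness of the bundle itself, and here lies the crux. One cannot argue that a weakly positive sheaf with big determinant is automatically big: for example $T_{\P^2}(-1)$ is globally generated, hence nef and weakly positive, with $\det=\O_{\P^2}(1)$ ample, yet $\O_{\P(T_{\P^2}(-1))}(1)$ has top self-intersection $c_1^2-c_2=0$ and so $T_{\P^2}(-1)$ is \emph{not} big. This is precisely why the statement concerns $\E_{2q}$ rather than $\E_{q}$. Instead I would run Viehweg's fibre-product argument as in \cite[Theorem 8.1]{KP}: writing $r=\rk\E_{q,h}$, the determinant inclusion $\lambda_{q}\hookrightarrow \E_{q,h}^{\otimes r}$ together with the K\"unneth identification of $\E_{q,h}^{\otimes r}$ with a pushforward on the $r$-fold fibre product $X_{B'}^{(r)}$ (legitimate here because, after the reduction, the relevant fibre products are normal) exhibits an effective divisor encoding the bigness of $\lambda_q$; combined with the weak positivity of $\E_{q,h}$ this yields bigness of the \emph{square} $\E_{q,h}^{\otimes 2}$. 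The factor two is intrinsic to this construction — it is exactly what compensates for the possible failure of bigness of $\E_q$ — and it matches the target $\E_{2q,h}=h_*\O(2q(K_{X_{B'}/B'}+\Delta_{B'}))$, which is the natural recipient of the multiplication $\E_{q,h}\otimes\E_{q,h}\to \E_{2q,h}$. Pushing the bigness through this comparison gives that $\E_{2q,h}$ is big, and descending along $\nu$ as above gives that $\E_{2q,f}^{\D}$ is big on $B$.

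I expect the fibre-product/determinant step to be the delicate point: one must run Viehweg's construction carefully so that the bigness of $\lambda_q$ and the weak positivity of $\E_{q,h}$ are transferred all the way to $\E_{2q,h}$ without assuming fibrewise projective normality in degree two (which need not hold for $q$ small), while keeping track of the boundary and of the reflexive/$S_2$ subtleties on the fibre products. The reductions in the first two paragraphs — smooth base, klt total space, nefness of $K_{X/B}+\Delta$, and the effective bigness of the $\lambda$-classes from \autoref{thm:postivity_lambda} — are exactly what make the non-effective machinery of \cite[Theorem 8.1]{KP} applicable with the explicit exponent $2q$.
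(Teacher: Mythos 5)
Your reductions (resolution of the base, klt total space, weak positivity of $\E_{q}$ and $\E_{2q}$ via \autoref{prop:weakly-positive}, bigness of $\lambda_q$ via \autoref{thm:postivity_lambda}) match the paper's, and you correctly diagnose that weak positivity plus big determinant does not give bigness of the bundle. But the crux — how the bigness of $\lambda_q$ is actually injected into $\E_{2q}$ — is left unresolved, and the route you sketch would not close. You propose to first get bigness of $\E_{q}^{\otimes 2}$ and then transfer it to $\E_{2q}$ through the multiplication map $\E_{q}\otimes\E_{q}\to\E_{2q}$; bigness only passes through \emph{generically surjective} maps, and as you yourself note this multiplication need not be generically surjective for small $q$ (no fibrewise projective normality). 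Flagging this as ``the delicate point'' does not supply the missing argument, so the proof has a genuine gap exactly where the factor $2$ has to be explained.

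The paper's mechanism is different and avoids the multiplication map entirely. By \cite[Lemma 8.10]{KP} there is a uniform $c>0$ with $c<\lct(X_b,\Delta_b)$ for all $b$ in the klt locus; set $\ell=\lceil 1/c\rceil$. After a degree-$\ell$ cyclic cover and a resolution one may write $\lambda_q=\O_B(\ell A)$ with $A$ Cartier and big. Viehweg's trick on the $r$-fold fibre product gives ${f^{(r)}}^{*}\ell A+\Gamma\sim q\big(K_{X^{(r)}/B}+\Delta^{(r)}\big)$ with $\Gamma\geq 0$, and the lct bound guarantees that $\big(X^{(r)}_b,\Delta^{(r)}_b+\Gamma_b/\ell\big)$ stays klt. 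One then applies \autoref{prop:weakly-positive} \emph{directly} to $L=2q\big(K_{X^{(r)}/B}+\Delta^{(r)}\big)-{f^{(r)}}^{*}A$ with this enlarged boundary: since $L-K_{X^{(r)}/B}-\Delta^{(r)}-\Gamma/\ell\sim\big(2q-1-q/\ell\big)\big(K_{X^{(r)}/B}+\Delta^{(r)}\big)$ and $2q-1-q/\ell\geq q-1>0$ for every $\ell\geq 1$, the hypothesis of \autoref{prop:weakly-positive} holds — this inequality is precisely where the exponent $2q$ (rather than $q$) is forced. The conclusion is that $\bigotimes^{r}\E_{2q}\otimes\O_B(-A)$ is weakly positive with $A$ big, which yields bigness of $\E_{2q}$ after passing to symmetric powers. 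To repair your write-up you would need to import this lct/cyclic-cover step (or an equivalent device) rather than rely on the $\E_q\otimes\E_q\to\E_{2q}$ comparison.
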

\begin{proof}
By \autoref{thm:postivity_lambda}, we know that $\lambda_{q}$ is big.
Let $r:=\rk(\E^{(q)})$ and let $B_{klt}$ be the open subset of $B$ over which the fibres $(X_b, \D_b)$ are klt. 
By \cite[Lemma 8.10]{KP}, there exists $c >0$ such that $c < \lct(X_b,\D_b)$ for any $b \in B_{klt}$.
Set 
$$
\ell:= \lceil \frac{1}{c} \rceil.
$$

Let $\eta$ be a very ample divisor on $B$. Then for an integer $N$ big and divisible enough, $\lambda_{q}^{\otimes \ell} \otimes \eta^{\otimes N}$ is base point free and divisible by $\ell$. Then a ramified cover of degree $\ell$ along a general section of $\lambda_{q}^{\otimes \ell} \otimes \eta^{\otimes N}$ will be normal and we can assume that $\lambda_{q} = \O_B(\ell A)$ for some Cartier divisor $A$ on $B$. Taking a resolution, we can also assume that $B$ is smooth and so $(X,\D)$ klt by \cite[Corollary 4.56]{Kobook}.  In both cases, we applied the basechange \autoref{prop:base_change_stable_families}.

The fibre product $f: (X^{(r)},\D^{(r)}) \to B$ is a stable family and $(X^{(r)},\D^{(r)})$ is still klt.  Then the Viehweg's trick tells us that
$$
{f^{(r)}}^*\ell A + \Gamma \sim q(K_{X^{(r)}/B} + \D^{(r)}),
$$
where $\Gamma$ is an effective divisor on $X^{(r)}$. 
Note that $(X_b^{(r)}, \Gamma_b/\ell +\D^{(r)}_b)$ is klt for any $b \in U$ by the assumption on $\ell$. 
Then 
$$
f^{(r)}_* \O_{X^{(r)}}(2q (K_{X^{(r)}/B} + \D^{(r)}) - {f^{(r)}}^*A) \cong \bigotimes_{i=1}^r f_*\O_{X}(2q(K_{X/B}+\D)) \otimes \O_B(-A).
$$
is weakly-positive by \autoref{prop:weakly-positive} (let us stress that \autoref{thm:postivity_lambda} is used to have $A$ big).  Hence there exists a positive integer $b$ such that
\begin{align}
\O_B(bA) \otimes \Sym^{2b}\left(\O_B(-A) \otimes \bigotimes_{i=1}^r f_*\O_{X}(2q(K_{X/B}+\D)) \right) \cong \\
\cong \O_B(-bA) \otimes \Sym^{2b}\left( \bigotimes_{i=1}^r f_*\O_{X}(2q(K_{X/B}+\D)) \right) \xrightarrow[]{}\mathrel{\mkern-14mu}\rightarrow  \\
\xrightarrow[]{}\mathrel{\mkern-14mu}\rightarrow
  \O_B(-bA) \otimes \Sym^{2br}\left( f_*\O_{X}(2q(K_{X/B}+\D)) \right)
\end{align}
is generically globally generated and so $\E_{2q,f}^\D$ is big.

\end{proof}

\section{Lower bound on Chow-Mumford volumes for stable families}

\begin{theorem}[Lower bound on the Chow-Mumford volume for stable families] \label{thm:volumelambda}
Let $n$ be a positive integer and $\Lambda \subset \mathbb Q \cap [0,1]$ be a DCC set. Then there exists a constant $\delta=\delta(n,\Lambda) >0$ such that 
$$
\lambda_{CM}^{d}\geq \delta^{d}
$$
for every stable family $f\colon (X,\Delta)\to B$	satisfying the following conditions: $B$ is a normal projective variety of dimension $d\geq 1$, the coefficients of $\Delta$ are in $\Lambda$, the relative dimension of $f$ is $n$,  $f$ has maximal variation, and at least one fiber is klt.

In particular, if $d=1$, we obtain that
$$
(K_{X/B}+\D)^{n+1} \ge \delta.
$$

\end{theorem}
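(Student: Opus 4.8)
The plan is to reduce the top self-intersection $\lambda_{CM}^{d}$ to the elementary fact that a nef and big \emph{Cartier} divisor on a $d$-dimensional projective variety has self-intersection at least $1$, by comparing $\lambda_{CM}$ with the Cartier lambda class $\lambda_{q}$ in the pseudoeffective cone. First I would fix, once and for all, an integer $q=q(n,\Lambda)\ge 2$ that works uniformly over the whole class of families: using the effective birationality of \cite{HMX} together with the attendant boundedness/DCC package, I want $q$ such that $q\Delta$ is a $\Z$-divisor and $q(K_{X_b}+\Delta_b)$ induces a birational map on every klt fibre $(X_b,\Delta_b)$. With such a $q$ I set $\delta:=q^{-(n+1)}$. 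By \autoref{cor:semi_posit_lambda_e_CM} both $\lambda_{CM}$ and $\lambda_{q}$ are nef, and by \autoref{thm:postivity_lambda} (maximal variation together with one klt fibre) $\lambda_{q}$ is big; since $\lambda_{q}=\det\E_{q}$ is a line bundle on the $d$-fold $B$, nefness and bigness give $\lambda_{q}^{\,d}\ge 1$.

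The core step is the pseudoeffective comparison $\lambda_{CM}-\delta\,\lambda_{q}\succeq 0$, which I would test against movable curves. Let $B_{\mathrm{klt}}\subseteq B$ be the open dense klt locus; any movable curve class can be represented by a curve meeting $B_{\mathrm{klt}}$, so write a positive multiple of it as $g_{*}[T]$ for a morphism $g\colon T\to B$ from a smooth projective curve whose image meets $B_{\mathrm{klt}}$. Base changing along $g$ (\autoref{prop:base_change_stable_families}) produces a stable family $h\colon(X_{T},\Delta_{T})\to T$ with $g^{*}\lambda_{q}=\lambda_{q,h}$ and $g^{*}\lambda_{CM}=\lambda_{CM,h}$; since its general fibre is klt and $T$ is smooth, the total space $X_{T}$ is klt, hence normal. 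Now apply \autoref{slopeinequality} on $T$ with $L=q(K_{X_{T}/T}+\Delta_{T})$: this is an $h$-ample $\Q$-Cartier $\Z$-divisor because $q\Delta$ is integral, both $L$ and $h_{*}\O_{X_{T}}(L)=\E_{q,h}$ are nef by \autoref{cor:semi_posit_lambda_e_CM}, and $L$ is birational on the general (klt) fibre by the choice of $q$. The inequality $L^{n+1}\ge\deg h_{*}\O_{X_{T}}(L)$ reads $q^{n+1}(\lambda_{CM}\cdot C)\ge \lambda_{q}\cdot C$, i.e. $(q^{n+1}\lambda_{CM}-\lambda_{q})\cdot C\ge 0$. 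As this holds for every movable $C$, the duality between movable curves and pseudoeffective divisors on a normal projective variety (\cite{BDPP}) shows that $q^{n+1}\lambda_{CM}-\lambda_{q}$ is pseudoeffective, that is $\lambda_{CM}-\delta\,\lambda_{q}\succeq 0$.

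Finally I would bootstrap from curves to the top intersection. Since $\lambda_{CM}$ and $\lambda_{q}$ are both nef, the product $(\lambda_{CM}-\delta\,\lambda_{q})\cdot\lambda_{CM}^{\,d-1-k}\lambda_{q}^{\,k}$ of a pseudoeffective class with a nef class is nonnegative for each $k$, so inductively $\lambda_{CM}^{\,d}\ge\delta\,\lambda_{CM}^{\,d-1}\lambda_{q}\ge\delta^{2}\lambda_{CM}^{\,d-2}\lambda_{q}^{\,2}\ge\cdots\ge\delta^{d}\lambda_{q}^{\,d}\ge\delta^{d}$, using $\lambda_{q}^{\,d}\ge 1$ at the last step. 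This is the asserted bound, and for $d=1$ it specializes directly to $(K_{X/B}+\Delta)^{n+1}=\deg\lambda_{CM}\ge\delta$.

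The main obstacle is the very first step, the production of a single integer $q=q(n,\Lambda)$. Effective birationality controls the ``birational on fibres'' requirement uniformly over the DCC set $\Lambda$, but this must be reconciled with the two competing constraints that $q\Delta$ be integral and that $q$ stay bounded: a large $q$ makes $\delta=q^{-(n+1)}$ collapse, and indeed the slope inequality degenerates (becomes asymptotically vacuous) as $q\to\infty$, so an unbounded $q$ would produce no uniform lower bound. Reconciling integrality of $q\Delta$ with boundedness of $q$ is exactly where the DCC hypothesis and the boundedness theory of \cite{HMX} are indispensable, and it is the most delicate point of the argument. A secondary, purely technical, point is verifying that the base-changed total space $X_{T}$ is genuinely normal so that \autoref{slopeinequality} applies; I would obtain this from the klt-ness of $X_{T}$ over the smooth curve $T$, which holds because the general fibre of $h$ is klt.
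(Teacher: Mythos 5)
Your second half — the movable-curve test, the base change to a curve through the klt locus, the slope inequality \autoref{slopeinequality} applied to $L=q(K_{X_T/T}+\Delta_T)$, the BDPP duality to get $\lambda_{CM}-\delta\lambda_q$ pseudoeffective, and the chain $\lambda_{CM}^d\ge\delta^d\lambda_q^d\ge\delta^d$ using nefness and $\lambda_q$ nef, big and Cartier — is exactly the paper's argument. But the first step has a genuine gap that you correctly flag and then do not close: for an arbitrary (infinite) DCC set $\Lambda\subset\Q\cap[0,1]$, such as $\{1-1/m\}_{m\ge 2}$, there is \emph{no} integer $q$ with $q\Lambda\subset\Z$, so ``fix once and for all $q=q(n,\Lambda)$ such that $q\Delta$ is a $\Z$-divisor'' is impossible. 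The integrality of $q\Delta$ is not a side condition: it is what makes \autoref{prop:base_change_stable_families}, \autoref{cor:semi_posit_lambda_e_CM} and \autoref{thm:postivity_lambda} applicable and what makes $L$ a $\Z$-divisor in \autoref{slopeinequality}. The effective birationality of \cite{HMX} controls the birationality constraint uniformly over a DCC set, but it does nothing for the common-denominator problem, so invoking ``the boundedness/DCC package'' does not resolve the tension you identify.

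The paper's missing ingredient is a preliminary reduction from $\Lambda$ to a \emph{finite} coefficient set $I=I(n,\Lambda)$. After passing to a resolution of $B$ (so that the total space is klt and $\Q$-factorial after a small modification), one invokes \cite[Theorem 8.1]{Bir} to get $\varepsilon=\varepsilon(n,\Lambda)>0$ with $K_F+\varepsilon\Delta_F$ big on the fibres; one then covers $[\min\Lambda,1]$ by finitely many intervals $(\varepsilon^i,\varepsilon^{i-1}]$ and rounds each coefficient $a$ down to the minimum $a^-$ of $\Lambda$ in its interval, producing $\Delta^-$ with coefficients in a finite set $I$ and with $\varepsilon\Delta\le\Delta^-\le\Delta$, so $K_{X/B}+\Delta^-$ is still $f$-big. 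Taking the relative canonical model of $(X,\Delta^-)$ gives a new stable family $f'$ with coefficients in $I$, and the inclusion of Hodge bundles $\E_{q,f'}^{\Delta'}\subset\E_{q,f}^{\Delta}$ together with nefness yields $\lambda_{CM,f'}^{\Delta'}\le\lambda_{CM,f}^{\Delta}$, so it suffices to bound the new family. Only then can one choose a single $q$ with $qI\subset\Z$ and $q(K_Z+\Gamma)$ birational for the relevant klt pairs, and run your argument. Without this reduction the constant $\delta$ cannot be made to depend only on $(n,\Lambda)$.
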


\begin{proof}

By \autoref{prop:base_change_stable_families}, we can assume that $B$ is smooth. In fact, let $g\colon B' \to B$ be a resolution of singularities, and let $f'\colon (X',\Delta')\to B'$ be the base-changed family. The family $f'$ is stable, has maximal variation and at least one fiber is klt because $g$ is birational. The coefficients of $\D'$ are in $\Lambda$ since $\D'$ is obtained as closure of the strict transform of $\D$ on a big open subset. In addition, since $\lambda_{CM,f'}=g^*\lambda_{CM,f}$ and $g$ is birational, the top self-intersection of the Chow-Mumford line bundle is preserved. From now on, we assume that $B$ is smooth.

Since $B$ is smooth, the general fibre of $f$ is normal and all fibres are reduced, we know that $X$ is normal and by \cite[Corollary 4.56]{Kobook} we conclude that $(X,\D)$ is klt
 
We now reduce to the case where the set of coefficients $\Lambda$ is finite.   First we take a small $\Q$-factorial  modification of $(X,\Delta)$.  Since such modification is small crepant, the volume is preserved and so we can assume $X$ $\Q$-factorial.  Thanks to \cite[Theorem 8.1]{Bir}, there exists a rational constant $\varepsilon>0$
depending just on $n$ and $\Lambda$ such that for every lc pair $(F,\Delta_F)$ of dimension $n$, coefficient of $\Delta_F$ in $\Lambda$, and $K_F+\Delta_F$ ample, $K_F+\varepsilon \Delta_F$ is big.  

We define a finite set $I=I(\Lambda,\varepsilon)$ as follow. Let $m$ be the minumum of $\Lambda$, which exists by the DCC condition. Let $J_i$ be the intervals $(\varepsilon^i, \varepsilon^{i-1} ] $, with $i>0$. We can cover $[m,1]$ with a finite number $r$ of intervals $J_i$'s. For each $i$, let $m_i$ be the minimum of the DCC set $\Lambda \cap J_i$ (if the intersection is empty, we do not define it). Define $I:=\{m_1,\dots , m_r\}$. We also introduce a map $\Lambda \to I$ which sends $a\mapsto a^-:=\max\{m_i \; \textrm{s.t.} \; m_i\leq a\}$.  Given a divisor $\Delta=\sum a_i \Delta_i$ with coefficients in $\Lambda$, we define $\Delta^-=\sum a_i^- \Delta_i$. Observe that $\varepsilon \Delta \leq \Delta^- \leq \Delta$, and $K_{X/B}+\Delta^-$ is $\Q$-Cartier because $X$ is $\Q$-factorial.

For $q$ multiple of the Cartier indexes of $K_{X/T}$, $\Delta$ and $\Delta^-$, we have an inclusion of Hodge bundles $\E_{q,f}^{\Delta^-}\subset \E_{q,f}^{\Delta}$. 
Because of the above quoted result \cite[Theorem 8.1]{Bir}, for a fibration as in the statement of Theorem we are proving, $K_{X/B}+\Delta^-$ is $f$-big. By \cite[Theorem 11.28 (i)]{Kobook}, we can now run an $f$-MMP to obtain a canonical model $f'\colon (X',\Delta')\to B$, which now is a stable family. Recall that, since the base $B$ is smooth, for $q$ divisible enough, $\O_{X'}(qK_{X/T})=\O_{X'}(qK_{X'})\otimes (f')^{*}\O_{B}(-qK_B)$; by the property \cite[Definition 11.26.5]{Kobook} of the canonical model, the Hodge bundles $\E_{q,f}^{\Delta^-}$ and $\E_{q,f'}^{\Delta'}$ are isomorphic for $q$ divisible enough, and hence we have an inclusion $\E_{q,f'}^{\Delta'}\subset \E_{q,f}^{\Delta}$. As both bundles are nef by \autoref{cor:semi_posit_lambda_e_CM}, and the quotient of nef bundles are nef, taking the determinant we have an inequality $\lambda_{q,f'}^{\Delta'}\leq \lambda_{q,f}^{\Delta}$. By \autoref{prop:lambda_and_CM} $\lambda_{CM,f'}^{\Delta'}\leq \lambda_{CM,f}^{\Delta}$ so we can replace $f$ with $f'$, and hence replace $\Lambda$ with the finite set $I$ defined above.

Fix an integer $q\geq 2$ such that $qI\subset \mathbb{Z}$, and for every klt pair $(Z,\Gamma)$ such that $K_Z+\Gamma$ is big, $\dim Z=n$ and the coefficient of $\Gamma$ are in $I$, the divisor $q(K_Z+\Gamma)$ gives a birational map. This $q$ exists by \cite[Theorem 1.3]{HMX}.	Let $\delta(n,I)={q}^{-n-1}$.

For the sake of simplicity, let us first discuss the case where $B$ is a curve. By \autoref{cor:ample} $\E_{q}$ is ample, in particular $\deg(\E_{q})\geq 1$. The claim follows for the slope inequality \autoref{slopeinequality}, $L=q(K_{X_T/T}+\Delta_T)$ ($L$ is nef by  \autoref{cor:semi_posit_lambda_e_CM}, $f_*\O_X(L)$ is nef by \autoref{thm: Fujino}), which asserts that
$$
(K_{X/T}+\Delta)^{n+1}\geq \deg(\E_{q})q^{-n-1}.
$$
(Recall that when the base is a curve $\deg \lambda_{CM}= (K_{X/B}+\D)^{n+1}$.)

Let us now consider a stable family over a base $B$ of dimension $d$. Let $[C]$ be the class of a movable curve in $B$. Take $g\colon T \to B$ as in the proof of Theorem \ref{thm:postivity_lambda}, let $h\colon (X_T,\Delta_T)\to T$ be the base change (recall we are in the situation of the base change \autoref{prop:base_change_stable_families}). Then \autoref{slopeinequality}, $L=q(K_{X_T/T}+\Delta_T)$ ($L$ is nef by  \autoref{cor:semi_posit_lambda_e_CM}, $f_*\O_X(L)$ is nef by \autoref{thm: Fujino}), implies that 
$$
(K_{X/T}+\Delta_T)^{n+1}\geq \frac{ \deg \lambda_{q,h}}{q^{n+1}}.
$$
We can apply \autoref{thm:postivity_lambda} and get $\deg(\lambda_{q,h})\geq 1$. Since all fibers of $f$ are slc, by the base change \autoref{prop:base_change_stable_families} $\lambda_{q,h}=g^*\lambda_{q,f}$ and $(K_{X_T/T}+\Delta_T)^{n+1}=g^*\lambda_{CM,f}$ .  Wrapping up we have
$$
\left(\lambda_{CM}-\delta\lambda_{q}\right)\cdot [C]\geq 0
$$
for any movable curve $[C]$. As $B$ is normal, $\lambda_{CM}-\delta\lambda_{q}$ is equal to a pseff divisor $P$. The divisors $\lambda_{CM}$ and $\lambda_{q}$ are nef ( \autoref{cor:semi_posit_lambda_e_CM}), and $\lambda_{q}$ is big ( \autoref{thm:postivity_lambda}), hence $\lambda_{CM}^k\lambda_{q}^{d-k-1}P\geq 0$ for all $k=1,\dots ,d-1$, hence
$$
\lambda_{CM}^{d}=\lambda_{CM}^{d-1}(\delta\lambda_{q} + P) \ge \lambda_{CM}^{d-1}\delta\lambda_{q} \ge \cdots \ge \delta^{d} \lambda_{q}^{d}.
$$
We conclude observing that $\lambda_{q}$ is Cartier, nef ( \autoref{cor:semi_posit_lambda_e_CM}), and big (\autoref{thm:postivity_lambda}), hence $\lambda_{q}^{d}\geq 1$. 

The \emph{in particular} part, follows since $\deg \lambda_{CM}= (K_{X/B}+\D)^{n+1}$ if the base is a curve. 
\end{proof}

The proof of the following Corollary is a variant of the proof of Theorem \ref{thm:volumelambda}.

\begin{corollary}[Lower bound on the Chow-Mumford volume for moduli spaces] \label{cor:volume_moduli}
Let $n$ be a positive integer, $\Lambda \subset \mathbb Q \cap [0,1]$ be a DCC set closed under addition. Then there exists a constant $C=C(n,I) >0$ with the following property. For every irreducible component $M$ of the coarse moduli space of $n$-dimensional stable pairs with coefficients in $\Lambda$ such that at least one parametrized pair is klt, we have the inequality
    $$
    ( \lambda_{CM})^{\dim M}\geq (vC)^{-\dim M}
    $$
 where  $v$ is the volume of the pairs parametrized by $M$, and $\lambda_{CM}$ is the $\Q$ divisor from \autoref{moduli}.
\end{corollary}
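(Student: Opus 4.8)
The plan is to run the proof of \autoref{thm:volumelambda} verbatim, but on the normalization $M^{\nu}$ of the component instead of on a base $B$, and to keep careful track of the denominators that appear because $\lambda_q$ is only $\Q$-Cartier there. Throughout write $D=\dim M$. By \autoref{moduli}, once $q$ is chosen so that $q\Lambda\subset\Z$, the classes $\lambda_{CM}$ and $\lambda_q$ are well-defined $\Q$-Cartier divisors on $M^{\nu}$, the divisor $A\lambda_q$ is an honest Cartier divisor (where $A$ is the greatest common divisor of the cardinalities $|\Aut(F,\Gamma)|$ of the parametrized pairs), both are nef by \autoref{cor:semi_posit_lambda_e_CM}, and $\lambda_q$ is big by \autoref{thm:postivity_lambda} (here the hypothesis that at least one parametrized pair is klt, and that the covering family has maximal variation, is used).

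First I would fix the integer $q$ exactly as in the proof of \autoref{thm:volumelambda}: invoking the effective birationality of \cite[Theorem 1.3]{HMX} together with boundedness, I take $q\geq 2$ depending only on $d$ and $\Lambda$ such that $q\Lambda\subset\Z$ and such that $q(K_F+\Gamma)$ induces a birational map for every klt pair $(F,\Gamma)$ of dimension $d$ with coefficients in $\Lambda$ and $K_F+\Gamma$ big. Set $\delta=q^{-d-1}$. Then I reproduce the slope-inequality step: passing through the finite cover $\pi\colon S\to M^{\nu}$ carrying a stable family (as in the proof of \autoref{thm:postivity_lambda}) and applying \autoref{slopeinequality} to the base change over a curve $g\colon T\to S$ dominating a given movable class, I obtain $\lambda_{CM}\cdot[C]\ge\delta\,\lambda_q\cdot[C]$ for every movable curve $[C]$ on $M^{\nu}$, the factor $\deg\pi$ cancelling by the projection formula and $\deg\lambda_q\ge 1$ on $T$ holding by \autoref{thm:postivity_lambda}. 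Since $M^{\nu}$ is normal, duality between movable curves and pseudoeffective divisors gives that $\lambda_{CM}-\delta\lambda_q$ is pseudoeffective, and intersecting with the nef classes $\lambda_{CM}$ and $\lambda_q$ exactly as in \autoref{thm:volumelambda} yields
$$\lambda_{CM}^{D}\ \ge\ \delta^{D}\,\lambda_q^{D}.$$

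It remains to bound $\lambda_q^{D}$ from below, and this is where the volume $v$ enters. Since $A\lambda_q$ is Cartier, nef and big on the $D$-dimensional projective variety $M^{\nu}$, its top self-intersection is a positive integer, so $(A\lambda_q)^{D}\ge 1$ and hence $\lambda_q^{D}\ge A^{-D}$. Now $A$ divides $|\Aut(F,\Gamma)|$ for every parametrized pair, whence $A\le |\Aut(F,\Gamma)|$; by the Hurwitz-type bound for stable pairs (\cite[Theorem 1.1]{HMX}, \cite{HMX-Aut}) there is a constant $c=c(d,\Lambda)$ with $|\Aut(F,\Gamma)|\le c\,\vol(K_F+\Gamma)=c\,v$. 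Combining, $\lambda_q^{D}\ge (cv)^{-D}$, and therefore
$$\lambda_{CM}^{D}\ \ge\ \delta^{D}(cv)^{-D}\ =\ (vC)^{-D},\qquad C:=c\,q^{d+1}=C(d,\Lambda).$$

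The main obstacle is precisely this last paragraph. Unlike in \autoref{thm:volumelambda}, where after smoothening the base $\lambda_q$ is an ordinary Cartier divisor and one simply uses $\lambda_q^{d}\ge 1$, on $M^{\nu}$ the class $\lambda_q$ is only $\Q$-Cartier and the correct denominator is the automorphism number $A$ of \autoref{moduli}; expressing $A$ in terms of $v$ through the automorphism bound of \cite{HMX} is exactly what produces the factor $v$ in the statement. A secondary point that must be handled with care is uniformity: I need to invoke boundedness to guarantee that a single $q$ (with bounded denominators and bounded effective-birationality threshold in fixed dimension with DCC coefficients) works across all components, so that the resulting constant $C$ depends only on $d$ and $\Lambda$ and not on the individual component $M$ or on $v$.
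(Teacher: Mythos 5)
Your proposal follows essentially the same route as the paper's proof: reduce to the normalization $M^{\nu}$, use movable curves together with the base change of \autoref{prop:base_change_stable_families}, the slope inequality \autoref{slopeinequality} and the bigness from \autoref{thm:postivity_lambda} to show $\lambda_{CM}-q^{-d-1}\lambda_q$ is pseudoeffective, deduce $\lambda_{CM}^{\dim M}\geq \delta^{\dim M}\lambda_q^{\dim M}$ by intersecting with the nef classes, bound $\lambda_q^{\dim M}\geq A^{-\dim M}$ via the Cartier index dividing the automorphism number $A$ of \autoref{moduli}, and finally control $A$ by $v$ times a constant using \cite[Theorem 1.1]{HMX-Aut}. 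The only imprecision is your requirement ``$q\Lambda\subset\Z$,'' which is impossible for an infinite DCC set $\Lambda$; as in the paper you must first pass to the finite coefficient set $I=I(\Lambda,\varepsilon)$ via the $\Delta\mapsto\Delta^-$ reduction in the proof of \autoref{thm:volumelambda} and choose $q$ with $qI\subset\Z$.
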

\begin{proof}
Let $\nu \colon M^{\nu}\to M$ be the normalization of $M$. It is enough to prove the statement on $M^{\nu}$.
 Let $[C]$ be the class of a movable curve on $M^{\nu}$. Then there exists a finite map $g\colon T\to M^{\nu}$ from a smooth projective curve $T$ such that

\begin{itemize}

    \item the class of $g(T)$ is a positive multiple of $[C]$;
    \item there is a family of $n$-dimensional stable pairs $h\colon (X,\Delta)\to T$ with coefficients in $I$  such that $g\circ \nu $ is the associated moduli map;
    \item the generic fiber of $h$ is klt (this is because $[C]$ is movable and at least one pair parametrized by $M$ is klt) and all fibers are reduced, in particular $(X_T,\Delta_T)$ is klt.
\end{itemize}

Let $I=I(\Lambda,n)$, $q$ and $\delta$ as in the proof of \ref{thm:volumelambda}. On $M^{\nu}$ consider the $\Q$-line bundle $L=\lambda_{CM}-q^{-n-1} \lambda_{q}$.

Because of base change \autoref{prop:base_change_stable_families}, $g^*L=\lambda_{CM,h}-q^{-n-1}\lambda_{q,h}$. Arguing as in the proof of Theorem \ref{thm:volumelambda}, we apply again \autoref{slopeinequality} and \autoref{thm:postivity_lambda} to show that $\deg(g^*L)\geq 1$, hence $L[C]>0$ and so $L$ is pseff on $M^{\nu}$. Again following the proof of Theorem \ref{thm:volumelambda}, we get $\lambda_{CM}^{d}\geq \delta^{d} \lambda_{q}^{d}$, where $d=\dim M^{\nu}$. Now $\lambda_{q}$ is nef and big by \autoref{cor:semi_posit_lambda_e_CM} and \autoref{thm:volumelambda}, but not Cartier. The Cartier index is a divisor of the number $A$ from \autoref{moduli}, so $\lambda_{q}^d\geq A^{-d}$.

To conclude the proof, observe that by \cite[Theorem 1.1]{HMX-Aut} the cardinality of the automorphism group of $(F,\Gamma)$, for every $(F,\Gamma)$ in $\M$, is bounded by the volume of $(F,\Gamma)$ times a constant which depends only on $n$ and $I$, so the same is true for $A$.

\end{proof}

We conclude this section with the following result about families over curves that are not necessarily stable. 

\begin{corollary}\label{cor:relative_volume}
Let $n$ be a positive integer and $\Lambda \subset \mathbb Q \cap [0,1]$ be a finite set. Then there exists a constant $\delta=\delta(n,\Lambda) >0$ such that 
$$
(K_{X/T}+\D)^{n+1} \geq \delta,
$$
for any fibration $f\colon (X,\Delta)\to T$ where $(X,\D)$ is a projective normal log-pair with general fibre klt and $T$ is a smooth curve such that $K_{X/T}+\D$ is $f$-ample and that $f$ has maximal variation.
\end{corollary}
\begin{proof}
Fix an integer $q\geq 2$ such that $qI\subset \mathbb{Z}$, and for every klt pair $(Z,\Gamma)$ such that $K_Z+\Gamma$ is big, $\dim Z=n$ and the coefficient of $\Gamma$ are in $I$, the divisor $q(K_Z+\Gamma)$ gives a birational map. This $q$ exists by \cite[Theorem 1.3]{HMX}.	Let $\delta(n,I)={q}^{-n-1}$.

 By \autoref{cor:ample} $\E_{q}$ is ample, in particular $\deg(\E_{q})\geq 1$. The claim follows for the slope inequality \autoref{slopeinequality}, $L=q(K_{X_T/T}+\Delta_T)$, which asserts that
$$
(K_{X/T}+\Delta)^{n+1}\geq \deg(\E_{q})q^{-n-1} \ge \frac{1}{q^{n+1}}.
$$
\end{proof}

\section{Automorphisms groups}

Let $(X,\D)$ be a slc pair. We denote by $\Bir(X,\D)$ (resp.\ $\Aut(X,\D)$) the birational automorphism  group  (resp.\ the regular automorphism group) of $(X,\D)$ (see \cite[Definition 6.15]{Fuj14}).
If $K_{X}+ \D$ is big, then $\Bir(X,\D)$ is finite by \cite[Thm 6.16]{Fuj14}. In fact, as a consequence of the DDC of the volume \cite[Theorem 1.3]{HMX}, the following holds in the same fashion as \cite[Theorem 1.1]{HMX-Aut} (see \cite[Theorem 2.8]{Ale} for the surface case). The result is well-known to expert, but we could not find it in the literature, so we write it down explicitly. 

\begin{theorem}\label{thm:aut} 
Let $n$ be a positive integer and let $\Lambda \subset [0,1]\cap \Q$ satisfying DCC. Then there exists a constant $C$ depending only on $n$ and $I$ such that
$$
|\Bir(X,\D)|\le C \vol(K_X + \D),
$$
for any $(X, \D)$ log canonical of dimension $n$ such that $K_X + \D$ is big, $\coeff(\D) \subset \Lambda$ and $(X, \D)$ admits a log canonical model.
\end{theorem}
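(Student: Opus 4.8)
The plan is to follow the quotient argument of \cite{HMX-Aut}, now keeping track of the boundary through the quotient. Since $K_X+\D$ is big, the group $G:=\Bir(X,\D)$ is finite by \cite[Theorem 6.16]{Fuj14}. As $(X,\D)$ admits a log canonical model $(X^c,\D^c)$, and this model is uniquely determined, every element of $\Bir(X,\D)$ induces a \emph{regular} automorphism of $(X^c,\D^c)$ preserving $\D^c$; thus $G=\Aut(X^c,\D^c)$ acts biregularly on the normal variety $X^c$. Since passing to the log canonical model does not change the volume, I would replace $(X,\D)$ by $(X^c,\D^c)$ and assume from now on that $G$ acts regularly and that $K_X+\D$ is ample.

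Next I would form the quotient $\pi\colon X\to Y:=X/G$, a finite morphism of degree $|G|$ between normal projective varieties. As $\D$ is $G$-invariant, the standard orbifold (crepant) formula for quotients by a finite group produces a unique boundary $\D_Y$ on $Y$ with
$$
K_X+\D=\pi^*(K_Y+\D_Y),
$$
where the coefficient of a branch divisor $B\subseteq Y$ with ramification index $e$ and whose preimage has coefficient $d\in\Lambda\cup\{0\}$ in $\D$ equals $1-\tfrac{1-d}{e}$. Crepant finite pullback preserves the log canonical property, so $(Y,\D_Y)$ is log canonical; it has dimension $n$, and $K_Y+\D_Y$ is big because $K_X+\D$ is big and $\pi$ is finite. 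The coefficients of $\D_Y$ lie in the set
$$
D(\Lambda):=\Big\{\,1-\tfrac{1-d}{e}\ \Big|\ d\in\Lambda\cup\{0\},\ e\in\Z_{>0}\,\Big\}\cup\{1\},
$$
and a short argument shows $D(\Lambda)$ is again a DCC set depending only on $\Lambda$: a strictly decreasing sequence in it would force $\tfrac{1-d_k}{e_k}$ to increase to a positive limit, which bounds the $e_k$ and reduces the problem to the ACC property of $1-\Lambda$.

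Finally, since volume is multiplicative under finite pullback, $\vol(K_X+\D)=|G|\cdot\vol(K_Y+\D_Y)$. Applying the DCC of volumes from \cite[Theorem 1.3]{HMX} to the set $D(\Lambda)$ yields a constant $\delta=\delta(n,D(\Lambda))>0$ with $\vol(K_Y+\D_Y)\ge\delta$, whence
$$
|\Bir(X,\D)|=|G|=\frac{\vol(K_X+\D)}{\vol(K_Y+\D_Y)}\le \frac{1}{\delta}\,\vol(K_X+\D),
$$
and I set $C:=1/\delta$. Note that only the DCC-of-volumes part of the quoted theorem is used, not the effective birationality statement.

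The main technical point to verify carefully is the bookkeeping of the quotient: that the orbifold boundary $\D_Y$ is well defined with the stated coefficients, that $(Y,\D_Y)$ inherits the log canonical property under the crepant pullback relation, and---most importantly---that $D(\Lambda)$ satisfies DCC. It is exactly this closure property of the coefficient set that lets the uniform lower bound on volumes from \cite{HMX} apply to $(Y,\D_Y)$ regardless of the group $G$, and thereby converts the DCC of volumes into the desired linear bound on $|\Bir(X,\D)|$.
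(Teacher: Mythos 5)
Your argument is correct and follows essentially the same route as the paper: finiteness of $\Bir(X,\D)$, passage to the log canonical model where the group acts biregularly, the crepant quotient formula placing the new coefficients in a DCC set determined by $\Lambda$, multiplicativity of the volume under the finite quotient, and the uniform lower bound on volumes from \cite[Theorem 1.3]{HMX}. The only cosmetic difference is that the paper packages the quotient step in its \autoref{lem:quotient} (stated for foliated triples and specialized to $\cF=T_X$), whose coefficient set $\{1-(1-n_jb_j)/m_i\}$ is a mild generalization of your $D(\Lambda)$.
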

\begin{proof}
Let $(X',\D')$ be the canonical model of $(X,\D)$. Then $G=\Bir(X,\D)=\Bir(X',\D')=\Aut(X',\D')$ is finite. By Lemma \ref{lem:quotient} taking the quotient by $G$ we obtain a log canonical pair $(Y,\Gamma)$ such that the coefficients of $\Gamma$ are in the DCC set 
$$
\Lambda'=\{0\le 1-(1-n_jb_j)/m_i \le 1 \ | \ b_j \in \Lambda \cup \{0\}, m_i, n_j\in \N \},
$$ 
and
$$
\vol(Y,\Gamma) =\frac{\vol(X,\D)}{|\Aut(X',\D')|}.
$$
The result follows now by \cite[Theorem 1.3]{HMX}.
\end{proof}

The following lemma is standard.

\begin{lemma}\label{lem:quotient}
Let $(X,\D=\sum b_j D)$ be a pair and $G \subset Aut(X,\D)$ be a finite subgroup. Set $\pi: X \to Y=X/G$. Then 
\begin{enumerate}
    \item there exists a divisor $\Gamma$ on $Y$ such that $K_X+\D=\pi^*(K_Y+\Gamma)$;
    \item $(X,\D)$ is klt (resp.\ lc, slc) iff $(Y,\Gamma)$ is klt (resp.\ lc, slc);
    \item the coefficients of $\Gamma$ are in the set
$$
\{0\le 1-(1-n_jb_j)/m_i \le 1 \ | \  m_i, n_j\in \N \}.
$$ 
\end{enumerate}
\end{lemma}
\begin{proof}
See \cite[Theorem 2.8]{Ale} for the surface case and \cite[2.41 and 2.42]{Kosing} for the general case.  
\end{proof}

Let $f:(X,\D) \to B$ be a fibration where $(X,\D)$ is a slc pair and $B$ is a normal variety. 
We denote by $\Aut(f) \subset \Aut(X,\D)$ the relative automorphism group, i.e.\  automorphisms of $(X,\D)$ preserving the fibration. This means that $\tau \in \Aut(f)$ is an automorphism of $(X,\D)$ such that $f \circ \tau = f$.

As a consequence of Theorem \ref{thm:volumelambda} we can prove a relative version of Theorem \ref{thm:aut}. 

\begin{corollary}\label{cor:autksb}
Let $n$ be a positive integer and $\Lambda \subset \mathbb Q \cap [0,1]$ be a DCC set. Then there exists a constant $\delta=\delta(n,\Lambda) >0$ such that 
$$
|\Aut(f)| \le \delta \vol(K_{X/B}+ \D)
$$
for any fibration $f\colon (X,\Delta)\to B$	satisfying the following conditions: the coefficients of $\Delta$ are in $\Lambda$, the relative dimension of $f$ is $n$, $f$ is a stable family with  maximal variaiton, at least one fiber is klt, and  $B$ is a smooth projective curve. 
\end{corollary}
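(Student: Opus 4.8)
The plan is to mimic the proof of the absolute statement \autoref{thm:aut}, replacing the volume lower bound of \cite{HMX} by the relative lower bound \autoref{thm:volumelambda}, and realizing the group as the deck transformations of a quotient fibration. Write $G=\Aut(f)$. The first step is finiteness of $G$: the relation $f\circ\tau=f$ means every $\tau\in G$ preserves each fibre, so $G$ acts on $X$ over $B$ and restricts to automorphisms of the fibre pairs; in particular $G$ injects into the automorphism group of the geometric generic fibre $(X_{\bar\eta},\D_{\bar\eta})$, which is a stable pair and hence has finite automorphism group, so $G$ is finite. Since at least one fibre is klt, the general fibre is klt by openness, and therefore $X$ is klt (hence normal) by the total-space regularity recalled in \autoref{S:notations}; being a stable family of maximal variation with klt fibre, $K_{X/B}+\D$ is big, so $\vol(K_{X/B}+\D)=(K_{X/B}+\D)^{n+1}>0$ (the top self-intersection computes the volume by nefness, \autoref{cor:semi_posit_lambda_e_CM}).

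Next I would form the geometric quotient $\pi\colon X\to Y:=X/G$, which is a quotient over $B$ and so carries an induced fibration $g\colon Y\to B$ with $g\circ\pi=f$, the variety $Y$ being normal. Applying \autoref{lem:quotient} in the classical case $\cF=T_X$ (equivalently, Riemann--Hurwitz for pairs, exactly as in \autoref{thm:aut}) produces an effective boundary $\Gamma$ with $K_X+\D=\pi^*(K_Y+\Gamma)$, with $(Y,\Gamma)$ klt and with the coefficients of $\Gamma$ lying in the DCC set $\Lambda'=\{0\le 1-(1-n_jb_j)/m_i\le 1\}$ attached to $\Lambda$. Subtracting $f^*K_B=\pi^*g^*K_B$ upgrades this to the relative identity $K_{X/B}+\D=\pi^*(K_{Y/B}+\Gamma)$. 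Checking fibrewise that $Y_b=X_b/G$ is the quotient of the stable (resp.\ klt) pair $(X_b,\D_b)$, one sees that $g\colon(Y,\Gamma)\to B$ is again a stable family — flatness over the smooth curve $B$ is automatic, while $\Q$-Cartierness and $g$-ampleness of $K_{Y/B}+\Gamma$ descend along the finite surjection $\pi$ — and that it has at least one klt fibre $Y_{b_0}$.

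The key point is then that $g$ automatically has maximal variation, which I would deduce without any combinatorial analysis of covers: by the projection formula $(K_{X/B}+\D)^{n+1}=|G|\,(K_{Y/B}+\Gamma)^{n+1}$, hence $(K_{Y/B}+\Gamma)^{n+1}=\vol(K_{X/B}+\D)/|G|>0$; since $K_{Y/B}+\Gamma$ is nef by \autoref{cor:semi_posit_lambda_e_CM}, positivity of its top self-intersection forces bigness, and a stable family with big relative log canonical divisor has maximal variation by \autoref{thm:positivity_lambda_e_CM}. Now \autoref{thm:volumelambda} applied to $g$ (with the DCC set $\Lambda'$, relative dimension $n$ and base dimension $d=1$) gives a constant $\delta'=\delta(n,\Lambda')>0$ with $(K_{Y/B}+\Gamma)^{n+1}\ge\delta'$. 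Combining the two displays yields $|G|=\vol(K_{X/B}+\D)/(K_{Y/B}+\Gamma)^{n+1}\le (\delta')^{-1}\vol(K_{X/B}+\D)$, which is the assertion with $\delta:=(\delta')^{-1}$.

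I expect the main obstacle to be the bookkeeping of the second step: verifying cleanly that the quotient $g\colon(Y,\Gamma)\to B$ is genuinely a stable family — that $K_{Y/B}+\Gamma$ is $\Q$-Cartier and $g$-ample, that the fibres remain slc, and that the boundary coefficients are controlled by \autoref{lem:quotient} — together with the precise descent of these properties along $\pi$ and the finiteness of $G$. By contrast the transfer of maximal variation, which at first glance looks like the delicate point, becomes immediate once bigness of $K_{Y/B}+\Gamma$ is read off from the strict positivity of the volume.
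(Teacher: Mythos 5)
Your proposal is correct and follows essentially the same route as the paper: finiteness of $\Aut(f)$ via restriction to a (generic) fibre, the quotient $\pi\colon(X,\D)\to(Y,\Gamma)$ with $K_{X/B}+\D=\pi^*(K_{Y/B}+\Gamma)$ and coefficients in the DCC set $\Lambda'$ via \autoref{lem:quotient}, verification that $g\colon(Y,\Gamma)\to B$ is stable with a klt fibre and maximal variation, and then \autoref{thm:volumelambda} combined with $(K_{X/B}+\D)^{n+1}=|\Aut(f)|\,(K_{Y/B}+\Gamma)^{n+1}$. The only (immaterial) deviation is that you deduce maximal variation of $g$ from nefness plus positivity of the top self-intersection, whereas the paper reads bigness of $K_{Y/B}+\Gamma$ directly off the finite pullback and invokes \autoref{thm:positivity_lambda_e_CM}.
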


\begin{proof}
First, observe that since the family is stable and the generic fiber is klt, then $(X,\Delta)$ is lc.

Since $\Aut(f) \subset \Aut(F,\D_F)$ where $F$ is a general fibre, we get that $\Aut(f)$ is finte. By Lemma \ref{lem:quotient}, taking the quotient $\pi: (X,\D) \to (Y,\Gamma)$ we obtain an lc pair $(Y,\Gamma)$ such that the coefficients of $\Gamma$ are in the DCC set 
$$
\Lambda'=\{0\le 1-(1-n_jb_j)/m_i \le 1 \ | \ b_j \in \Lambda\cup \{0\}, m_i, n_j\in \N \}
$$ 

with $K_X+\D = \pi^*(K_Y + \Gamma)$. By definition of $\Aut(f)$ we get a family $g:(Y,\Gamma) \to B$ such that $f^*K_B=\pi^*g^*K_B$ and so $K_{X/B}+\Delta=\pi^*(K_{Y/B}+\Gamma)$. Since $\pi$ is finite,  $K_{Y/B}+\Gamma$ is $g$-ample. Any fibre of $g$ is a finite quotient of a fibre of $f$ and so it is lc. This means that $g$ is also a stable family. We have
$$ 
(K_{X/B}+\Delta)^{n+1}=|\Aut(f)|(K_{Y/B}+\Gamma)^{n+1}.
$$
By Theorem \ref{thm:positivity_lambda_e_CM}, $g$ has maximal variation.
 
The result follows from Theorem \ref{thm:volumelambda}, and by $(K_{X/B}+\Delta)^{n+1}=|\Aut(f)|(K_{Y/B}+\Gamma)^{n+1}$.
\end{proof}

\section{Nefness threeshold with respct to a fiber}

Given a family $f\colon X\to T$ of projective varieties over a smooth, projective curve, and an $f$-ample $\Q$-Cartier divisor $L$ on $X$, we can look at the nefness threeshold with respect to a fiber
$$
\lambda_-(L)=\sup \{ t \in \R \, | \, L- tF \; \textrm{is nef} \}\,,
$$
where $F$ is the class of a fiber. We have the following general result, whose proof is a variation on \cite[Page 15]{Xu_Zhuang}.
\begin{lemma}\label{lem:lim}
With the above notation, let $k$ be the Cartier index of $L$, assume that all fibers of $f$ are reduced and at least one fiber is klt, then
$$\lim_{m\to \infty}\frac{\mu_-\left(f_*\O_X(mkL)\right)}{mk}=\lambda_-(L)$$
\end{lemma}
\begin{proof}
 We have the linearity property $\lambda_-(kL)=k\lambda_-(L)$, so we can replace $L$ with $kL$, and assume $k=1$ and $L$ Cartier.  Let $\E_m=f_*\O_X(mL)$. First, the sequence $\frac{\mu_-(\E_m)}{m}$ have a limit by \cite[Theorems 8.9, 8.11]{Chen}. Take a rational number $c<\lambda_-(L)$, so that $L-cF$ is ample. For $m$ big and divisible enough, $m(L-cF)$ is Cartier and $m(L-cF)-K_{X/T}-\Delta$ is ample, hence by \autoref{p_nef_Weil} the vector bundle $f_*\O_X(m(L-cF))$ is nef (we need that the fibers of $f$ are reduced and the general fiber is klt to apply \autoref{p_nef_Weil}, here is the only place where we use these two hypotheses). This means that $\mu_-(\E_m(-mc))\geq 0$, and so $\mu_-(\E_m)\geq qc$. Letting $c$ tend to $\lambda_-(L)$, we obtain that $\lambda_-(L)\leq \lim_{m\to \infty} \frac{\mu_-(\E_m)}{m}$. Take a rational number $c>\lambda_-(L)$, so that $L-cF$ is not nef. For any $m$ big and divisible enough, we have that $\O_X(m(L-cF))$ is an $f$-globally generated line bundle, since $L-cF$ is $f$-ample. Observe that the divisibility condition depend on $c$, because we need the line bundle to be Cartier. This means that $f^*f_*\O_X(mL-mcF) \to \O_X(mL-mcF)$ is surjective, which implies that $f_*\O_X(mL-mcF)$ is not nef and so $\mu_-(f_*\O_X(mL-mcF)) < 0$ by Hartshorne characterisation of nef vector bundles on curves. This gives $\mu_-(\E_m) < cq$, and letting $c$ tend to $\lambda_-$ we obtain that $\lambda_-(L)\geq \lim_{m\to \infty} \frac{\mu_-(\E_m)}{m}$.
 
\end{proof}

We do not know whether the sequence $q^{-1}\mu_-(f_*\O_X(qL))$ admits a limit when $q$ tends to infinity if $L$ is only a $\Q$-Cartier divisor rather than a Cartier divisor.

\medskip

The following result could be compared with the nefness threeshold \cite[Theorem 1.20]{CP}, see also \cite[Proposition 4.9]{Xu_Zhuang},\cite[Theorem 1.3]{CP23} and \cite[Theorem 3.4]{CP17}.

\begin{corollary}[Nefness threeshold of the relative log-canonical bundle]\label{nef_threshold}
With the assumptions and notations of \autoref{thm:lowermu_basic}, except that we allow $f$ to have non-reduced fibers, we have
$$
\lambda_-(K_{X/T}+\Delta)\geq \frac{\g}{1+\g}\frac{\deg{\lambda_{CM}}}{(n+1)(K_{X_t}+\Delta_t)^n}
$$
\end{corollary}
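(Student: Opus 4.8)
The plan is to first prove the inequality under the additional assumption that every fibre of $f$ is reduced, and then remove this assumption by a finite base change to the reduced-fibre situation. In the reduced case the statement is essentially a repackaging of the asymptotic estimate \eqref{e:asym_bound} through \autoref{lem:lim}, while the general case is obtained by tracking how each quantity transforms under base change.

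For the reduced case I would combine \autoref{lem:lim} with \eqref{e:asym_bound} of \autoref{thm:lowermu_basic}. Writing $L=K_{X/T}+\Delta$ and letting $k$ be its Cartier index, \autoref{lem:lim} gives $\lambda_-(L)=\lim_{m\to\infty}\frac{\mu_-(\E_{mk})}{mk}$. For $m\gg 0$ one has $\rk\E_{mk}>0$ (as $L$ is $f$-ample) and $mk\ge 2$, so \eqref{e:asym_bound} applies and, after dividing by $mk$, reads
\[
\frac{\mu_-(\E_{mk})}{mk}\ \ge\ \min\Big\{\tfrac{mk-1}{mk},\ \tfrac{\g}{1+\g}\Big\}\,\frac{\deg\lambda_{CM}}{(n+1)(K_{X_t}+\Delta_t)^n}.
\]
Since $(X_t,\Delta_t)$ is klt and $K_{X_t}+\Delta_t$ is ample, \autoref{gamma_positive} gives $\g>0$, whence $\frac{\g}{1+\g}<1$ and the minimum stabilises to $\frac{\g}{1+\g}$ for $m\gg 0$; letting $m\to\infty$ then yields the assertion when all fibres are reduced.

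To treat non-reduced fibres I would pass to a base change as in \autoref{lem:fibre_ridotte}: choose a finite surjection $\tau\colon S\to T$ of some degree $d$ so that the normalisation $\pi\colon Y\to X_S$ of the base-changed family has reduced fibres over $S$, and set $\phi=\rho\circ\pi\colon Y\to X$ and $g\colon Y\to S$. By \autoref{item_crepant} there is a boundary $\Gamma$ with $\phi^*(K_{X/T}+\Delta)=K_{Y/S}+\Gamma$, and by \autoref{item_sing} and \autoref{item_positivity} the family $g\colon(Y,\Gamma)\to S$ has reduced fibres, klt general fibre, and $g$-ample $K_{Y/S}+\Gamma$, so the reduced case applies to $g$. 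It then remains to compare the two sides. Since $\phi$ is finite surjective, a $\Q$-divisor on $X$ is nef if and only if its $\phi$-pullback is nef, and for general $t\in T$ one has $\phi^*(f^*[t])=g^*(\tau^*[t])\equiv d\,F_S$ numerically, with $F_S$ a fibre of $g$; hence $L-sF$ is nef iff $(K_{Y/S}+\Gamma)-sd\,F_S$ is nef, which gives $\lambda_-(L)=\tfrac1d\lambda_-(K_{Y/S}+\Gamma)$. On the other hand $\deg\lambda_{CM}^g=d\,\deg\lambda_{CM}^f$ by \autoref{item_CM}, while the BGV invariant $\g$ and the fibre volume $(K_{X_t}+\Delta_t)^n$ are unchanged because the general fibres of $f$ and $g$ are isomorphic as pairs. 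Feeding the reduced-case bound for $g$ into $\lambda_-(L)=\tfrac1d\lambda_-(K_{Y/S}+\Gamma)$, the two factors of $d$ cancel and the desired inequality for $f$ drops out.

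I expect the only real subtlety to lie in this last bookkeeping: one must verify that $\lambda_-$ scales by $1/d$ and $\deg\lambda_{CM}$ by $d$ under the degree-$d$ base change, while the fibrewise invariants $\g$ and $(K_{X_t}+\Delta_t)^n$ are preserved, so that the $d$'s cancel exactly. The reduced case itself is a formal consequence of \eqref{e:asym_bound} and \autoref{lem:lim}, and the extraction of the limit uses only $\g>0$ to discard the $\frac{mk-1}{mk}$ term.
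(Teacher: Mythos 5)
Your proposal is correct and follows essentially the same route as the paper: reduce to reduced fibres via \autoref{lem:fibre_ridotte}, then combine \eqref{e:asym_bound} with \autoref{lem:lim}, using \autoref{gamma_positive} to see that the minimum stabilises at $\g/(1+\g)$. Your explicit verification that $\lambda_-$ scales by $1/d$ while $\deg\lambda_{CM}$ scales by $d$ (with $\g$ and $(K_{X_t}+\Delta_t)^n$ unchanged) is a correct and welcome elaboration of a reduction the paper compresses into a single sentence.
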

\begin{proof}
Because of \autoref{lem:fibre_ridotte}, up to replacing $(X,\Delta)$ with $(Y,\Gamma)$ we can assume that all fibers of $f$ are reduced.

Let $k$ be the Cartier index of $K_{X/T}+\Delta$, and $m$ any positive integer greater than or equal to two, and such that $f_*\O_X(mk(K_{X/T}+\Delta))$ is not trivial. In equation \eqref{e:asym_bound} from \autoref{thm:lowermu_basic}, take $q_0=mk$, and divide both sides by $mk$. 

The limit for $m$ that goes to infinity of the LHS is computed in \autoref{lem:lim}. 

Let us study the RHS. By \autoref{gamma_positive}, $0<\frac{\g}{1+\g}<1$; for $m$ large enough we thus have
$$
\min \left\{mk-1, \frac{mk\g}{1 + \g}\right\}=\frac{mk\g}{1 +\g}
$$
 Therefore, for $m$ large enough, the RHS is a linear function of $mk$, and the result follows.
\end{proof}

\section{New slope inequalities}
Broadly speaking, a slope inequality for a polarized family of varieties $f\colon (X,L)\to T$ over a curve $T$, is an inequality of the form $L^{\dim X} \geq C \deg f_*\mathcal{O}_X(L)$, where the constant $C$ depends only on the general fiber of $f$. Many slope inequalities, as well as detailed introduction to this topic, are presented in \cite{CTV}. Here, using our \autoref{thm: Fujino}, we can improve the slope inequalities for stable families obtained in \cite[Theorem B]{CTV}. 

First, \cite[Theorem 5.2 and Corollary 5.3]{CTV} can be replaced by our stronger \autoref{thm: Fujino} and its \autoref{cor:semi_posit_lambda_e_CM}.

\cite[Theorem B]{CTV} is a special case of \cite[Theorem 5.6]{CTV}, which in turn builds on \cite[Proposition 5.5]{CTV}. In the proofs of \cite[Proposition 5.5 and Theorem 5.6]{CTV}, we can replace all occurrences of \cite[Theorem 5.2 and Corollary 5.3]{CTV} with our \autoref{cor:semi_posit_lambda_e_CM}. In this way one can obtain various improvements, in particular we can remove the hypothesis ``$\Delta$ reduced" in \cite[Theorem B (1) and (3)]{CTV}, and ``$K_{X/T}+\Delta$ is nef" from \cite[Theorem B (4)]{CTV} obtaining the following result.

\begin{theorem}[Slope inequalities for stable families]
Let $f\colon (X,\Delta)\to T$ be a stable family over a a smooth, irreducible, projective curve $T$ and denote by $(F, \Delta_F)$ the general fiber of $f$. 
\begin{enumerate}
\item \label{TT:KSB-fam1}
Let $m \in \N_{>0}$ be an integer such that $m(K_{F}+\Delta_F)$  is Cartier and globally generated. Let $w\in \Q_{>0}$ such that the volume of the pull-back of $K_F+\Delta_F$ to any irreducible component of the normalization of $F$ is at least $w$. Then
$$ m^{n+1}(K_{X/T}+\Delta)^{n+1}\geq \frac{2wm^n}{wm^n+n}\deg \left(f_*\mathcal{O}_X(m(K_{X/T}+\Delta))\right).$$

\item \label{T:KSB-fam3} 
Let $m,q\in \N_{>0}$ such that at least one of the following conditions holds true 
\begin{itemize}
\item $\phi_{mq(K_{F} + \Delta_F)}$ is generically finite;
\item $mq(K_{F}+\Delta_F)$  is Cartier.
\end{itemize} 
Then 
$$
m^{n+1}(K_{X/T}+ \Delta)^{n+1}  \geq \frac{\deg f_*\O_X(m(K_{X/T}+\Delta))}{q^n}.
$$ 
\end{enumerate}
\end{theorem}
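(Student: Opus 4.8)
The plan is to run the proof of \cite[Theorem B]{CTV} essentially verbatim, substituting the semipositivity inputs used there by the stronger ones now available. Recall that \cite[Theorem B]{CTV} is deduced from \cite[Theorem 5.6]{CTV}, which rests on \cite[Proposition 5.5]{CTV}, and that the only semipositivity ingredients entering those arguments are (i) the nefness of the Hodge bundle $f_*\O_X\big(m(K_{X/T}+\Delta)\big)$ and (ii) the nefness of the relative log canonical divisor $K_{X/T}+\Delta$. In \cite{CTV} these were provided by \cite[Theorem 5.2 and Corollary 5.3]{CTV}, whose hypotheses forced $\Delta$ to be reduced (items (1) and (3)) or required $K_{X/T}+\Delta$ to be assumed nef a priori (item (4)). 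First I would observe that, for a stable family over a smooth projective curve, \autoref{thm: Fujino} yields (i) for \emph{every} positive integer $m$ with no restriction on $\Delta$, while \autoref{cor:semi_posit_lambda_e_CM} yields (ii) directly from $f$-ampleness; neither statement needs $\Delta$ reduced nor a standing nefness assumption.

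With (i) and (ii) at our disposal, the numerical core of \cite{CTV} proceeds unchanged. I would take the Harder--Narasimhan filtration of $f_*\O_X\big(m(K_{X/T}+\Delta)\big)$ and feed the slope bounds obtained from (i) and (ii) into the Xiao-type intersection computation of \cite[Proposition 5.5]{CTV}, combined with the fiberwise hypotheses: global generation of $m(K_F+\Delta_F)$ together with the componentwise volume bound $w$ for item \autoref{TT:KSB-fam1}, and the generic finiteness of $\phi_{mq(K_F+\Delta_F)}$ (or the Cartier property of $mq(K_F+\Delta_F)$) for item \autoref{T:KSB-fam3}. This produces exactly the coefficients $\tfrac{2wm^n}{wm^n+n}$ and $\tfrac{1}{q^n}$, and the computation is insensitive to the reducedness of $\Delta$.

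The only point requiring genuine care, and the place I expect to be the main obstacle, is to confirm that the hypotheses being dropped are used in \cite[Proposition 5.5 and Theorem 5.6]{CTV} solely to secure (i) and (ii), and nowhere else---e.g.\ not to control base change of the pushforwards, the behaviour under passage to fiber products, or the identification of the relevant determinants. I would therefore go through every invocation of \cite[Theorem 5.2 and Corollary 5.3]{CTV} in those proofs, replace each by the appropriate instance of \autoref{thm: Fujino} or \autoref{cor:semi_posit_lambda_e_CM}, and verify that the surrounding bookkeeping survives. Granting this, items \autoref{TT:KSB-fam1} and \autoref{T:KSB-fam3} follow.
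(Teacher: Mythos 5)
Your proposal matches the paper's argument exactly: the paper likewise deduces the theorem by rerunning the proofs of \cite[Proposition 5.5 and Theorem 5.6]{CTV} with every invocation of \cite[Theorem 5.2 and Corollary 5.3]{CTV} replaced by \autoref{thm: Fujino} and \autoref{cor:semi_posit_lambda_e_CM}, which is precisely what lets the reducedness and nefness hypotheses be dropped. Your added caution about checking that the replaced results are used only for the two semipositivity inputs is the right (and only) point of care.
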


The above result, together with the arguments of  \cite[Section 5.2]{CTV} and \autoref{moduli}, gives  the following generalization of \cite[Theorem C]{CTV}.

\begin{theorem}[Ample cone of moduli spaces of stable pairs]\label{thm:ample_cone}
 Let $\M_{n,v,I}$ be the moduli space of slc pairs $(V,\Delta)$, such that the dimension of $V$ is $n$, the coefficients of $\Delta$ are in the finite set $I\subset [0,1] \cap \mathbb{Q}$ closed under addition, and the log canonical bundle is ample with volume $v$. Let $m$ be a positive integer  such that $m(K_V+\Delta)$ is Cartier and globally generated for every $(V,\Delta)\in \M_{n,v,I}(k)$ and let $w\in \Q_{>0}$ such that the volume of the pull-back of $K_V+\Delta$ to any irreducible component of the normalization of $V$ is at least $w$. Then the $\Q$-divisor
$$
\lambda_{CM}-\varepsilon \lambda_m
$$
is ample on the normalization of $\M_{n,v,I}$ for every rational number $\varepsilon$ in $\left[0,\frac{1}{m^{n+1}}\frac{2wm^n}{wm^n+n}\right)$.

\medskip

Consider two positive integers $m$ and $q$ such that, for every $(V,\Delta)\in \M_{n,v,I}(k)$, either $mq(K_V+\Delta)$ is Cartier or $\phi_{mq(K_V+\Delta)}$ is generically finite. Then the $\Q$-divisor
$$
\lambda_{CM}-\varepsilon \lambda_m
$$
is ample on the normalization of $\M_{n,v,I}$ for every rational number $\varepsilon$ in $\left[0,\frac{1}{q^{n}m^{n+1}}\right)$.
\end{theorem}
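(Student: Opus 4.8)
The plan is to follow the argument of \cite[Section 5.2]{CTV}, substituting the improved slope inequalities proved just above and the positivity of the classes coming from \autoref{moduli} and \autoref{cor:semi_posit_lambda_e_CM}. Write $n=d$ for the dimension of the pairs parametrized by $\M_{d,v,I}$, and record the two thresholds
$$
c_1=\frac{1}{m^{n+1}}\frac{2wm^n}{wm^n+n},\qquad c_2=\frac{1}{q^{n}m^{n+1}}.
$$
Since in both cases the Cartier hypothesis on $m(K_V+\Delta)$ (resp.\ $mq(K_V+\Delta)$) forces $mI\subset\Z$, the class $\lambda_m$ is a well-defined nef $\Q$-divisor on $M^{\nu}$ by \autoref{moduli} and \autoref{cor:semi_posit_lambda_e_CM}, and $\lambda_{CM}$ is the pullback along the finite normalization $\nu\colon M^{\nu}\to M$ of the ample class on $M$ produced in \cite{PX}, hence is itself ample on $M^{\nu}$. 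The whole statement then reduces to the single nefness claim that $\lambda_{CM}-c_i\lambda_m$ is nef on $M^{\nu}$: indeed, for any $\varepsilon\in[0,c_i)$ one has
$$
\lambda_{CM}-\varepsilon\lambda_m=\Bigl(1-\tfrac{\varepsilon}{c_i}\Bigr)\lambda_{CM}+\tfrac{\varepsilon}{c_i}\bigl(\lambda_{CM}-c_i\lambda_m\bigr),
$$
which exhibits $\lambda_{CM}-\varepsilon\lambda_m$ as the sum of a positive multiple of an ample divisor and a nef divisor, and is therefore ample.

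To prove nefness of $\lambda_{CM}-c_i\lambda_m$ I would test it against an arbitrary irreducible curve $C\subset M^{\nu}$ via the Kleiman criterion. Let $\tilde C\to C$ be the normalization. Using that $\M_{d,v,I}$ is a Deligne--Mumford stack with proper coarse space, after a finite base change $T\to\tilde C$ from a smooth projective curve $T$ the induced map $T\to M$ lifts to the stack, i.e.\ it is realized by a stable family $f\colon(X,\Delta)\to T$ of $d$-dimensional pairs with coefficients in $I$. As all degrees are multiplied by $\deg(T\to\tilde C)$ under this cover, it suffices to verify the inequality on $T$. By \autoref{prop:base_change_stable_families} together with the descent construction of \autoref{moduli}, the pullbacks to $T$ of $\lambda_{CM}$ and $\lambda_m$ are exactly $\lambda_{CM,f}^{\Delta}$ and $\lambda_{m,f}^{\Delta}$, whose degrees are $(K_{X/T}+\Delta)^{n+1}$ and $\deg f_*\O_X\bigl(m(K_{X/T}+\Delta)\bigr)$ respectively.

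Now I would invoke the relevant slope inequality: \autoref{TT:KSB-fam1} in the first case and \autoref{T:KSB-fam3} in the second. In the first case the hypotheses hold because $m(K_F+\Delta_F)$ is Cartier and globally generated on the general fiber and $w$ bounds the volumes on the normalized components, and the inequality reads $(K_{X/T}+\Delta)^{n+1}\geq c_1\deg f_*\O_X(m(K_{X/T}+\Delta))$; in the second case the hypothesis on $mq(K_F+\Delta_F)$ yields $(K_{X/T}+\Delta)^{n+1}\geq c_2\deg f_*\O_X(m(K_{X/T}+\Delta))$. Either way $\deg_T(\lambda_{CM}-c_i\lambda_m)\geq 0$, whence $(\lambda_{CM}-c_i\lambda_m)\cdot C\geq 0$, and nefness follows; combined with the convex-combination identity above, this finishes the proof.

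The step I expect to be the main obstacle is the curve-testing reduction: one must ensure that every curve of $M^{\nu}$ is dominated, after a finite cover, by an honest stable family over a smooth projective curve (so that the slope inequalities and \autoref{cor:semi_posit_lambda_e_CM} apply), and that the formation of the $\lambda$-classes along that family is compatible with the descent of \autoref{moduli}. Both are consequences of the properness of the moduli stack and of the base change in \autoref{prop:base_change_stable_families}, but they are the places where care is genuinely needed; the remaining manipulations of the nef and ample cones are formal.
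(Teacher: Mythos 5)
Your proposal is correct and follows essentially the same route the paper takes: the paper's proof is precisely the deferral to the arguments of \cite[Section 5.2]{CTV} that you spell out, namely writing $\lambda_{CM}-\varepsilon\lambda_m$ as a positive multiple of the ample class $\lambda_{CM}$ plus a nonnegative multiple of $\lambda_{CM}-c_i\lambda_m$, and checking nefness of the latter on curves by lifting, after a finite cover, to a stable family over a smooth projective curve and invoking the new slope inequalities together with \autoref{moduli} and \autoref{prop:base_change_stable_families}. One small quibble: in the second case the hypothesis only forces $mqI\subset\Z$ (or nothing at all under the generically-finite alternative), not $mI\subset\Z$, so the well-definedness of $\lambda_m$ on $\M^{\nu}$ is an implicit assumption of the statement rather than a consequence of the Cartier condition as you assert --- but this imprecision is inherited from the theorem as stated and does not affect your argument.
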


\end{document}